\begin{document}

\newcommand{\ci}[1]{_{ {}_{\scriptstyle #1}}}

\newcommand{\norm}[1]{\ensuremath{\left\|#1\right\|}}
\newcommand{\abs}[1]{\ensuremath{\left\vert#1\right\vert}}
\newcommand{\p}{\ensuremath{\partial}}
\newcommand{\pr}{\mathcal{P}}

\newcommand{\pbar}{\ensuremath{\bar{\partial}}}
\newcommand{\db}{\overline\partial}
\newcommand{\D}{\mathbb{D}}
\newcommand{\B}{\mathbb{B}}
\newcommand{\Sp}{\mathbb{S}}
\newcommand{\T}{\mathbb{T}}
\newcommand{\R}{\mathbb{R}}
\newcommand{\Z}{\mathbb{Z}}
\newcommand{\C}{\mathbb{C}}
\newcommand{\N}{\mathbb{N}}
\newcommand{\scrH}{\mathcal{H}}
\newcommand{\scrL}{\mathcal{L}}
\newcommand{\td}{\widetilde\Delta}

\newcommand{\La}{\langle }
\newcommand{\Ra}{\rangle }
\newcommand{\rk}{\operatorname{rk}}
\newcommand{\card}{\operatorname{card}}
\newcommand{\ran}{\operatorname{Ran}}
\newcommand{\osc}{\operatorname{OSC}}
\newcommand{\im}{\operatorname{Im}}
\newcommand{\re}{\operatorname{Re}}
\newcommand{\tr}{\operatorname{tr}}
\newcommand{\vf}{\varphi}
\newcommand{\f}[2]{\ensuremath{\frac{#1}{#2}}}


\newcommand{\entrylabel}[1]{\mbox{#1}\hfill}

\newenvironment{entry}
{\begin{list}{X}%
  {\renewcommand{\makelabel}{\entrylabel}%
      \setlength{\labelwidth}{55pt}%
      \setlength{\leftmargin}{\labelwidth}
      \addtolength{\leftmargin}{\labelsep}%
   }%
}%
{\end{list}}


\numberwithin{equation}{section}

\newtheorem{thm}{Theorem}[section]
\newtheorem{lm}[equation]{Lemma}
\newtheorem{cor}[equation]{Corollary}
\newtheorem{conj}[equation]{Conjecture}
\newtheorem{prob}[equation]{Problem}
\newtheorem{prop}[equation]{Proposition}
\newtheorem*{prop*}{Proposition}

\newtheorem{lemma}[equation]{Lemma} 
\newtheorem{proposition}[equation]{Proposition} 
\newtheorem{theorem}[equation]{Theorem} 
\newtheorem{corollary}[equation]{Corollary} 
\newtheorem{conjecture}[equation]{Conjecture}
\newtheorem{priorResults}{Theorem} 
\renewcommand*{\thepriorResults}{\Alph{priorResults}} 

\theoremstyle{definition}
\newtheorem{definition}[equation]{Definition} 

\theoremstyle{remark}
\newtheorem{rem}[thm]{Remark}
\newtheorem*{rem*}{Remark}

\theoremstyle{remark}
\newtheorem{remark}[equation]{Remark}

\newtheorem*{ack}{Acknowledgment}

\title[Two Weight Cauchy Transform Inequalities]{Two Weight Inequalities for the \\ Cauchy Transform from $ \mathbb{R}$ to $ \mathbb{C} _+$} 

 \subjclass[2000]{Primary: 42B20 Secondary:   42B35}
 \keywords{Two weight inequalities, Cauchy transform, Riesz transform, Poisson operator, Carleson Measure, Model Space}
\author[M.~T.~Lacey]{Michael T. Lacey$^{1}$}   

\address{Michael T. Lacey, School of Mathematics\\ Georgia Institute of Technology\\ 686 Cherry Street\\ Atlanta GA  USA 30332-0160}
\email {lacey@math.gatech.edu}
\thanks{1.  Research supported in part by National Science Foundation DMS grants  \# 0968499 and \# 1265570, a grant from the Simons Foundation (\#229596 to Michael Lacey), and the Australian Research Council through grant ARC-DP120100399.}



\author[E.~T.~Sawyer]{Eric T. Sawyer$^{2}$}
\address{Eric T. Sawyer, Department of Mathematics \& Statistics\\ McMaster University\\ 1280 Main Street West\\ Hamilton, Ontario, Canada L8S 4K1 }
\email{sawyer@mcmaster.ca}
\thanks{2. Research supported in part by NSERC}

\author[C.-Y.~Shen]{Chun-Yen Shen$^{3}$}
\address{Chun-Yen Shen, Department of Mathematics\\ National Central University \\ Chungli, Taiwan 32054}
\email{chunyshen@gmail.com}
\thanks{3.  Research supported in part by the NSC, through grant NSC102-2115-M-008-015-MY2.}

\author[I.~Uriarte-Tuero]{Ignacio Uriarte-Tuero$^{4}$}
\address{Ignacio Uriarte-Tuero, Department of Mathematics \\ Michigan State University \\ East Lansing MI USA 48824}
\email{ignacio@math.msu.edu}
\thanks{4.  Research supported in part by a National Science Foundation DMS grants \# 0901524 and \# 1056965, MTM2010-16232, MTM2015-65792-P (MINECO, Spain), and a Sloan Foundation Fellowship.}

\author[B.~D.~Wick]{Brett D. Wick$^{5}$}
\address{Brett D. Wick, Department of Mathematics\\ Washington University -- St. Louis\\ One Brookings Drive\\ St. Louis, MO USA 63130-4899}
\email{wick@math.wustl.edu}
\thanks{5.  Research supported in part by a National Science Foundation DMS grants \# 0955432 and \#1500509.}

\begin{abstract} 
We characterize those pairs of weights $ \sigma $ on $ \mathbb{R}$ and $ \tau $ on $ \mathbb{C}_+$ for which the Cauchy transform 
$\mathsf{C}_{\sigma } f (z) \equiv \int _{\mathbb{R}} \frac {f(t)} {t-z} \; \sigma (dt)$, $ z\in \mathbb{C} _+$,  is bounded from $L ^2(\mathbb{R};\sigma)$ to $L ^{2}(\mathbb{C}_+; \tau)$.  The characterization is in terms of an $A_2$ condition on the pair of weights and testing conditions for the transform, 
extending  the recent solution of the two weight inequality for the Hilbert transform.  
As corollaries of this result we derive (1)  a characterization of embedding measures for the model space $K_\vartheta$, for arbitrary  inner function $ \vartheta $, 
and (2) a characterization of the (essential) norm of composition operators mapping $K_\vartheta$ into  a general class of Hardy and Bergman spaces.  
\end{abstract}

\maketitle
\setcounter{tocdepth}{1} 
\tableofcontents

\section{Introduction} 

In this paper we characterize the boundedness for the Cauchy transform:
$$
\mathsf C _{\sigma} f(z)\equiv\int_{\mathbb{R}} \frac{f(t)}{t-z}\sigma(dt)
$$
as a map between $L^2(\mathbb{R};\sigma)$ and $L^2(\mathbb{R}^2_+;\tau)$, where $ \sigma $ and $ \tau $ are two 
arbitrary weights, i.e.\thinspace locally finite positive Borel measures.  The characterization is in terms of 
a joint Poisson $ A_2$  condition and a set of testing conditions.  

We are motivated by the study of the model space $ K _{\vartheta } = H ^2 \ominus \vartheta H ^2 $, where $ \vartheta $ is an inner function. 
These spaces are essential to the Nagy-Foias model for contractions on a Hilbert space.  
Function theoretic properties of the space $K_\vartheta$  are therefore of significant interest, and for the basics we point the reader to the text \cite{MR827223} and survey \cite{MR2198367} and the references therein for a guide to this intricate literature.  

In Theorem~\ref{K_varthetaCM}, we characterize the Carleson measures for $ K _{\vartheta }$ spaces, a question posed by 
Cohn \cite{MR705235}.  Previously, Aleksandrov \cite{MR1464420} characterized \emph{isometric} Carleson measures, and otherwise 
definitive results have only been proved in the so-called  `one component' case \cites{MR705235,MR849293}.  

We also characterize the norm of a composition operator from a $ K _{\vartheta } $ space to 
any one of a general class of analytic function spaces, which include Hardy and the entire scale of Bergman spaces.  
See Theorem~\ref{t:composition}. 
The operator-theoretic properties of composition operators have been of intense interest for 60 years, see for instance the 
text \cite{MR1397026}, but the only result concerning composition operators on an arbitrary model space is the 
elegant characterization of  compactness  from $ K _{\vartheta }$ to $ H ^2 $   obtained by Lyubarskii-Malinnikova \cite{12055172}.  

Our methods to attack the characterization question use real variable techniques, and so it is more convenient to describe the main results in terms of the Riesz transform.  
Let $\mathsf{R}$ denote the one-dimensional Riesz  transform acting on the plane. This is an operator defined on functions in the following manner.  For $x\in\mathbb{R}_+ ^2 $  and a signed measure $\nu$ on $\R$ we are interested in the family of   $ \mathbb R ^2 $-valued operators 
given by 
\begin{equation}
\label{Cauchy_Def}
\mathsf{R} \nu(x)\equiv\int \frac {x -t} { \lvert  x-t\rvert ^2   }{\nu(dt)} , \qquad x\in \mathbb R ^2 _+.  
\end{equation}
We write the coordinates of this operator as $(\mathsf{R}  ^{1}, \mathsf{R}  ^{2})$.  
The second coordinate $ \mathsf R ^{2}$ is the Poisson transform $ \mathsf P $, and the Cauchy transform is 
\begin{equation*}
\mathsf C \nu  \equiv \mathsf R ^{1}  \nu + i \mathsf R ^{2}   \nu . 
\end{equation*}

Let $\sigma$ denote a weight on $\R$ and $\tau$ denote a measure on the upper half plane $\mathbb R ^2 _+$. 
Finding necessary and sufficient conditions on the pair of measures $\sigma$ and $\tau$ so that
the estimate below holds is the \emph{two weight problem} for the Riesz transform
\begin{equation}
\label{Cauchy_Est1}
 \left\Vert \mathsf{R} \left(\sigma f\right)\right\Vert_{L^2(\mathbb R_+ ^2;\tau)}\le \mathscr N \left\Vert f\right\Vert_{L^2(\mathbb{R};\sigma)}. 
\end{equation}
This Theorem characterizes the two weight inequality, under restrictions on the supports of the two weights.  

\begin{theorem}\label{t:main}  Let $ \sigma $ be a weight on  $ \mathbb R $ and $ \tau $ a weight on the closed upper half-plane  $ \mathbb R ^2  _+$.  
The two weight inequality \eqref{Cauchy_Est1} holds if and only if  these conditions hold uniformly over all intervals $ I \subset \mathbb R $,  
and Carleson cubes $ Q_I=I \times [0, \lvert  I\rvert) $.
An  $ A_2 $ condition holds: For a finite positive constant $ \mathscr A_2$, 
\begin{equation}  \label{e:A2}
\begin{split}	
\frac{ \tau (Q_I) } {\lvert  I\rvert } 
\times 
\int  _{\mathbb R \setminus I} \frac{\lvert  I\rvert } { ( \lvert  I\rvert  + \textup{dist} (t,I))^2} \; \sigma (dt) &\leq \mathscr A_2  ,
\\ \frac {\sigma (I)} {\lvert  I\rvert } \times 
 \int _{\mathbb R^2_+ \setminus Q_I} \frac { \lvert  I\rvert  }  { (\lvert  I\rvert + \textup{dist} ( x, Q_I )) ^2 } \; \tau (dx) 
 & \leq \mathscr A_2,
\end{split}
\end{equation}
and,  these  testing inequalities hold: For a finite positive constant $ \mathscr T$, 
\begin{align} \label{e:T1}
 \int _{Q_I} \lvert  \mathsf R _{\sigma } \mathbf 1_{I}(x)\rvert ^2 \; \tau(dx)  &\le \mathscr T ^2  \sigma (I) , 
\\ \label{e:T*1} 
\int _{I} \lvert \mathsf R  _{\tau  } ^{\ast}  \mathbf 1_{Q_I} (t)\rvert ^2 \; \sigma(dt)	& \le \mathscr T ^2   \tau (Q_I) .
\end{align}
Moreover, if $ \mathscr T$ and $ \mathscr A_2$ are the best constants in these inequalities, then $\mathscr N\simeq \mathscr R 
\equiv  \mathscr A_2 ^{1/2} + \mathscr T$. 
\end{theorem}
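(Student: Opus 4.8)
We follow, and extend to the present setting, the architecture of the recent solution of the two weight problem for the Hilbert transform; the main new feature is that $\sigma$ lives on the line $\mathbb R$ while $\tau$ lives on the two-dimensional region $\mathbb R^2_+$. \emph{Necessity} is routine: \eqref{e:T1} and \eqref{e:T*1} are \eqref{Cauchy_Est1} and its dual $\lVert\mathsf R^\ast_\tau g\rVert_{L^2(\mathbb R;\sigma)}\le\mathscr N\lVert g\rVert_{L^2(\mathbb R^2_+;\tau)}$ applied, respectively, to $f=\mathbf 1_I$ and $g=\mathbf 1_{Q_I}$, while the $A_2$ inequalities \eqref{e:A2} come from the usual argument exploiting the positivity of the Poisson component of $\mathsf R$ (respectively of $\mathsf R^\ast$), testing on indicators of intervals and of Carleson boxes at all scales. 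Thus $\mathscr R\lesssim\mathscr N$, and the theorem amounts to the reverse inequality $\mathscr N\lesssim\mathscr R$.

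\emph{Reduction to a dyadic model.} Fix independent random dyadic grids $\mathcal D$ on $\mathbb R$ and let $\mathcal Q=\{Q_I:I\in\mathcal D\}$ be the associated family of Carleson boxes. Run the Nazarov--Treil--Volberg good/bad decomposition, so that after averaging over the grids only intervals lying deeply inside their ancestors need be considered. Expand $f=\sum_{I\in\mathcal D}\Delta^\sigma_I f$ in the $\sigma$-Haar basis on $\mathbb R$ and $g=\sum_{Q\in\mathcal Q}\Delta^\tau_Q g$ in a $\tau$-martingale (Haar) basis adapted to $\mathcal Q$, so that
\[
\langle\mathsf R(\sigma f),g\rangle_\tau=\sum_{I\in\mathcal D}\ \sum_{Q\in\mathcal Q}\ \langle\mathsf R(\sigma\,\Delta^\sigma_I f),\,\Delta^\tau_Q g\rangle_\tau .
\]
Split this double sum according to the relative scales and positions of $I$ and the projection (shadow) of $Q$ to $\mathbb R$ into a \emph{long-range} part, where the supports are separated or the scales differ greatly, and a \emph{local} part. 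The long-range part is summed against $\mathscr A_2$ via the \emph{monotonicity/energy lemma}, which bounds $\lVert\Delta^\tau_Q\mathsf R(\sigma\mathbf 1_J)\rVert_{L^2(\tau)}$ by the scale-$\lvert I_Q\rvert$ Poisson mass of $\sigma\mathbf 1_J$ relative to $Q$ times the $\tau$-energy of $Q$; together with \eqref{e:A2} and the positivity of the Poisson component, this closes exactly as for the Hilbert transform.

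\emph{The local term.} Perform a corona (stopping-time) decomposition of $f$ relative to $\sigma$, with stopping intervals chosen where the $\sigma$-average of $\lvert f\rvert$ roughly doubles or where a large fixed multiple of the Poisson $A_2$ mass has accumulated, so that on each corona $f$ is essentially a constant multiple of $\mathbf 1$. Organizing the boxes $Q$ by the corona containing their shadow, the local form splits into (i) a \emph{paraproduct} term, summed by a Carleson embedding theorem whose Carleson-measure hypothesis is exactly the forward testing condition \eqref{e:T1}; (ii) \emph{neighbour} terms, where $I$ and the shadow of $Q$ are comparable in scale but the Haar supports straddle, controlled by $\mathscr A_2$; and (iii) the \emph{stopping form}, handled by the recursive argument of Lacey: one dominates it by a size functional built from $\mathsf R_\sigma\mathbf 1$ on stopping data and iterates a parallel-corona/stopping-time scheme, reducing it to $\mathscr T$ together with a \emph{functional energy} bound.

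\emph{The main obstacle.} The deep part of the proof consists of two linked energy estimates. First, the \emph{functional energy} bound: writing $\mathsf P(\mu,J)=\int\tfrac{\lvert J\rvert}{(\lvert J\rvert+\operatorname{dist}(s,J))^2}\,\mu(ds)$ for the Poisson mass of $\mu$ relative to an interval $J$, and $\mathsf x(x)=x$ for the coordinate function on $\mathbb R^2_+$, one must show that for every stopping interval $F$,
\[
\sum_{Q\in\mathcal Q:\,Q\subset Q_F}\Bigl(\frac{\mathsf P(\sigma\mathbf 1_F,\,I_Q)}{\lvert I_Q\rvert}\Bigr)^2\,\lVert\Delta^\tau_Q\mathsf x\rVert_{L^2(\tau)}^2\ \lesssim\ \bigl(\mathscr A_2+\mathscr T^2\bigr)\,\sigma(F),
\]
together with the dual statement with $\sigma\leftrightarrow\tau$ and $\mathsf R\leftrightarrow\mathsf R^\ast$. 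Second, and most delicate, the \emph{scalar} energy condition --- the same estimate with the Poisson mass taken relative to a single interval and the sum restricted to a suitably sparse family of boxes --- must itself be shown to follow from $\mathscr A_2$ and $\mathscr T$, so that it need not be taken as a hypothesis; this is where the one-dimensional geometry of the boxes $Q_I=I\times[0,\lvert I\rvert)$ is used decisively, in the spirit of Lacey's resolution of the analogous redundancy for the Hilbert transform, converting the $L^2(\tau)$-norms of $\mathsf R_\sigma\mathbf 1$ into the testing constant by \eqref{e:T1} and absorbing the residual global interactions into $\mathscr A_2$. Granting both energy bounds, summing the long-range, paraproduct, neighbour and stopping contributions yields $\mathscr N\lesssim\mathscr A_2^{1/2}+\mathscr T=\mathscr R$, completing the proof.
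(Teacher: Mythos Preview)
Your architecture matches the paper's exactly: necessity is routine, sufficiency proceeds via random grids, good/bad reduction, Haar expansion on both sides, splitting into above/below triangular forms, corona stopping (with energy stopping built in), and control of the stopping form by a recursive size argument in the spirit of Lacey. So at the level of outline you are on the right track and there is no alternative route to compare.

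The gap is precisely where you flag the ``main obstacle'' and then defer to ``the spirit of Lacey's resolution.'' In this setting the monotonicity and energy lemmas are \emph{not} direct transcriptions of the Hilbert transform versions, and the paper's new content lives there. Two points in particular are missing from your sketch. First, the problem is asymmetric: the direction $L^2(\mathbb R^2_+;\tau)\to L^2(\mathbb R;\sigma)$ and its dual require \emph{different} monotonicity principles and \emph{different} energy inequalities, with different proofs. For the harder direction the paper introduces the auxiliary cone-like sets $V_I=\bigcup_{t\in I}\{x:\lvert x_1-t\rvert<x_2\}$ and a Bergman-type positive operator $\mathsf T_\tau$ (not the Poisson operator); the monotonicity equivalence only holds for $\varphi$ supported \emph{off} $V_I$, and the energy inequality is first proved with holes $V_{2^sK}$ and then the holes are filled in via a separate geometric overlap argument. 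Your statement that ``the Poisson component'' drives monotonicity is not what happens here. Second, for the dual direction the lower bound in monotonicity uses \emph{both} coordinates of the Riesz transform but only recovers the $x_1$-part of the energy; the $x_2$-part is handled by a separate, direct Poisson testing argument. None of this is visible in your functional/scalar energy paragraph, which reads as if a single Poisson-based energy lemma suffices. Until you supply these asymmetric monotonicity/energy ingredients, the stopping-form recursion cannot be launched, and the proof does not close.

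A smaller point: you speak of ``independent random dyadic grids,'' but the grid on $\mathbb R^2_+$ is built from the one on $\mathbb R$ (Carleson boxes plus vertical translates), with randomness only in the horizontal direction; independence here would break the alignment the argument needs.
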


Restricting $ \tau $ to be supported on $ \mathbb R \times \{0\}$ would reduce to the Hilbert transform case. Then the Theorem above 
is the foundational result of  
 Lacey-Sawyer-Shen-Uriarte-Tuero \cite{MR3285857} and Lacey \cite{MR3285858}, with the further refinements of Hyt\"onen \cite{13120843},  answering a conjecture of Nazarov-Treil-Volberg \cites{10031596,V}. 
The necessity of the conditions above, given the norm condition are obvious for the testing inequalities \eqref{e:T1}---\eqref{e:T*1}, 
and the necessity of the $ A_2$ condition is known.  Thus the main content is the sufficiency of the conditions above 
for the norm inequality.   

A version of the above theorem holds in a more general context. In any dimension, as long as one weight is supported on the real line, the Cauchy transform 
can be replaced by an arbitrary fractional Riesz transform. See \cite{SSUT}.

\subsection{Other Results}

We now present   applications of the main result.  Because of the close connection with analytic functions and the Cauchy transform our applications are drawn from this area.

\subsubsection{Setting on the Disk, Compactness}  
There are two forms of a Cauchy transform on the disk. For $ \sigma $ a weight on $ \mathbb T $, and $ f\in L ^2 (\mathbb T , \sigma )$, the transform could be either 
\begin{equation*}
\int _{\mathbb T } \frac{ f (w)} {w-z} \sigma (dw), \quad \textup{or} \quad 
\int _{\mathbb T }\frac{ f (w)} {1-\overline w z} \sigma (dw). 
\end{equation*}
The two are unitarily equivalent, via division by $\overline{w}$ and conjugation. We prefer the second formulation, 
and denote it by $ \mathsf C _{\sigma } f (z)$.  
The main theorem, formulated on the disk, is below. 
 
\begin{theorem}\label{t:disk}  Let $ \sigma $ be a weight on $ \mathbb T = \partial \mathbb D $, and $ \tau $ a weight on $ \overline {\mathbb D }$.  The inequality below holds, for some finite positive $ \mathscr C$,  
\begin{equation}\label{e:cauchyDisk}
\begin{split}
 \lVert  \mathsf C_ \sigma  f \rVert_ {L ^2 (\overline {\mathbb D }; \tau )}\le \mathscr C \lVert f\rVert_{L ^2 (\mathbb T ; \sigma )} ,
\end{split}
\end{equation}
if and only if these constants are finite: For the Poisson extension operator $ \mathsf P$ on the disk, 
\begin{gather}\label{e:Disk}
 \sigma (\mathbb T) \cdot \tau (\overline {\mathbb D }) + \sup _{z\in  {\mathbb D }} \bigl\{\mathsf P (\sigma \mathbf 1_{\mathbb T \setminus I_z}) (z) \mathsf P \tau (z) 
+ \mathsf P \sigma (z) \mathsf P (\tau \mathbf 1_{\overline {\mathbb D} \setminus B_{I_z}}) (z) \bigr\}   
\equiv \mathscr A_2 , 
\\
\sup _{I} 
\sigma (I) ^{-1} \int _{B_I}  \lvert  \mathsf C _{\sigma } \mathbf 1_{I}(z)\rvert ^2 \tau (dA(z))  \equiv \mathscr T ^2 , 
\\\sup _{I} \tau (B_I) ^{-1} 
\int _{I}  \lvert   \mathsf C _{\tau} ^{\ast}  \mathbf 1_{B_I}(w)\rvert ^2 \sigma (dw)  \equiv
\mathscr T ^2, 
\end{gather}
where these conventions hold. In the first inequality, to each $z\in \mathbb{D}$ we associate the interval $I_z\subset \mathbb{T}$ with center $\frac{z}{\left\vert z\right\vert}$ and length $1-\left\vert z\right\vert$.  The last two inequalities are uniform over all intervals $ I\subset \mathbb T $, with $ \lvert  I\rvert\leq \tfrac 12  $, 
 $ B_I \equiv \{ z\in \overline  {\mathbb D } \::\:   z= r \operatorname e ^{i \theta },  \lvert  1-r \rvert \leq \lvert  I\rvert,  \operatorname e ^{i \theta } \in I \}$ is the Carleson box over $ I$. 
Finally, for the best constants $ \mathscr C, \mathscr A_2, \mathscr T$ in these inequalities, we have $ \mathscr C \simeq \mathscr A_2 ^{1/2} + \mathscr T$.  
\end{theorem}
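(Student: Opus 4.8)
The plan is to derive Theorem~\ref{t:disk} from Theorem~\ref{t:main} by conformal transference, so that the only real work is translating the geometric hypotheses under the Cayley transform. Let $\psi(z)=i\,\frac{1+z}{1-z}$, a homeomorphism of $\overline{\D}\setminus\{w_0\}$ onto $\overline{\R^2_+}$ with $w_0=1$, carrying $\T\setminus\{w_0\}$ onto $\R$ and $\D$ onto $\R^2_+$. Since each of $\sigma,\tau$ has at most countably many atoms, after a rotation of the disk we may assume that $w_0$ is an atom of neither. A short computation of $\psi(z)-\psi(w)$ gives the kernel identity
\begin{equation*}
\frac{1}{1-\overline w z}=\frac{-2i\,w}{(1-z)(1-w)}\cdot\frac{1}{\psi(z)-\psi(w)},\qquad z\in\D,\ w\in\T ,
\end{equation*}
whose first factor depends only on $z$ and whose second depends only on $w$. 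Conjugating $\mathsf C_\sigma$ by $z\mapsto\psi(z)$ and absorbing these two factors into the measures, one checks that \eqref{e:cauchyDisk} holds with constant $\mathscr C$ if and only if the half-plane estimate \eqref{Cauchy_Est1} holds with a comparable constant --- equivalently its Cauchy-transform form, since $\lvert\mathsf C\nu\rvert=\lvert\mathsf R\nu\rvert$ pointwise --- for the pair
\begin{equation*}
\tilde\sigma:=\lvert 1-\psi^{-1}\rvert^{-2}\,\psi_{\ast}\sigma \ \text{ on } \R , \qquad
\tilde\tau:=\lvert 1-\psi^{-1}\rvert^{-2}\,\psi_{\ast}\tau \ \text{ on } \overline{\R^2_+} ,
\end{equation*}
where $\lvert 1-\psi^{-1}(x)\rvert^{-2}\simeq 1+\lvert x\rvert^{2}$. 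These are the push-forwards of $\sigma,\tau$ reweighted by the conformal factor, and they are locally finite weights precisely because $w_0$ is not an atom, so that no point mass escapes to $\infty$. Applying Theorem~\ref{t:main} to $(\tilde\sigma,\tilde\tau)$ reduces the problem to the real-variable statement that $\mathscr A_2(\tilde\sigma,\tilde\tau)^{1/2}+\mathscr T(\tilde\sigma,\tilde\tau)$ is comparable, up to absolute constants, to the disk quantity $\mathscr A_2^{1/2}+\mathscr T$ of Theorem~\ref{t:disk}.

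For this comparison I would split according to scale. On bounded regions --- for arcs $J\subset\T$ bounded away from $w_0$ and of length $\lesssim 1$, which $\psi$ carries to intervals $I=\psi(J)\subset\R$ of comparable size and Carleson boxes $B_J$ to regions comparable to Carleson cubes $Q_I$ --- the conformal factor $\lvert 1-\psi^{-1}\rvert^{-2}$ is bounded above and below, so $\tilde\sigma\simeq\psi_{\ast}\sigma$ and $\tilde\tau\simeq\psi_{\ast}\tau$ there, and a routine change of variables matches the bounded-scale part of the half-plane $A_2$ and testing conditions with the $z$-supremum in \eqref{e:Disk} restricted to $z$ away from $w_0$, and with the testing integrals of Theorem~\ref{t:disk} (taken over $\lvert I\rvert\le\tfrac12$, hence automatically of bounded scale). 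Here one uses the conformal covariance of the Poisson operator --- it is the imaginary part of the Cauchy kernel, so $\mathsf P$ on $\D$ and $\mathsf P$ on $\R^2_+$ are intertwined by $\psi$ --- together with the standard equivalences among the various forms of the joint Poisson $A_2$ condition. The reason the specific reweighting $\lvert 1-\psi^{-1}\rvert^{-2}$ is the right one is exactly that with it the conformal factors in the kernel, in the measures, and in the test functions cancel.

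The hard part will be the complementary range: intervals and Carleson cubes of large diameter on the half-plane, equivalently arcs shrinking to $w_0$, where the factor $1+\lvert x\rvert^{2}$ is genuinely unbounded. I expect to show that (i) the large-scale part of both the $A_2$ and the testing conditions for $(\tilde\sigma,\tilde\tau)$ is, in both directions, governed by the single global term $\sigma(\T)\cdot\tau(\overline{\D})$ of \eqref{e:Disk} together with the bounded-scale quantities already matched --- concretely, the quadratic weight against the push-forward measures near $\infty$ is precisely what keeps these large-scale expressions finite, and the usual ``Poisson tail'' manipulations turn ``$\lvert I\rvert^{-1}\tilde\tau(Q_I)$ tested against the $\tilde\sigma$-tail over $\R\setminus I$'' (and its dual) into a quantity comparable to $\sigma(\T)\tau(\overline{\D})$ as $I$ exhausts $\R$ --- and (ii) for the testing inequalities one also uses the elementary splitting of the test over a large Carleson cube into a bounded-scale piece plus an $A_2$-controlled remainder, which is where the cutoff $\lvert I\rvert\le\tfrac12$ in the disk testing conditions originates. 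These steps are where the compactness of $\overline{\D}$ is reconciled with the non-compactness of $\overline{\R^2_+}$, and where the non-atomicity of $w_0$ is used; they also explain why the disk statement carries the extra term $\sigma(\T)\tau(\overline{\D})$ and a restricted testing range. Combining the bounded-scale matching with (i)--(ii), and using that all the conditions are stable under the bounded fattening and shrinking that relates $\psi^{-1}(Q_I)$ to a genuine Carleson box, yields $\mathscr A_2^{1/2}+\mathscr T$ on the disk comparable to $\mathscr A_2^{1/2}+\mathscr T$ on the half-plane, whence Theorem~\ref{t:disk}. Of these, step (ii) --- obtaining the large-scale testing from the small-scale testing together with the global $A_2$ term --- is the one I expect to demand the most care.
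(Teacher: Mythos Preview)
Your strategy is fundamentally different from the paper's. Immediately after stating the theorem, the authors write that they ``are not aware of how to derive this theorem from the results on the upper half-plane, rather the proof must be repeated.'' Their argument redoes the machinery of Theorem~\ref{t:main} directly on the disk: random grids obtained by rotation of $\T$, the associated Bergman tree in $\D$, and the monotonicity and energy inequalities reworked in radial--angular coordinates, after first reducing to $\sigma$ supported on a small arc via a weak-boundedness argument.

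The gap in your transference is at the step you pass over with ``one checks.'' With $a(x)=(1-\psi^{-1}(x))^{-1}$ and $b(t)=-2i\psi^{-1}(t)(1-\psi^{-1}(t))^{-1}$, your kernel identity gives, for $\tilde f=f\circ\psi^{-1}$,
\[
(\mathsf C_\sigma f)\circ\psi^{-1}(x)=a(x)\int_{\R}\frac{b(t)\,\tilde f(t)}{x-t}\,\psi_*\sigma(dt),
\]
so \eqref{e:cauchyDisk} becomes $\bigl\lVert\,\textup{(half-plane Cauchy of }(b\tilde f)\,\psi_*\sigma)\bigr\rVert_{L^2(|a|^2\psi_*\tau)}\le\mathscr C\,\lVert\tilde f\rVert_{L^2(\psi_*\sigma)}$. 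The target factor $a$ absorbs into $\tilde\tau=|a|^2\psi_*\tau$, but $b$ cannot be absorbed on the source side while keeping the operator weight equal to the domain weight: setting $F=b\tilde f$ yields operator measure $\psi_*\sigma$ but domain norm $\lVert F\rVert_{L^2(|b|^{-2}\psi_*\sigma)}$, and since $|b(t)|=(1+t^2)^{1/2}$ these differ by an unbounded factor. Thus the transferred inequality is \emph{not} of the form $\mathsf C_{\tilde\sigma}\colon L^2(\tilde\sigma)\to L^2(\tilde\tau)$ to which Theorem~\ref{t:main} applies; in particular your choice $\tilde\sigma=|1-\psi^{-1}|^{-2}\psi_*\sigma$ does not achieve it. One can try the identity $\tfrac{i-t}{x-t}=\tfrac{i-x}{x-t}+1$ to peel off a rank-one remainder and leave a genuine two-weight main term $\tfrac{x^2+1}{2i}\,\mathsf C_{\psi_*\sigma}\tilde f$, but the rank-one piece has $L^2(\psi_*\tau)$-norm comparable to $\bigl(\int_{\overline{\D}}|1-z|^{-2}\,d\tau\bigr)^{1/2}$, which can be infinite under the hypotheses even with $w_0$ non-atomic (take $\sigma=\delta_{-1}$ and $\tau$ arc length on an arc near $1$; the disk Cauchy transform is then bounded while that integral diverges). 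This obstruction is exactly what the authors' remark anticipates, and it blocks your reduction before the condition-matching in your later paragraphs is ever reached.
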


We are not aware of how to derive this theorem from the results on the upper half-plane, rather the proof must be repeated, taking 
into account  a few minor complications. First, it is elementary to see that the kernel of the Cauchy transform 
satisfies 
\begin{equation} \label{e:CPQ}
\frac {2} {1- z \overline w} = 1+ P _{z} (w) + iQ _{z} (w) 
\end{equation}
where the right side consists of the Poisson and conjugate Poisson kernels, namely for $ z\in \mathbb D $ and 
$ w\in \mathbb T $, 
\begin{equation*}
P_{z} (w) = \frac {1- \lvert  z\rvert ^2  } {\lvert  w-z\rvert ^2  }, \qquad 
Q_{z} (w) = \frac {2 \textup{Im} (z \overline w)  } {\lvert  w-z\rvert ^2  }. 
\end{equation*}
The leading constant term on the right in \eqref{e:CPQ}  
leads to the global term  $ \sigma (\mathbb T ) \tau (\overline {\mathbb D })$ 
in the $ A_2 $ condition \eqref{e:Disk}.  The necessity of the other conditions is then seen 
in a manner similar to the setting of the upper half-plane.  In the sufficient condition, the main point is to 
prove boundedness of the transform 
\begin{equation*}
\int _{\mathbb T } \{P _{z} (w) + iQ _{z} (w)\} f (w) \; \sigma (dw).  
\end{equation*}
This is then in a form closely matching the vector Riesz transform.  

One reduces to the case when the weight $\sigma$ is supported on a fixed small arc of $\mathbb{T}$. This is accomplished by showing that a version of the weak-boundedness principle holds so that one can see that the testing inequalities still hold for the restricted weight $ \sigma $.  The weak boundedness statement of interest is: $\left\vert \left\langle \mathsf C_{\sigma} \mathbf 1_I, \mathbf 1_{B_{I'}}\right\rangle_{\tau}\right\vert\lesssim \mathscr A_2^{\frac{1}{2}}\sigma(I)^{\frac{1}{2}}\tau(B_{I'})^{\frac{1}{2}}$, when $I$ and $B_{I'}$ have comparable side lengths and their distance is close.   (Compare to Proposition \ref{p:weak} for the statement of on $\mathbb{R}$ and $\mathbb{R}^2_+$). 
One should also reduce to the case where the weight $\tau$ is supported in a narrow annulus close to the boundary of the arc that supports $ \sigma $.   We reduce to an annulus close to the boundary of the arc so as to avoid the origin (which would create complications in how to extend the grid).  The random grids on the circle $\mathbb{T}$ are constructed by a rotation of the standard lattice on $\mathbb{T}$. The corresponding grid in $\mathbb{D}$ is then constructed in the analogous manner, resulting in the standard Bergman tree. Modifications of energy and monotonicity are handled in an analogous fashion, with derivative calculations  being done in radial and angular coordinates relative to the arc.   The remainder of the proof then follows similarly to what is done in the case discussed in detail in the rest of the paper. In the interest of brevity, we leave the modifications to the interested reader.

We now turn to compactness of the Cauchy transform.  

\begin{theorem}\label{t:compact} Under the assumptions of Theorem~\ref{t:disk}, the operator $ \mathsf C _{\sigma }$ is compact if and only if 
$ \mathsf C _{\sigma } \::\: L ^2 (\mathbb T ; \sigma ) \to L ^2 (\overline  {\mathbb D } ; \tau )$ is bounded and these conditions hold:
\begin{gather*}
\lim _{r \uparrow 1}  \sup _{\lvert  z\rvert=r } \big\{ \mathsf P (\sigma \mathbf 1_{\mathbb T \setminus I_{z}}) (z) \mathsf P \tau (z) 
+ \mathsf P \sigma (z) \mathsf P (\tau \mathbf 1_{\overline  {\mathbb D }  \setminus B_{I_z}} ) (z)\big\}
=0 ,
\\
\lim _{ \epsilon \downarrow 0} 
\sup _{ \lvert  I\rvert < \epsilon  }  \sigma (I) ^{-1} 
\int _{B_I}  \lvert  \mathsf C _{\sigma } \mathbf 1_{I}(z)\rvert ^2 \tau (dA(z))  =0, 
\\
\lim _{ \epsilon \downarrow 0} 
\sup _{ \lvert  I\rvert < \epsilon  }    \tau (B_I) ^{-1} 
\int _{I}  \lvert  \mathsf C _{\tau} ^{\ast}  ( \mathbf 1_{B_I})(t)\rvert ^2 \sigma (dt)  
=0.
\end{gather*}
\end{theorem}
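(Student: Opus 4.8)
\medskip
\noindent\textbf{Proof proposal.}
The plan is to prove the two implications separately, necessity being the softer half. If $\mathsf{C}_\sigma$ is compact then it is bounded, and for each of the three displayed quantities I would exhibit a family of $L^2(\sigma)$- or $L^2(\tau)$-normalized test functions that converges weakly to $0$ as the relevant parameter degenerates, and then invoke the fact that a bounded Hilbert space operator is compact precisely when it carries weakly null sequences to norm null sequences. For the first testing quantity, take $h_I=\sigma(I)^{-1/2}\mathbf 1_I$ with $\lvert I\rvert\downarrow0$: exactly as in the necessity half of Theorem~\ref{t:disk}, after passing to a subsequence along which $I_n$ shrinks to a point carrying no $\sigma$-atom one has $h_{I_n}\to0$ weakly, and compactness forces $\sigma(I_n)^{-1}\int_{B_{I_n}}\lvert\mathsf{C}_\sigma\mathbf 1_{I_n}\rvert^2\,d\tau\to0$. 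The dual testing quantity is identical, using that $\mathsf{C}_\sigma$ is compact iff $\mathsf{C}_\tau^{\ast}$ is. For the vanishing $A_2$ quantity I would reuse the normalized Poisson bumps supported on $\mathbb{T}\setminus I_z$, respectively on $\overline{\mathbb{D}}\setminus B_{I_z}$, that furnish the necessity of the $A_2$ condition in Theorem~\ref{t:disk}; these tend weakly to $0$ as $\lvert z\rvert\uparrow1$, and the quantitative form of that argument (the $A_2$ quantity at $z$ being dominated by the square of the operator norm of $\mathsf{C}_\sigma$, respectively $\mathsf{C}_\tau^{\ast}$, acting on the corresponding bump) yields the two limits. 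This step deserves care, since the necessity of the Poisson $A_2$ condition is not formal, and I would check it paying attention to any mass $\tau$ carries on $\partial\mathbb{D}$.

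For sufficiency the goal is to realise $\mathsf{C}_\sigma$ as a norm limit of compact operators. First, by \eqref{e:CPQ}, write $\mathsf{C}_\sigma=\tfrac12L+\tfrac12\mathsf T$, where $Lf=\langle f,\mathbf 1\rangle_\sigma\,\mathbf 1_{\overline{\mathbb{D}}}$ is rank one, hence compact, and $\mathsf T$ has kernel $P_z(w)+iQ_z(w)$; this removes the leading constant term, i.e.\ the global piece $\sigma(\mathbb{T})\tau(\overline{\mathbb{D}})$ of \eqref{e:Disk}, and puts $\mathsf T$ in the form matched to the vector Riesz transform, so that the characterization of Theorem~\ref{t:disk} applied to $\mathsf T$ (which has no global term) gives, for any weight $\mu\le\tau$, that the norm of $\mathsf T\colon L^2(\mathbb{T};\sigma)\to L^2(\overline{\mathbb{D}};\mu)$ is comparable to $\mathscr A_2(\sigma,\mu)^{1/2}+\mathscr T(\sigma,\mu)$ with only the Poisson offset $A_2$ terms. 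Next I would split $\tau=\tau\mathbf 1_{\mathbb{D}}+\tau\mathbf 1_{\partial\mathbb{D}}$. On $\partial\mathbb{D}$ one has $P_z(w)\equiv0$ and $Q_z(w)=\cot\tfrac12\arg(z\overline w)$, so the restriction of $\mathsf T$ to $L^2(\tau\mathbf 1_{\partial\mathbb{D}})$ is, up to normalisation, the two weight Hilbert transform on $\mathbb{T}$ for the pair $(\sigma,\tau\mathbf 1_{\partial\mathbb{D}})$; the vanishing hypotheses of Theorem~\ref{t:compact}, specialised to $\lvert z\rvert\uparrow1$, are precisely the conditions making that operator compact, which I would either cite or re-derive by the same machinery, so this piece is compact. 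For the remaining piece, $\mathsf T$ into $L^2(\tau\mathbf 1_{\mathbb{D}})$, fix $R>0$: the part into $L^2(\tau\mathbf 1_{\{1-\lvert z\rvert>R\}})$ has bounded kernel on $\mathbb{T}\times\{1-\lvert z\rvert>R\}$, hence is Hilbert--Schmidt and compact, while the part into $L^2(\tau\mathbf 1_{\{0<1-\lvert z\rvert\le R\}})$ has, by the above characterization, operator norm comparable to the offset $A_2$ constant and testing constants of the pair $(\sigma,\tau\mathbf 1_{\{0<1-\lvert z\rvert\le R\}})$. These should tend to $0$ as $R\downarrow0$: for $z$ near $\partial\mathbb{D}$ they are dominated by the vanishing $A_2$ and testing hypotheses, and for $z$ away from $\partial\mathbb{D}$ they are controlled by $\tau\mathbf 1_{\mathbb{D}}(\{1-\lvert z\rvert\le R\})$, which decreases to $\tau\mathbf 1_{\mathbb{D}}(\partial\mathbb{D})=0$. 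Thus $\mathsf T$, hence $\mathsf{C}_\sigma$, is a norm limit of compact operators.

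The step I expect to be the real obstacle is the last claim of the previous paragraph: verifying that the offset $A_2$ and testing data of the truncated pair $(\sigma,\tau\mathbf 1_{\{0<1-\lvert z\rvert\le R\}})$ are genuinely \emph{small}, uniformly over all intervals, once the corresponding data of $\mathsf{C}_\sigma$ vanishes near $\partial\mathbb{D}$. This forces one to re-run each ingredient of the proof of Theorem~\ref{t:disk} --- the $A_2$ reductions, the weak boundedness and comparable-scale estimates, the energy and functional-energy bounds, the stopping form --- and to check that each produces a bound decaying with $R$ rather than merely a finite bound; this is routine in spirit but is where the effort lies, and it is exactly why the hypotheses of Theorem~\ref{t:compact} are those of Theorem~\ref{t:disk} with the offset terms forced to vanish. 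The subsidiary subtlety throughout is the behaviour at $\partial\mathbb{D}$ itself: the mass $\tau$ may carry there is not isolated by a radial truncation, which is why it is peeled off first and treated as a two weight Hilbert transform on $\mathbb{T}$, whose own compactness criterion --- boundedness together with vanishing $A_2$ and testing --- is read off from the hypotheses here.
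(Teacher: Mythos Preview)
The paper does not actually prove this theorem: immediately after the statement it writes that ``the details of the proof will not be given; they are a bit easier than those of \cite{primer}*{\S9},'' the point being that on the disk only small intervals, not also intervals escaping to infinity, need to be tracked. Your outline is a correct and faithful elaboration of that strategy---necessity via weakly null test functions, sufficiency by peeling off a rank-one piece via \eqref{e:CPQ}, a Hilbert--Schmidt piece supported away from $\partial\mathbb D$, and a remainder whose norm is small by the quantitative form of Theorem~\ref{t:disk}. Your separate handling of $\tau\mathbf 1_{\partial\mathbb D}$ as a two-weight Hilbert transform on $\mathbb T$ is necessary (the radial truncation does not remove boundary mass) and correct.

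Two small remarks. In necessity, the atomic case you set aside does need a word: if $I_n$ shrinks to a $\sigma$-atom $w_0$, then $h_{I_n}$ converges in norm (not weakly to zero) to $\sigma(\{w_0\})^{-1/2}\mathbf 1_{\{w_0\}}$, but the testing integral still vanishes because $\mathsf C_\sigma\mathbf 1_{\{w_0\}}\in L^2(\tau)$ and $B_{I_n}$ shrinks to $\{w_0\}$, which carries no $\tau$-mass (no common point masses under the $A_2$ condition). In sufficiency, you rightly identify the ``real obstacle'' as showing that the testing and $A_2$ data of $(\sigma,\tau\mathbf 1_{\{0<1-|z|\le R\}})$ are small, and propose to re-run each ingredient of the boundedness proof. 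That works, but the argument of \cite{primer}*{\S9} avoids revisiting energy, functional energy, and the stopping form: for $\lvert I\rvert<\epsilon$ the truncated testing is dominated by the original one; for $\lvert I\rvert\ge\epsilon$ one partitions $I$ into subintervals of length $\approx\epsilon$ and controls the diagonal blocks by small-interval testing, the adjacent blocks by weak boundedness (the disk analogue of Proposition~\ref{p:weak}), and the separated blocks by the vanishing $A_2$ condition. So only the elementary ``surgery'' estimates are re-run, not the full machinery.
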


The details of the proof will not be given; they are a bit easier than those of \cite{primer}*{\S9}.  Indeed, in \cite{primer}*{\S9} to test compactness one must check intervals that are both small and those that are moving toward infinity.  Whereas, in our case, since we are working in the disc, only intervals of small size play a role in determining the compactness.  With this result in hand, one can state characterizations of compact analogs of the two theorems that follow.

\subsubsection{Carleson Measures for the Space $K_\vartheta$}
Let $H^2=H^2(\mathbb{D})$ denote the Hardy space of analytic functions on the unit disk $\D$.  
 Let $\vartheta$ be an inner function on $\mathbb{D}$, namely an analytic function such that $\left\vert\vartheta (\xi)\right\vert =1$ for almost every $\xi\in\mathbb{T}$.  Such functions have a canonical factorization given by 
\begin{equation}\label{e:factorization}
\vartheta (z) = B _{\Lambda } (z) \operatorname {exp}\left(-\int _{\mathbb T } \frac { \xi + z} {\xi -z} \; \nu (d \xi ) \right), 
\end{equation}
where $ B _{\Lambda } $ is a Blaschke product with zero set given by $ \Lambda \subset \mathbb D $ and $ \nu $ is a measure on $ \mathbb T $ singular with respect to Lebesgue measure.  
The space $K_\vartheta\equiv H^2\ominus\vartheta H^2$ is called the \emph{model space associated to $\vartheta$.}
Functions in this space admit an analytic continuation through the set $ \mathbb T \setminus \Sigma (\vartheta )$, where the latter set is the \emph{spectrum of $ \vartheta $,} 
defined to be the closed set 
\begin{equation*}
\Sigma (\vartheta ) \equiv  \textup{clos} (\Lambda  \cup \textup{supp} (\nu )) 
=  \left\{ \zeta \in \overline  {\mathbb D } \::\:  \liminf _{\substack{z\to \zeta \\ z\in \mathbb D  }}  \vartheta (z) = 0\right\}. 
\end{equation*}

Function theoretic properties of the space $K_\vartheta$  are   of significant interest, and 
we concentrate here on Carleson measures for the space.  
 Recall that a measure $\mu$ is a $K_\vartheta$-Carleson measure if we have the following estimate holding:
$$
\int_{\overline{\D}}\abs{f(z)}^2 d\mu(z)\leq C(\mu)^2\norm{f}_{K_\vartheta}^2\quad\forall f\in K_\vartheta.
$$
Since $ K _{\vartheta }$ is a subspace of $ H ^2 $, every Carleson measure for $ H ^2 $ is also one for $ K _{\vartheta }$, 
but its norm may be significantly smaller. And, a Carleson measure for $ K _{\vartheta }$ need not be one for $ H ^2 $. 

This problem has been intensely studied by numerous authors, with the question of characterization posed by Cohn \cite{MR705235} in 1982.  
An attractive special case when $ \vartheta $ satisfies the `one-component', or `connected level set' condition, namely that the enlargement of the spectrum, given by 
\begin{equation*}
\Omega (\epsilon ) \equiv \{z\in \mathbb D \::\: \lvert  \vartheta (z)\rvert < \epsilon   \}, \qquad \epsilon >0
\end{equation*}
is connected for some $ \epsilon >0$.  In this case, Cohn \emph{op.\thinspace cite} and Treil and Volberg \cite{MR849293}, showed that $ \mu $ is $ K _{\vartheta }$-Carleson if and only if 
the Carleson condition $ \mu (B_I) \lesssim \lvert  I\rvert $ holds for all intervals $ I$ such that the Carleson box $ B_I$ intersects $ \Omega (\epsilon )$.   See also the alternate proof obtained by Aleksandrov in \cite{MR1327512}.   For more general $ \vartheta $, see however the counterexample of Nazarov-Volberg \cite{NV}, based on the famous counterexample of Nazarov  \cite{N1} to the Sarason conjecture.   
Apparently, there are very few results known for general $ \vartheta $, with one 
of these being the remarkable results of Aleksandrov \cite{MR1464420} characterizing those $ \mu $ for which $ K _{\vartheta }$ isometrically embeds into $ L ^2 (\overline{\mathbb{D}};\mu )$, under the 
natural embedding map. 

In the case when the measure $\mu$ is supported on $\mathbb{T}$ a characterization of the $ K _{\vartheta }$-Carleson measures is a 
corollary of the two weight Hilbert inequality obtained in \cites{MR3285858,primer}.  However, for measures with more general supports, we need the full characterization obtained in this paper.
We now translate this problem to one about weighted estimates for the Cauchy transform by following the exposition of Nazarov and Volberg \cite{NV}.  Let $\sigma$ denote the Clark measure on $\mathbb{T}$ associated to $\vartheta$ (recall that this is the measure defined by $\frac{1+\vartheta(z)}{1-\vartheta(z)}=\int_{\mathbb{T}}\frac{1+z\overline{w}}{1-z\overline{w}}d\sigma(w)$).  Then we have that $L^2(\mathbb{T};\sigma)$ is unitarily equivalent to $K_\vartheta$ via a unitary $U$.  Moreover, we have that $U^*:L^2(\mathbb{T};\sigma)\to K_\vartheta$ has the integral representation given by
$$
U^*f(z)\equiv(1-\vartheta(z)) \int_{\mathbb{T}} \frac{f(\xi)}{1-\overline{\xi} z}\sigma(d\xi). 
$$
Note that $U^*$ is, up to a multiplication operator, the Cauchy transform $\mathsf{C}$ of  $f$ with respect to the measure $\sigma$.  For the inner function $\vartheta$ and measure $\mu$, define a new measure $\nu_{\vartheta,\mu}\equiv \lvert  1-\vartheta\rvert ^2  \mu$.  Then we have that $\mu$ is a Carleson measure for $K_\vartheta$ if and only if $\mathsf{C}: L^2(\mathbb{T};\sigma)\to L^2(\overline{\mathbb{D}};\nu_{\vartheta,\mu})$ is bounded.  Indeed, suppose that $\mathsf{C}:L^2(\mathbb{T};\sigma)\to L^2(\overline{\mathbb{D}};\nu_{\vartheta,\mu})$ is bounded.  Then we have that
$$
\int_{\overline{\mathbb{D}}} \left\vert U^* f(z)  \right\vert^2 d\mu(z)=\int_{\overline{\mathbb{D}}} \left\vert (1-\vartheta(z)) \mathsf{C}(f\sigma)(z) \right\vert^2 d\mu(z)=\int_{\overline{\mathbb{D}}} \left\vert \mathsf{C}(f\sigma)(z) \right\vert^2 d\nu_{\vartheta,\mu}(z)\leq\mathcal{C}^2\left\| f\right\|_{L^2(\mathbb{T};\sigma)}^2.
$$
For $g\in K_\vartheta$ let $f=Ug$, then the above inequality gives
$$
\int_{\overline{\mathbb{D}}} \left\vert g(z)  \right\vert^2 d\mu(z)\leq \mathcal{C}^2 \left\| U f\right\|_{L^2(\mathbb{T};\sigma)}^2=\left\| g\right\|_{K_\vartheta}^2.
$$
Thus, we have that $C(\mu)^2\leq\mathcal{C}^2$.  However, this argument is completely reversible, and we in fact arrive at $C(\mu)=\mathcal{C}$.  So understanding the Carleson measures for $K_\vartheta$ is equivalent to deducing the boundedness of $\mathsf{C}:L^2(\mathbb{T};\sigma)\to L^2(\overline{\mathbb{D}};\nu_{\vartheta,\mu})$  (a similar argument applies to the Hardy space of the upper half plane $\mathbb{C}_+$).  Our characterization of the Carleson measures for $K_\vartheta$ is given by the following theorem.

\begin{theorem}
\label{K_varthetaCM}
Let $\mu$ be a non-negative Borel measure supported on $\overline{\mathbb D }$ and let $\vartheta$ be an inner function on $\mathbb D $ with Clark measure $ \sigma $. 
Set $\nu_{\mu,\vartheta}= \lvert 1-\vartheta \rvert ^2 \mu$.  The following are equivalent:
\begin{itemize}
\item[(i)] $\mu$ is a Carleson measure for $K_\vartheta$, namely,
$$
\int_{\overline{\mathbb{D}}}\abs{f(z)}^2 d\mu(z)\leq C(\mu)^2\norm{f}_{K_\vartheta}^2\quad\forall f\in K_\vartheta;
$$
\item[(ii)] The Cauchy transform $\mathsf{C}$ is a bounded map between $L^2(\mathbb{T};\sigma)$ and $L^2(\overline{\mathbb{D}};\nu_{\mu,\vartheta})$, i.e.,  $\mathsf{C}:L^2(\mathbb{T};\sigma)\to L^2(\overline{\mathbb{D}};\nu_{\vartheta,\mu})$ is bounded;
\item[(iii)]  The three conditions in \eqref{e:Disk} hold for the pair of measures $ \sigma $ and $ \nu_{\mu,\vartheta}$. 
\end{itemize}
Moreover, 
$$ 
C(\mu)\simeq \left\Vert \mathsf{C}\right\Vert_{L^2(\mathbb{T};\sigma)\to L^2(\overline{\mathbb{D}};\nu_{\vartheta,\mu})}\simeq \mathscr A_2 ^{1/2} + \mathscr T.
$$
\end{theorem}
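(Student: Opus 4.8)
The plan is to assemble the theorem from two pieces, both essentially already available: the Clark-measure unitary equivalence recalled in the paragraphs preceding the statement, and Theorem~\ref{t:disk} applied to a suitable pair of weights.

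First I would prove the equivalence of (i) and (ii), in the sharp form $C(\mu) = \lVert \mathsf C_\sigma \rVert_{L^2(\mathbb T;\sigma)\to L^2(\overline{\mathbb D};\nu_{\mu,\vartheta})}$. This is exactly the reversible computation displayed just before the statement. Let $U\colon K_\vartheta \to L^2(\mathbb T;\sigma)$ be the Clark unitary, so that its inverse $U^*$ has the integral representation $U^* f = (1-\vartheta)\,\mathsf C_\sigma f$. For $f \in L^2(\mathbb T;\sigma)$ set $g := U^* f \in K_\vartheta$; then $\int_{\overline{\mathbb D}}\lvert g\rvert^2\,d\mu = \int_{\overline{\mathbb D}}\lvert (1-\vartheta)\,\mathsf C_\sigma f\rvert^2\,d\mu = \int_{\overline{\mathbb D}}\lvert \mathsf C_\sigma f\rvert^2\,d\nu_{\mu,\vartheta}$, while $\lVert g\rVert_{K_\vartheta} = \lVert f\rVert_{L^2(\mathbb T;\sigma)}$ since $U$ is unitary; as $g$ ranges over all of $K_\vartheta$ this forces the two operator norms to be equal, and (i) and (ii) are equivalent. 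The only verification needed here is that $\nu_{\mu,\vartheta} = \lvert 1-\vartheta\rvert^2\mu$ is a legitimate weight on $\overline{\mathbb D}$, which is immediate from $\lvert 1-\vartheta\rvert \le 2$ there. It bears emphasis that no absolute continuity of $\sigma$ is assumed anywhere in this paper, which is essential because the Clark measure is in general singular with respect to arclength.

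Next I would apply Theorem~\ref{t:disk} to the pair consisting of the weight $\sigma$ on $\mathbb T$ and the weight $\tau := \nu_{\mu,\vartheta}$ on $\overline{\mathbb D}$. That theorem asserts precisely that $\mathsf C_\sigma \colon L^2(\mathbb T;\sigma)\to L^2(\overline{\mathbb D};\tau)$ is bounded if and only if the three conditions \eqref{e:Disk} hold for $(\sigma,\tau)$ (this is exactly statement (iii)), and moreover that $\lVert \mathsf C_\sigma\rVert \simeq \mathscr A_2^{1/2} + \mathscr T$. This yields the equivalence of (ii) and (iii) verbatim, and combined with the sharp identity of the previous step it produces the full chain of equivalences together with the stated comparabilities $C(\mu) \simeq \lVert \mathsf C_\sigma\rVert \simeq \mathscr A_2^{1/2} + \mathscr T$.

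The main obstacle is thus not located in this argument: once Theorems~\ref{t:main} and \ref{t:disk} are in hand, everything here is bookkeeping. The real work is the proof of Theorem~\ref{t:disk}, whose only substantive departure from the half-plane statement Theorem~\ref{t:main} is the constant term in the kernel identity \eqref{e:CPQ}; that constant is precisely what forces the global summand $\sigma(\mathbb T)\,\tau(\overline{\mathbb D})$ into the $A_2$ condition \eqref{e:Disk}, and hence into condition (iii).
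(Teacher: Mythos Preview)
Your proposal is correct and follows exactly the paper's own argument: the equivalence (i)$\Leftrightarrow$(ii) with the sharp identity $C(\mu)=\lVert \mathsf C_\sigma\rVert$ is the reversible Clark-unitary computation displayed just before the statement, and (ii)$\Leftrightarrow$(iii) together with the norm comparability is a direct application of Theorem~\ref{t:disk} to the pair $(\sigma,\nu_{\mu,\vartheta})$. Your supplementary remarks (legitimacy of $\nu_{\mu,\vartheta}$, singularity of $\sigma$, origin of the global $A_2$ summand) are accurate and do not depart from the paper's viewpoint.
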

The equivalence between (i) and (ii) is sketched before the theorem, while the equivalence between (ii) and (iii) follows from interpreting Theorem \ref{t:disk} in the context at hand.   
The Clark measure $ \sigma $ is a singular measure supported on the set $ \{z : \vartheta (z) =1\}$, hence it and 
$ \nu_{\mu,\vartheta}= \lvert 1-\vartheta \rvert ^2 \mu$ do not have common point masses, so that the $ A_2 $ conditions 
could be phrased more simply. 

There is a variant of Theorem~\ref{t:compact} that holds, characterizing those measures $ \mu $ such that $ K _{\vartheta } $ embeds compactly into $ L ^2 (\overline {\mathbb D } ; \mu )$.   
The interested reader can combine Theorems \ref{K_varthetaCM} and \ref{t:compact} to formulate it.  
Variants of these results hold with the disk replaced by $ \mathbb C _+$, and we again leave the details to the reader. 

\subsubsection{Composition Operators on $K_\vartheta$}

Let $\varphi:\D\to\D$ be holomorphic.  The composition operator with symbol $\varphi$ is
$
C_{\varphi}f=f\circ \varphi 
$.
The Littlewood subordination principle implies that these operators are bounded from $ H ^2$ to $ H ^2$, and to the Bergman spaces $ A _{\alpha }$ given by  the norm 
\begin{equation*}
\lVert f\rVert_{ A _{\alpha}} ^2  \equiv \int _{\mathbb D } \lvert  f (z)\rvert ^2  (1 - \lvert  z\rvert ^2  )  ^{\alpha } dA(z), \qquad -1 < \alpha < \infty .  
\end{equation*}
However, the upper bound supplied by the subordination argument will not be sharp in general. 
The subject at hand is to describe finer operator theoretic properties in terms of  the properties of $ \varphi $. 
Since $ K _{\vartheta }$ is a subspace of $ H ^2 $, it follows that $ C _{\varphi }$ is bounded as a map from $ K _{\vartheta }$ to $ H ^2 $, or any of the Bergman spaces above.  
In this setting, we can characterize three properties of $ C _{\varphi }$:   the norm of $ C _{\varphi }$, its essential norm, and compactness. 
There is an extensive literature on the properties of composition operators, see the text \cite{MR1397026} for a relatively recent guide to it.

Let $ \tau $ be a weight on $ \overline  {\mathbb D }$, and define a Hilbert space of analytic functions by taking the closure of $ H ^{\infty } (\mathbb D )$ with respect to  the norm for $ L ^2 (\overline  {\mathbb D } ; \tau ) $. Call the resulting space $ H ^2 _{\tau }$.  Thus, if $ \tau $ is Lebesgue measure on $ \mathbb T $, the space $ H ^2 _{\tau }$ is the Hardy space, and if $ \tau (dA(z)) = (1 - \lvert  z\rvert ^2  )  ^{\alpha } dA(z)$, it is the Bergman space.  

To the function $\varphi$ and weight $ \tau $ we associate the pullback measure $\tau_\varphi$ defined as a measure on $\overline{\D}$, as 
$
\tau_{\varphi}(E)\equiv \tau (\varphi^{-1}(E))
$.  
Then
\begin{equation*}
\lVert C _{\varphi } f \rVert_{H ^2 _ \tau } ^2 
= \int _{\overline  {\mathbb D  } } \lvert  f \circ \varphi (z)\rvert ^2  \; \tau (d A(z)) 
= \int _{\overline  {\mathbb D } }  \lvert  f  (z)\rvert  ^2 \; \tau_{\varphi}(d A(z)). 
\end{equation*}
That is, $C_{\varphi}:K_\vartheta\to H^2 _{\tau }$ is unitarily equivalent to the embedding operator $I_{\tau_{\varphi}}: K_\vartheta\to L^2(\overline{\mathbb{D}};\tau_{\varphi})$.  Thus, we have that the boundedness of the composition operator $C_{\varphi}:K_{\vartheta}\to H^2 _{\tau }$ is equivalent to determining when $\tau_{\varphi}$ is a Carleson measure for $K_\vartheta$.  By Theorem~\ref{t:disk} we have the following answer. 

\begin{theorem} \label{t:composition}
Let $\vartheta$ be an inner function.  Let $\varphi:\D\to\D$ be analytic and let $\tau_{\varphi}$ denote the pullback measure associated to $\varphi$.  The following are equivalent:
\begin{itemize}
\item[(i)] $C_{\varphi}:K_\vartheta\to H^2 _{\tau}$ is bounded;
\item[(ii)] $\tau_{\varphi}$ is a Carleson measure for $K_\vartheta$, namely,
$$
\int_{\overline{\mathbb{D}}}\abs{f(z)}^2 \tau_{\varphi}(dA(z))\leq C(\tau_{\varphi})^2\norm{f}_{K_\vartheta}^2\quad\forall f\in K_\vartheta;
$$
\item[(iii)] The conditions of Theorem~\ref{t:disk} \textnormal{(iii)} hold for the pair of weights $ \sigma $ on $ \mathbb T $ and 
 $\nu_{\tau_{\varphi},\vartheta}=  \lvert 1-\vartheta\rvert ^2 \tau_{\varphi}$. 
\end{itemize}
\end{theorem}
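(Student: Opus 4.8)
The plan is to read this off as a direct corollary of the pullback‑measure reformulation recorded immediately above the statement, together with the two preceding results, Theorem~\ref{t:disk} and Theorem~\ref{K_varthetaCM}. Concretely, I would dispatch (i)$\Leftrightarrow$(ii) by a change of variables, and (ii)$\Leftrightarrow$(iii) by invoking Theorem~\ref{K_varthetaCM} with $\mu=\tau_\varphi$.

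First I would establish (i)$\Leftrightarrow$(ii). Since $\varphi\colon\mathbb D\to\mathbb D$ is holomorphic it has radial boundary values almost everywhere on $\mathbb T$, so $\varphi$ is a Borel map $\overline{\mathbb D}\to\overline{\mathbb D}$ defined $\tau$‑a.e., and the pushforward $\tau_\varphi=\tau\circ\varphi^{-1}$ is a well‑defined finite Borel measure on $\overline{\mathbb D}$. For $f\in K_\vartheta\cap H^\infty(\mathbb D)$ one has $f\circ\varphi\in H^\infty(\mathbb D)\subset H^2_\tau$ and the elementary change‑of‑variables identity
\begin{equation*}
\lVert C_\varphi f\rVert_{H^2_\tau}^2=\int_{\overline{\mathbb D}}\lvert f\circ\varphi(z)\rvert^2\,\tau(dA(z))=\int_{\overline{\mathbb D}}\lvert f(z)\rvert^2\,\tau_\varphi(dA(z)).
\end{equation*}
Because $H^\infty(\mathbb D)$ is dense in $H^2_\tau$ by definition and $K_\vartheta\cap H^\infty$ is dense in $K_\vartheta$ (e.g.\ via the Littlewood subordination bound for $C_\varphi$ on $H^2$, or by exhausting $K_\vartheta$ by the spaces $K_{\vartheta^n}$), this identity persists for all $f\in K_\vartheta$; it exhibits $C_\varphi\colon K_\vartheta\to H^2_\tau$ as unitarily equivalent to the embedding $I_{\tau_\varphi}\colon K_\vartheta\to L^2(\overline{\mathbb D};\tau_\varphi)$. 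Hence $C_\varphi$ is bounded with $\lVert C_\varphi\rVert\le N$ if and only if $\int_{\overline{\mathbb D}}\lvert f\rvert^2\,\tau_\varphi\le N^2\lVert f\rVert_{K_\vartheta}^2$ for every $f\in K_\vartheta$, which is exactly the assertion that $\tau_\varphi$ is a $K_\vartheta$‑Carleson measure with $C(\tau_\varphi)\le N$; reversing the argument gives the converse, and in fact $\lVert C_\varphi\rVert=C(\tau_\varphi)$.

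Next I would prove (ii)$\Leftrightarrow$(iii) simply by applying Theorem~\ref{K_varthetaCM} to the non‑negative Borel measure $\mu=\tau_\varphi$ on $\overline{\mathbb D}$. That theorem asserts the equivalence of: $\tau_\varphi$ is $K_\vartheta$‑Carleson; the Cauchy transform $\mathsf C\colon L^2(\mathbb T;\sigma)\to L^2(\overline{\mathbb D};\nu_{\tau_\varphi,\vartheta})$ is bounded, where $\nu_{\tau_\varphi,\vartheta}=\lvert 1-\vartheta\rvert^2\,\tau_\varphi$; and the three conditions of \eqref{e:Disk} hold for the pair $(\sigma,\nu_{\tau_\varphi,\vartheta})$, which is precisely condition (iii) here. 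The quantitative conclusion $C(\tau_\varphi)\simeq\mathscr A_2^{1/2}+\mathscr T$ is inherited from Theorem~\ref{K_varthetaCM}, and combined with Step~1 yields $\lVert C_\varphi\rVert\simeq\mathscr A_2^{1/2}+\mathscr T$.

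I do not anticipate a genuine obstacle: the only point demanding attention is the density/extension argument in Step~1, namely verifying that the densely‑defined composition operator on $K_\vartheta\cap H^\infty$ extends to all of $K_\vartheta$ and that the pullback identity survives the closure — this is where the standing assumption making $H^2_\tau$ a reproducing‑kernel Hilbert space of analytic functions, together with the Littlewood subordination principle, enters. Everything past that is a transcription of Theorems~\ref{t:disk} and \ref{K_varthetaCM}.
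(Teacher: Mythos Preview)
Your proposal is correct and follows essentially the same approach as the paper: the paper gives no separate proof of Theorem~\ref{t:composition}, instead deriving (i)$\Leftrightarrow$(ii) from the pullback identity displayed immediately before the statement and then invoking Theorem~\ref{K_varthetaCM} (hence Theorem~\ref{t:disk}) for (ii)$\Leftrightarrow$(iii). Your added care about the density/extension step is a reasonable elaboration of a point the paper leaves implicit.
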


A corresponding characterization of compactness can be obtained, in terms of the limits in Theorem~\ref{t:compact} being zero. 
If one is interested in the essential norm, the limits in Theorem~\ref{t:compact} should be taken to be limit superiors.  
A theorem of this level of generality is new even when $ K _{\vartheta }$ is replaced by the Hardy space $ H ^2 $. 

In the setting of composition operators from $ H ^2 $ to $ H ^2 $, MacCluer \cite{MR783578} characterized the compact operators in terms of the 
the measure $ \lvert  \mathbb T \cap \varphi ^{-1} (E)\rvert $ being a \emph{vanishing} Carleson measure.  Shapiro \cite{MR881273} calculates the 
essential norm of the same operators in terms of the Nevanlinna counting function $ N _{\varphi }$. Specializing his result to compactness,  the characterization states that 
$ N _{\varphi } (w) = o ( 1- \lvert  w\rvert ) $ as $ \lvert  w\rvert \to 1 $. 
The connection between the two approaches 
is analyzed in  Lef{\`e}vre-Li-Queff{\'e}lec-Rodr{\'{\i}}guez-Piazza \cite{MR2836660}.  In the setting of the Theorem above, one weight is the pullback measure, and the 
other is Lebesgue measure on the unit circle. If the pullback measure is a (vanishing) Carleson measure the conditions above can be verified 
by ad hoc means.  The survey \cite{MR2198367} includes additional points of view and references related to Shapiro's results.  

In the setting of composition operators from $ K _{\vartheta }$ to $ H ^2 $,  there is an elegant characterization of 
compactness due to  Lyubarskii-Malinnikova \cite{12055172}*{Theorem 1} expressed in terms of the Nevanlinna counting function of $ \vartheta $, namely that 
\begin{equation*}
 N _{\varphi } (w) \frac {1 - \lvert   \vartheta (w)\rvert ^{2}  } {1- \lvert  w\rvert ^2  } = o ( 1- \lvert  w\rvert ), \qquad  \lvert  w\rvert \to 1 . 
\end{equation*}
Roughly speaking, this condition only imposes the Shapiro condition as $ w$ approaches the spectrum $ \Sigma (\vartheta ) \subset \mathbb T $.    
We are not aware any other results at this level of generality for composition operators on $ K _{\vartheta } $ spaces.  

\subsection{Proof and Organization}

We concern ourselves with the proof of Theorem~\ref{t:main}.  The testing inequalities \eqref{e:T1} and \eqref{e:T*1} 
are obviously necessary, and we provide the known argument for necessity of the $ A_2$ condition \eqref{e:A2} below. 
The bulk of the argument concerns the sufficiency of the $ A_2$ condition and testing conditions for the norm estimate. 
For this we follow the model of the argument derived from the beautiful strategy of Nazarov-Treil-Volberg \cite{10031596}. 
This strategy works however for any Calder\'on-Zygmund operator, whereas delicate properties of the operator at hand 
must inform the proof. These additional elaborations were provided for the 
Hilbert transform in   \cites{10014043,MR3285858,MR3285857,13120843}.

Central here is the notion of \emph{monotonicity} and \emph{energy} inequalities, which control subtle off-diagonal terms in the proof.
These conditions are asymmetric with respect to the role of the  weights, and so these two conditions are different in the current setting.  
One of them involves geometric arguments that are not present in the setting of the Hilbert transform, and we present that case first below. 
The other uses \emph{both components} of the Riesz transform in order to control only \emph{part of} the energy.  
Both versions of the energy inequality require more sophisticated formulations than those for the Hilbert transform.  

Following the development of the energy inequalities, they must be bootstrapped to more complicated inequalities, in two different (highly non-obvious) ways. 
In addition, one must incorporate stopping data from the functions on which one is testing the norm of the Cauchy transform.  
Again the arguments are asymmetric with respect to the weights, but share many commonalities with the case of the Hilbert transform. 
The most delicate part of the argument, the \emph{Local Estimate}, 
 requires a more careful analysis, due to the more sophisticated formulation of the energy inequality. 
Accordingly, we present only the more novel of the two cases in full.

The next section has the standard random dyadic grid construction of \cite{10031596}; section \S\ref{s:necessary} is the essence of the matter, deriving the energy inequalities. Following that, the more robust parts of the argument are presented, with \S\ref{s:global} focusing on the global to local reductions, \S\ref{s:local} studying the local estimates, \S\ref{s:Bbelow} studying one of the bilinear forms that arise in \S\ref{s:global}. Finally, in \S\ref{s:elementary} we focus on the elementary estimates for bilinear forms arising from the analysis in the proof.

\section{Dyadic Grids, Good and Bad Decomposition} \label{s:dyadic}

Let $\hat{\mathcal{D}}$ denote the standard dyadic grid in $\R$.  A \textit{random} dyadic grid $\mathcal{D}$ is specified by $\xi\in\{0,1\}^{\Z}$ 
and choice of $ 1\le \lambda \le 2$.  The elements of $ \mathcal D$ are  given by
$$
I\equiv \hat{I}\dot+\xi = \lambda \biggl\{ \hat{I}+\sum_{n: 2^{-n}<|\hat{I}|} 2^{-n}\xi_n \biggr\}. 
$$
Place the uniform probability measure $\mathbb{P}$ on $ \xi \in \{0,1\}^{\Z}$, and choose $ \lambda $ with respect to normalized measure 
on $ [1,2]$ with measure $ \frac {d \lambda } \lambda $.

Fix $0<\epsilon<1$ and $r\in\N$.  An interval $I\in\mathcal{D}$ is said to be $(\epsilon,r)$-\textit{bad} if there is an interval $J\in\mathcal{D}$ such that $\vert J\vert>2^{r}\vert I\vert$ and $\textnormal{dist}(I,\partial J)<\vert I\vert^{\epsilon}\vert J\vert^{1-\epsilon}$.  Otherwise, an interval $I$ will be called $(\epsilon,r)$-\textit{good}.  We have the following well-known properties associated to the random dyadic grid $\mathcal{D}$.

\begin{prop}
\label{RandomLattice_1d}
The following properties hold:
\begin{enumerate}
\item The property of $I=\hat{I}\dot +\xi$ with $\hat{I}\in\hat{\mathcal{D}}$ being $(\epsilon,r)$-good depends only on $\xi$ and $\vert I\vert$;
\item $\mathbf{p}_{\textnormal{good}}\equiv\mathbb{P}\left(I \textnormal{ is } (\epsilon,r)-\textnormal{good}\right)$ is independent of $I$;
\item $\mathbf{p}_{\textnormal{bad}}\equiv 1-\mathbf{p}_{\textnormal{good}}\lesssim \epsilon^{-1} 2^{-\epsilon r}$.
\end{enumerate}
\end{prop}

We now indicate how, associated to a dyadic lattice $\mathcal{D}$ on $\R$, we create a ``dyadic lattice'' on $\mathbb R^2_+$.  For any interval $I\in\mathcal{D}$, we define $Q_I$, the Carleson square over the interval $I$, as the set 
$$
Q_I\equiv I\times[0,\vert I\vert).
$$
We then set $\mathcal D_+\equiv\left\{ Q_I: I\in\mathcal{D}\right\}$.  $\hat{\mathcal{D}}_+$ will denote the Carleson cubes associated to the standard dyadic lattice $\hat{\mathcal{D}}$ on $\R$.  Note that if we write $Q\in\mathcal D_+$, then there is a corresponding $I\in\mathcal{D}$ such that $Q=Q_I$.  For a cube $Q$ we let $\ell (Q)$ denote the side length of the cube, i.e for $Q=Q_I$, we have that $\ell (Q)=\ell (Q_I)=\vert I\vert=\vert Q\vert^{\frac{1}{2}}$.

The collection $ \mathcal D_+$ is extended to a dyadic grid $ \mathcal D ^2 $ on $ \mathbb R ^2_+$, defined to be all cubes of the form 
$ I \times \lvert  I\rvert ([0, 1)+ n) $, for $ n\in \mathbb N $.  
Similar to above, we have the following notion of good and bad cubes.  Fix $0<\epsilon<1$ and $r\in\N$.  A cube $Q\in\mathcal{D} ^2 $ is said to be $(\epsilon,r)$-\textit{bad} if there is a cube $Q'\in\mathcal{D} ^2 $ such that $\ell (Q')>2^{r}\ell (Q)$ and $\textnormal{dist}(Q,\partial Q')<\ell (Q)^{\epsilon}\ell (Q')^{1-\epsilon}$.  Otherwise, a cube $Q$ will be called $(\epsilon,r)$-\textit{good}.  Similar to Proposition \ref{RandomLattice_1d} we have the following properties holding.

\begin{prop}
\label{RandomLattice_UpperHalfPlane}
The following properties hold:
\begin{enumerate}
\item The property of $Q=\hat{Q}+(\xi,0)$ with $\hat{Q}\in\hat{\mathcal{D}}_+$ being $(\epsilon,r)$-good depends only on $\xi$ and $\ell (Q)$;
\item $\mathbf{p}_{\textnormal{good}}\equiv\mathbb{P}\left(Q \textnormal{ is } (\epsilon,r)-\textnormal{good}\right)$ is independent of $Q$;
\item $\mathbf{p}_{\textnormal{bad}}\equiv 1-\mathbf{p}_{\textnormal{good}}\lesssim \epsilon^{-1} 2^{-\epsilon r}$.
\end{enumerate}
\end{prop}


We introduce the Haar basis adapted to the weights $\sigma$ on $\R$ and $\tau$ on $\mathbb R^2_+$.  
To avoid cumbersome notation later on, for a set $E$ we will identify the set with its indicator function, i.e. $E\equiv \mathsf{1}_E$.  For $I\in\mathcal{D}$,  
if $ \sigma $ assigns positive mass to both children of $ I$,  define
\begin{equation}
\label{Haar-1d}
h_I^{\sigma}\equiv \sqrt{\frac{\sigma(I_+)\sigma(I_-)}{\sigma(I)}}\left(\frac{I_+}{\sigma(I_+)}-\frac{I_-}{\sigma(I_-)}\right).
\end{equation}
Otherwise, set $ h ^{\sigma }_I= 0$. 
Note that this is a $L^2(\R;\sigma)$ normalized function and has integral $0$ with respect to $\sigma$.  Also $\{h_I^{\sigma} \::\: I\in\mathcal{D}, h ^\sigma _I \neq 0\}$ is an orthonormal basis of $L^2(\R;\sigma)$.  We will also let $\widehat{f}_{\sigma}(I)\equiv\left\langle f,h_I^{\sigma}\right\rangle_\sigma$ and will let
$$
\Delta_I^{\sigma} f\equiv \left\langle f,h_I^{\sigma}\right\rangle_\sigma h_I^{\sigma}=I_+\mathbb{E}_{I_+}^{\sigma} f+I_-\mathbb{E}_{I_-}^{\sigma}f-I\mathbb{E}_{I}^{\sigma} f.
$$ 
Otherwise, we set $ \Delta_I^{\sigma} f\equiv 0$. 
We have the following identify from this formula
$$
\mathbb{E}_J^{\sigma} f=\sum_{I\supsetneq J} \mathbb{E}_J^{\sigma}\Delta_I^{\sigma} f.
$$

We next discuss the Haar basis on $\mathbb{R}^2_+$.  Given a cube $ Q \in\mathcal D^2$, with $ \tau (Q') >0$, for at least two children $ Q'$ of $ Q$,  set
\begin{align*}
\Delta ^{\tau} _{Q}  g \equiv \sum_{\substack{\textup{$ Q'$ a child of $ Q$}\\ \tau (Q') >0}}  \mathbb E ^{\tau } _{Q'} g \cdot Q'- \mathbb E ^{\tau } _{Q} g  \cdot Q.  
\end{align*}
This is the standard martingale difference. 

Say that $ \mathcal D$ is \emph{admissible} for $ \sigma $ and $ \tau $ if and only if $ \sigma $ does not have a point mass at the endpoint 
of any interval $ I\in \mathcal D$ and $ \tau $ does not  assign positive mass  to the boundary of any $ Q\in  \mathcal D ^2  $.  
With probability one the  random grid $ \mathcal D$ is admissible, and we always assume this below.
This is so, due to the incorporation of the dilation factor into the definition of the random grid, though throughout, we will assume that the dilation factor is $1$.  
Thus, with probability one, we can define the Haar bases $\{\Delta _Q^{\tau}\}_{Q\in\mathcal D^2}$ and $\{h_I^{\sigma}\}_{I\in\mathcal{D}}$ as above.  Now, write the identity operator in $L^2(\R;\sigma)$ as
$$
f=P^\sigma_{\textnormal{good}} f+P^\sigma_{\textnormal{bad}}f \quad\textnormal{ where } P^\sigma_{\textnormal{good}} f\equiv \sum_{I\in\mathcal{D}: I \textnormal{ is } (\epsilon, r)-\textnormal{good}} \Delta_I^{\sigma} f.
$$
Similar notation applies for the identity operator on $L^2(\mathbb R^2_+;\tau)$. For the remainder of the paper we let $\Vert\cdot\Vert_{\sigma}$ and $\left\langle\cdot,\cdot\right\rangle_{\sigma}$ denote the norm and inner product in $L^2(\mathbb{R};\sigma)$.  Identical notation applies for $\Vert\cdot\Vert_{\tau}$ and $\left\langle\cdot,\cdot\right\rangle_{\tau}$ in $L^2(\mathbb{R}^2_+;\tau)$.

We have the following proposition.
\begin{prop}  
\label{BadExpectation}
The following estimate holds:
$$
\mathbb{E}\left\Vert P_{\textnormal{bad}}^\sigma f\right\Vert_{\sigma}^2\lesssim \epsilon^{-1} 2^{-\epsilon r}\Vert f\Vert_{\sigma}^2.
$$
An identical estimate is true for the function $g$ and the weight $\tau$.
\end{prop}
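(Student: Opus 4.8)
The plan is to carry out the classical Nazarov--Treil--Volberg averaging argument, whose crux is a probabilistic independence between the \emph{badness} of a dyadic interval and the value of the corresponding Haar coefficient. Reindex the random grid $\mathcal D$ by the standard grid $\hat{\mathcal D}$ through the bijection $\hat I \mapsto \hat I\dot+\xi$, and write $\widehat f_\sigma(I)=\langle f,h^\sigma_I\rangle_\sigma$. Fixing an admissible realization of $\mathcal D$ (which occurs with probability one) and using that $\{h^\sigma_I : I\in\mathcal D,\ h^\sigma_I\neq 0\}$ is an orthonormal basis of $L^2(\mathbb R;\sigma)$, one has
$$\bigl\|P^\sigma_{\textnormal{bad}} f\bigr\|_\sigma^2 = \sum_{\substack{I\in\mathcal D\\ I\ \textnormal{bad}}} \bigl|\widehat f_\sigma(I)\bigr|^2 = \sum_{\hat I\in\hat{\mathcal D}} \mathbf 1\bigl[\, \hat I\dot+\xi\ \textnormal{bad}\, \bigr]\,\bigl|\langle f,\, h^\sigma_{\hat I\dot+\xi}\rangle_\sigma\bigr|^2 .$$
Taking the expectation in $\xi$ (and in the dilation factor $\lambda$, or simply setting $\lambda=1$ as elsewhere in the paper), it suffices to bound each summand $\mathbb E\bigl[\mathbf 1[\hat I\dot+\xi\ \textnormal{bad}]\,\bigl|\langle f, h^\sigma_{\hat I\dot+\xi}\rangle_\sigma\bigr|^2\bigr]$ with $\hat I$ of a fixed generation, say $|\hat I|=2^{-k}$.

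The heart of the matter, and the step demanding the most care, is the independence. Inspecting the definition $\hat I\dot+\xi = \lambda\{\hat I + \sum_{n:\,2^{-n}<|\hat I|}2^{-n}\xi_n\}$, the \emph{set} $\hat I\dot+\xi$ --- hence, $\sigma$ being deterministic, the Haar function $h^\sigma_{\hat I\dot+\xi}$ --- depends only on $\lambda$ and the ``fine'' digits $\{\xi_n : n>k\}$. Conversely, for any coarser grid interval $\hat J\dot+\xi$ with $|\hat J|>2^r|\hat I|$, the common translation $\lambda\sum_{n>k}2^{-n}\xi_n$ cancels in $\textnormal{dist}(\hat I\dot+\xi,\ \partial(\hat J\dot+\xi))$, so that this distance, and therefore the event $\{\hat I\dot+\xi\ \textnormal{bad}\}$, depends only on $\lambda$ and the ``coarse'' digits $\{\xi_n : n\le k\}$; this is precisely the content of Proposition~\ref{RandomLattice_1d}(1). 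Since, conditionally on $\lambda$, the coarse and fine digit families are independent under $\mathbb P$, I may factor
$$\mathbb E\bigl[\mathbf 1[\hat I\dot+\xi\ \textnormal{bad}]\,\bigl|\langle f, h^\sigma_{\hat I\dot+\xi}\rangle_\sigma\bigr|^2 \mid \lambda\bigr] = \mathbb P\bigl(\hat I\dot+\xi\ \textnormal{bad}\mid\lambda\bigr)\,\mathbb E\bigl[\bigl|\langle f, h^\sigma_{\hat I\dot+\xi}\rangle_\sigma\bigr|^2 \mid \lambda\bigr] = \mathbf{p}_{\textnormal{bad}}\,\mathbb E\bigl[\bigl|\langle f, h^\sigma_{\hat I\dot+\xi}\rangle_\sigma\bigr|^2 \mid \lambda\bigr],$$
using Proposition~\ref{RandomLattice_1d}(2) to identify the probability with $\mathbf{p}_{\textnormal{bad}}$, independent of $\hat I$.

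It remains to reassemble. Summing over all $\hat I\in\hat{\mathcal D}$, pulling the constant $\mathbf{p}_{\textnormal{bad}}$ out front, and then applying Parseval's identity $\sum_{I\in\mathcal D}|\langle f,h^\sigma_I\rangle_\sigma|^2 = \|f\|_\sigma^2$, valid for each fixed admissible grid, yields $\mathbb E\bigl[\|P^\sigma_{\textnormal{bad}}f\|_\sigma^2 \mid \lambda\bigr] = \mathbf{p}_{\textnormal{bad}}\,\|f\|_\sigma^2$. Integrating in $\lambda$ and invoking Proposition~\ref{RandomLattice_1d}(3), $\mathbf{p}_{\textnormal{bad}}\lesssim\epsilon^{-1}2^{-\epsilon r}$, finishes the proof.

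The statement for $g$ and $\tau$ follows verbatim, with Proposition~\ref{RandomLattice_1d} replaced by Proposition~\ref{RandomLattice_UpperHalfPlane}, the grid $\mathcal D^2$ on $\mathbb R^2_+$ in place of $\mathcal D$, and the martingale differences $\Delta^\tau_Q$ in place of the $\Delta^\sigma_I$; the coarse/fine digit split goes through unchanged because $\mathcal D^2$ is built from $\mathcal D$. The only genuine difficulty is the bookkeeping underlying the coarse/fine dichotomy --- pinning down exactly which digits govern $\textnormal{dist}(\hat I\dot+\xi,\partial(\hat J\dot+\xi))$ and noting that only boundedly many candidate ancestors $\hat J$ per scale are ever relevant --- but this is exactly what Propositions~\ref{RandomLattice_1d} and \ref{RandomLattice_UpperHalfPlane} package for us.
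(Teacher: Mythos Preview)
The paper does not actually supply a proof of this proposition; it is stated as a known fact from the Nazarov--Treil--Volberg theory of non-homogeneous Calder\'on--Zygmund operators, and the surrounding text simply invokes ``the ideas of good and bad dyadic cubes in non-homogeneous harmonic analysis.'' Your argument is precisely the standard NTV proof, correctly executed: the coarse/fine digit split showing that badness of $\hat I\dot+\xi$ is measurable with respect to $\{\xi_n:n\le k\}$ while $h^\sigma_{\hat I\dot+\xi}$ is measurable with respect to $\{\xi_n:n>k\}$, conditional independence given $\lambda$, and Parseval to reassemble. So your proof is correct and is exactly the argument the paper is tacitly citing.
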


Using the ideas of good and bad dyadic cubes in non-homogeneous harmonic analysis, one can deduce that it suffices to study only good functions.  
This reduction is very familiar, and its requirements well-known.  
To make this reduction we will make these standing assumptions:  First, that there is an \emph{a priori} inequality 
\begin{equation*}
 \left\Vert \mathsf{R}_\sigma f\right\Vert_{\tau}\le \mathscr N \left\Vert f\right\Vert_{\sigma}. 
\end{equation*}
And, second, for any pair of functions $ f \in L ^2 (\mathbb R , \sigma )$ and $ g \in L ^2 (\mathbb R ^2 _+, \tau  )$ 
with disjoint compact supports, there holds 
\begin{equation}\label{e:canonical*}
\langle \mathsf R _{\sigma } f, g \rangle _{\tau } = \int _{\mathbb R } \int _{\mathbb R ^2 _+} 
f (y) g (x) \frac { x-y} { \lvert  x-y\rvert ^2  } \; \tau (dx) \, \sigma (dy) . 
\end{equation}
We will refer  to this as the \emph{canonical value} of $ \langle {\mathsf R} _{ \sigma }  f   , g \rangle _{\tau } $.  

This lemma lets one reduce to only considering good functions in the bilinear form associated to $\mathsf{R}_\sigma$. 

\begin{lm}
\label{Reduction_1}
Let  $\sigma$ and $\tau$ be a pair of weights for which the \emph{a priori} inequality \eqref{Cauchy_Est1} holds, and  $\mathscr R \equiv \mathscr A_2 ^{1/2} + \mathscr T$ is finite.  Suppose that for a choice of $ 0< \epsilon <1$,  all $ r\in \mathbb N $,  and all  admissible dyadic grids $\mathcal{D}$ and $\mathcal D^2$ 
\begin{equation}
\label{ReductiontoGood}
\bigl\lvert \langle\mathsf{R}_{\sigma} ( P_{\textnormal{good}}^\sigma f) , P_{\textnormal{good}}^\tau g\rangle_{\tau}\bigr\rvert
\le C _{\epsilon ,r}\mathscr R\left\Vert f\right\Vert_{\sigma}\left\Vert g\right\Vert_{\tau}.
\end{equation}
Then  \eqref{Cauchy_Est1} holds with $\mathscr{N}\lesssim\mathscr R$.
\end{lm}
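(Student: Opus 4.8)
The strategy is the standard NTV reduction to good functions, run symmetrically in the two weights. The plan is to average over the random grids and exploit that (a) the bad projections have small expectation (Proposition~\ref{BadExpectation}), and (b) the \emph{a priori} bound $\mathscr N$ is finite, so that the full bilinear form can be estimated even when one argument is a bad function, at the cost of a factor that is absorbed once $r$ is chosen large. Fix $ 0 < \epsilon < 1$, and for the moment keep $ r\in \mathbb N$ a free parameter. Write $f = P^\sigma_{\textnormal{good}} f + P^\sigma_{\textnormal{bad}} f$ and $ g = P^\tau_{\textnormal{good}} g + P^\tau_{\textnormal{bad}} g$, and expand the bilinear form $\langle \mathsf R_\sigma f, g\rangle_\tau$ into four pieces. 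The good-good piece is controlled by hypothesis \eqref{ReductiontoGood}, with constant $C_{\epsilon,r}\mathscr R$. For the remaining three pieces, at least one of the two arguments is a bad projection, and I would estimate it by the \emph{a priori} inequality: for instance $|\langle \mathsf R_\sigma (P^\sigma_{\textnormal{bad}} f), g\rangle_\tau| \le \mathscr N \lVert P^\sigma_{\textnormal{bad}} f\rVert_\sigma \lVert g\rVert_\tau$, and similarly for the mixed term $|\langle \mathsf R_\sigma (P^\sigma_{\textnormal{good}} f), P^\tau_{\textnormal{bad}} g\rangle_\tau| \le \mathscr N \lVert f\rVert_\sigma \lVert P^\tau_{\textnormal{bad}} g\rVert_\tau$.

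Next I would take expectations over the random grid $ \mathcal D$ (hence over $ \mathcal D^2$, which is built from it). The key point is that whether a given interval is good depends only on data independent of the particular ``shift'' that positions that interval relative to intervals of its own scale, so that $\mathbb E \langle \mathsf R_\sigma (P^\sigma_{\textnormal{good}} f), P^\tau_{\textnormal{good}} g\rangle_\tau = \langle \mathsf R_\sigma f, g\rangle_\tau$ up to the bad corrections; more precisely, using linearity of expectation together with the fact that for each fixed Haar coefficient the probability it survives the good projection is the constant $\mathbf p_{\textnormal{good}}$, one gets
\begin{equation*}
\langle \mathsf R_\sigma f, g\rangle_\tau = \mathbf p_{\textnormal{good}}^{-2}\, \mathbb E \langle \mathsf R_\sigma (P^\sigma_{\textnormal{good}} f), P^\tau_{\textnormal{good}} g\rangle_\tau + (\textnormal{bad corrections}),
\end{equation*}
where the bad-correction terms are exactly the three expanded pieces above with a bad projection in them. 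Applying Cauchy--Schwarz in the probability space to each such term and then Proposition~\ref{BadExpectation}, one bounds each by $\mathscr N (\epsilon^{-1}2^{-\epsilon r})^{1/2} \lVert f\rVert_\sigma \lVert g\rVert_\tau$ (up to absolute constants), since $\mathbb E\lVert P^\sigma_{\textnormal{bad}} f\rVert_\sigma^2 \lesssim \epsilon^{-1}2^{-\epsilon r}\lVert f\rVert_\sigma^2$ and likewise for $\tau$ and $g$.

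Combining, and using the hypothesis on the good-good term inside the expectation, I obtain
\begin{equation*}
\lvert \langle \mathsf R_\sigma f, g\rangle_\tau\rvert \le \bigl( C'_{\epsilon,r}\mathscr R + C \mathscr N\, \epsilon^{-1/2} 2^{-\epsilon r/2}\bigr) \lVert f\rVert_\sigma \lVert g\rVert_\tau
\end{equation*}
for absolute constants $C, C'_{\epsilon,r}$. Since $\mathscr N < \infty$ by the \emph{a priori} assumption, choose $ r = r(\epsilon)$ large enough that $C \epsilon^{-1/2} 2^{-\epsilon r/2} \le \tfrac12$; the $\mathscr N$ term on the right is then at most $\tfrac12 \mathscr N \lVert f\rVert_\sigma\lVert g\rVert_\tau$, and since $\mathscr N$ is the best constant in \eqref{Cauchy_Est1}, taking the supremum over $f, g$ and moving this term to the left gives $\tfrac12 \mathscr N \le C'_{\epsilon,r}\mathscr R$, i.e. $\mathscr N \lesssim \mathscr R$ with implied constant depending only on $\epsilon$ (and the now-fixed $r$). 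The main obstacle is the bookkeeping in the second step: one must verify carefully that taking the expectation over the random grid genuinely reproduces the original bilinear form up to the bad terms — this uses the independence structure of the random shifts described in Propositions~\ref{RandomLattice_1d} and~\ref{RandomLattice_UpperHalfPlane} and the fact that $f, g$ are fixed functions independent of the grid — and that the mixed good-bad term can be legitimately bounded by $\mathscr N$ even though $P^\sigma_{\textnormal{good}} f$ has compact support issues; here the \emph{a priori} inequality \eqref{Cauchy_Est1} together with the canonical value convention \eqref{e:canonical*} is exactly what licenses the pairing. Everything else is routine Cauchy--Schwarz and the absorption argument.
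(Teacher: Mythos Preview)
Your approach is essentially the same as the paper's, and the final absorption argument is correct. However, your second paragraph introduces an unnecessary and incorrect detour: the identity
\[
\langle \mathsf R_\sigma f, g\rangle_\tau = \mathbf p_{\textnormal{good}}^{-2}\, \mathbb E \langle \mathsf R_\sigma (P^\sigma_{\textnormal{good}} f), P^\tau_{\textnormal{good}} g\rangle_\tau + (\textnormal{bad corrections})
\]
would require the events ``$I$ is good'' and ``$Q$ is good'' to be independent, but here $\mathcal D^2$ is built from the \emph{same} random grid $\mathcal D$, so no such product structure is available (and the Haar functions themselves depend on the grid). Fortunately you never actually use this identity; your third paragraph only uses the pointwise decomposition plus expectation, which is exactly the paper's route. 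The paper simply observes that $\langle \mathsf R_\sigma f, g\rangle_\tau$ is deterministic, bounds it for each grid by the good--good piece plus bad remainders, and takes expectation of that inequality directly---no $\mathbf p_{\textnormal{good}}^{-2}$ factor appears. Delete the second paragraph and the argument is clean.
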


\begin{proof}[Proof of Lemma~\ref{Reduction_1}]  
Let $f\in L^2(\R;\sigma)$ and $g\in L^2(\mathbb R^2_+;\tau)$ be arbitrary functions.  Then we have 
\begin{equation*}
\vert \langle \mathsf{R}_\sigma f,g\rangle_{\tau}\vert \leq\vert \langle \mathsf{R}_\sigma P_{\textnormal{good}}^\sigma f,P_{\textnormal{good}}^\tau g\rangle_{\tau}\vert + \vert \langle \mathsf{R}_\sigma P_{\textnormal{good}}^\sigma f,P_{\textnormal{bad}}^\tau g\rangle_{\tau}\vert +  \vert \langle \mathsf{R}_\sigma P_{\textnormal{bad}}^\sigma f, g\rangle_{\tau}\vert.
\end{equation*}
Now using \eqref{ReductiontoGood} and the \emph{a priori} inequality \eqref{Cauchy_Est1}, 
$$
\left\vert \left\langle \mathsf{R}_\sigma f,g\right\rangle_{\tau}\right\vert\le C _{r, \epsilon }\mathscr R\Vert f\Vert_\sigma\Vert g\Vert_{\tau}+\mathscr N(\Vert f\Vert_\sigma\Vert \mathsf{P}_{\textnormal{bad}}^{\tau}g\Vert_\tau+\Vert g\Vert_\tau\Vert \mathsf{P}_{\textnormal{bad}}^{\sigma}f\Vert_\tau).
$$
Taking expectation over the choice of random grid $\mathcal{D}$ and $\mathcal D^2$ we have by Proposition \ref{BadExpectation} that
$$
\left\vert \left\langle \mathsf{R}_\sigma f,g\right\rangle_{\tau}\right\vert\lesssim C _{\epsilon ,r}\mathscr R\Vert f\Vert_\sigma\Vert g\Vert_{\tau}+\epsilon^{-1}2^{-r\epsilon}\mathscr N \Vert f\Vert_\sigma\Vert g\Vert_\tau.
$$
But, for appropriate selection of $f\in L^2(\R;\sigma)$ and $g\in L^2(\mathbb R^2_+;\tau)$ we have
$$
\Vert f\Vert_\sigma\Vert g\Vert_\tau \mathscr N\lesssim \left\vert \left\langle \mathsf{R}_\sigma f,g\right\rangle_{\tau}\right\vert\lesssim  C _{\epsilon ,r}\mathscr R\Vert f\Vert_\sigma\Vert g\Vert_{\tau}+\epsilon^{-1}2^{-r\epsilon} \mathscr N \Vert f\Vert_\sigma\Vert g\Vert_\tau.
$$
Choosing $r$ sufficiently large for a given $\epsilon$ then lets one conclude that
$
\mathscr N\lesssim\mathscr R
$.
\end{proof}

One way to obtain the \emph{a priori} inequality is to  impose a standard truncation on the singular integral.  
They are defined as follows, for all $ 0 < \alpha < \beta $
\begin{gather}\label{e:standard}
\begin{split}
{\mathsf R} _{\alpha , \beta } (\sigma f) (x) \equiv 
\int _{ \mathbb R }  K _{\alpha , \beta } (x, t) f (t) \; \sigma (dt) ,
\\ \textup{where} \quad 
 \lvert x- t\rvert  \cdot \lvert    K _{\alpha , \beta } (x,t)\rvert + 
 \lvert x- t\rvert ^2   \cdot \lvert  \nabla  K _{\alpha , \beta } (x,t)\rvert 
 \le C \mathbf 1_{  [\alpha /2, 2 \beta]} ( \lvert  x-t\rvert  )\,, 
\\ \textup{and} \quad 
   K _{\alpha , \beta } (x, t) = \frac {x-t} {\lvert x-  t\rvert ^2  }, \qquad  \textup{if $ $} \alpha <  \lvert x- t\rvert  < \beta,  \ t \in \mathbb R ^2 _+.   
\end{split}
\end{gather}
Thus, the choice of kernel $  K _{\alpha , \beta }$ is compactly supported, satisfies a size  and gradient condition. 
Finally, it  agrees with the Riesz  transform kernel for most values where it is not zero. 
With a choice of standard truncations one can then define a uniform norm over certain truncations to be 
the best constant $  {\mathscr N _{\alpha _0, \beta _0}}$ 
in the inequality below, in which $ 0 < \alpha _0 < \beta _0$
\begin{equation}\label{e:dotN}
 \sup _{\alpha _0< \alpha < \beta < \beta _0  } 
 \lVert {\mathsf R} _{\alpha , \beta } (\sigma f) \rVert_{\tau} \le  {\mathscr N _{\alpha _0, \beta _0}} \lVert f\rVert_{\sigma} . 
\end{equation}
It is elementary to see that $ \mathscr N _{\alpha _0, \beta _0} \le C _{\alpha _0 , \beta _0} \mathscr A_2$.  
That is, assuming the $A_2$ condition, there is always an \emph{a priori} inequality for standard truncations.  
Moreover, under the assumptions of the main theorem, one can show that the limit below will be finite:   
\begin{equation*}
\lim _{\alpha \downarrow 0} \lim _{\beta \uparrow \infty } 
\langle {\mathsf R} _{\alpha , \beta } (\sigma f) , g \rangle _{\tau}, 
\qquad f \in L ^2 (\mathbb R , \sigma ),\  g \in L ^2 (\mathbb R ^2 _+, \tau  ).  
\end{equation*}
This is left to the reader, as the additional complications needed to prove this do not require any 
ideas that go beyond the scope of this paper.

\section{Necessary Conditions} \label{s:necessary}

We begin with some conventions.  
\begin{itemize}
\item For two dyadic intervals $ I,J$ and integer $ s$ we write $ J \Subset_ s I$, and say `$ J$ is $ s$-strongly contained in $ I$' if $ J\subset I$ and $ 2 ^{s} \lvert  J\rvert\le \lvert  I\rvert  $.    
We are interested in the cases of $ s=r$, the integer associated with goodness, and $ s=4r$. 
The value of $ 4r$ is used in \S\ref{s:global}, and the value of $ r$ is used in this section and \S\ref{s:local}.  

\item  The center of the interval $ J$ is denoted  $ t_J$ and $ x_Q$ is the center of cube $ Q$. 
The Poisson average at interval $ I$ is $ \mathsf P ( f, I) \equiv \mathsf P f (x_I, \lvert  I\rvert )$, and we frequently appeal to the approximation 
\begin{equation*}
\mathsf P ( f, I)  = \int  _{\mathbb R }\frac {\lvert  I\rvert } { \lvert  I\rvert ^2 + \textup{dist} (t,I) ^2  } f (t)  \; dt 
\simeq 
 \int  _{\mathbb R }\frac {\lvert  I\rvert } { [\lvert  I\rvert  + \textup{dist} (t,I)] ^2  } f (t)  \; dt. 
 \end{equation*}
 The same approximation holds for the Poisson average on $ \mathbb R ^2 _+$. 

\item We are working with operators that carry $ L ^2 (\mathbb R ; \sigma )$ into $ L ^2 (\mathbb R ^2 _+; \tau )$, as well as their duals.
We will use $ f, \phi \in L ^2 (\mathbb R ; \sigma )$ and $ g, \varphi \in L ^2 (\mathbb R ^2 _+; \tau )$ to denote the functions being acted on by the operators in question.

\item There is opportunity for confusion about ranges of integration.  Generically, we will have the integration variable $ t \in \mathbb R $, and $ x\in \mathbb R ^2 _+$. 
But, if we write $ x - t$, we are viewing $ t \in \mathbb R \subset \mathbb R ^2 $, with the natural inclusion.   If the role of the dimensions are important for $ x$, we will write $ x=(x_1, x_2)$  or $ z= x_1 + i x_2$ as integrating variables.

\item Letters in sans-serif denote operators, for example $ \mathsf R$ denotes the Riesz transforms, and we will have other related variants, such as the Poisson and a Bergman-like operator.    

\item The `hard' case is the analysis of the operators in the  direction from  $ L ^2 (\mathbb R ^2 _+; \tau )$ into  $ L ^2 (\mathbb R ; \sigma )$.  This case is treated first,  followed by the reverse direction. The two cases are not dual.  Herein, we develop the \emph{monotonicity principle}, 
and the \emph{energy principle,} both in parts I and II.  

\item For a dyadic interval $ I\in \mathcal D$, we set $ \pi I $ to be its parent in $ \mathcal D$, the minimal interval in $ \mathcal D$ 
which strictly contains $ I$.  For a dyadic subtree $ \mathcal F\subset \mathcal D$, we set $ \pi _{\mathcal F} I$ to be the 
minimal element $ F\in \mathcal F$ with $ I\subset F$ (so $ \pi _{\mathcal F} F= F$).  
At a key point, we will use the notation $ \pi ^{2} _{\mathcal F} I$ to be the minimal element of $ \mathcal F$ which strictly contains $ \pi _{\mathcal F} I$  (it is the $ \mathcal F$-grandparent of $ I$ for $I\notin\mathcal{F}$ and it the $\mathcal{F}$-parent of $I$ for $I\in\mathcal{F}$).

\end{itemize}

\subsection{List of Common Notation}
\begin{itemize}
\item[$\mathcal{D}$:]  A random dyadic lattice.
\item[Good/Bad:] Fix $0<\epsilon<1$ and $r\in\N$.  An interval $I\in\mathcal{D}$ is said to be $(\epsilon,r)$-\textit{bad} if there is an interval $J\in\mathcal{D}$ such that $\vert J\vert>2^{r}\vert I\vert$ and $\textnormal{dist}(I,\partial J)<\vert I\vert^{\epsilon}\vert J\vert^{1-\epsilon}$.  Otherwise, an interval $I$ will be called $(\epsilon,r)$-\textit{good}.
\item[$Q_I$:] For any interval $I\in\mathcal{D}$, we define $Q_I$, the Carleson square over the interval $I$, as the set  
$$
Q_I\equiv I\times[0,\vert I\vert).
$$
\item[$ \ell(Q)$:] The side length of the square $Q$;
\item[$t_J$:]  The center of the interval $ J$;
\item[$x_Q$:] The center of the cube $Q$.
\item[$ h_I^{\sigma}$:] The Haar function adapted to $\sigma$:
\begin{equation*}
h_I^{\sigma}\equiv \sqrt{\frac{\sigma(I_+)\sigma(I_-)}{\sigma(I)}}\left(\frac{I_+}{\sigma(I_+)}-\frac{I_-}{\sigma(I_-)}\right).
\end{equation*}
\item[$ \mathsf P ( f, I)$:] The Poisson average at interval $ I$ of $f$, i.e., $ \mathsf P ( f, I) \equiv \mathsf P f (x_I, \lvert  I\rvert )$.  The standard approximation will be frequently used:
\begin{equation*}
\mathsf P ( f, I)  = \int  _{\mathbb R }\frac {\lvert  I\rvert } { \lvert  I\rvert ^2 + \textup{dist} (t,I) ^2  } f (t)  \; dt 
\simeq 
 \int  _{\mathbb R }\frac {\lvert  I\rvert } { [\lvert  I\rvert  + \textup{dist} (t,I)] ^2  } f (t)  \; dt. 
 \end{equation*}
 The same approximation holds for the Poisson average on $ \mathbb R ^2 _+$. 
\item[$ E (\sigma , I)$:] Energy adapted to $\sigma$ and the dyadic interval $I$:
 \begin{equation*}
E (\sigma , I) ^2  \equiv   \sigma (I) ^{-1} \sum_{\substack{J \::\: J\subset I\\ \textup{$ J$ is good} }} \left\langle\frac t {\lvert  I\rvert }, h ^{\sigma } _{J} \right\rangle_{\sigma } ^2 .
\end{equation*}
\item[$ J \Subset_ s I$:] For two dyadic intervals $ I,J$ and integer $ s$ we write $ J \Subset_ s I$, and say `$ J$ is $ s$-strongly contained in $ I$ if $ J\subset I$ and $ 2 ^{s} \lvert  J\rvert\le \lvert  I\rvert  $.    
We are interested in the cases of $ s=r$, the integer associated with goodness, and $ s=4r$. 
The value of $ 4r$ is used in \S\ref{s:global}, and the value of $ r$ is used in this section and \S\ref{s:local}.  
\item[$\pi I$:] For a dyadic interval $ I\in \mathcal D$, we set $ \pi I $ to be its parent in $ \mathcal D$, the minimal interval in $ \mathcal D$ which strictly contains $ I$.  
\item[$\pi_{\mathcal{F}} I$:] For a dyadic subtree $ \mathcal F\subset \mathcal D$, we set $ \pi _{\mathcal F} I$ to be the  minimal element $ F\in \mathcal F$ with $ I\subset F$ (so $ \pi _{\mathcal F} F= F$).  
\item[$\pi^{2}_{\mathcal{F}} I$:]  The minimal element of $ \mathcal F\subset\mathcal D$ which strictly contains $ \pi _{\mathcal F} I$  (it is the $ \mathcal F$-grandparent of $ I$). 
\item[$\mathcal D _{f}^r $:] $ \mathcal D _{f}^r$ is a grid containing all the children of intervals in the Haar support of $ f$ defined by: $ \mathcal D _{f}^r \equiv \{ I\in \mathcal D \::\: I\subset I^0\,\  \log_2 \lvert  I\rvert \in r \mathbb Z +s_f \}$.  
\item[$ \tilde \pi _{\mathcal F} J$:]  The smallest member of $ \mathcal F$ so that $ J\Subset _{4r}  F$.
\item[$ \mathsf  T _{\tau } g$:]   The operator  which maps functions on $ \mathbb R ^2 _+$ to $ \mathbb R ^2 _+$ used to establish the energy estimate, where $ \tau $ is a measure on $ \mathbb R ^2 _+ $.  
\begin{gather}
\label{e:Tdef}
\mathsf T _{\tau } g (x)  \equiv \int _{\mathbb R ^2 _+}  \frac { g (y) \cdot x_2   } { y^2_2 + (y_1-x_1)^2 + x_2^2  } \; \tau (dy)
\end{gather}
\item[$ V _I$:]  A set relevant to the energy inequality, see Figure~\ref{f:vee}.  
\begin{gather}
 V _I \equiv  \bigcup _{t\in I}\{ x = (x_1, x_2) \::\:   2\lvert  x_1-t\rvert <  x_2   \} .  
\end{gather}
\item[$\widehat Q _{F',F} $] The set $ \widehat Q _{F',F} = Q _{F'} \setminus Q _{F''}$, where 
$ F''$ is the $  \mathcal F$-child of $ F'$ that contains $ F$. 
\item[$ \mathcal W\!I$:]  The partition of the interval $ I$ consisting of those maximal intervals 
$ K\Subset_r I$ with $ \textup{dist} (K, \partial I) \ge \lvert  K\rvert ^{\epsilon } \lvert  I\rvert ^{1- \epsilon }  $.
\item[$H _F^{\tau } g $, $
\tilde H _F^{\sigma } f$:] Haar projection onto a certain collection of intervals associated to $F\in\mathcal{F}$:
\begin{align*}
H _F^{\tau } g &\equiv   \sum _{I \::\: \pi _{\mathcal F} I=F} \Delta ^{\tau } _{Q_I} g  , 
\\
\tilde H _F^{\sigma } f &\equiv \sum _{\substack{J \::\:  \tilde \pi _{\mathcal F} J=F}} \Delta ^{\sigma  } _{J} f.   
\end{align*}
\item[$ P _{\textup{Car}} ^{\tau } g$:] Haar projection associated to Carleson cubes: $$ P _{\textup{Car}} ^{\tau } g \equiv \sum_{I \in \mathcal D} \Delta ^{\tau } _{Q_I} g $$
\item[$  \tilde {\mathsf T }_{\tau } g (K)$:]  Approximation to the Bergman-type operator $T_\tau g$:
\begin{align}
 \tilde {\mathsf T }_{\tau } g (K)  &\equiv \int _{\mathbb R ^2 _+}  \frac { g (y)  } { y^2_2 + \lvert  K\rvert ^2  + \textup{dist}(y_1, K)^2 } \; \tau (dy)   , 
\end{align}
\item[$\mu_K$:] Measure indexed by the dyadics:
\begin{align}
\mu_K &\equiv 
\sum_{\substack{J \::\: J\subset K \\ \tilde \pi _{\mathcal F}   J = F}}
\langle  t, h ^{\sigma } _{J}\rangle _{\sigma } ^2 , \qquad F\in \mathcal F,\ K\in \mathcal WF. 
\end{align}
\item[$ \ddot\pi _{\mathcal G} P_2$:] The minimal element $ G\in \mathcal G$ with  $ P_2 \subset G$ and  $ P_2 \Subset_r  \pi G$.
\item[$ \Pi ^{\sigma } _{G }$:]  
 The projection onto the Haar coefficients $ P_2\in \mathcal Q_2$ with $ \ddot\pi _{\mathcal G}P_2=G$.   
\item[$ \ddot\pi _{\mathcal L}P_2$:] The minimal element of $ L\in \mathcal L$ with $ P_2 \Subset_r L$.
\item[$ \ddot\pi ^{t+1} _{\mathcal L} P_2$:] The minimal member of $ \mathcal L$ that \emph{strictly} contains $ \ddot\pi ^{t} _{\mathcal L} P_2$, where $\ddot\pi^1_{\mathcal{L}}=\ddot\pi_{\mathcal{L}}$.
\end{itemize}

\subsection{$ A_2$ Condition}

We show that the assumed norm inequality implies the $ A_2$ condition. 
\begin{proposition}\label{p:A2}  There holds  $ \mathscr A_2 \lesssim{\mathscr N} ^2 $.  
\end{proposition}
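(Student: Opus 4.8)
The plan is to test the norm inequality \eqref{Cauchy_Est1} and its adjoint on the indicator functions of the extremal sets appearing in \eqref{e:A2}, reading off each Poisson--tail estimate from a \emph{single} test function. The one nonelementary ingredient is a \emph{no-cancellation} remark: if a vector-valued integral $v=\int\frac{x-t}{\lvert x-t\rvert^2}\,d\mu(x)$ is such that throughout the domain of integration $x_1-t$ keeps a fixed sign and $x_2\ge 0$, then
$\lvert v\rvert\ge\tfrac1{\sqrt2}\int\frac{d\mu(x)}{\lvert x-t\rvert}$,
since $\lvert a\rvert+\lvert b\rvert\ge\sqrt{a^2+b^2}$; the analogous statement holds with the roles of the two coordinates exchanged.

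\emph{First inequality of \eqref{e:A2}.} Since $\mathsf R_\sigma\colon L^2(\sigma)\to L^2(\tau)$ has norm $\le\mathscr N$, so does its adjoint, which by \eqref{e:canonical*} is $\mathsf R^\ast_\tau g(t)=\int_{\mathbb R^2_+}\frac{x-t}{\lvert x-t\rvert^2}g(x)\,\tau(dx)$. Fix $I$ and test this on $g=\mathbf 1_{Q_I}\in L^2(\tau)$. If $t\in\mathbb R\setminus I$ lies to the left of $I$, then for every $x\in Q_I$ one has $x_1-t>0$, $x_2\ge0$, and $\lvert x-t\rvert\le\sqrt2(\lvert I\rvert+\textup{dist}(t,I))$, so by the no-cancellation remark
\begin{equation*}
\lvert \mathsf R^\ast_\tau\mathbf 1_{Q_I}(t)\rvert\ \ge\ \tfrac1{\sqrt2}\int_{Q_I}\frac{\tau(dx)}{\lvert x-t\rvert}\ \gtrsim\ \frac{\tau(Q_I)}{\lvert I\rvert+\textup{dist}(t,I)},
\end{equation*}
and symmetrically for $t$ to the right of $I$. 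Squaring, integrating against $\sigma$ over $\mathbb R\setminus I$, and using $\lVert\mathsf R^\ast_\tau\mathbf 1_{Q_I}\rVert_\sigma^2\le\mathscr N^2\,\tau(Q_I)$ gives $\tau(Q_I)\int_{\mathbb R\setminus I}(\lvert I\rvert+\textup{dist}(t,I))^{-2}\,\sigma(dt)\lesssim\mathscr N^2$, which is the first line of \eqref{e:A2}. Note that no separate treatment of $t$ near $\partial I$ is needed: there the kernel is only larger, and this is precisely why the dualized, single-test-function argument works where summing over dyadic annuli would not.

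\emph{Second inequality of \eqref{e:A2}.} Now test $\mathsf R_\sigma$ on $f=\mathbf 1_I\in L^2(\sigma)$ and fix $x=(x_1,x_2)\in\mathbb R^2_+\setminus Q_I$. If $x_1\notin I$, then $x_1-t$ has a fixed sign for $t\in I$, $x_2\ge0$, and $\lvert x-t\rvert\le\sqrt2(\lvert I\rvert+\textup{dist}(x,Q_I))$, so the no-cancellation remark yields $\lvert\mathsf R_\sigma\mathbf 1_I(x)\rvert\gtrsim\sigma(I)/(\lvert I\rvert+\textup{dist}(x,Q_I))$. If instead $x_1\in I$, then necessarily $x_2\ge\lvert I\rvert$ and $\textup{dist}(x,Q_I)=x_2-\lvert I\rvert$; here $\lvert x-t\rvert\le\sqrt2\,x_2$ for $t\in I$ and the \emph{positive} Poisson kernel alone suffices:
\begin{equation*}
\lvert\mathsf R_\sigma\mathbf 1_I(x)\rvert\ \ge\ \mathsf P(\sigma\mathbf 1_I)(x)\ =\ \int_I\frac{x_2}{\lvert x-t\rvert^2}\,\sigma(dt)\ \gtrsim\ \frac{\sigma(I)}{x_2}\ =\ \frac{\sigma(I)}{\lvert I\rvert+\textup{dist}(x,Q_I)}.
\end{equation*}
In either case, squaring, integrating against $\tau$ over $\mathbb R^2_+\setminus Q_I$, and using $\lVert\mathsf R_\sigma\mathbf 1_I\rVert_\tau^2\le\mathscr N^2\,\sigma(I)$ gives $\sigma(I)\int_{\mathbb R^2_+\setminus Q_I}(\lvert I\rvert+\textup{dist}(x,Q_I))^{-2}\,\tau(dx)\lesssim\mathscr N^2$, the second line of \eqref{e:A2}.

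The step I expect to require the most care is the case $x_1\in I$ in the second part: there the first-coordinate/no-cancellation argument breaks down — the point sits above the Carleson box $Q_I$ and the kernel vectors fan out horizontally — and one must instead exploit the positivity of the Poisson component $\mathsf R^2=\mathsf P$. This is also the only place where the genuinely two-dimensional nature of the operator intervenes, the rest of the argument being essentially identical to its Hilbert transform counterpart. Routine measurability and integrability points — that $\mathbf 1_I,\mathbf 1_{Q_I}$ lie in the relevant $L^2$ spaces, that the defining integrals make sense on the (measure-zero) sets where supports merely touch, and the handling of possible $\tau$-atoms on $\partial Q_I$ — are dealt with by standard approximation and are suppressed.
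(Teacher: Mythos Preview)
Your proof is correct and follows essentially the same approach as the paper: test $\mathsf R^\ast_\tau$ on $\mathbf 1_{Q_I}$ and $\mathsf R_\sigma$ on $\mathbf 1_I$, establish the pointwise lower bounds $\lvert\mathsf R^\ast_\tau Q_I(t)\rvert\gtrsim\tau(Q_I)/(\lvert I\rvert+\textup{dist}(t,I))$ and $\lvert\mathsf R_\sigma I(x)\rvert\gtrsim\sigma(I)/(\lvert I\rvert+\textup{dist}(x,Q_I))$, then square, integrate, and divide. The paper simply asserts these pointwise bounds, whereas you supply the no-cancellation argument and the separate $x_1\in I$ case via the Poisson component; this is additional (correct) detail, not a different method.
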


\begin{proof}
Fix an interval $ I$, and  note that 
\begin{equation*}
\lvert  \mathsf R ^{\ast} _{\tau } Q_I (t)\rvert \gtrsim \frac {\tau (Q_I)} { \lvert  I\rvert + \textup{dist}(t,I) }  , \qquad t\not\in I.  
\end{equation*}
Squaring, and integrating against the measure $\sigma $, the assumed norm inequality gives us 
\begin{equation*}
 \frac {\tau (Q_I) ^2 } {\lvert  I\rvert } \int _{\mathbb R \setminus I}  \frac {\lvert  I\rvert } { (\lvert  I\rvert + \textup{dist}(t,I) ) ^2 } \; \sigma (dt) 
\lesssim \mathscr N ^2 \tau (Q_I).  
\end{equation*}
Dividing out by $ \tau (Q_I)$ will complete the first half of the $ A_2$ bound.  

For the second half, note that 
\begin{equation*}
\lvert  \mathsf R _{\sigma  } I (x)\rvert \gtrsim \frac {\sigma (I)} { \lvert  I\rvert + \textup{dist}(x, Q_I) }  , \qquad x\not\in Q_I.  
\end{equation*}
And then the argument is completed as before. 

\end{proof}

\subsection{Monotonicity, I}

We begin our discussion of the critical off-diagonal considerations, where we will use the condition 
\begin{gather}\label{e:nabla}
 \left\lvert \nabla ^{j}  \frac {x-t} { \lvert  x-t\rvert ^2  }\right\rvert
\lesssim   \frac 1 { \lvert  x-t\rvert ^{1+j} }, \qquad j=1,2. 
\end{gather}
\emph{Monotonicity} refers to the domination of off-diagonal inner products by positive operators. 
In this direction, we can establish the monotonicity principle, using the  operators and sets given by 
\begin{gather}
\label{e:Ttdef}
\mathsf T _{\tau } g (x)  \equiv \int _{\mathbb R ^2 _+}  \frac { g (y) \cdot x_2   } { y^2_2 + (y_1-x_1)^2 + x_2^2  } \; \tau (dy)   , 
\\
\label{e:Vdef}  
 V _I \equiv  \bigcup _{t\in I}\{ x = (x_1, x_2) \::\:   \lvert  x_1-t\rvert <  2x_2   \} .  
\end{gather}

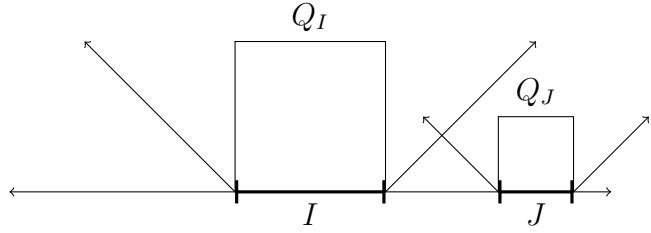
\begin{figure}
\begin{tikzpicture}
\draw[<->] (-3,0) -- (5,0);  
\draw (0,0) rectangle (2,2);  
\draw  (1, 2) node[above] { $ Q_I$}; 
\draw[|-|,very thick] (0,0) -- (2,0) node[midway,below] {$ I$}; 
\draw[->] (0,0) -- (-2,2);   \draw[->] (2,0) -- (4,2) ; 

\begin{scope}[xshift=3.5cm] 
\draw (0,0) rectangle (1,1);
\draw (.5,1) node[above] {$ Q_J$}; 
\draw[|-|, very thick] (0,0) -- (1,0) node[midway,below] {$ J$}; 
\draw[->] (0,0) -- (-1,1);   \draw[->] (1,0) -- (2,1); 
\end{scope}
\end{tikzpicture}

\caption{Two examples of sets $ V_I$, which are the regions between the two oblique lines at either endpoint of the intervals.  }
\label{f:vee}
\end{figure}

\begin{lemma}{\textnormal{[Monotonicity, I]}}
\label{l:monoI} 
If $ \varphi \in L^2(\mathbb{R}^2_+;\tau)$ is non-negative and 
compactly supported on the complement of the set $ V_I$,  and $ 10\cdot J \subset I$, then 
\begin{align}  \label{e:monoI2}
\left\vert   \left\langle  {\mathsf  R } ^{\ast } _{\tau } \varphi ,  h ^{\sigma } _{J'} \right\rangle _{\sigma }\right \vert 
& \simeq   \mathsf T_{\tau }   \varphi  ( x _{Q_J} )  \cdot 
\left\langle\frac t {\lvert  J\rvert }, h ^{\sigma } _{J'} \right\rangle _{\sigma } , \qquad J'\subset J.
\end{align}
Assume   that $ \varphi \in L^2(\mathbb{R}^2_+;\tau)$ is supported on the complement of the set $ Q_I$,   $ 10 \cdot J\subset I$  and $ f\in L^2(\mathbb{R};\sigma) $, supported on $ J$, 
has $ \sigma $-integral zero. 
Then these two estimates hold: 
\begin{align}
\label{e:MONO}
\left\vert   \left \langle{\mathsf  R } ^{\ast } _{\tau   }      \varphi    ,  h ^{\sigma } _{J'} \right\rangle _{\sigma } \right\vert 
& \lesssim 
 \mathsf T_{\tau } \lvert  \varphi \rvert (x_{Q_J} )  \cdot 
\left\langle\frac t {\lvert  J\rvert }, h ^{\sigma } _{J'} \right\rangle _{\sigma } , \qquad J'\subset J, 
\\
\label{e:monoI1}
\left\vert  \left\langle  {\mathsf  R } ^{\ast } _{ \tau }  \varphi  ,  f  \right\rangle _{\sigma } \right\vert 
&\lesssim \mathsf T_{\tau } \lvert  \varphi \rvert (x_{Q_J} ) \cdot  \int _{J} \lvert  f(t) \rvert \; \sigma(dt) . 
\end{align}

\end{lemma}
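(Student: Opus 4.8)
The plan is to dualize all three estimates into a single bound on $\mathsf R_{\sigma}$ applied to a $\sigma$-mean-zero function supported on $J$, exploiting an exact algebraic identity for the kernel that is special to the planar Cauchy--Riesz kernel. Since $\varphi$ is supported away from $J$ (off $Q_{I}$, or off $V_{I}$) while $h^{\sigma}_{J'}$ and $f$ are supported on $J\subset I$ with $10\cdot J\subset I$, the supports are disjoint and \eqref{e:canonical*} gives $\langle \mathsf R^{\ast}_{\tau}\varphi,u\rangle_{\sigma}=\langle \varphi,\mathsf R_{\sigma}u\rangle_{\tau}$ for $u=h^{\sigma}_{J'}$ and $u=f$. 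Identifying $\mathbb R^{2}\cong\mathbb C$ and writing $z=x_{1}+ix_{2}$ for $x\in\mathbb R^{2}_{+}$, the vector kernel corresponds to $\frac{x-s}{\lvert x-s\rvert^{2}}\leftrightarrow \overline{(z-s)}^{\,-1}$ for $s\in\mathbb R$, so
\begin{equation*}
\frac{x-s}{\lvert x-s\rvert^{2}}-\frac{x-s'}{\lvert x-s'\rvert^{2}}\ \leftrightarrow\ \frac{s-s'}{\overline{(z-s)(z-s')}},\qquad\text{of modulus}\quad \frac{\lvert s-s'\rvert}{\lvert x-s\rvert\,\lvert x-s'\rvert}.
\end{equation*}
Using that $u$ has $\sigma$-integral zero, subtract the value of the kernel at $c:=t_{J'}$ (for $u=h^{\sigma}_{J'}$) or $c:=t_{J}$ (for $u=f$) to get
\begin{equation*}
\mathsf R_{\sigma}u(x)=\int (s-c)\,u(s)\,\Bigl[\text{the vector}\ \leftrightarrow\ \bigl(\overline{(z-s)(z-c)}\bigr)^{-1}\Bigr]\,\sigma(ds).
\end{equation*}

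For $x$ in the relevant support, $10\cdot J\subset I$ (and the standing admissibility, which renders the boundary $x_{2}=0$ harmless) yields $\lvert x-s\rvert\simeq\lvert x-c\rvert\simeq\lvert x-t_{J}\rvert\gtrsim\lvert J\rvert$ uniformly in $s\in J$; hence the bracketed vector has modulus $\simeq\lvert x-t_{J}\rvert^{-2}$. Since $(s-t_{J'})h^{\sigma}_{J'}(s)\ge 0$ with $\int_{J'}(s-t_{J'})h^{\sigma}_{J'}(s)\,\sigma(ds)=\langle t,h^{\sigma}_{J'}\rangle_{\sigma}$, this gives $\lvert \mathsf R_{\sigma}f(x)\rvert\lesssim \lvert J\rvert\lvert x-t_{J}\rvert^{-2}\int_{J}\lvert f\rvert\,\sigma$ and $\lvert \mathsf R_{\sigma}h^{\sigma}_{J'}(x)\rvert\lesssim \lvert x-t_{J}\rvert^{-2}\langle t,h^{\sigma}_{J'}\rangle_{\sigma}$. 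Putting absolute values inside $\langle \varphi,\mathsf R_{\sigma}u\rangle_{\tau}$ and observing that, because $x_{Q_{J}}=(t_{J},\lvert J\rvert/2)$ and $\lvert x-t_{J}\rvert\gtrsim\lvert J\rvert$ on $\operatorname{supp}\varphi$,
\begin{equation*}
\int \frac{\lvert \varphi(x)\rvert}{\lvert x-t_{J}\rvert^{2}}\,\tau(dx)\ \simeq\ \frac1{\lvert J\rvert}\,\mathsf T_{\tau}\lvert\varphi\rvert(x_{Q_{J}}),
\end{equation*}
we obtain \eqref{e:monoI1} and \eqref{e:MONO} at once.

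For the comparability \eqref{e:monoI2} the upper bound is \eqref{e:MONO} applied to $\varphi\ge0$, so only the lower bound remains, and this is where the geometry of $V_{I}$ enters. One must show the $\mathbb R^{2}$-valued quantities $\mathsf R_{\sigma}h^{\sigma}_{J'}(x)$ do not cancel as $x$ ranges over $\operatorname{supp}\varphi$. By the identity above, $\mathsf R_{\sigma}h^{\sigma}_{J'}(x)$ is, up to a factor $1+O(\lvert J\rvert/\lvert x-t_{J}\rvert)$, a nonnegative multiple of the vector $\leftrightarrow\overline{(z-t_{J'})}^{\,-2}$, whose direction is governed by the angle $\arctan\frac{x_{2}}{x_{1}-t_{J'}}$. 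The set $V_{I}$ is designed precisely so that, for $x\in V_{I}^{\,c}$ with $10\cdot J\subset I$ and lying on a fixed side of $I$ (which is how the lemma is invoked, after splitting $\varphi$), this angle stays in $(0,\tfrac\pi4)$, so the direction of $\mathsf R_{\sigma}h^{\sigma}_{J'}(x)$ lies in a cone of opening at most $\tfrac\pi2$; there is then no cancellation, whence
\begin{equation*}
\Bigl\lvert\int\varphi(x)\,\mathsf R_{\sigma}h^{\sigma}_{J'}(x)\,\tau(dx)\Bigr\rvert\ \gtrsim\ \int\varphi(x)\,\lvert \mathsf R_{\sigma}h^{\sigma}_{J'}(x)\rvert\,\tau(dx)\ \simeq\ \mathsf T_{\tau}\varphi(x_{Q_{J}})\,\Bigl\langle\tfrac{t}{\lvert J\rvert},h^{\sigma}_{J'}\Bigr\rangle_{\sigma},
\end{equation*}
which is \eqref{e:monoI2}. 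This coherence-of-directions step is exactly the ingredient absent for the Hilbert transform, where the analogous kernel difference is a positive scalar and no geometry is needed; pinning down the precise role of $V_{I}$ and verifying the non-cancellation is the part I expect to demand the most care.
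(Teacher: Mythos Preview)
For \eqref{e:MONO} and \eqref{e:monoI1} your argument is correct and is essentially the paper's (mean-zero subtraction plus the kernel bound \eqref{e:nabla}), cleanly packaged via the complex identity.

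For the lower bound in \eqref{e:monoI2} you take a genuinely different route. The paper works only with the \emph{first} coordinate of the kernel difference and asserts the pointwise lower bound \eqref{e:Bigger}; since the sign of that coordinate is independent of which side of $I$ the point $x$ lies on, this would deliver the two-sided equivalence directly, with no split. You instead track the direction of the full vector, identify it (up to a small perturbation) with $2\arg(z-t_{J'})$, and argue that on a \emph{fixed side} these directions fill a sector of opening at most $\pi/2$, so integration against $\varphi\ge 0$ cannot cancel. Your instinct that the split is needed is actually correct: without it the equivalence can fail. Place unit masses at $x^{\pm}=(\pm N,\,N-D)$ with $D\simeq\operatorname{dist}(t_{J},\partial I)$ fixed and $N\to\infty$; by symmetry the Poisson coordinates of $\langle \mathsf R^{\ast}_{\tau}\varphi,h^{\sigma}_{J}\rangle_{\sigma}$ cancel, while the surviving first coordinate, though nonnegative, has size only $\simeq |J|\,/\,|x-t_{J}|^{3}$ near $\partial V_{I}$ (since $(x_{1}-t)(x_{1}-t_{J})-x_{2}^{2}\simeq |J|\cdot|x_{1}-t_{J}|$ there), too small by a factor $|J|/N$ to match $\mathsf T_{\tau}\varphi(x_{Q_{J}})$. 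The same regime defeats the uniform pointwise bound \eqref{e:Bigger}. So the one-sided inequality is what your cone argument actually establishes, and it is what one should state and use; the application in Lemma~\ref{l:holesI} can be run on the left and right halves of $Q_{I_{0}}\setminus V_{2^{s}K}$ separately. One small refinement: the multiplicative error $1+O(|J|/|x-t_{J}|)$ you record is \emph{complex} and thus perturbs the direction as well as the magnitude, widening the sector by a bounded angle; the opening remains strictly below $\pi$, so the no-cancellation conclusion still stands.
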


\begin{proof}
We are in a situation where \eqref{e:canonical*} holds.   The basic property is that 
\begin{equation}  \label{e:ht}
\langle  t, h ^{\sigma } _{J} \rangle _{\sigma } = \int _{J} (t-t_J) h ^{\sigma } _{J}(t) \; \sigma(dt) 
\end{equation}
and that the integrand is non-negative. From this, and a standard application of the kernel estimates in \eqref{e:nabla}, the 
estimates \eqref{e:MONO} and \eqref{e:monoI1} follow.  

\smallskip

We turn to the critical  equivalence \eqref{e:monoI2}, for which we only need to prove the lower bound. 
From the integral zero property of the Haar function, we have the equality below: 
\begin{align}
\left\langle {\mathsf  R } ^{\ast}_{\tau  }  \varphi , h ^{\sigma } _{J}  \right\rangle _{\sigma } 
&= 
\int\!\!\!\int _{\mathbb R ^2_+ \setminus V_I} \int _{J}  \varphi (x) h ^{\sigma } _{J} (t)  \frac { x-t} { \lvert  x-t\rvert ^2  } \; \sigma (dt)\, \tau (dx) 
\\   \label{e:RI}
&= 
\int\!\!\!\int _{\mathbb R ^2_+ \setminus V_I} \int _{J}  \varphi (x) h ^{\sigma } _{J} (t) 
\left\{
\frac { x-t} { \lvert  x-t\rvert ^2  } - 
\frac { x-t_J} { \lvert  x-t_J\rvert ^2  }
\right\} \; \sigma (dt)\, \tau (dx) . 
\end{align}

Now, the first coordinate of the term in braces above satisfies for all $ t\in J$, 
and $ x\in \mathbb R ^2 _+ \setminus V_I$, 
\begin{align}  \label{e:Bigger}
-\textup{sgn} (h _{J} ^{\sigma } (t)) \left\{
\frac { x_1-t} { \lvert  x-t\rvert ^2  } - 
\frac { x_1-t_J} { \lvert  x-t_J\rvert ^2  }  \right\} 
&\ge c 
\frac { \lvert  t-t_J\rvert }  {  \lvert  J\rvert ^2 + \textup{dist} (x_1, J) ^2  } .
\end{align}
Since $ \varphi \geq 0$, this clearly completes the proof of \eqref{e:monoI2}. 

Take $ 0 < \delta = t-t_J < \lvert  J\rvert/2 $.  
The difference on the left is the increment of the  function 
$
\frac { u } {u ^2 + x_2 ^2 } 
$, 
over the interval  $  u=x_1 - t_J$  to  $ u+ \delta = x_1 -t$.  
By the Fundamental Theorem of Calculus, the increment is 
\begin{align*}
\int _{u} ^{u+ \delta }  \frac { v ^2 - x_2 ^2 } { ( v ^2 + x_2 ^2 ) ^2 } \; d v 
& \geq \frac 34 
\int _{u} ^{u+ \delta }  \frac { v ^2} { ( v ^2 + x_2 ^2 ) ^2 } \; d v 
\\
& \geq \frac 34  \delta \frac { (u+ \delta ) ^2  } { ( (u+ \delta ) ^2 + x_2 ^2 ) ^2 } 
\\
&\geq    c  \frac {t - t_J} {  \lvert  x-t_J\rvert ^2  }. 
\end{align*}
In the numerator in the integral, we have $ v ^2 \geq 4 x ^2 _2 $, by the definition of $ V_J$, which 
fact yields the $ \tfrac 34$.  The  next  line is a consequence of $ \frac { v ^2  } { ( v ^2 + x_2 ^2 ) ^2 }$ being decreasing in $ v$.  
The last line follows since $ u=x_1 - t_J > 5 \lvert  J\rvert > 10 \delta   $.  
\end{proof}

\subsection{Energy Inequality,  I}
Essential for the control of the off-diagonal terms is the energy inequality.  We need two definitions.  
For a interval $ I $, set the \emph{energy of $ I$} to be  
\begin{equation} \label{e:eng-sigma}
E (\sigma , I) ^2  \equiv   \sigma (I) ^{-1} \sum_{\substack{J \::\: J\subset I\\ \textup{$ J$ is good} }} \left\langle\frac t {\lvert  I\rvert }, h ^{\sigma } _{J} \right\rangle_{\sigma } ^2 .
\end{equation}
Since the Haar functions have mean value zero with respect to $\sigma$, we are free to replace $ t$ above by $ t - t_I$, so that $ E (\sigma ,I) \lesssim 1$.  

Define  $ \mathcal W\!I$ to be the partition of the interval $ I$ consisting of those maximal intervals 
$ K\Subset_r I$ with $ \textup{dist} (K, \partial I) \ge \lvert  K\rvert ^{\epsilon } \lvert  I\rvert ^{1- \epsilon }  $.  So these are the maximal intervals that are `good with respect to $ I$.'    We refer to the collection $\mathcal W\!I$ as the `Whitney' collection of $I$. These intervals need not be  good, but they do satisfy the following.

\begin{proposition}\label{p:overlap} For any interval $ I$, and any good $J\Subset_ r I $, there is a $ K\in \mathcal WI$ which contains it. Moreover, 
\begin{equation}   \label{e:whitney}
\biggl\Vert \sum_{K\in \mathcal W\!I} 2^{r}K (x)\biggr\Vert_{\infty } \lesssim 1 . 
\end{equation} 
\end{proposition}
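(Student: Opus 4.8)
The plan is to prove the two assertions separately: the covering statement is a formal consequence of the definitions, while the overlap bound \eqref{e:whitney} is a Whitney-type estimate.

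\textbf{The covering statement.} Let $J\Subset_r I$ be good. I would apply the definition of $(\epsilon,r)$-goodness of $J$ with the interval $I$ itself playing the role of the larger interval: since $J\subset I$ and $2^r\lvert J\rvert\le\lvert I\rvert$, the interval $I$ is an eligible competitor in the bad-event test, so goodness of $J$ forces $\textup{dist}(J,\partial I)\ge\lvert J\rvert^{\epsilon}\lvert I\rvert^{1-\epsilon}$. (If the bad-event test is read with the strict inequality $\lvert I\rvert>2^r\lvert J\rvert$, the borderline case is absorbed into the standing dilation convention on the random grid and introduces nothing new.) Thus $J$ lies in the family $\mathcal F_I\equiv\{K\in\mathcal D:K\Subset_r I,\ \textup{dist}(K,\partial I)\ge\lvert K\rvert^{\epsilon}\lvert I\rvert^{1-\epsilon}\}$, of which $\mathcal W\!I$ is, by definition, the collection of maximal members. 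The dyadic intervals lying between $J$ and $I$ form a finite chain, so there is a largest $K\in\mathcal F_I$ containing $J$; any member of $\mathcal F_I$ strictly containing $K$ would strictly contain $J$, contradicting that choice, so $K$ is maximal in $\mathcal F_I$, i.e.\ $K\in\mathcal W\!I$, and $K\supseteq J$. The same observation --- two distinct maximal dyadic intervals cannot be nested --- shows the elements of $\mathcal W\!I$ are pairwise disjoint subsets of $I$, which is the sense in which $\mathcal W\!I$ ``partitions'' $I$; in particular $\sum_{K\in\mathcal W\!I}\mathbf 1_K\le\mathbf 1_I$.

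\textbf{The overlap bound.} Here the mechanism is a two-sided comparison of $\ell(K)$ with $\textup{dist}(K,\partial I)$ for $K\in\mathcal W\!I$. From the defining inequality and $\ell(K)\le 2^{-r}\lvert I\rvert$,
\[
  \textup{dist}(K,\partial I)\ \ge\ \ell(K)^{\epsilon}\lvert I\rvert^{1-\epsilon}\ \ge\ 2^{r(1-\epsilon)}\,\ell(K),
\]
while maximality of $K$ supplies the reverse estimate $\textup{dist}(K,\partial I)\lesssim\ell(K)^{\epsilon}\lvert I\rvert^{1-\epsilon}$ for every $K\in\mathcal W\!I$ except those at the top scale $\ell(K)=2^{-r}\lvert I\rvert$, which form a single dyadic generation and can be handled on their own. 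Now fix $x$; if $x\in 2^{r}K$ then $\textup{dist}(x,K)<2^{r-1}\ell(K)$, and feeding this into the two displayed inequalities shows $x$ and $K$ must sit in the same Whitney layer relative to one endpoint of $I$. This confines $\ell(K)$ to a range of scales of cardinality controlled by $r$ and $\epsilon$, and at each admissible scale confines $K$ to boundedly many dyadic positions; summing over those scales gives $\sum_{K\in\mathcal W\!I}\mathbf 1_{2^{r}K}(x)\lesssim 1$, the implied constant depending only on the fixed grid parameters, as is customary for such overlap statements. The top-scale intervals are treated identically, being a single generation.

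The covering statement and the manipulation of the scale-free distance inequalities are routine. The only genuine obstacle is the bookkeeping in the last step: one must cleanly separate the ``bulk'' top-scale intervals from the Whitney layers accumulating at $\partial I$, and push the exponent $\epsilon$ through the equivalence $\textup{dist}(K,\partial I)\simeq\ell(K)^{\epsilon}\lvert I\rvert^{1-\epsilon}$ with enough care that any point $x$ --- whether inside $I$ or just outside it --- is met by only boundedly many of the enlarged Whitney intervals.
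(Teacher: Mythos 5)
The proposal is correct and takes essentially the same approach as the paper: the covering claim is derived from goodness of $J$ implying the stronger Whitney distance condition, and the overlap bound rests on the same two-sided comparison $\operatorname{dist}(K,\partial I)\simeq \ell(K)^{\epsilon}\lvert I\rvert^{1-\epsilon}$ (lower bound from the defining inequality, upper bound from maximality), which the paper deploys as a contradiction --- if two Whitney intervals at scales more than $2/\epsilon$ apart had overlapping $2^{r}$-dilates, the parent of the smaller would also qualify for $\mathcal W\!I$ --- while you use it directly to confine the admissible scales at a fixed point $x$. The parenthetical about the dilation convention absorbing the borderline case $\lvert I\rvert=2^{r}\lvert J\rvert$ is not really what that convention is for (it guarantees admissibility of the grid, not strictness in the $\Subset_{r}$ test), but this top-scale bookkeeping is glossed over equally tersely in the paper's own one-line covering argument, so it is a shared rather than a new gap.
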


The reason for this definition is that the maximal \emph{good} intervals contained inside of $ I$ need not be Whitney, they can for instance 
have accumulation points strictly contained in $ I$.

\begin{proof}
Any good interval $ J\Subset_ r I$ must satisfy a stronger set of conditions than those that define $ \mathcal W\!I$, so 
there must be an interval $ K\in \mathcal W\!I$ that contains it.  Concerning the second claim, 
suppose $ K_1, K_2 \in \mathcal W\!I$ with $  2 ^{\frac 2 \epsilon } \lvert  K_1\rvert <\lvert  K_2\rvert < 2 ^{-\frac {r+2} {1- \epsilon }} \lvert  I\rvert$, and $ 2 ^{r} K_1 \cap  2 ^{r}K_2\neq \emptyset $.   
Then 
\begin{align*}
\textup{dist} (\pi K_1, \partial I) &\ge  \textup{dist} (K_1, \partial I)  - \lvert  K_1\rvert  
\\&\ge \textup{dist} (K_2, \partial I) - 2 ^{r} \lvert  K_2\rvert - (2 ^{r}+1)\lvert  K_1\rvert 
\\
& \ge \lvert  K_2\rvert ^{\epsilon } \lvert  I\rvert ^{1- \epsilon  }  - 2 ^{r+1} \lvert  K_2\rvert
\\
& \ge \tfrac 12  \lvert  K_2\rvert ^{\epsilon } \lvert  I\rvert ^{1- \epsilon  } \ge    \lvert  \pi K_1\rvert ^{\epsilon } \lvert  I\rvert ^{1- \epsilon  }. 
\end{align*}
That is, $ \pi K_1$ meets the criteria for membership in $ \mathcal W\!I$, which is a contradiction to maximality.  
\end{proof}

We now state and prove the first \emph{energy inequality}. 

\begin{lemma}{\textnormal{[Energy Inequality,  I]}}
\label{l:energyI}
For all intervals $ I_0$ and partitions $ \mathcal I$ of $ I _0$ into 
 (not necessarily good) dyadic intervals,  
\begin{equation}\label{e:energyI}
\sum_{I\in \mathcal I}  \sum_{K\in\mathcal W\!I} \mathsf T_{\tau } (Q _{I_0} \setminus Q_K) (x_{Q_K} )^2 E (\sigma, K) ^2 \sigma(K) 
\lesssim \mathscr R ^2 \tau (Q_ {I_0}) . 
\end{equation}
\end{lemma}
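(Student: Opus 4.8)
The plan is to follow the standard template for energy inequalities from the two-weight literature: first use the Monotonicity principle (Lemma~\ref{l:monoI}) to reduce the left side of \eqref{e:energyI} to a testing-type expression, then apply a stopping-time/Carleson-embedding argument to sum over the family $\{K : K\in\mathcal W\!I,\ I\in\mathcal I\}$. First I would fix $I_0$ and observe that, since each $K\in\mathcal W\!I$ satisfies $10\cdot K \subset I$ (or can be arranged to, shrinking by a harmless constant), and since $Q_{I_0}\setminus Q_K$ is supported off the set $V_K$ (here I must check that the Whitney separation $\textup{dist}(K,\partial I)\ge \lvert K\rvert^\epsilon\lvert I\rvert^{1-\epsilon}$ together with $x\in Q_{I_0}\setminus Q_K$ forces $\lvert x_1 - t\rvert \ge x_2$ for $t\in K$ — this is exactly the geometry encoded in Figure~\ref{f:vee}), the equivalence \eqref{e:monoI2} applies with $\varphi = \mathbf 1_{Q_{I_0}\setminus Q_K}$, giving for each good $J'\subset J\subset K$
\begin{equation*}
\bigl\langle \mathsf R^\ast_\tau (Q_{I_0}\setminus Q_K),\, h^\sigma_{J'}\bigr\rangle_\sigma
\simeq \mathsf T_\tau(Q_{I_0}\setminus Q_K)(x_{Q_K})\cdot \Bigl\langle \tfrac{t}{\lvert K\rvert}, h^\sigma_{J'}\Bigr\rangle_\sigma .
\end{equation*}
Squaring and summing over good $J'\subset K$ and recalling the definition \eqref{e:eng-sigma} of $E(\sigma,K)$ then shows
$\mathsf T_\tau(Q_{I_0}\setminus Q_K)(x_{Q_K})^2 E(\sigma,K)^2\sigma(K) \simeq \sum_{J'\subset K,\ J'\ \mathrm{good}} \lvert\langle \mathsf R^\ast_\tau(Q_{I_0}\setminus Q_K), h^\sigma_{J'}\rangle_\sigma\rvert^2$, so the left side of \eqref{e:energyI} is comparable to $\sum_{I\in\mathcal I}\sum_{K\in\mathcal W\!I}\sum_{J'\subset K,\ \mathrm{good}}\lvert\langle \mathsf R^\ast_\tau(Q_{I_0}\setminus Q_K),h^\sigma_{J'}\rangle_\sigma\rvert^2$.

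Next I would replace $Q_{I_0}\setminus Q_K$ by $Q_{I_0}$ in each term at the cost of a lower-order piece: $\mathsf T_\tau(Q_K)(x_{Q_K})$ is controlled, since $K\in\mathcal W\!I\subset I\subset I_0$, by a Poisson-type average of $\tau$ restricted to $Q_K$, and the contribution $\sum_{I,K} \mathsf T_\tau(Q_K)(x_{Q_K})^2 E(\sigma,K)^2\sigma(K)$ is bounded by $\mathscr A_2\,\tau(Q_{I_0})$ using $E(\sigma,K)\lesssim 1$, the first $A_2$ condition in \eqref{e:A2}, and the bounded-overlap property \eqref{e:whitney} of Proposition~\ref{p:overlap} which keeps the cubes $Q_K$ essentially disjoint (weighted by $2^{-r}$). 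For the main term, since the $J'$ ranges over good dyadic subintervals of $K\in\mathcal W\!I$ and these $K$ are pairwise essentially disjoint across all $I\in\mathcal I$ (again by \eqref{e:whitney}), the union over all such $J'$ is a subfamily of all good dyadic intervals inside $I_0$, with controlled overlap, and one has $\mathsf R^\ast_\tau(Q_{I_0}\setminus Q_K)$ agreeing with $\mathsf R^\ast_\tau Q_{I_0}$ modulo the lower-order piece already handled; thus the main term is dominated by $\sum_{J'\subset I_0,\ \mathrm{good}}\lvert\langle \mathsf R^\ast_\tau(Q_{I_0}), h^\sigma_{J'}\rangle_\sigma\rvert^2 \le \lVert \mathsf R^\ast_\tau \mathbf 1_{Q_{I_0}}\rVert^2_\sigma$, which by the dual testing inequality \eqref{e:T*1} is at most $\mathscr T^2\tau(Q_{I_0})$.

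The main obstacle, I expect, is not the Carleson/overlap bookkeeping but the careful verification that the cancellation in \eqref{e:monoI2} may be invoked \emph{simultaneously} for the whole Whitney tower: one must confirm that every $K\in\mathcal W\!I$, for every $I\in\mathcal I$, genuinely sees $Q_{I_0}\setminus Q_K$ as living off $V_K$ (the Whitney condition gives separation from $\partial I$, but $Q_{I_0}$ can contain points near $K$ coming from the interior of $I$ not in $\partial I$ — so I need to split $Q_{I_0}\setminus Q_K = (Q_I\setminus Q_K)\cup(Q_{I_0}\setminus Q_I)$ and treat the ``near'' part $Q_I\setminus Q_K$ via \eqref{e:monoI2}-type cancellation using the Whitney geometry of $K$ inside $I$, and the ``far'' part $Q_{I_0}\setminus Q_I$ via the cruder monotonicity estimate \eqref{e:MONO}, then reassemble). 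Managing the interplay of these two regimes while keeping all error terms of $A_2$-type is the delicate point; everything else is the by-now-standard reduction to the testing constant $\mathscr T$ and the $A_2$ constant $\mathscr A_2$, yielding the bound $\mathscr R^2\tau(Q_{I_0})$ as claimed.
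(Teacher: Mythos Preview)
Your overall strategy---convert the $\mathsf T_\tau$--energy product into Haar coefficients of $\mathsf R^\ast_\tau$ via monotonicity, then invoke the testing hypotheses---matches the paper's. But the step where you apply the equivalence \eqref{e:monoI2} to $\varphi=\mathbf 1_{Q_{I_0}\setminus Q_K}$ is not valid, and your proposed repair does not close the gap.

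The equivalence \eqref{e:monoI2} requires $\varphi$ to be supported on the complement of the cone $V_I$ (with $10\cdot J\subset I$); only the cruder upper bound \eqref{e:MONO} holds under the weaker hypothesis ``supported off $Q_I$''. The set $Q_{I_0}\setminus Q_K$ is \emph{not} supported off $V_K$: any point $(x_1,x_2)$ with $x_1\in K$ and $\lvert K\rvert\le x_2<\lvert I_0\rvert$ lies in $V_K\cap(Q_{I_0}\setminus Q_K)$. Your split $Q_{I_0}\setminus Q_K=(Q_I\setminus Q_K)\cup(Q_{I_0}\setminus Q_I)$ does not help, since $Q_I\setminus Q_K$ still contains the part of the cone over $K$ at heights between $\lvert K\rvert$ and $\lvert I\rvert$; the Whitney separation of $K$ from $\partial I$ says nothing about points \emph{above} $K$. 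And for the ``far'' part you propose using \eqref{e:MONO}, but that inequality goes the wrong way: it bounds Haar coefficients of $\mathsf R^\ast_\tau$ by $\mathsf T_\tau$, whereas here you need to dominate $\mathsf T_\tau$ by something the testing conditions control.

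The paper's resolution is to enlarge the hole from $Q_K$ to the cone $V_{2^sK}$ (with $s=r/2$), so that \eqref{e:monoI2} legitimately applies to $Q_{I_0}\setminus V_{2^sK}$; after passing to Haar coefficients one writes $Q_{I_0}\setminus V_{2^sK}=(Q_{I_0}\setminus Q_{2^sK})-(V_{2^sK}\setminus Q_{2^sK})$, handles the first piece exactly as you suggest (two applications of testing, plus \eqref{e:whitney}), and handles the second piece---the awkward ``cone minus cube'' region---by a further decomposition into a \emph{top} part (heights $\gtrsim \lvert 2^sK\rvert$, controlled by Poisson testing) and a \emph{bottom} part (two small triangles near $\mathbb R$, controlled by $\mathscr A_2$ together with a geometric bounded-overlap lemma for these triangles across the partition). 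The remaining term $\mathsf T_\tau(V_{2^sK}\setminus Q_K)$ is treated by the same top/bottom mechanism. This top/bottom argument is the genuinely new ingredient that your proposal is missing.
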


The argument is broken into  two Lemmas, of which the main one is a version of the inequality above, but with holes in the argument of $ \mathsf T_{\tau }$.

\begin{lemma}\label{l:holesI}  For all intervals $ I_0$ and partitions $ \mathcal I$ of $ I _0$ into 
dyadic intervals, we have 
\begin{equation}\label{e:holesI}
\sum_{I\in \mathcal I}  \sum_{K\in\mathcal W\!I} \mathsf T_{\tau } (Q _{I_0} \setminus V_{2^sK}) (x_{Q_K} )^2 E (\sigma, K) ^2 \sigma(K) 
\lesssim \mathscr R ^2 \tau (Q_ {I_0}) . 
\end{equation}
Here $ V_I $ is defined in \eqref{e:Vdef}, and  $ s = r/2$.  
\end{lemma}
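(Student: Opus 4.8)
The plan is to prove Lemma~\ref{l:holesI} by a stopping-time/Carleson-embedding argument, which is the standard route for energy inequalities of this type. The key observation is that the argument of $\mathsf T_\tau$ on the left side, namely $Q_{I_0}\setminus V_{2^sK}$, only ``sees'' the part of $\tau$ that is far (in the conical sense defining $V_I$) from $K$; this is what will let the $A_2$ and testing conditions interact productively with the energy factors $E(\sigma,K)^2\sigma(K)$. First I would recall from Proposition~\ref{p:overlap} that the Whitney intervals $\{K\in\mathcal W\!I: I\in\mathcal I\}$, taken over all $I$ in the partition $\mathcal I$, have bounded overlap after dilation by $2^r$ (hence certainly after dilation by $2^s=2^{r/2}$); this packing statement is what we will feed into a Carleson-measure estimate.

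**Main steps.** The plan proceeds as follows. (1) Reduce to a single scale or dyadic-block structure: split the sum over $K$ according to which dyadic ancestor of $K$ inside $I_0$ we are looking at, so as to exploit that $\mathsf T_\tau(Q_{I_0}\setminus V_{2^sK})(x_{Q_K})$ is, up to the Poisson-type approximations recorded in \S\ref{s:necessary}, controlled by $\mathsf P(\tau\mathbf 1_{Q_{I_0}\setminus Q_{K'}},K)$ for suitable enlargements $K'$ of $K$ — here the hole $V_{2^sK}$ rather than merely $Q_K$ is exactly what makes the Poisson tail estimate clean, because outside $V_{2^sK}$ the kernel $x_2/(y_2^2+(y_1-x_1)^2+x_2^2)$ behaves like a genuine Poisson kernel at scale $\operatorname{dist}(y,K)$. (2) Apply the testing inequality \eqref{e:T*1}: the quantity $\mathsf T_\tau(\cdot)(x_{Q_K})$ is comparable to $|\mathsf R^*_\tau\mathbf 1_{Q_{I_0}}(t)|$ averaged appropriately over $K$ (this is where the monotonicity computation, Lemma~\ref{l:monoI}, is used to convert the operator $\mathsf T_\tau$ applied at the point $x_{Q_K}$ into pairings with Haar functions supported in $K$, reproducing the energy sum $E(\sigma,K)^2\sigma(K)=\sum_{J\subset K,\,J\text{ good}}\langle t/|K|,h^\sigma_J\rangle_\sigma^2$). (3) Combine the testing bound $\int_{I_0}|\mathsf R^*_\tau\mathbf 1_{Q_{I_0}}|^2\,\sigma\le\mathscr T^2\tau(Q_{I_0})$ with the bounded-overlap property to sum over $K$, and absorb the ``near'' contributions (the part of $\tau$ inside the holes that we threw away but must still account for when passing between $Q_K$ and $V_{2^sK}$) using the $A_2$ condition \eqref{e:A2} together with a geometric series in $s$. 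The telescoping/monotonicity structure of $E(\sigma,K)$ — each good $J$ is counted in exactly one $K\in\mathcal W\!I$ by Proposition~\ref{p:overlap} — guarantees no double counting across the two sums.

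**Where the difficulty lies.** I expect the main obstacle to be step (1)–(2): correctly passing from the truncated operator $\mathsf T_\tau(Q_{I_0}\setminus V_{2^sK})(x_{Q_K})$ to something to which the testing inequality \eqref{e:T*1} literally applies, because the testing inequality is stated for $\mathsf R^*_\tau\mathbf 1_{Q_I}$ (both Riesz components), whereas $\mathsf T_\tau$ is a positive one-component surrogate. The resolution is the monotonicity principle of Lemma~\ref{l:monoI}: for $\varphi$ supported off $V_{2^sK}$ (equivalently $Q_{I_0}\setminus V_{2^sK}\subset$ the complement of $V_{2^sK}$), equation \eqref{e:monoI2} gives the \emph{two-sided} equivalence $|\langle\mathsf R^*_\tau\varphi,h^\sigma_{J'}\rangle_\sigma|\simeq\mathsf T_\tau\varphi(x_{Q_J})\langle t/|J|,h^\sigma_{J'}\rangle_\sigma$, and choosing $\varphi=\mathbf 1_{Q_{I_0}\setminus V_{2^sK}}$ converts the left side of \eqref{e:holesI} into $\sum_K\sum_{J'\subset K,\,\text{good}}|\langle\mathsf R^*_\tau\mathbf 1_{Q_{I_0}\setminus V_{2^sK}},h^\sigma_{J'}\rangle_\sigma|^2$, up to handling the discrepancy between $\mathbf 1_{Q_{I_0}}$ and $\mathbf 1_{Q_{I_0}\setminus V_{2^sK}}$ — and that discrepancy is a Poisson average of $\tau$ over a small cone, hence controlled by $\mathscr A_2$. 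One then invokes Bessel's inequality and the testing bound. A secondary technical point is justifying the interchange that puts $\langle\cdot,h^\sigma_{J'}\rangle$ in canonical form, which is legitimate because of the a priori inequality and \eqref{e:canonical*} assumed throughout \S\ref{s:necessary}. I would organize the writeup so that the geometric lemma on cones (replacing $Q_K$-holes by $V_{2^sK}$-holes costs only a factor controlled by $\mathscr A_2$ and summable in $s$) is isolated first, and then the Carleson embedding via \eqref{e:T*1} is the clean final step.
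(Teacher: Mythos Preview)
Your overall architecture matches the paper's proof: use the two-sided monotonicity \eqref{e:monoI2} to convert the left side of \eqref{e:holesI} into
\[
\sum_{I\in\mathcal I}\sum_{K\in\mathcal W\!I}\sum_{J\subset K}\bigl\langle \mathsf R^*_\tau(Q_{I_0}\setminus V_{2^sK}^0),\,h^\sigma_J\bigr\rangle_\sigma^2,
\]
then write $Q_{I_0}\setminus V_{2^sK}^0 = Q_{I_0} - (\text{correction})$ and use the testing inequality \eqref{e:T*1} on $Q_{I_0}$ for the main piece. That part is fine.

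The gap is in your treatment of the correction. You assert that the discrepancy between $\mathbf 1_{Q_{I_0}}$ and $\mathbf 1_{Q_{I_0}\setminus V_{2^sK}}$ is ``a Poisson average of $\tau$ over a small cone, hence controlled by $\mathscr A_2$''. But $V_{2^sK}^0=Q_{I_0}\cap V_{2^sK}$ is \emph{not} small: it contains the full Carleson box $Q_{2^sK}$ and a wedge reaching up to height $|I_0|$. The $A_2$ condition alone cannot bound $\sum_K\|\mathsf R^*_\tau\mathbf 1_{V_{2^sK}^0}\|_{L^2(K;\sigma)}^2$, because this sum hides a genuine operator bound on a region whose $\tau$-mass has no reason to be small relative to $\sigma(K)/|K|$. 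The paper handles this in two further steps you have not accounted for. First, it subtracts $Q_{2^sK}$ using testing on that box together with the bounded overlap \eqref{e:whitney}, reducing to the wedge $V_{2^sK}^0\setminus Q_{2^sK}$. Second, this wedge is split into a \emph{top} part (height $\gtrsim|2^sK|$) and a \emph{bottom} part (the small triangles near $\mathbb R$): on the top, $\mathsf T_\tau$ is dominated pointwise by the dual Poisson $\mathsf P^*_\tau Q_{I_0}$, so the Poisson testing inequality (not $A_2$) controls it; only the bottom is handled by $A_2$, and even there one needs a separate geometric lemma (Proposition~\ref{p:V2}) showing the bottom triangles have overlap at most $2$ so that $\sum_K|K|\cdot\mathsf T_\tau V_K^{\textup{bottom}}(x_{Q_K})\lesssim\tau(Q_{I_0})$.

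In short, your step (3) conflates three distinct mechanisms --- testing on $Q_{2^sK}$, Poisson testing for the top wedge, and $A_2$ for the bottom wedge --- into a single appeal to $A_2$, and the top-wedge contribution is precisely the piece that $A_2$ cannot reach.
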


\begin{proof}
The sum over the left is over positive quantities, and so it suffices to consider some finite sub-sum, and establish the bound for it. 
But, with a finite sum, we can then invoke \eqref{e:monoI2},  so that 
for each term in the finite sum, 
\begin{equation*}
\mathsf T_{\tau } (Q _{I_0} \setminus V_{2^sK}) (x_{Q_K} ) ^2 E (\sigma, K) ^2 \sigma(K)  
\lesssim
\sum_{J \::\: J\subset K}  \left\langle  {\mathsf R}^{\ast}_\tau (Q _{I_0} \setminus V_{2^sK}) ,  h _{J} ^{\sigma }\right\rangle _{\sigma } ^2  . 
\end{equation*}
Below, we use the notation $ V ^{0} _{K'} := Q _{I_0} \cap V _{K'}$ for intervals $ K'\subset I_0$, in order to ease notation.   
It is sufficient to show that 
\begin{equation}\label{e:Er1}
\sum_{I\in \mathcal I} \sum_{K\in \mathcal WI} \sum_{J \::\: J\subset K}  \left\langle  {\mathsf R}^{\ast}_\tau (Q _{I_0} \setminus V_{2^sK}^0) ,  h _{J} ^{\sigma }\right\rangle _{\sigma } ^2  \lesssim \mathscr R ^2 \tau (Q _{I_0}) . 
\end{equation}
Now, turn to the assumption of testing $ {\mathsf R}^{\ast}_\tau$ on Carleson cubes.   Obviously, 
\begin{align*}
\sum_{I\in \mathcal I} 
\sum_{K\in \mathcal WI}  \bigl\Vert  {\mathsf R}^{\ast}_\tau Q _{2^sK} \bigr\Vert_{ L ^2 (K;  \sigma )} ^2   &  \lesssim 
\mathscr R ^2 \sum_{I\in \mathcal I}  \tau (Q_I) \lesssim   \mathscr R ^2 \tau (Q _{I_0}) , 
\\
\sum_{I\in \mathcal I}  \left\Vert  {\mathsf R}^{\ast}_\tau Q _{I_0} \right\Vert_{ L ^2 (I; \sigma )} ^2   & \lesssim  \mathscr R ^2 \tau (Q _{I_0}) . 
\end{align*}
The first inequality also depends upon \eqref{e:whitney}, and the fact that $ 2 ^{s}K\subset I$.  
Taking the difference, we see that 
\begin{equation}  \label{e:zri}
\sum_{I\in \mathcal I} 
\sum_{K\in \mathcal WI} 
\left\Vert  {\mathsf R}^{\ast}_\tau (Q _{I_0}- Q _ { 2^sK})\right\Vert_{ L ^2 (I, \sigma )} ^2   \lesssim  \mathscr R ^2 \tau (Q _{I_0}) . 
\end{equation}
And so, \eqref{e:Er1} follows from the estimate below,
\begin{equation}\label{e:Er2}
\sum_{I\in \mathcal I}
\sum_{K\in \mathcal WI} \sum_{J \::\: J\subset K}  
\left\langle  {\mathsf R}^{\ast}_\tau (V_{2^sK}^0-Q_ { 2^sK}) ,  h _{J} ^{\sigma }\right\rangle _{\sigma } ^2  \lesssim \mathscr R ^2 \tau (Q _{I_0}) . 
\end{equation}

For the proof of this last estimate, it is convenient to use duality.  For $ K\in\mathcal W\!I$, and functions $ \varphi _K \in L ^2(K, \sigma )$, of integral zero, it follows that $ \varphi _K$ is in the linear span of 
the Haar functions $ \{h ^{\sigma } _{J} \::\: J\subset K,\ \textup{$ J$ is good}\}$.  We should show that 
\begin{equation} \label{e:eerr}
\sum_{I\in \mathcal I}  \sum_{K\in\mathcal W\!I} 
\left\langle  {\mathsf  R } ^{\ast } _{\tau  } (V_{2^sK} ^{0}-Q_ { 2^sK}),  \varphi _K\right\rangle _{\sigma } 
\lesssim \mathscr R \tau (Q _{I_0}) ^{1/2} \left\Vert \sum_{I\in \mathcal I}  \sum_{K\in\mathcal W\!I}  \varphi _K \right\Vert_{\sigma} . 
\end{equation}

First, apply monotonicity from  \eqref{e:monoI1}. Thus, the inner product above is at most   
\begin{align}  \label{e:eM}
\mathsf T_{\tau } (V_{2^sK}^0 - Q_ { 2^sK}) (x_{Q_K} )  \int _{K} \lvert  \varphi _{K}(t)\rvert \; \sigma (dt) .
 \end{align}
Write   $ V_{2^sK} ^0-Q_ { 2^sK} $  as the disjoint union of $ V_{K} ^{\textup{top}} \cup V_{K} ^ {\textup{bottom}}$, where $ V_{K} ^{\textup{top}} \equiv \{ (x,y) \in V_{2^sK} ^0- Q_ { 2^sK} \::\: 8y \ge \lvert  { 2^sK}\rvert \}$, that is, $ V_{K} ^{\textup{top}}$ is `away' from the $ x$-axis, see Figure~\ref{f:top}.  
For the top, we have, restricting the integration to $ V ^{\textup{top}}_K$
\begin{align*}
\mathsf T_{\tau } (V ^{\textup{top}}_K) (x_{Q_K} )  
\lesssim \inf _{x \in K} \mathsf P ^{\ast} _{\tau } Q _{I_0} (x).
\end{align*}
Therefore, it follows that 
\begin{align*}
\textup{LHS} ^{\textup{top}}\eqref{e:eerr} 
& \lesssim 
\sum_{I\in \mathcal I}
\sum_{K\in \mathcal WI}  \inf _{x \in K} \mathsf P ^{\ast} _{\tau } Q _{I_0} (x) 
\int _{K} \lvert  \varphi _{K}(t)\rvert \; \sigma (dt) 
\\& 
\lesssim \sum_{I\in \mathcal I}
\sum_{K\in \mathcal WI}  \inf _{x \in K} \mathsf P ^{\ast} _{\tau } Q _{I_0} (x)   \sigma (K)  ^{1/2} 
\lVert \varphi _K\rVert_{\sigma} .
\end{align*}
An application of Cauchy--Schwarz, and using the testing conditions for the  Poisson operator will complete the proof.  

For the set $ V ^{\textup{bottom}}_K$,  we just use the $ A_2$ condition in this form: 
\begin{equation*}
\mathsf T_{\tau } V_{K} ^{\textup{bottom}} (x_{Q_K} ) \frac {\sigma (K)} {\lvert  K\rvert } \lesssim \mathscr A_2.  
\end{equation*}
Thus, 
\begin{align*}
\textup{LHS} ^{\textup{bottom}}\eqref{e:eerr} 
& \lesssim 
\sum_{I\in \mathcal I}
\sum_{K\in \mathcal WI}   
\mathsf T_{\tau } V_{K} ^{\textup{bottom}} (x_{Q_K} ) \int _{K} \lvert  \varphi _{K}(t)\rvert \; \sigma (dt)  
\\& \lesssim  
\sum_{I\in \mathcal I}
\sum_{K\in \mathcal WI}   
\mathsf T_{\tau } V_{K} ^{\textup{bottom}} (x_{Q_K} ) \sqrt {\sigma (K)}  \lVert \varphi _{K}\rVert_{\sigma} 
\\
& \lesssim
\mathscr R 
\sum_{I\in \mathcal I}
\sum_{K\in \mathcal WI}    \sqrt { \lvert  K\rvert  \cdot \mathsf T_{\tau } V_{K} ^{\textup{bottom}} (x_{Q_K} ) } \lVert \varphi _{K}\rVert_{\sigma}  
\\& \lesssim \mathscr R \tau  (Q _{I_0}) ^{1/2} 
 \left\Vert \sum_{I\in\mathcal{I}}\sum_{K\in\mathcal W\!I} \varphi _K \right\Vert_{\sigma } . 
 \end{align*}
 Use the bounded overlap property of the $ V_{2^sK} ^{\textup{bottom}}$  proved in Proposition~\ref{p:V2} to get the last estimate. 
Our proof is complete. 
 
\end{proof}

\begin{figure}
\begin{tikzpicture}
\draw[<->] (-3,0) -- (5.5,0);
\begin{scope}[xshift=-1cm] 
\draw (0,0) rectangle (2,2);  \draw[|-|,very thick] (0,0) -- (2,0); 
\draw[->] (0,0) -- (-2,2);   \draw[->] (2,0) -- (4,2); 
\fill[fill=black, opacity=.4] (0,0) -- (-.75 ,.75) -- (0,.75) -- (0,0);
\fill[fill=black, opacity=.4]  (2.75,.75) -- (2,0) -- (2,.75) --  (2.75,.75);
\end{scope}

\begin{scope}[xshift=3.5cm] 
\draw (0,0) rectangle (1,1);  \draw[|-|, very thick] (0,0) -- (1,0); 
\draw[->] (0,0) -- (-1,1);   \draw[->] (1,0) -- (2,1); 
\fill[fill=black, opacity=.4] (0,0) -- (-.5 ,.5) -- (0,.5) -- (0,0);
\fill[fill=black, opacity=.4]   (1.5,.5) -- (1,.5) -- (1,0) --  (1.5,.5) ;
\end{scope}
\end{tikzpicture}

\caption{Two sets $ V_I ^{\textup{bottom}}$ are indicated in light gray.}
\label{f:top} 
\end{figure}
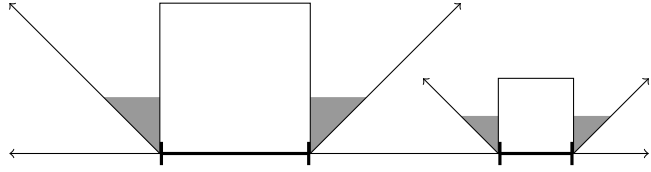

\begin{proposition}\label{p:V2} Let $ \mathcal I$ be a partition of interval $ I_0$ into dyadic intervals.  We have 
\begin{equation*}
\sum_{I\in \mathcal I} V ^{\textup{bottom}}_I (x,y) \le 2 . 
\end{equation*}
\end{proposition}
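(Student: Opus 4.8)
The plan is to prove the bounded overlap of the sets $V^{\textup{bottom}}_I$ by a direct geometric argument. First I would unwind the definitions: for $I\in\mathcal I$ and the parameter $s=r/2$, the set $V_{2^sI}$ is the union over $t$ in the dilated interval $2^sI$ of the ``cone'' $\{(x_1,x_2):\lvert x_1-t\rvert<x_2\}$, which geometrically is the region above the two oblique $45^\circ$ lines erected at the endpoints of $2^sI$ (together with the infinite strip above $2^sI$ itself). The set $V^{\textup{bottom}}_I$ then consists of those points of $V_{2^sI}\cap Q_{I_0}$ that are \emph{not} in the Carleson box $Q_{2^sI}$ and that satisfy $8x_2\ge\lvert 2^sI\rvert$ fails, i.e.\ $8x_2<\lvert 2^sI\rvert$ — so $V^{\textup{bottom}}_I$ lies in the two small triangular wedges at the two endpoints of $2^sI$, at height at most $\tfrac18\lvert 2^sI\rvert$, and outside the vertical strip over $2^sI$. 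These are the light-gray regions in Figure~\ref{f:top}.

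The key step is then to show that a fixed point $(x_1,x_2)$ can lie in at most two such wedges. Fix $(x_1,x_2)$ and suppose it lies in $V^{\textup{bottom}}_I$ for some $I\in\mathcal I$. Since the wedge at an endpoint $e$ of $2^sI$ is contained in $\{(y_1,y_2):0\le y_2,\ \lvert y_1-e\rvert<y_2\}$ but outside the strip over $2^sI$, the point is within horizontal distance $x_2$ of an endpoint of $2^sI$, and on the side of that endpoint away from $2^sI$; moreover $x_2<\tfrac18\lvert 2^sI\rvert$, so $\lvert 2^sI\rvert>8x_2$, which forces $\lvert I\rvert>8\cdot 2^{-s}x_2$. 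The crucial point is that the intervals in $\mathcal I$ are \emph{pairwise disjoint}, being a partition of $I_0$; hence the dilated intervals $2^sI$, $I\in\mathcal I$, while they may overlap, are arranged so that their endpoints near $x_1$ are controlled. Concretely: the point $(x_1,x_2)$ lies either to the left or to the right of (the projection of) the containing structure; I would argue that among all $I\in\mathcal I$ whose wedge contains $(x_1,x_2)$ and which open to the right of $x_1$ (the left endpoint of $2^sI$ lies at distance $<x_2$ to the right… wait, rather: $x_1$ lies to the left of $2^sI$ within distance $x_2$), the left endpoints of the $2^sI$ all lie in the interval $(x_1,x_1+x_2)$, and since the underlying $I$'s are disjoint dyadic intervals of length $>8\cdot2^{-s}x_2$ each contributing a left endpoint, and $2^sI$ has left endpoint equal to $2^s$ times (a translate of) $I$'s left endpoint while $|2^sI|>8x_2$… the interval of admissible left endpoints has length $x_2$ which is smaller than a single $|2^sI|$, forcing at most one such $I$; similarly at most one $I$ whose wedge opens so that $x_1$ is to its right. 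This yields the bound $2$.

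The main obstacle I anticipate is bookkeeping the interaction between the dilation factor $2^s$ and the disjointness of the original partition: the dilated intervals $2^sI$ are \emph{not} disjoint, so I cannot argue about their overlap directly, and I must instead track the (disjoint) pre-images $I$ and translate the endpoint-location constraint on $2^sI$ back to a constraint on $I$, using that $I\in\mathcal I$ are disjoint and dyadic. A clean way to handle this: observe that if $(x_1,x_2)\in V^{\textup{bottom}}_I$ with $x_1$ to the left of $2^sI$, then in particular $x_1<$ (left endpoint of $2^sI$) $\le$ (left endpoint of $I$) is false in general — so I'd instead note that $V^{\textup{bottom}}_I$ is contained in $V_{2^sI}\setminus Q_{2^sI}$, hence in $V_{2^sI}$, and for the wedge at the \emph{left} endpoint $a$ of $2^sI$ being relevant we need $x_1<a$ and $a-x_1<x_2<\tfrac18\lvert 2^sI\rvert$. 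If two distinct such intervals $I,I'\in\mathcal I$ both put $(x_1,x_2)$ in their left wedges, with say $\lvert I\rvert\le\lvert I'\rvert$, then $a,a'\in(x_1,x_1+x_2)$ and $\lvert 2^sI'\rvert>8x_2>8(a-x_1)$; since $I'\supsetneq$ or is disjoint from $I$ (dyadic), and both have endpoints within the tiny window, one checks that $2^sI'$ must contain $2^sI$ or overlap it in a way contradicting that $(x_1,x_2)\notin Q_{2^sI'}$. I expect the argument to come down to this endpoint-window count, and the two wedges (left and right) give the final constant $2$.
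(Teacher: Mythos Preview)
You have misread the definition: in the proposition, $V_I^{\textup{bottom}}$ is attached to the interval $I$ itself, not to a dilation $2^sI$. The parameter $s=r/2$ appears in the proof of Lemma~\ref{l:holesI}, where the index is $K\in\mathcal W\!I$ and the region is $V_{2^sK}\setminus Q_{2^sK}$; the proposition abstracts this to a generic partition $\mathcal I$ with $V_I^{\textup{bottom}}$ the low part of $V_I\setminus Q_I$, i.e.\ the two triangular wedges at the endpoints of $I$ at height $x_2<\lvert I\rvert/8$ (this is also what Figure~\ref{f:top} and the paper's proof depict). Consequently the intervals whose wedges you are counting are the members of $\mathcal I$ themselves, which \emph{are} pairwise disjoint, and your entire worry about overlapping dilations $2^sI$ is misplaced.

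Once that is corrected, your endpoint-window idea goes through cleanly and is a genuine alternative to the paper's argument. Your version: if $(x_1,x_2)$ lies in the right wedge of $I$ with right endpoint $b$, then $b\in(x_1-x_2,x_1)$ and $\lvert I\rvert>8x_2$; two such intervals $I,I'$ would have right endpoints within $x_2$ of each other while each has length exceeding $8x_2$, forcing $I\cap I'\neq\emptyset$, impossible in a partition. Hence at most one right wedge and, symmetrically, at most one left wedge contain $(x_1,x_2)$, giving the bound $2$. The paper instead passes to a sub-partition in which no two intervals share an endpoint (any partition of an interval can be $2$-coloured this way), uses that disjoint dyadic intervals not sharing an endpoint satisfy $\textup{dist}(I_1,I_2)\ge\lvert I_2\rvert$, and then checks directly that the boundary line segments of the facing wedges cannot meet. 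Your counting argument is a bit more direct and avoids the two-colouring step; the paper's argument makes the geometry of the wedges more explicit. Both are short once the setup is right, but you should rewrite your proposal without the spurious dilation, which is what makes your final paragraph devolve into hand-waving.
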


\begin{proof}
See Figure~\ref{f:top} for an illustration.  
The collection $ \mathcal I$ is a partition of dyadic intervals.  Assume that it is a sub-partition, with no two 
dyadic intervals sharing a common endpoint.  Then, the sets $ V_I ^{\textup{bottom}}$ are pairwise disjoint, as we argue. 
Suppose that $ \lvert  I_1\rvert \ge \lvert  I_2\rvert  $, and $ I_2$ lies to the right of $ I_1$. The dyadic 
property implies that $ \textup{dist} (I_1, I_2) \ge \lvert  I_2\rvert $. By translation  and dilation invariance, we can assume that the 
right hand endpoint of $ I_1$ is the origin, and that the left hand endpoint of $ I_2$ is equal to one, which is the length of $ I_2$. 
Then, the right boundary of $ V_{I_1} ^{\textup{bottom}}$ and the left boundary of $ V_{I_2} ^{\textup{bottom}}$ are the two lines 
\begin{equation*}
 \left\{ (t,t/2) \::\: 0\le t \le \tfrac {\lvert  I_1\rvert } {16}\right\}, \quad 
 \textup{and} \quad 
 \left\{ (1-s,s/2) \::\: 0\le s \le \tfrac 1 {16}\right\}. 
\end{equation*}
These two line segments do not  intersect, so the proposition is proved. 
\end{proof}

\begin{lemma}\label{l:hI}  For all  intervals $ I_0$, and partition $ \mathcal I$ of $ I _0$ into dyadic intervals, 
\begin{equation}\label{e:holes-V}
\sum_{I\in \mathcal I}  \sum_{K\in\mathcal W\!I} \mathsf T_{\tau } (V_{2^sK} \setminus Q_K) (x_{Q_K} ) ^2 E (\sigma, K) ^2 \sigma(K) 
\lesssim \mathscr R ^2 \tau (Q_ {I_0}) . 
\end{equation}

\end{lemma}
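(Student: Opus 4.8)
The plan is to exploit the geometry of $V_{2^sK}\setminus Q_K$: this is a union of two thin wedge-like regions sitting over the two endpoints of $K$, scaled out to diameter $\sim 2^s\lvert K\rvert$, with the portion of height $<\lvert K\rvert$ removed. The key observation is that for $K\in\mathcal W\!I$ the dilate $2^sK$ with $s=r/2$ is still comfortably inside $I$ (since $K\Subset_r I$), and hence inside $I_0$, so $V_{2^sK}^0=V_{2^sK}$ up to the cube $Q_{I_0}$. First I would dyadically decompose $V_{2^sK}\setminus Q_K$ into pieces $W_K^{(j)}$, $0\le j\le s$, where $W_K^{(j)}$ consists of the part of the wedges at scale $\sim 2^j\lvert K\rvert$ (i.e.\ $2^{j-1}\lvert K\rvert\le x_2< 2^j\lvert K\rvert$ roughly). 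On each such piece one has the pointwise bound
\begin{equation*}
\mathsf T_\tau W_K^{(j)}(x_{Q_K}) \lesssim 2^{-j}\,\frac{\tau(Q_{2^jK}\setminus Q_{2^{j-1}K})+\tau(\text{adjacent boxes})}{\lvert K\rvert}\cdot\lvert K\rvert^{0}
\end{equation*}
— more precisely $\mathsf T_\tau W_K^{(j)}(x_{Q_K})$ is comparable to the Poisson-type average of $\tau$ restricted to the annular box at scale $2^j$ around $K$, divided by $2^j$ from the extra distance in the kernel. Combining with $E(\sigma,K)\lesssim 1$ and $\sigma(K)$, and squaring, the left side of \eqref{e:holes-V} is dominated by
\begin{equation*}
\sum_{I\in\mathcal I}\sum_{K\in\mathcal W\!I}\sum_{j=1}^{s} 4^{-j}\Bigl(\frac{\tau(R_K^{(j)})}{\lvert K\rvert}\Bigr)^2 \sigma(K),
\end{equation*}
where $R_K^{(j)}$ is the box of side $\sim 2^j\lvert K\rvert$ concentric (in the base) with $K$.

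Next I would invoke the $A_2$ condition in its Poisson form, exactly as in the bottom-region argument of Lemma~\ref{l:holesI}. Since $R_K^{(j)}\subset Q_{2^jK}$ and $2^jK\subset 2^sK\subset I$, one gets $\frac{\tau(R_K^{(j)})}{\lvert 2^jK\rvert}\cdot\frac{\sigma(K)}{\lvert K\rvert}\lesssim\frac{\tau(R_K^{(j)})}{\lvert 2^jK\rvert}\cdot\frac{\sigma(2^jK)}{\lvert 2^jK\rvert}\lesssim\mathscr A_2$; hence
\begin{equation*}
4^{-j}\Bigl(\frac{\tau(R_K^{(j)})}{\lvert K\rvert}\Bigr)^2\sigma(K) = 4^{-j}\cdot 4^{j}\Bigl(\frac{\tau(R_K^{(j)})}{\lvert 2^jK\rvert}\Bigr)^2\sigma(K)\lesssim \mathscr A_2\,\frac{\tau(R_K^{(j)})}{\lvert 2^jK\rvert}\cdot\lvert K\rvert\lesssim \mathscr A_2\, 2^{-j}\,\tau(R_K^{(j)}).
\end{equation*}
So it remains to prove the packing bound $\sum_{I\in\mathcal I}\sum_{K\in\mathcal W\!I}\sum_{j=1}^s 2^{-j}\tau(R_K^{(j)})\lesssim\tau(Q_{I_0})$. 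This is where the bounded overlap of the Whitney collection, Proposition~\ref{p:overlap} (specifically \eqref{e:whitney}, $\bigl\Vert\sum_{K\in\mathcal W\!I}2^rK\bigr\Vert_\infty\lesssim1$), does the work: for fixed $I$ and fixed $j\le s=r/2$, the boxes $R_K^{(j)}$ are contained in the $2^rK$, which have bounded overlap, so $\sum_{K\in\mathcal W\!I}\tau(R_K^{(j)})\lesssim\tau(Q_I)$ — here one must check that $R_K^{(j)}\subset Q_I$, which holds because $2^jK\subset I$ and the vertical extent $2^j\lvert K\rvert\le 2^s\lvert K\rvert\le\lvert I\rvert$. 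Summing the geometric factor $2^{-j}$ over $j$ and then over the partition $\mathcal I$ gives $\sum_{I\in\mathcal I}\tau(Q_I)\lesssim\tau(Q_{I_0})$, completing the estimate.

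The main obstacle I anticipate is the bookkeeping in the annular decomposition: one must be careful that the wedges in $V_{2^sK}$ genuinely live at distance $\sim 2^j\lvert K\rvert$ from $x_{Q_K}$ (so that the kernel of $\mathsf T_\tau$, which behaves like $x_2/\lvert x - x_{Q_K}\rvert^2$ on $V_{2^sK}$, produces the gain $2^{-j}$ rather than merely $O(1)$), and that $R_K^{(j)}$ really does sit inside $Q_I$ so that the overlap bound from \eqref{e:whitney} applies — this uses $s=r/2<r$ in an essential way, exactly mirroring the role of the hole size in Lemma~\ref{l:holesI}. Once the geometry is pinned down, the two analytic inputs (the $A_2$ bound and the Whitney overlap) are applied verbatim as in the proof of Lemma~\ref{l:holesI}, and together Lemma~\ref{l:holesI} and Lemma~\ref{l:hI} combine via $\mathsf T_\tau(Q_{I_0}\setminus Q_K)\le\mathsf T_\tau(Q_{I_0}\setminus V_{2^sK})+\mathsf T_\tau(V_{2^sK}\setminus Q_K)$ and $(a+b)^2\le 2a^2+2b^2$ to yield Lemma~\ref{l:energyI}.
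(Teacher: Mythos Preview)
Your proposal has a genuine gap rooted in a misreading of the set $V_{2^sK}$. By definition \eqref{e:Vdef},
\[
V_I = \bigcup_{t\in I}\{x : |x_1-t|<x_2\}
\]
is an \emph{unbounded} cone region; it is not ``scaled out to diameter $\sim 2^s|K|$'' as you describe. Consequently your annular decomposition $\bigcup_{j=0}^{s} W_K^{(j)}$ only covers the part of $V_{2^sK}^0\setminus Q_K$ up to height $\sim 2^s|K|$; it omits the entire ``top'' portion at heights between $2^s|K|$ and $|I_0|$. (There is also a secondary geometric slip: for $j<s$ the slab $W_K^{(j)}$ has horizontal width $\sim 2^s|K|$, not $2^j|K|$, so your box $R_K^{(j)}$ of side $2^j|K|$ does not contain it.)

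The missing top portion is exactly what the paper handles by writing $V_{2^sK}\setminus Q_K = (V_{2^sK}\setminus Q_{2^sK})\cup (Q_{2^sK}\setminus Q_K)$ and reusing the argument beginning at \eqref{e:eerr}: there the $V_K^{\textup{top}}$ contribution is controlled via $\mathsf T_\tau(V_K^{\textup{top}})(x_{Q_K})\lesssim \inf_{x\in K}\mathsf P^*_\tau Q_{I_0}(x)$ and the \emph{Poisson testing inequality}, not the $A_2$ condition. Your $A_2$-only scheme runs into trouble here because the cones $V_{2^sK}$ for different $K$ overlap arbitrarily at large heights, so no bounded-overlap packing like \eqref{e:whitney} is available. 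Your approach does essentially recover the paper's treatment of $Q_{2^sK}\setminus Q_K$ (which is the $A_2$ + overlap step), but you still need the testing-based argument for the unbounded top of the wedge.
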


\begin{proof}
Observe that  the estimate below 
\begin{equation*}
\sum_{I\in \mathcal I}  \sum_{K\in\mathcal W\!I}\mathsf T_{\tau } (V_{2^sK} - Q _{2^sK}) (x_{Q_K} ) ^2 E (\sigma, K) ^2 \sigma(K) 
\lesssim \mathscr R ^2 \tau (Q_ {I_0})
\end{equation*}
follows from the argument above, beginning at \eqref{e:eerr}, since our first step was to apply monotonicity in the form of \eqref{e:eM}. Now, using the $ \mathscr A_2$ condition, and the bounded overlap property of Proposition~\ref{p:overlap},  one easily sees that 
\begin{equation*}
\sum_{I\in \mathcal I}  \sum_{K\in\mathcal W\!I}\mathsf T_{\tau } ( Q _{2^sK} \setminus Q_K)  (x_{Q_K} ) ^2  \sigma(K) 
\lesssim \mathscr R ^2 \tau (Q_ {I_0}). 
\end{equation*}
That completes the proof. 
\end{proof}

\begin{proof}[Proof of \eqref{e:energyI}]  
This is an immediate combination of Lemma~\ref{l:holesI} and \ref{l:hI}. 
\end{proof}

\subsection{Monotonicity, II}

Below, we will phrase the monotonicity estimate in terms of the $ L ^2 _0 (Q _{J}; \tau )$ norm, which is the norm 
for the subspace of $ L ^2 (Q_J; \tau )$ which is orthogonal to constants.   One should note that we could have used 
this type of definition for $ E (\sigma , I)$ in \eqref{e:eng-sigma}.  But, also note that the $  L ^2 _0 (Q _{J}; \tau )$ norm 
equals 
\begin{equation} \label{e:==}
2 \lVert g \rVert_{ L ^2 _0 (Q _{J}; \tau )} ^2 = 
\mathbb E ^{\tau } _{Q_J} \int _{Q} \lvert   g (x) - g (x')\rvert ^2 \; \tau(dx) .  
\end{equation}

\begin{lemma}{\textnormal{[Monotonicity Property, II]}}
\label{l:monoIIx} 
For an absolute constant $A$, this holds.  
Let $ I$ be an interval, and suppose that $ f \in L ^2 (\mathbb{R};\sigma )$ is not supported on $  I$.  Then, for intervals $   A\cdot J\subset I$, 
\begin{equation}  \label{e:monoII<}
 \lVert \mathsf R _{ \sigma  } f\rVert_{ L ^2 _0 (Q_J, \tau )} 
\lesssim \mathsf P _{\sigma } (\lvert  f\rvert, J) 
\left\lVert \frac {x} {\lvert  J\rvert } \right\rVert_{ L ^2 _0 (Q_J; \tau )}.
\end{equation}
Moreover, if $ f \ge 0$, 
\begin{equation}\label{e:monoII>}
 \mathsf P _{\sigma } (  \lvert  f\rvert ,   J) 
\left\lVert \frac {x} {\lvert  J\rvert } \right\rVert_{ L ^2 _0 (Q_J;\tau )}
\lesssim 
\left\lVert \mathsf R _ \sigma f\right\rVert_{ L ^2  (Q_J ;\tau )} .
\end{equation}

\end{lemma}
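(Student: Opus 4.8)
The plan is to exploit the representation of the $L^2_0(Q_J;\tau)$ norm from \eqref{e:==} as a double average of differences $\mathsf R_\sigma f(x) - \mathsf R_\sigma f(x')$ over $x,x'\in Q_J$, and then to estimate the vector difference of the Riesz kernels $\frac{x-t}{|x-t|^2} - \frac{x'-t}{|x'-t|^2}$ for $x,x'\in Q_J$ and $t$ in the support of $f$, which lies outside $I \supset 10\cdot J$. First I would write, using that $f$ is supported off $I$ and \eqref{e:canonical*},
\[
\mathsf R_\sigma f(x) - \mathsf R_\sigma f(x') = \int_{\mathbb R} \Bigl\{ \frac{x-t}{|x-t|^2} - \frac{x'-t}{|x'-t|^2}\Bigr\} f(t)\,\sigma(dt),
\]
and then use the gradient bound \eqref{e:nabla} (with $j=1$): since $|x-x'| \lesssim |J|$ while $|x-t| \gtrsim |J| + \textup{dist}(t,J)$ for $x\in Q_J$, $t\notin I$, we get
\[
\Bigl| \frac{x-t}{|x-t|^2} - \frac{x'-t}{|x'-t|^2}\Bigr| \lesssim \frac{|x-x'|}{(|J|+\textup{dist}(t,J))^2}.
\]
Integrating against $|f|\,\sigma$ recognizes the Poisson average $\mathsf P_\sigma(|f|,J)$, yielding $|\mathsf R_\sigma f(x) - \mathsf R_\sigma f(x')| \lesssim \frac{|x-x'|}{|J|}\,\mathsf P_\sigma(|f|,J)$; plugging this into \eqref{e:==} and noting $|x-x'|/|J| \simeq \bigl\|\frac{x}{|J|}\bigr\|_{L^2_0(Q_J;\tau)}$ up to the same double-average bound gives \eqref{e:monoII<}.

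For the lower bound \eqref{e:monoII>} the key is that, unlike the Hilbert transform case, we have \emph{both} components $(\mathsf R^1,\mathsf R^2)$ available, and we only need to control the \emph{first} coordinate $\frac{x_1}{|J|}$ in $L^2_0(Q_J;\tau)$. The idea is to test against the right vector field: for $f\ge 0$ supported off $I$ with $J\Subset I$, and $x\in Q_J$, consider the first coordinate
\[
\mathsf R^1_\sigma f(x) = \int_{\mathbb R} \frac{x_1 - t}{|x-t|^2} f(t)\,\sigma(dt).
\]
Subtracting the value at the center $x_{Q_J}$ (which is legitimate since subtracting a constant does not change the $L^2_0$ norm), one shows that
\[
\textup{sgn}(\text{appropriate sign})\cdot\Bigl\{\frac{x_1-t}{|x-t|^2} - \frac{(x_{Q_J})_1 - t}{|x_{Q_J}-t|^2}\Bigr\} \gtrsim \frac{|x_1 - (x_{Q_J})_1|}{(|J|+\textup{dist}(t,J))^2}
\]
for $t$ on one side of $J$ — this is the analogue of the lower bound \eqref{e:Bigger} used in Lemma~\ref{l:monoI}, now in the roles with $\sigma$ and $\tau$ swapped. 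Since $f\ge 0$ and the contribution of $t$ on the far side of $J$ is of lower order (by the same derivative estimate, the cross terms from the two sides are comparable or the dominant side wins), this gives a pointwise lower bound $|\mathsf R^1_\sigma f(x) - \mathsf R^1_\sigma f(x_{Q_J})| \gtrsim \frac{|x_1 - (x_{Q_J})_1|}{|J|}\,\mathsf P_\sigma(f,|J|)$ for $x$ in (say) a definite fraction of $Q_J$; integrating in $\tau$ over $Q_J$ and using \eqref{e:==} produces \eqref{e:monoII>}, because $\|\mathsf R_\sigma f\|_{L^2_0(Q_J;\tau)} \ge \|\mathsf R^1_\sigma f\|_{L^2_0(Q_J;\tau)}$.

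The main obstacle I anticipate is making the lower bound \eqref{e:monoII>} genuinely rigorous: one must verify that the sign of $\frac{x_1-t}{|x-t|^2} - \frac{(x_{Q_J})_1-t}{|x_{Q_J}-t|^2}$ is constant (for $t$ on a fixed side of $J$, as $x$ ranges over $Q_J$) and that it matches the sign of $x_1 - (x_{Q_J})_1$, so that the positivity of $f$ is actually used and no cancellation occurs — this requires a careful monotonicity-in-$x_1$ analysis of the kernel $\frac{x_1 - t}{|x-t|^2}$ that is more delicate than in the Hilbert transform setting because $|x-t|^2 = (x_1-t)^2 + x_2^2$ carries the extra $x_2^2$ term. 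A secondary point is checking that the condition $J\Subset I$ (rather than merely $10\cdot J\subset I$) gives enough separation that $\textup{dist}(t,J) \gtrsim \textup{dist}(t,J) + |J|$ uniformly, so that the lower Poisson bound is not degraded; this is where the stronger containment hypothesis is needed. Everything else is a routine application of \eqref{e:nabla}, \eqref{e:canonical*}, and \eqref{e:==}.
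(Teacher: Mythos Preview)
Your argument for the upper bound \eqref{e:monoII<} is fine and matches the paper's: a direct kernel-difference bound via \eqref{e:nabla} and \eqref{e:==}.

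For the lower bound \eqref{e:monoII>} there is a genuine gap. You propose to use only the first component $\mathsf R^1$, arguing that $\lVert \mathsf R_\sigma f\rVert_{L^2_0}\ge \lVert \mathsf R^1_\sigma f\rVert_{L^2_0}$ and then establishing a pointwise lower bound for $\lvert \mathsf R^1_\sigma f(x)-\mathsf R^1_\sigma f(x_{Q_J})\rvert$. This fails for two reasons. First, subtracting the value at the center gives $\lVert \mathsf R^1_\sigma f - \mathsf R^1_\sigma f(x_{Q_J})\rVert_{L^2(Q_J;\tau)}\ge \lVert \mathsf R^1_\sigma f\rVert_{L^2_0(Q_J;\tau)}$, which is the wrong direction for a lower bound on the $L^2_0$ norm; you must instead work with the full double-average in \eqref{e:==} and bound $\lvert \mathsf R^1_\sigma f(x)-\mathsf R^1_\sigma f(x')\rvert$ for \emph{all} pairs $x,x'\in Q_J$. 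Second, and more seriously, that pointwise two-point bound does not hold using $\mathsf R^1$ alone. By the mean value theorem,
\[
\mathsf R^1_\sigma f(x)-\mathsf R^1_\sigma f(x')=(x_1-x'_1)\,\partial_{x_1}\mathsf R^1_\sigma f(z)+(x_2-x'_2)\,\partial_{x_2}\mathsf R^1_\sigma f(z),
\]
and while $\partial_{x_1}\mathsf R^1_\sigma f$ has definite sign and size $\simeq \lvert J\rvert^{-1}\mathsf P_\sigma(f,J)$, the cross term $\partial_{x_2}\mathsf R^1_\sigma f$ is only small of order $c_r\lvert J\rvert^{-1}\mathsf P_\sigma(f,J)$. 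When $\lvert x_2-x'_2\rvert$ is large compared with $\lvert x_1-x'_1\rvert$ (which is allowed in $Q_J$), the cross term can cancel the main term entirely, so $\mathsf R^1_\sigma f(x)-\mathsf R^1_\sigma f(x')$ can vanish while $\lvert x_1-x'_1\rvert>0$. Your ``definite fraction of $Q_J$'' escape does not help either, because $\tau$ is an arbitrary weight and may concentrate on the bad set.

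The paper's resolution is exactly the point you allude to in your opening paragraph but then abandon: one must use \emph{both} components. The key differential inequalities are that for $\{j,k\}=\{1,2\}$ one has $\lvert\partial_{x_j}\mathsf R^j_\sigma f\rvert\gtrsim \lvert J\rvert^{-1}\mathsf P_\sigma(f,J)$ while $\lvert\partial_{x_k}\mathsf R^j_\sigma f\rvert\le c_r\lvert J\rvert^{-1}\mathsf P_\sigma(f,J)$ with $c_r$ small. Then for $\lvert x-y\rvert<4\lvert x_1-y_1\rvert$ one uses $\mathsf R^1$ (the cross term is harmless since $\lvert x_2-y_2\rvert\lesssim\lvert x_1-y_1\rvert$), and for $\lvert x-y\rvert\ge 4\lvert x_1-y_1\rvert$ one switches to the Poisson component $\mathsf R^2$, whose dominant derivative is in the $x_2$ direction; in that regime $\lvert x_2-y_2\rvert\gtrsim\lvert x_1-y_1\rvert$, so the resulting bound still dominates $\lvert x_1-y_1\rvert\lvert J\rvert^{-1}\mathsf P_\sigma(f,J)$. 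This dichotomy is what makes the full vector $\mathsf R_\sigma f$ appear on the right of \eqref{e:monoII>} while only $x_1$ appears on the left.
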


\begin{proof}
The canonical value of the inner products \eqref{e:canonical*} is used.  
The first inequality \eqref{e:monoII<} is simple. For coordinates $ j=1,2$, the function $ \mathsf R _ \sigma  ^{j} f $ is $ C ^2 $ 
and real-valued on $ Q_J$.  
It follows from \eqref{e:nabla}, and the mean value theorem that for any $ x, y = (y_1, y_2)\in Q_J$, there is an $ z = (z_1, z_2)$, on the line between $ x$ and $  y$ 
so that 
\begin{equation} \label{e:RR}
\mathsf R _{\sigma }  ^{j} f ( x)
-\mathsf R _ \sigma   ^{j} f (  y ) 
= (x- y ) \cdot \nabla \mathsf R _{\sigma } ^{j}  (z).   
\end{equation}
By inspection, 
\begin{equation*}
\left\lvert  \nabla \mathsf R _ \sigma   ^{j} f (z)\right\rvert \lesssim 
\frac {1} {\lvert  J\rvert } \cdot P _{\sigma } (\lvert  f\rvert, J ).  
\end{equation*}
So \eqref{e:monoII<} follows from \eqref{e:==}.  

\smallskip 
For the reverse inequality \eqref{e:monoII>}, we treat two cases separately.  
Assume first that 
\begin{equation}\label{e:qhalf}
\lVert x_1\rVert _{L ^2 _0 (Q_J; \tau )} \geq \tfrac 12 \lVert x \rVert _{L ^2 _0 (Q_J; \tau )}.  
\end{equation}
Then, we will show that for $ x= (x_1, x_2), x' = (x_1 ', x_2') \in Q _{J}$
\begin{align}
P _{\sigma } (f, J)  \Bigl\lVert  \frac {x_1} {\lvert  J\rvert } \Bigr\rVert _{L ^2 _0 (Q_J ; \tau )} 
& = \sqrt 2  
P _{\sigma } (f, J)  \Bigl\lVert  \frac {x_1 '-x_1} {\lvert  J\rvert } \Bigr\rVert _{L ^2 _0 (Q_J \times Q_J ; \tau \times \tau )} 
\\  \label{e:qgrad}
& \lesssim 
\lVert  R _{\sigma } ^1 f (x') - R _{\sigma } ^1 f (x)\rVert _{L ^2 _0 (Q_J \times Q_J ; \tau \times \tau )} 
 \leq 2 
 \lVert  R _{\sigma } ^1 f \rVert _{L ^2 _0 (Q_J   ;  \tau )} .  
\end{align}
This completes the proof of \eqref{e:monoII>} subject to \eqref{e:qhalf} being true.

It remains to prove \eqref{e:qgrad}. By the mean value theorem, we have 
for some  $ z = z(x, x')$  between $ x$ and $ x'$
\begin{align}\label{e:qqgrad}
 R _{\sigma } ^1 f (x_1', x_2') - R _{\sigma } ^1 f (x_1, x_2) 
& =  
 \begin{bmatrix}
x_1' - x_1 \\  x_2 ' - x_2 
\end{bmatrix} \cdot \nabla R _{\sigma } ^2 f (z), 
\\
& =
\begin{bmatrix}
x_1' - x_1 \\  x_2 ' - x_2 
\end{bmatrix} \cdot
\int _{\mathbb R \setminus I} 
\begin{bmatrix}
   \frac {- (z_1-t) ^2 + z_2 ^2 } { [(z_1-t) ^2 + z_2 ^2 ] ^2 } \\
  -  2\frac { z_2  (z_1-t)} { [(z_1-t) ^2 + z_2 ^2 ] ^2 } 
\end{bmatrix} 
f (t) \; \sigma (dt)
\\&= 
(x_1' - x_1)  E_1  f (z) 
+
(x_2' - x_2)  E_2 f(z) . 
\end{align}
For the second term, estimate 
\begin{align*}
\lVert (x_2' - x_2)  E_2 f(z) \rVert _{L ^2  (Q_J \times Q_J ; \tau \times \tau )} 
& \leq \lVert E_2 f(z) \rVert_{L ^\infty   (Q_J \times Q_J )} \lVert x_2' - x_2 \rVert _{L ^2  (Q_J \times Q_J ; \tau \times \tau )} 
\\
& \leq \Bigl\lVert \frac {x_2} {\lvert J\rvert} \Bigr\rVert _{L ^2_0  (Q_J;\tau)} 
{\lvert J\rvert} \cdot   \lVert E_2 f(z) \rVert_ {L ^\infty   (Q_J \times Q_J )}
\\&\lesssim A^{-1} 
\Bigl\lVert \frac {x_2} {\lvert J\rvert} \Bigr\rVert _{L_0 ^2  (Q_J;\tau)}  \mathsf{P}_{\sigma} (f,J). 
\end{align*}
The last inequality follows by inspection.  
The leading term of $A^{-1}$ follows from the assumption that $A\cdot J\subset I$, 
which implies that $ z_2 ^2 \leq \lvert J\rvert ^2 
\leq A_2^{-2} (z_1-t)^2$ in the integral defining $E_2f$.  This is  a small estimate.  

For the term $E_1f $, let $y$ be the center of $Q_J$, and write 
\begin{align*}
E_1f = \mathsf{P}_{\sigma} f (y) + E_3 f (z) 
\end{align*}
where, again by inspection, we will have 
\begin{equation}
\lVert (x_1' - x_1)  E_3 f(z) \rVert _{L ^2  (Q_J \times Q_J ; \tau \times \tau )} 
\lesssim A^{-1} 
\Bigl\lVert \frac {x_2} {\lvert J\rvert} \Bigr\rVert _{L_0 ^2  (Q_J;\tau)}  \mathsf{P}_{\sigma} (f,J). 
\end{equation}
On the other hand, the term $ (x_1' - x_1) \mathsf{P}_\sigma f (y)$ is the main term that we want. 
That is, we have 
\begin{equation}
\Bigl\lVert 
(x_1' - x_1) \mathsf{P}_\sigma f (y) 
\Bigr\rVert  _{L ^2  (Q_J \times Q_J ; \tau \times \tau )} 
\leq 
\lVert  R _{\sigma } ^2 f \rVert _{L ^2 (Q_J   ;  \tau )} -C A ^{-1} 
\Bigl\lVert \frac {x_1} {\lvert J\rvert} \Bigr\rVert _{L_0 ^2  (Q_J;\tau)}  \mathsf{P}_{\sigma} (f,J). 
\end{equation}
For a large enough constant $A$, we have completed the proof of \eqref{e:qgrad}. 
\smallskip 

If \eqref{e:qhalf} does not hold, we necessarily have 
\begin{equation*}
\lVert x_2\rVert _{L ^2 _0 (Q_J; \tau )} \geq \tfrac 12 \lVert x \rVert _{L ^2 _0 (Q_J; \tau )}.  
\end{equation*}
But, then there is no cancellation needed, as we can compare directly to the second coordinate of the Riesz transform, which is the Poisson integral. We have 
\begin{align}
 \mathsf P _{\sigma } (  \lvert  f\rvert ,   J) 
\left\lVert \frac {x_2} {\lvert  J\rvert } \right\rVert_{ L ^2 _0 (Q_J;\tau )}
&\leq 
 \mathsf P _{\sigma } (  \lvert  f\rvert ,   J) 
\left\lVert \frac {x} {\lvert  J\rvert } \right\rVert_{ L ^2 (Q_J;\tau )}
\\
&\lesssim 
\lVert R^2_ \sigma f \rVert _{ L ^2 (Q_J;\tau )} . 
\end{align}

\end{proof}

\subsection{Energy Inequality, II}

We focus on the energy inequality in the dual setting. 
For an interval $ I$, we define the energy in a different, but equivalent, way than before, 
\begin{equation}  \label{e:Etau} 
E (\tau ,I) ^2 \equiv \tau (Q_I) ^{-1}  \left\lVert   \frac { x} {\lvert  I\rvert } \right\rVert_{ L ^2 _{0} (Q_I, \tau )} ^2 . 
\end{equation}
Keep in mind that $ x\in \mathbb R ^2 _+$.  Here $ L ^2  _{0} (Q_I, \tau )$ denotes the norm of the function, less its mean.  

\begin{lemma}{\textnormal{[Energy Inequality, II]}}
\label{l:energyII}
For any interval $ I_0$ and partition $ \mathcal P$ of $ I_0$ into dyadic intervals, 
\begin{equation}\label{e:energyII} 
\sum_{I\in \mathcal P}
\sum_{K\in \mathcal W\!I}
\mathsf P _{\sigma }(I_0  \setminus K, K) ^2 E (\tau ,K) ^2 \tau (Q_K) \lesssim \mathscr R ^2 \sigma  (Q_ {I_0}) . 
\end{equation}
\end{lemma}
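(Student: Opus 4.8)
\textbf{Proof plan for the Energy Inequality, II (Lemma~\ref{l:energyII}).}
The plan is to mirror the structure of the proof of Energy Inequality, I (Lemma~\ref{l:energyI}), but now exploiting Monotonicity, II (Lemma~\ref{l:monoIIx}), which is the tool adapted to the direction $ L ^2 (\mathbb R ; \sigma ) \to L ^2 (\mathbb R ^2 _+; \tau )$. First I would replace $ E (\tau ,K) ^2 \tau (Q_K) = \lVert x/\lvert K\rvert \rVert_{L ^2 _0 (Q_K; \tau )} ^2 $ and observe, via \eqref{e:monoII>} applied with the test function $ f = \mathbf 1_{I_0 \setminus K}\ge 0$ (which is not supported on $ K$, and for which $ K \Subset I$ when $ K\in\mathcal W\!I$ by construction), that each summand is dominated:
\begin{equation*}
\mathsf P _{\sigma }(I_0 \setminus K, K) ^2 E (\tau ,K) ^2 \tau (Q_K) \lesssim \lVert \mathsf R _{\sigma } (\mathbf 1_{I_0\setminus K}) \rVert_{ L ^2 _0 (Q_K; \tau )} ^2 .
\end{equation*}
Since all terms are nonnegative it suffices to bound a finite subsum, legitimizing the use of \eqref{e:monoII>} and \eqref{e:monoII<}.

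Next, using the martingale-difference characterization \eqref{e:==} of the $ L ^2 _0 (Q_K; \tau )$ norm, the right side above is controlled by $ \sum _{Q\subseteq Q_K} \lVert \Delta ^{\tau } _{Q} \mathsf R _{\sigma } (\mathbf 1_{I_0 \setminus K}) \rVert_{\tau } ^2 $, so the target reduces to
\begin{equation*}
\sum_{I\in \mathcal P}\sum_{K\in \mathcal W\!I} \sum_{Q \::\: Q\subseteq Q_K} \bigl\lVert \Delta ^{\tau } _{Q} \mathsf R _{\sigma } (\mathbf 1_{I_0\setminus K}) \bigr\rVert_{\tau } ^2 \lesssim \mathscr R ^2 \sigma (Q _{I_0}) ,
\end{equation*}
where I note $ \sigma (Q _{I_0}) = \sigma (I_0)$ since $ \sigma $ lives on $ \mathbb R $. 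Now I would run the ``subtract the testing function'' trick exactly as in Lemma~\ref{l:holesI}: write $ \mathbf 1_{I_0\setminus K} = \mathbf 1_{I_0} - \mathbf 1_{K}$, and estimate the full square $ \lVert \mathsf R _{\sigma }(\mathbf 1_{I_0}) \rVert_{L ^2 (Q_{I_0}; \tau )} ^2 \le \mathscr T ^2 \sigma (I_0)$ by testing \eqref{e:T1}, while $ \sum _{I}\sum_{K\in\mathcal W\!I} \lVert \mathsf R _{\sigma }(\mathbf 1_{K}) \rVert_{L ^2 (Q_K; \tau )} ^2 \lesssim \mathscr T ^2 \sum _{I}\sum_{K\in\mathcal W\!I}\sigma (K) \lesssim \mathscr T ^2 \sigma (I_0)$ using \eqref{e:T1} on each $ K$ together with the bounded overlap \eqref{e:whitney} and the fact that the $ \mathcal W\!I$ over $ I\in\mathcal P$ are contained in the disjoint $ I$'s. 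Taking the difference and using orthogonality of the martingale differences $ \Delta ^{\tau }_Q $ over $ Q\subseteq Q_K$ leaves the ``near'' contribution
\begin{equation*}
\sum_{I\in \mathcal P}\sum_{K\in \mathcal W\!I} \sum_{Q \::\: Q\subseteq Q_K} \bigl\lVert \Delta ^{\tau } _{Q}\bigl( \mathsf R _{\sigma }(\mathbf 1_{K} \setminus \{\text{far part}\}) \bigr) \bigr\rVert_{\tau } ^2 ,
\end{equation*}
which I would split geometrically, as in the ``top/bottom'' decomposition of Lemma~\ref{l:holesI}: the part of $ Q_K$ well inside the cone region $ V_K$ (near the support of $ \mathbf 1_K$) is handled by the $ \mathscr A_2 $ condition \eqref{e:A2} (the second line) against the bounded overlap of the $ K\in\mathcal W\!I$, and the remaining part is estimated by $ \inf _{x\in Q_K}\mathsf P _{\sigma }\mathbf 1_{I_0}$-type bounds, absorbed by Cauchy--Schwarz and the Poisson testing condition embedded in $ \mathscr A_2 $.

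The main obstacle, I expect, is the asymmetry flagged in the remark after Lemma~\ref{l:monoIIx}: the lower bound \eqref{e:monoII>} only sees $ x_1/\lvert J\rvert $ on the left yet needs \emph{both} components of $ \mathsf R _\sigma $ on the right, so one must be careful that the quantity being estimated, $ E (\tau , K) ^2 $, genuinely involves the full vector $ x = (x_1, x_2) $, not just $ x_1 $; I would address this by first reducing $ E (\tau , K) $ to its $ x_1 $-component (the $ x_2 $-component of $ x/\lvert K\rvert $ has oscillation $ \lesssim 1 $ over $ Q_K $ trivially, contributing only $ \sum \tau (Q_K) \lesssim \tau (Q_{I_0}) $, which is \emph{not} what we want --- so actually the reduction must go the other way: bound $ \lVert x_2/\lvert K\rvert \rVert_{L^2_0(Q_K;\tau)} ^2 \lesssim \tau(Q_K) $ only after noting this is absorbed since $ \mathsf P _\sigma(I_0\setminus K, K)\lesssim 1 $ forces nothing --- here one instead uses that the $ x_2 $ oscillation times $ \mathsf P _\sigma(I_0\setminus K,K)^2 $ is handled directly by the $ A_2 $ term, i.e. $ \sum_{I}\sum_K \mathsf P_\sigma(I_0\setminus K, K)^2 \tau(Q_K) \lesssim \mathscr A_2\, \sigma(I_0) $ by the first line of \eqref{e:A2} summed with bounded overlap). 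So the genuinely new content is confined to the $ x_1 $-part, where \eqref{e:monoII>} applies cleanly. A secondary technical point is verifying $ K\Subset I $ (not merely $ 10\cdot K\subset I $) for $ K\in\mathcal W\!I $, so that the hypothesis of \eqref{e:monoII>} is met; this follows from the definition of $ \mathcal W\!I $ since $ K\Subset_r I $. The rest is a routine repackaging of the Lemma~\ref{l:holesI}--Lemma~\ref{l:hI} argument with the roles of $ \sigma $ and $ \tau $ interchanged.
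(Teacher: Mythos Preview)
Your plan for the $x_1$-part of the energy --- use \eqref{e:monoII>} to pass to $\lVert\mathsf R_\sigma(\mathbf 1_{I_0\setminus K})\rVert_{L^2_0(Q_K;\tau)}$, then add and subtract $\mathbf 1_K$ and invoke testing --- is the paper's argument (modulo a minor point: you must first enlarge the hole from $K$ to $2^rK$ via $A_2$ and \eqref{e:whitney}, since \eqref{e:monoII>} requires $K$ to be \emph{strongly} contained in an interval on which $f$ vanishes, and $f=\mathbf 1_{I_0\setminus K}$ does not give that).

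The genuine gap is the $x_2$-contribution. You bound $\lVert x_2/|K|\rVert_{L^2_0(Q_K;\tau)}^2\le\tau(Q_K)$ and then assert $\sum_{I,K}\mathsf P_\sigma(I_0\setminus K,K)^2\,\tau(Q_K)\lesssim\mathscr A_2\,\sigma(I_0)$ ``by bounded overlap''. This does not follow: one application of $A_2$ leaves $\sum_K|K|\,\mathsf P_\sigma(I_0\setminus K,K)=\int_{I_0}\sum_{K\not\ni t}\frac{|K|^2}{(|K|+\textup{dist}(t,K))^2}\,\sigma(dt)$, and the inner sum is \emph{not} uniformly bounded over partitions of $I_0$ (if the partition accumulates at $t$, each nearby $K$ contributes $\simeq 1$). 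The bounded overlap \eqref{e:whitney} controls $\sum\mathbf 1_{2^rK}$, not this Poisson-weighted sum. The paper's treatment of the $x_2$-part rests on a structural observation absent from your plan: the second Riesz component $\mathsf R^2_\sigma$ \emph{is} the Poisson integral, and for $x\in Q_K$ one has the pointwise lower bound $\mathsf R^2_\sigma I_0(x)\gtrsim\tfrac{x_2}{|K|}\,\mathsf P(\sigma I_0,K)$, whence $\mathsf P(\sigma I_0,K)^2\lVert x_2/|K|\rVert_{L^2_0(Q_K;\tau)}^2\lesssim\int_{Q_K}(\mathsf R^2_\sigma I_0)^2\,d\tau$, which sums to $\lesssim\mathscr T^2\sigma(I_0)$ by testing. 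The factor $x_2/|K|$ you discard is precisely what converts the scale-$|K|$ Poisson average into the pointwise Poisson integral. Your ``top/bottom'' idea from Lemma~\ref{l:holesI} does not transfer here either, since the argument of $\mathsf R_\sigma$ lives on $\mathbb R$, not $\mathbb R^2_+$; as the paper emphasizes, the two energy inequalities are genuinely asymmetric and not obtained from one another by interchanging $\sigma$ and $\tau$.
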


\begin{proof}
We can assume that $2^r >A$, where $A$ is the constant of Lemma \ref{l:monoIIx}.  
Using the $ A_2$ inequality, we can enlarge the holes, namely, 
\begin{align*}
\sum_{I\in \mathcal P} 
\sum_{K\in \mathcal W\!I} 
\mathsf P (\sigma  \cdot 2^r K \setminus K  , K) ^2  \tau (Q_K) &\lesssim  \mathscr A _2  
\sum_{I\in \mathcal P} \sum_{K\in \mathcal W\!I}  \lvert  K\rvert \cdot  
\mathsf P (\sigma  \cdot 2^r K ,K) 
\\& \lesssim \mathscr A_2 \sum_{I\in \mathcal P} \sum_{K\in \mathcal W\!I}  \sigma ({2 ^{r}K}) \lesssim \mathscr A _2  \sigma  ({I_0}) . 
\end{align*}
Note that this depends critically on the bounded overlap property \eqref{e:whitney}.  

It remains to consider the sum with the Poisson term being $ \mathsf P (\sigma \cdot (I_0 -2 ^r K), K)$.  
It suffices to prove the estimate with $ \mathcal P$ a finite sub-partition of $I_0 $, and the assumption that 
each $ \mathcal WI$ is also finite. The constant will be independent of this assumption. 
The  monotonicity property \eqref{e:monoII>} applies, so that it suffices to estimate 
\begin{align*}
\sum_{I\in \mathcal P} \sum_{K\in \mathcal W\!I} &  
 \lVert \mathsf R _ \sigma  (I_0- 2 ^{r} K))\rVert_{ L ^2 (Q_K; \tau )}^2 
 \\
 & \lesssim 
 \sum_{I\in \mathcal P} \sum_{K\in \mathcal W\!I} 
 \lVert \mathsf R _ \sigma  I_0 \rVert_{ L ^2 (Q_K; \tau )}^2 
 +  \lVert \mathsf R _ \sigma  (2 ^{r} K)\rVert_{ L ^2 (Q_K; \tau )}^2 
 \\
& \lesssim 
 \lVert \mathsf R _\sigma  I_0\rVert_{ L ^2 (Q_ {I_0}; \tau )}^2  
 +\sum_{I\in \mathcal P} \sum_{K\in \mathcal W\!I} 
 \lVert \mathsf R _  \sigma (2 ^{r} K)\rVert_{ L ^2 (Q_K; \tau )}^2 . 
 \end{align*}
And these two terms are controlled by the testing inequalities and \eqref{e:whitney}.

\end{proof}

\section{Global to Local Reduction} \label{s:global}

\subsection{Initial Reductions}

We can assume that $ f$ is supported on a (large) interval $ I^0$, and $ g$ is supported on $ Q _{I^0}$.  
By trivial application of the testing inequalities, we can further assume that $ f$ has $ \sigma $-integral zero, 
and $ g$ has $ \tau $-integral zero.  Thus, $ f, g$ are in the span of good adapted Haar functions.  
And we can assume that $ \lvert  I^0\rvert\ge 2 ^{r} \lvert  J\rvert  $ for all $ J$ in the Haar support of $ f$, 
and similarly $ \lvert  I ^{0}\rvert\ge 2 ^{r} \lvert  Q\rvert ^{1/2}   $ for all cubes $ Q$ in the Haar support of $ g$. 
	
Further restrictions on the Haar supports of $ f $ and $ g$ are made, these restrictions are phrased in terms of $ r$ and $ \epsilon $. 
The values needed for $ r$ and $ \epsilon $ are derived from the elementary estimates. 
For an integer $ 0\le s_f < r $ (which plays no further role in the argument),  
assume that 
\begin{equation} \label{e:HS} 
f = \sum_{ \substack{ I\in \mathcal D \::\: I\subset I^0 \\  \log_2 \lvert  I\rvert \in  r  \mathbb Z +s_f +1  }}
\Delta ^{\sigma } _{I} f \,,  
\end{equation}
and let $ \mathcal D _{f}^r \equiv \{ I\in \mathcal D \::\: I\subset I^0\,\  \log_2 \lvert  I\rvert \in r \mathbb Z +s_f \}$.  
Thus, $ \mathcal D _{f}^r$ is a grid containing all the children of intervals  in the Haar support of $ f$.  
Likewise, 
for an integer $ 0\le s_g < r $,   assume that 
\begin{equation*}
g= \sum_{ \substack{ Q\in \mathcal D^2 \::\: Q\subset Q_{I^0} \\ \log_2 \lvert  Q\rvert \in r  \mathbb Z +s_g +1  }}
\Delta ^{\tau  } _{Q} g \,,  
\end{equation*}
and let $ \mathcal D _{g}^r \equiv \{ I\in \mathcal D \::\: I\subset I^0\,\  \log_2 \lvert  I\rvert \in r \mathbb Z +s_g\}$.  Note that this is the projection of of the associated squares in the upper half plane to the real line.    
Our specificity about the martingale difference support for $ f$ and $ g$ has the purpose of easing the technical 
burdens at different points in the proof below.  
Consequently, we will reference the grids $ \mathcal D _{f}^r$ and $ \mathcal D _{g}^r$ when appropriate.

Let $ P _{\textup{Car}} ^{\tau } g \equiv \sum_{I \in \mathcal D} \Delta ^{\tau } _{Q_I} g $, where  the sum is only over Carleson cubes. 
It suffices to consider (good) functions in the range of this projection.  This proposition is proved in \S\ref{s:elementary}.

\begin{proposition}\label{p:carlesonCubes}  The following estimate holds: 
\begin{equation}\label{e:carlesonCubes}
\left\Vert {\mathsf R}^{\ast}_\tau (g - P ^{\tau }  _{\textup{Car}} g)\right\Vert_{\sigma } \lesssim \mathscr T \lVert g\rVert_{\tau } . 
\end{equation}
\end{proposition}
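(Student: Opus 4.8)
The plan is to show that the "non-Carleson" part of $g$, namely $g - P^\tau_{\mathrm{Car}}g$, is built entirely out of martingale differences $\Delta^\tau_Q g$ for cubes $Q = I \times |I|([0,1)+n)$ with $n \ge 1$, and that ${\mathsf R}^\ast_\tau$ applied to such a piece is controlled by the dual testing constant together with the $A_2$ constant. The key geometric observation is that if $Q \in \mathcal D^2$ is \emph{not} a Carleson cube, then $Q$ is separated from the real line: $\mathrm{dist}(Q,\mathbb R\times\{0\}) \gtrsim \ell(Q)$, and in fact $Q$ sits inside the dilate $3Q_I$ of the Carleson cube over its "shadow" $I$ with $\ell(Q) \simeq \mathrm{dist}(Q,\mathbb R)$ comparable to its height. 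So the kernel $\frac{x-t}{|x-t|^2}$ for $x \in Q$, $t \in \mathbb R$ is smooth on the scale of $Q$ in a way that is \emph{uniform} — there is no singularity to fight.

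First I would group the non-Carleson cubes by their shadow: for each dyadic interval $I$, collect the tower $\mathcal T_I \equiv \{ Q = I\times |I|([0,1)+n) : n \ge 1\}$, and write $g - P^\tau_{\mathrm{Car}} g = \sum_I \sum_{Q\in\mathcal T_I}\Delta^\tau_Q g =: \sum_I g_I$. Each $g_I$ is supported on the tower over $I$ and has $\tau$-mean zero on $Q_I^{\mathrm{above}} \equiv I\times[|I|,\infty)$ at the coarsest relevant scale; more usefully, $\|g_I\|_\tau^2 = \sum_{Q\in\mathcal T_I}\|\Delta^\tau_Q g\|_\tau^2$ and these are orthogonal across $I$, so $\sum_I \|g_I\|_\tau^2 = \|g - P^\tau_{\mathrm{Car}}g\|_\tau^2 \le \|g\|_\tau^2$. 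It then suffices to bound $\bigl\|\sum_I {\mathsf R}^\ast_\tau g_I\bigr\|_\sigma$, and for this I would test against an arbitrary $\phi \in L^2(\mathbb R;\sigma)$, decompose $\phi = \sum_L \Delta^\sigma_L\phi$, and split $\langle {\mathsf R}^\ast_\tau g_I, \Delta^\sigma_L\phi\rangle_\sigma$ according to the relative position of $L$ and $I$. The point is that $g_I$ lives \emph{strictly above} the real line, at distance $\gtrsim |I|$, so for \emph{any} interval $L$ the kernel restricted to $\mathrm{supp}(g_I)\times L$ is smooth: one can invoke a monotonicity-type estimate — either the local one (when $L$ is well inside the shadow, using that $g_I$ has mean zero and \eqref{e:nabla} to gain a factor $|L|/(\text{height of }Q)$) or a crude Poisson bound (when $L$ is far or large) — to reduce everything to a Poisson–$A_2$ bookkeeping argument exactly as in the energy inequalities of \S\ref{s:necessary}.

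Concretely, I expect the estimate to come down to two sums. The "diagonal/local" part, where $L \subset 3I$, is handled by the dual testing inequality \eqref{e:T*1}: since $\bigcup_{Q\in\mathcal T_I}Q \subset Q_I \setminus (I\times[0,|I|))$ is contained in the single Carleson box $Q_I$ up to bounded dilation, $\|{\mathsf R}^\ast_\tau \mathbf 1_{Q_I}\|_{L^2(I;\sigma)} \le \mathscr T\, \tau(Q_I)^{1/2}$, and one transfers this to $g_I$ by the usual martingale/quasi-orthogonality argument (replacing $\mathbf 1_{Q_I}$ by the $L^2$-normalized pieces $\Delta^\tau_Q g$ and using that $\|{\mathsf R}^\ast_\tau\|$ on a cube of height $\simeq |I|$ is comparable to the Poisson-test quantity). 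The "off-diagonal" part, where $L$ is disjoint from or much larger than $3I$, is estimated using only the size bound $|{\mathsf R}^\ast_\tau g_I(t)| \lesssim \mathsf P_\tau(|g_I|, I)\cdot(\text{decay in }\mathrm{dist}(t,I))$ — here the height $\gtrsim|I|$ is exactly what lets the denominator $|I|+\mathrm{dist}(t,I)$ appear — and then the two $A_2$ conditions in \eqref{e:A2} plus Cauchy–Schwarz and the standard nested/overlap summation sum these contributions to $\lesssim (\mathscr A_2^{1/2})\|g\|_\tau\|\phi\|_\sigma$; a Schur-test / almost-orthogonality argument across the scales closes it.

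The main obstacle, as usual in these arguments, is the \textbf{off-diagonal summation}: organizing the double sum over shadows $I$ and test intervals $L$ so that the Poisson-$A_2$ factors assemble without loss, i.e.\ proving the Schur-type bound $\sum_{I,L}|\langle {\mathsf R}^\ast_\tau g_I,\Delta^\sigma_L\phi\rangle_\sigma| \lesssim \mathscr R\,\|g\|_\tau\|\phi\|_\sigma$. This is exactly the kind of bookkeeping carried out in the energy inequalities and in \S\ref{s:elementary}, so I would expect to cite or mirror those estimates rather than redo them; the genuinely new input here is merely the elementary geometric fact that non-Carleson cubes are bounded away from $\mathbb R$, which removes the singularity and makes every interaction a "smooth" one. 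Everything else is the robust machinery already in place.
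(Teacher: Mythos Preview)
Your geometric observation is exactly right: non-Carleson cubes are separated from $\mathbb R$ by at least their side length, so the Riesz kernel is never singular on them. But your implementation has a concrete error and misses the clean reduction the paper uses.

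The error is in your diagonal step: you claim that $\bigcup_{Q\in\mathcal T_I} Q$ is contained in a bounded dilate of $Q_I$. This is false --- the tower $I\times[|I|,\infty)$ extends to arbitrary height, so no dilate of $Q_I = I\times[0,|I|)$ contains it, and the dual testing inequality \eqref{e:T*1} cannot be invoked for the whole tower at once. Related to this, your $g_I$ for nested shadows $I$ are orthogonal but \emph{not} disjointly supported, which complicates the summation.

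The paper avoids all of this by grouping differently and reducing to a single known operator bound. Partition $\mathbb R^2_+$ into Whitney boxes $\tilde Q_I = I\times[|I|,2|I|)$ and set $\Pi^\tau_I g \equiv \sum_{P\subset\tilde Q_I}\Delta^\tau_P g$; these projections are \emph{disjointly} supported and each has $\tau$-mean zero. Split a test function as $f=\chi_I f + (1-\chi_I)f$ with a smooth cutoff $\chi_I$ adapted to $2I$. On the near piece, for $(x,t)\in\tilde Q_I\times 2I$ the Riesz kernel has size $\simeq 1/|I|\simeq x_2/|x-t|^2$, the Poisson kernel at $x$. On the far piece, mean zero of $\Pi^\tau_I g$ replaces the kernel by $K(x,t)-K(x_{\tilde Q_I},t)$, of size $\lesssim |I|/\textup{dist}(t,I)^2$, again the Poisson kernel. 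Thus in both cases
\[
\bigl|\langle \mathsf R^\ast_\tau \Pi^\tau_I g,\, f\rangle_\sigma\bigr|\lesssim \langle |\Pi^\tau_I g|,\, \mathsf P_\sigma|f|\rangle_\tau.
\]
Disjoint supports give $\sum_I |\Pi^\tau_I g| = |g|$, so the total is $\langle|g|,\mathsf P_\sigma|f|\rangle_\tau$. Since $\mathsf P=\mathsf R^2$, the testing hypotheses for $\mathsf R$ contain those for $\mathsf P$, and Sawyer's two-weight theorem for the Poisson operator then yields the bound $\lesssim\mathscr T\|g\|_\tau\|f\|_\sigma$.

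So the Schur-type off-diagonal summation you flag as the ``main obstacle'' never arises: everything collapses in one step to Poisson boundedness, available off the shelf. Your route could likely be repaired (group by Whitney box, not by shadow), but it is longer and in effect re-derives a fragment of Sawyer's theorem by hand.
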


Thus, we assume throughout that $ g$ is a good function, with $ P _{\textup{Car}} ^{\tau } g =g$, as well as satisfying the further restriction on the 
Haar supports described above. 
We define the two triangular forms 
\begin{align}\label{e:above}
B ^{\textup{above}} (f,g) &\equiv \sum_{ I, J \::\: J \Subset_ {4r} I} 
\mathbb E ^{\tau } _{Q _{IJ}} \Delta ^{\tau } _{Q_I} g \cdot \left\langle {\mathsf R}^{\ast}_\tau Q _{IJ}  , \Delta ^{\sigma } _{J} f \right\rangle _{\sigma } ,  
\\ \label{e:below}
B ^{\textup{below}} (f,g) &\equiv \sum_{ I, J \::\: I\Subset_ {4r} J} 
\mathbb E ^{\sigma } _{J_{I}} \Delta ^{\sigma } _{J} f \cdot \left\langle \Delta ^{\tau} _{Q_I} g ,  {\mathsf R}_ \sigma   J_I  \right\rangle _{\tau  } , 
\end{align}
where $ J_I$ is the child of $ J$ that contains $ I$, and $ Q _{IJ}$ is the child of $ Q_I$ that contains $ Q_J$.  See Figure \ref{fig:QIJ}.
These two forms are dual to one another, but their analysis is different, due to the assumptions on the supports of $ \sigma $ and $ \tau $.

\begin{figure}
\begin{tikzpicture}
\draw[<->] (0,0) -- (6,0);
\draw[|-|,thick]  (4,0) -- (1,0) node[below,midway] {$ I$}; 
\draw  (1,0) rectangle (4,3) node[right] {$ Q_I$}; 
\draw  (3,0) rectangle (3.5,.5) node[above] {$ Q_J$}; 
\draw (2.5,0) rectangle (4,1.5) node[right] {$Q _{IJ} $}; 
\end{tikzpicture}
\caption{The sets $Q_I$, $Q_J$ and $Q_{IJ}$.}
\label{fig:QIJ}
\end{figure}
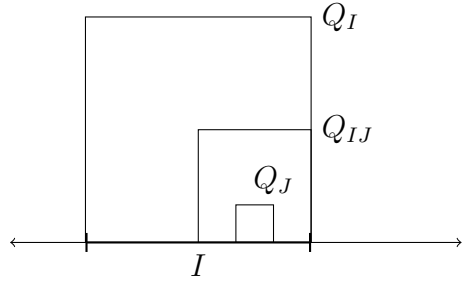

\begin{proposition}\label{p:triangles} The following estimate is true: 
\begin{equation}\label{e:triangles}
\left\vert 
\left\langle {\mathsf R}_ \tau^*  g, f\right\rangle _{\sigma} - B ^{\textup{above}} (f,g)  - B ^{\textup{below}} (f,g) 
\right\vert \lesssim \mathscr R  \lVert f\rVert_{\tau }\lVert g\rVert_{\sigma }.
\end{equation}
\end{proposition}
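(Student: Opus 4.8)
### Proof plan for Proposition~\ref{p:triangles}

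The plan is to expand $\langle \mathsf R_\tau^\ast g, f\rangle_\sigma$ using the Haar expansions $f=\sum_J \Delta^\sigma_J f$ and $g=\sum_I \Delta^\tau_{Q_I} g$ (recall $g$ is a good Carleson-cube function), obtaining
\begin{equation*}
\langle \mathsf R_\tau^\ast g, f\rangle_\sigma = \sum_{I,J} \left\langle \mathsf R_\tau^\ast \Delta^\tau_{Q_I} g, \Delta^\sigma_J f\right\rangle_\sigma,
\end{equation*}
and then split the sum over pairs $(I,J)$ according to the relative position of $J$ and $I$. There are four regimes: (i) $J\Subset_{4r} I$ (the "above" region), (ii) $I\Subset_{4r} J$ (the "below" region), (iii) $J$ and $I$ are "close in scale", i.e. neither is $4r$-strongly contained in the other but they are nested or nearby, and (iv) $J$ and $I$ are far apart / disjoint. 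The goal is to show that the regime (i) sum equals $B^{\textup{above}}(f,g)$ up to an error of size $\mathscr R\lVert f\rVert_\sigma\lVert g\rVert_\tau$, that regime (ii) likewise gives $B^{\textup{below}}(f,g)$ up to acceptable error, and that regimes (iii) and (iv) are directly bounded by $\mathscr R\lVert f\rVert_\sigma\lVert g\rVert_\tau$.

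First I would handle the "above" region. Fix $J\Subset_{4r} I$. Using that $\Delta^\sigma_J f$ has $\sigma$-integral zero and is supported on $J\subset I$, and that $\Delta^\tau_{Q_I}g$ is constant on each child of $Q_I$, one writes $\langle \mathsf R_\tau^\ast \Delta^\tau_{Q_I}g,\Delta^\sigma_J f\rangle_\sigma$ by replacing $\Delta^\tau_{Q_I}g$ with its value $\mathbb E^\tau_{Q_{IJ}}\Delta^\tau_{Q_I}g$ on the child $Q_{IJ}$ containing $Q_J$: the difference $\Delta^\tau_{Q_I}g - (\mathbb E^\tau_{Q_{IJ}}\Delta^\tau_{Q_I}g)\mathbf 1_{Q_{IJ}}$ is supported on $Q_I\setminus Q_{IJ}$, hence at distance $\gtrsim \lvert J\rvert^\epsilon\lvert I\rvert^{1-\epsilon}$ from $Q_J$ by goodness of $J$, and the kernel smoothness \eqref{e:nabla} together with the zero $\sigma$-integral of $\Delta^\sigma_J f$ produces a gain of $(\lvert J\rvert/\lvert I\rvert)^{1-\epsilon}$ (or similar power) in each such term. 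Summing these error terms is a standard Schur-type / quasi-orthogonality estimate: one organizes by the parameter $s$ with $\lvert J\rvert = 2^{-s}\lvert I\rvert$, sums a geometric series in $s$, and controls the resulting bilinear form by $\mathscr A_2^{1/2}\lVert f\rVert_\sigma\lVert g\rVert_\tau$ using the Poisson $A_2$ testing of the off-diagonal. The main term that survives is exactly $B^{\textup{above}}(f,g)$. The "below" region is treated symmetrically with the roles of $\sigma,\tau$ and $\mathsf R,\mathsf R^\ast$ swapped, replacing $\Delta^\sigma_J f$ by $(\mathbb E^\sigma_{J_I}\Delta^\sigma_J f)\mathbf 1_{J_I}$, yielding $B^{\textup{below}}(f,g)$ plus an acceptable error.

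Next I would dispose of the remaining "diagonal" pairs, where $J$ is neither $4r$-strongly inside $I$ nor vice versa. These split into: pairs where $J\subset I$ but $\lvert I\rvert \le 2^{4r}\lvert J\rvert$ (and the symmetric case), pairs where $J,I$ are disjoint, and the nearby-but-not-nested pairs. For the disjoint/far pairs one uses the canonical value \eqref{e:canonical*} and the kernel size bound to get a quickly summable off-diagonal form controlled by $\mathscr A_2^{1/2}$ via a Poisson estimate --- this is the content of the "elementary estimates" deferred to \S\ref{s:elementary}, so I would simply cite Proposition-level statements from there. For the finitely-many-scales-apart nested pairs, since the ratio of sidelengths is bounded by $2^{4r}$, there are only $O(r)$ relevant scales, and for each one the resulting form is handled by the weak boundedness principle (Proposition~\ref{p:weak}) together with testing and $A_2$; the constant will depend on $r$, which is acceptable since \eqref{e:triangles} does not claim an $r$-independent bound (and in the final assembly via Lemma~\ref{Reduction_1} the $r$-dependence is absorbed).

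The main obstacle is the careful bookkeeping in the "above" region: one must verify that after extracting the main term $B^{\textup{above}}$, the collection of error terms genuinely forms a bilinear form that is bounded by $\mathscr R\lVert f\rVert_\sigma\lVert g\rVert_\tau$ --- this requires combining the pointwise kernel gain with a quasi-orthogonality argument (exploiting that the $\Delta^\sigma_J f$ are orthogonal in $L^2(\sigma)$ and the $\Delta^\tau_{Q_I}g$ in $L^2(\tau)$), and correctly invoking the Poisson $A_2$ condition \eqref{e:A2} to sum over the "outer" scales $I$. The geometry of which child $Q_{IJ}$ of $Q_I$ contains $Q_J$ (Figure~\ref{fig:QIJ}) and the fact that goodness of $J$ keeps $J$ well inside $J_I$ are exactly what make the off-diagonal gain available; I expect no essentially new idea beyond what appears in the Hilbert transform literature \cites{MR3285857,MR3285858,13120843}, but the two-dimensional target forces one to track the Carleson-cube structure rather than just intervals.
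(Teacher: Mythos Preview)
Your proposal is correct and follows essentially the same approach as the paper: expand in Haar, split pairs $(I,J)$ into the collections $\mathcal P_{\textup{diagonal}}$, $\mathcal P_{\textup{far}}$, $\mathcal P_{\textup{near}}$ (the paper's Lemma~\ref{l:star}), and then handle the error from restricting the large martingale difference to the child containing the small interval (the paper's Lemma~\ref{l:restrict}). The paper likewise defers the details to standard arguments from \cites{10031596,10014043,primer}, citing goodness and the $A_2$ condition just as you do.
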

The proof of Proposition \ref{p:triangles} appears in \S\ref{s:elementary}. 
We concentrate on the  `above' form in the remainder of this section. 

\subsection{The Stopping Data}\label{s:stopData}

The function $ g$ is in the linear span of the martingale differences associated with good Carleson cubes and is supported on the cube $Q _{I^0} $. 
Construct stopping intervals for $ g$, which is  a collection of dyadic intervals $ \mathcal F$. 
Initialize $ \mathcal F $ to be the maximal elements of $ \mathcal D _g ^{r}$ contained in $ I^0$. 
In the inductive stage, if $ F\in \mathcal F$ is minimal, add to $ \mathcal F$ the maximal standard dyadic children $ I \in \mathcal D _{g} ^{r}$ of $ F$ that meet either of these conditions: 
\begin{enumerate}
\item (A large average)  $ \mathbb E _{Q _{I}} ^{\tau } \lvert  g\rvert \ge 10 \mathbb E _{Q _{F}} ^{\tau } \lvert  g\rvert $; 
\item (Energy Stopping) 
$ \sum_{K\in \mathcal W\!I}\mathsf T_{\tau } (Q _{F} \setminus Q_K) (x_{Q_K}) ^2 E (\sigma ,K) ^2  \sigma (K) \ge C_0 \mathscr R ^2 \tau  (Q_I)$.  
\end{enumerate}
 
The second condition arises from the Energy Inequality Lemma~\ref{l:energyI}.  
It  implies  that $ \mathcal F$ satisfies a $ \tau $-Carleson condition for a sufficiently large constant $ C_0$ for energy stopping.  Namely,
\begin{equation} \label{e:tauCarleson}
\sum_{F' \in \mathcal F \::\: F'\subsetneq F} \tau (Q_{F'}) \leq \tfrac 12  \tau (Q_F), \qquad F\in \mathcal F.  
\end{equation} 
It is also immediate from construction that the stopping intervals 
control the averages of $ g$ in the following sense:  For all intervals $ I\in \mathcal D _{g} ^{r}$, $ I\subset I_0$, we have 
\begin{equation} \label{e:avg<}  
\lvert \mathbb E ^{\tau } _{Q_I}  g\rvert 
= \left\lvert 
\sum_{K \::\: K\supsetneq I}  \mathbb E ^{\tau } _{Q_I}  \Delta ^{\tau } _{Q_K}g
\right\rvert
\lesssim
 \mathbb E _{Q_{F} }^{\tau }  \lvert  g\rvert , \qquad \pi _{\mathcal F} I=F. 
\end{equation}
(The notation $ \pi _{\mathcal F}I$, and several more, are  defined at the beginning of \S\ref{s:necessary}.)

We make this brief remark about the collections $ \mathcal F $ and $ \{ \mathcal W\!F \::\: F\in \mathcal F\}$. 
For each $ F\in \mathcal F$, and good $ J\Subset _{r} F$, there is a $ K\in \mathcal W\!F$ with $J\subset K$.   
For intervals $ F\in \mathcal F$, define Haar projections by 
\begin{align*}
H _F^{\tau } g &\equiv   \sum _{I \::\: \pi _{\mathcal F} I=F} \Delta ^{\tau } _{Q_I} g  , 
\\
\tilde H _F^{\sigma } f &\equiv \sum _{\substack{J \::\:  \tilde \pi _{\mathcal F} J=F}} \Delta ^{\sigma  } _{J} f.   
\end{align*}
In the second line, we take $ \tilde \pi _{\mathcal F} J$ to be the smallest member $F$  of $ \mathcal F$ so that $ J\Subset _{4r}  F$.  We call this inequality the \emph{quasi-orthogonality} bound; 
it  is basic to the proof: 
\begin{equation}\label{e:quasi}
\sum_{F\in \mathcal F} 
\left\{ \mathbb E _{Q _{F}} ^{\tau } \lvert  g\rvert  \cdot \tau (Q_F) ^{1/2}  + \lVert H _F^{\tau } g\rVert_{\tau} \right\} \lVert \tilde H_{F}^{\sigma } f\rVert_{\sigma}  \lesssim \lVert f\rVert_{\tau} \lVert g\rVert_{\sigma} . 
\end{equation}
This follows from  the  $ \tau $-Carleson property of $ \mathcal F$  and the quasi-orthogonality of the Haar projections.  
It will appear below with different choices of these orthogonal projections.  

Observe that we have 
\begin{align} \label{e:AFF}
\begin{split}
B ^{\textup{above}} (f,g) &=    \sum_{F\in \mathcal F} \sum_{F' \::\: F'\supset F}
B ^{\textup{above}} (\tilde H ^{\sigma } _{F}f, H ^{\tau } _{F'}g) .
\end{split}
\end{align}
Indeed, the definition of $ B ^{\textup{above}} $ is over a sum of pairs of good intervals $ J, I$ with 
 $ J\Subset _{4r} I$ and say $ \tilde \pi _{\mathcal F} J=F$.  
We necessarily have $ J\Subset _{4r} \pi (\pi _{\mathcal F}I_J)$, hence $ F\subset \pi _{\mathcal F}I$.  
Then it is clear that this  pair of intervals $ (J,I)$ appear exactly once on the right  \eqref{e:AFF}.  

We can now turn to the global to local reduction for the form $ B ^{\textup{above}}$.  
In the sum below we are taking that part of  the right side of \eqref{e:AFF} which is `separated'  by $ \mathcal F$.  
Namely, in the lemma below, we form the sum only over  pairs of intervals $ I,J$ that  do not have the same $ \mathcal F$-parent. 

\begin{lemma}{\textnormal{[Global to Local Reduction, I]}}
\label{l:global2localAbove} The following estimate holds: 
\begin{align} \label{e:Cglobal}
\Biggl\vert \sum_{I} \sum_{\substack{J \::\:  \tilde \pi _{\mathcal F} J \subsetneq I \\ J\Subset _{4r} I}} \mathbb E ^{\tau } _{Q_{IJ}} \Delta ^{\tau } _{Q_I} g \cdot 
\left\langle  {\mathsf R}^{\ast}_\tau Q _{IJ},    \Delta ^{\sigma } _{J} f \right\rangle _{\sigma}   \Biggr\vert 
\lesssim \mathscr R  \lVert g\rVert_{\tau} \lVert f\rVert_{\sigma} . 
\end{align}
\end{lemma}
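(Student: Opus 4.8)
The plan is to exploit the separation hypothesis $\tilde\pi_{\mathcal F}J\subsetneq I$ to extract a Poisson decay factor, then group the resulting sum along the stopping tree $\mathcal F$ and apply the energy inequality together with quasi-orthogonality. First I would fix the interval $I$ and the unique $F\in\mathcal F$ with $F=\tilde\pi_{\mathcal F}J$ for the $J$'s appearing; since $J\Subset_{4r}I$ and $F\subsetneq I$, the interval $I$ sits at or above the $\mathcal F$-parent of $F$, so the pairs $(I,J)$ organize according to which $F$ and which stopping tower above $F$ they belong to. For each such $(I,J)$, replace $\Delta^\sigma_Jf$ by $h^\sigma_{J'}$ (summing over $J'\subset J$) and apply the Monotonicity Principle I, specifically \eqref{e:monoI2} and \eqref{e:MONO}, to the function $\varphi = Q_{IJ}$: this is legitimate because $Q_{IJ}$, being a child of $Q_I$ with $J\Subset_{4r}I$ good, lies outside a dilate $V_{2^sK}$ of the Whitney interval $K\in\mathcal WF$ containing $J$ (here the gap $4r$ versus $r$ is exactly what buys the required separation, cf.\ the role of $s=r/2$ in Lemma~\ref{l:holesI}). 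This bounds each inner product by $\mathsf T_\tau(Q_{IJ})(x_{Q_K})$ times $\langle t/|K|,h^\sigma_{J'}\rangle_\sigma$, and summing over $J'\subset J\subset K$ and over $J$ inside $K$ gives the factor $E(\sigma,K)^2\sigma(K)$ after Cauchy--Schwarz in $J'$.

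Next I would handle the martingale coefficient. Using \eqref{e:avg<}, the coefficient $\mathbb E^\tau_{Q_{IJ}}\Delta^\tau_{Q_I}g$ is controlled by $\mathbb E^\tau_{Q_F}|g|$ when $I\in\mathcal D_g^r$ has $\pi_{\mathcal F}I = F$, or more precisely one keeps track of the two pieces $\mathbb E_{Q_F}^\tau|g|$ and the local Haar mass $\lVert H_F^\tau g\rVert_\tau$ that appear in the quasi-orthogonality bound \eqref{e:quasi}. Telescoping the sum over the cubes $I$ in a fixed stopping tower over $F$ collapses the $\langle \mathsf R^\ast_\tau Q_{IJ},h^\sigma_{J'}\rangle$ contributions, up to the pieces already absorbed into monotonicity, so the whole expression is dominated by
\begin{equation*}
\sum_{F\in\mathcal F}\Bigl(\mathbb E^\tau_{Q_F}|g| + \tau(Q_F)^{-1/2}\lVert H_F^\tau g\rVert_\tau\Bigr)\Bigl(\sum_{K\in\mathcal WF}\mathsf T_\tau(Q_{I_0}\setminus Q_K)(x_{Q_K})^2 E(\sigma,K)^2\sigma(K)\Bigr)^{1/2}\lVert\tilde H_F^\sigma f\rVert_\sigma.
\end{equation*}
The energy stopping criterion (2) in the construction of $\mathcal F$, together with the Energy Inequality I, Lemma~\ref{l:energyI}, shows that for each $F$ the inner sum over $\mathcal WF$ is at most $C_0\mathscr R^2\tau(Q_F)$; substituting this and pulling out $\mathscr R\tau(Q_F)^{1/2}$ leaves exactly the quantity controlled by \eqref{e:quasi}, which yields the bound $\mathscr R\lVert g\rVert_\tau\lVert f\rVert_\sigma$.

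\textbf{Main obstacle.} The delicate point is the geometric step: one must verify that for the separated pairs the child $Q_{IJ}$ genuinely lies in the complement of the enlarged vee-region $V_{2^sK}$ of the relevant Whitney interval $K$, so that monotonicity \eqref{e:monoI2} applies with the right argument $Q_{I_0}\setminus V_{2^sK}$ rather than just $Q_{I_0}\setminus Q_K$ — this is what makes the energy inequality as stated (Lemma~\ref{l:holesI} with holes) the applicable tool, and it is where the choice of the dilation parameter $4r$ in \eqref{e:HS} and in the definition of $\Subset_{4r}$ is forced. A secondary subtlety is bookkeeping the two-sided telescoping so that the leftover non-telescoped terms are precisely those captured by $\lVert H_F^\tau g\rVert_\tau$ and by the parent-inclusive projection $\tilde H_F^\sigma f$; this is routine but must be done carefully to land inside \eqref{e:quasi}. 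I expect the rest — the Cauchy--Schwarz applications and the invocation of Lemma~\ref{l:energyI} — to be mechanical.
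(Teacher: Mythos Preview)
Your approach has a geometric error at the key step. You propose to apply the Monotonicity Principle~I to $\varphi=\mathbf 1_{Q_{IJ}}$, asserting that $Q_{IJ}$ lies outside $V_{2^sK}$. This is false: by definition $Q_{IJ}$ is the child of $Q_I$ that \emph{contains} $Q_J$, hence it contains $Q_K$ (since $J\subset K$) and therefore meets $V_{2^sK}$. Lemma~\ref{l:monoI} requires $\varphi$ to be supported on the complement of $V_I$ (for \eqref{e:monoI2}) or of $Q_I$ (for \eqref{e:MONO}) with $10\cdot J'\subset I$; since $Q_{IJ}$ covers $Q_{J'}$, neither hypothesis holds and monotonicity does not apply to the full argument. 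The $4r$-gap buys separation between $J$ and the \emph{boundary} of $Q_{IJ}$, not between $J$ and the set $Q_{IJ}$ itself.

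The paper handles this by the splitting $Q_{IJ}=Q_F+(Q_{IF}-Q_F)$ (note $Q_{IJ}=Q_{IF}$ since $F\subset I_J$). The piece $Q_F$ is independent of $I$, so the sum over $I\supsetneq F$ of the martingale coefficients telescopes to a bounded multiple of $\mathbb E^\tau_{Q_F}|g|$, after which the testing inequality for $\mathsf R^\ast_\tau Q_F$ and quasi-orthogonality finish---no monotonicity is used here. For the remainder $Q_{IF}-Q_F$, which \emph{is} supported off $Q_F\supset Q_K$, monotonicity does apply, but the resulting sum is not a direct instance of Lemma~\ref{l:energyI}: the argument $\widehat Q_{F',F}$ and the coefficient $\mathbb E^\tau_{Q_{F'}}|g|$ vary with the level $F'\supsetneq F$ in the $\mathcal F$-tower, and summing this requires the parallel-corona argument of Lemma~\ref{l:xx} with auxiliary stopping data for the dual function. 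Your endgame also overstates what energy stopping gives: the stopping rule controls $\sum_{K\in\mathcal W\!F}\mathsf T_\tau(Q_{\pi_{\mathcal F}F}\setminus Q_K)^2E(\sigma,K)^2\sigma(K)$ only for \emph{non}-stopping $F$, and even then with the $\mathcal F$-parent in the argument, not $Q_{I_0}$; so your middle factor is not dominated by $\mathscr R^2\tau(Q_F)$ in general.
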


\begin{proof}
We invoke, for the first time, the \emph{exchange argument}, namely exchanging the inequality concerning a singular integral for one involving  a purely positive operator.  
This entails (a) controlling the sums  
of martingale differences by the stopping values; 
(b) replacing the argument of the singular integral by a stopping interval;  
(c) appealing to interval, or Carleson cube, testing and quasi-orthogonality to complete the bound in this case; 
(d) for the complementary argument in the singular integral, appeal to monotonicity, to get a positive operator; 
(e) appeal directly to a so-called \emph{parallel corona} argument to prove the required inequality.  

The details in this case are as follows. We are to bound the sum 
\begin{equation} \label{e:caseA} 
\sum_{F\in \mathcal F}  \; 
\sum_{\substack{I \::\: F\subsetneq I\\}}  \; 
\sum_{\substack{J \::\:  \tilde \pi _{\mathcal F} J=F}}  
\mathbb E ^{\tau } _{Q_{IJ}} \Delta ^{\tau } _{Q_I} g
\cdot 
\left\langle  {\mathsf R}^{\ast}_\tau Q _{IJ},    \Delta ^{\sigma } _{J} f \right\rangle _{\sigma} .
\end{equation}
Write the argument of the  Riesz transform as $ Q _{IJ} = Q _{IF} = Q_F + (Q _{IF} - Q_F)$.  
In the case that the argument is $ Q _{F}$, observe that by construction of the stopping data, that 
\begin{equation*}
\left\vert   \sum_{I \::\: F\subsetneq I} 
\mathbb E ^{\tau } _{Q_{IJ}} \Delta ^{\tau } _{Q_I} g \right\vert \lesssim \mathbb E^{\tau}_{Q_F}\lvert g\rvert.  
\end{equation*}
Therefore, we can estimate using the testing inequality for the  Riesz transform: 
\begin{align*}
\Biggl\lvert 
\sum_{I \::\: F\subsetneq I}
\sum_{\substack{J \::\:  \tilde \pi _{\mathcal F} J=F}}   
\mathbb E ^{\tau } _{Q_{IJ}} \Delta ^{\tau } _{Q_I} g
\cdot & 
\left\langle  {\mathsf R}^{\ast}_\tau Q _{F},    \Delta ^{\sigma } _{J} f \right\rangle _{\sigma} 
\Biggr\rvert
\\& \lesssim 
\mathbb E^{\tau}_{Q_F}\lvert g\rvert  \left\vert
\sum_{\substack{J \::\:  \tilde \pi _{\mathcal F} J=F}}  
\left\langle  {\mathsf R}^{\ast}_\tau Q _{F},    \Delta ^{\sigma } _{J} f  \right\rangle _{\sigma } \right\vert
\\
& \lesssim \mathscr R \mathbb E^{\tau}_{Q_F}\lvert g\rvert \cdot\tau (Q_F) ^{1/2} 
\lVert \tilde H ^{\sigma } _{F} f   \rVert_{\sigma } ^2 . 
\end{align*}
The sum over $ F\in \mathcal F$ of this last expression is controlled by quasi-orthogonality, \eqref{e:quasi}.

When the argument of the Riesz transform is $ Q _{IF} - Q_F$, the analysis proceeds in a different  and more involved manner and so it is proved in the next Lemma.  

\end{proof}

\begin{lemma}\label{l:xx}  There holds 
\begin{align}\label{e:xx}
\Biggl\lvert \sum_{F\in \mathcal F}
\sum_{I \::\: F\subsetneq I}
\sum_{\substack{J \::\:  \tilde \pi _{\mathcal F} J=F}}   
\mathbb E ^{\tau } _{Q_{IJ}} \Delta ^{\tau } _{Q_I} g
\cdot & 
\left\langle  {\mathsf R}^{\ast}_\tau (Q _{IF} \setminus Q_F) ,    \Delta ^{\sigma } _{J} f \right\rangle _{\sigma} 
\Biggr\rvert
 \lesssim \mathscr R \lVert g\rVert_ \tau \lVert f\rVert _{\sigma } . 
\end{align}

\end{lemma}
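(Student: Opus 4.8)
\textbf{Proof plan for Lemma~\ref{l:xx}.}
The plan is to use the \emph{exchange argument} in the form (d)--(e) from the proof of Lemma~\ref{l:global2localAbove}: the argument of the Riesz transform, $Q_{IF}\setminus Q_F$, is disjoint from the (large) interval $J$ on which $\Delta^\sigma_J f$ is supported, since $J\subset F$ and $Q_{IF}\setminus Q_F$ lies over $I\setminus F$. Hence monotonicity applies. First I would fix $F\in\mathcal F$ and collapse the inner sum over $J$: by \eqref{e:avg<} (or the stopping construction), for each $I\supsetneq F$ the coefficients $\mathbb E^\tau_{Q_{IJ}}\Delta^\tau_{Q_I}g$ are, up to the sign carried by which child $Q_{IF}$ is, essentially constant in $J$, and the total mass they contribute telescopes to a quantity bounded by $\mathbb E^\tau_{Q_F}\lvert g\rvert$. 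More precisely, I would first sum over $J$ with $\tilde\pi_{\mathcal F}J=F$ using linearity, writing $\sum_J \langle \mathsf R^\ast_\tau(Q_{IF}\setminus Q_F),\Delta^\sigma_J f\rangle_\sigma = \langle \mathsf R^\ast_\tau(Q_{IF}\setminus Q_F), \tilde H^\sigma_F f\rangle_\sigma$, and then sum over $I$, so that the whole expression becomes $\sum_{F\in\mathcal F} \langle \mathsf R^\ast_\tau \Phi_F, \tilde H^\sigma_F f\rangle_\sigma$ for a function $\Phi_F = \sum_{I\supsetneq F}\mathbb E^\tau_{Q_{IJ}}\Delta^\tau_{Q_I}g\cdot(Q_{IF}\setminus Q_F)$ supported on $Q_{I^0}\setminus Q_F$ with $\lVert \Phi_F\rVert_\infty \lesssim \mathbb E^\tau_{Q_F}\lvert g\rvert$ on each relevant region, or more usefully $\lvert \Phi_F\rvert \lesssim \sum_{I}\lvert\mathbb E^\tau_{Q_I}g\rvert\,(Q_{IF}\setminus Q_F)$.

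Next I would apply monotonicity. Decomposing $\tilde H^\sigma_F f = \sum_{K\in\mathcal W\!F}\tilde H^\sigma_F f\cdot K$ (each good $J\Subset_{4r}F$ in the Haar support sits inside a unique $K\in\mathcal W\!F$ by Proposition~\ref{p:overlap}), and using that $\Phi_F$ is supported off $Q_F\supset Q_{2^sK}$, hence off $V_{2^sK}$ for the $K$ at hand, inequality \eqref{e:monoI1} (with $\varphi = \Phi_F$ localized to the relevant region, $J'$ replaced by the Haar functions summing to $\tilde H^\sigma_F f\cdot K$) gives
\begin{equation*}
\bigl\lvert \langle \mathsf R^\ast_\tau \Phi_F, \tilde H^\sigma_F f\rangle_\sigma\bigr\rvert
\lesssim \sum_{K\in\mathcal W\!F} \mathsf T_\tau\lvert\Phi_F\rvert(x_{Q_K})\, \int_K \bigl\lvert \tilde H^\sigma_F f\bigr\rvert\,\sigma(dt)
\lesssim \sum_{K\in\mathcal W\!F} \mathsf T_\tau\lvert\Phi_F\rvert(x_{Q_K})\,\sigma(K)^{1/2}\,\bigl\lVert \tilde H^\sigma_F f\cdot K\bigr\rVert_\sigma.
\end{equation*}
Here I would bound $\mathsf T_\tau\lvert\Phi_F\rvert(x_{Q_K})$ by replacing the holey argument with $Q_{I^0}\setminus Q_K$ at the cost of a constant (since $\lvert\Phi_F\rvert\lesssim \mathbb E^\tau_{Q_F}\lvert g\rvert \cdot (Q_{I^0}\setminus Q_F)$ and $Q_F\supset Q_K$ needs some care — actually $K\subsetneq F$ so $Q_K\subset Q_F$, and $\Phi_F$ vanishes on $Q_F\supset Q_K$, so indeed $\mathsf T_\tau\lvert\Phi_F\rvert(x_{Q_K})\lesssim \mathbb E^\tau_{Q_F}\lvert g\rvert\cdot\mathsf T_\tau(Q_{I^0}\setminus Q_K)(x_{Q_K})$), which is exactly the quantity controlled by Energy Inequality~I.

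Finally I would Cauchy--Schwarz in $F$ and $K$: the sum becomes
\begin{equation*}
\sum_{F\in\mathcal F}\sum_{K\in\mathcal W\!F} \mathbb E^\tau_{Q_F}\lvert g\rvert\cdot \mathsf T_\tau(Q_{I^0}\setminus Q_K)(x_{Q_K})\,E(\sigma,K)\,\sigma(K)^{1/2}\,\bigl\lVert \tilde H^\sigma_F f\cdot K\bigr\rVert_\sigma,
\end{equation*}
where I have inserted $E(\sigma,K)$ — this is legitimate because $\int_K\lvert\tilde H^\sigma_F f\rvert\,\sigma \le \sigma(K)^{1/2}\lVert\tilde H^\sigma_F f\cdot K\rVert_\sigma$ and monotonicity \eqref{e:monoI1} really controls against $\sum_{J\subset K}\langle t/\lvert K\rvert,h^\sigma_J\rangle_\sigma$-type quantities, i.e. the energy of $K$. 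Applying Cauchy--Schwarz, one factor collects $\sum_{F}\sum_K (\mathbb E^\tau_{Q_F}\lvert g\rvert)^2 \mathsf T_\tau(Q_{I^0}\setminus Q_K)(x_{Q_K})^2 E(\sigma,K)^2\sigma(K)$, which the Energy Inequality~I (Lemma~\ref{l:energyI}, applied with the partition of each $F$ into its Whitney intervals, combined with the $\tau$-Carleson property \eqref{e:tauCarleson} to sum the $(\mathbb E^\tau_{Q_F}\lvert g\rvert)^2$ weights against $\tau(Q_F)$) bounds by $\mathscr R^2\lVert g\rVert_\tau^2$; the other factor collects $\sum_F\sum_K \lVert\tilde H^\sigma_F f\cdot K\rVert_\sigma^2 = \sum_F\lVert\tilde H^\sigma_F f\rVert_\sigma^2\lesssim\lVert f\rVert_\sigma^2$ by quasi-orthogonality. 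The main obstacle I anticipate is organizing the weighted sum $\sum_F(\mathbb E^\tau_{Q_F}\lvert g\rvert)^2(\cdots)$ so that the energy inequality applies across the whole family $\mathcal F$ at once: one must pass from the per-$F$ energy estimate to a global one, and this is precisely where the $\tau$-Carleson property of $\mathcal F$ and a parallel-corona/quasi-orthogonality bookkeeping (item (e)) is essential — the energy stopping condition (2) in \S\ref{s:stopData} is exactly calibrated so that the ``large energy'' contributions have already been split off, leaving a controlled remainder.
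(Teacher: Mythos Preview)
Your plan has a genuine gap at exactly the point you flag as ``the main obstacle,'' and the hand-waving there does not close it.

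There are two linked problems. First, the pointwise bound $\lvert\Phi_F\rvert\lesssim \mathbb E^\tau_{Q_F}\lvert g\rvert\cdot(Q_{I^0}\setminus Q_F)$ is not available. At a point $x\in Q_{IF}\setminus Q_F$ with $I\supsetneq F$, the telescoping sum $\sum_{I'}\mathbb E^\tau_{Q_{I'F}}\Delta^\tau_{Q_{I'}}g$ is controlled by $\mathbb E^\tau_{Q_{F'}}\lvert g\rvert$ where $F'=\pi_{\mathcal F}I_F$ is the $\mathcal F$-\emph{ancestor} of $F$ governing that region, not by the average at $F$ itself. Because $\mathcal F$ contains energy-stopping intervals as well as large-average ones, there is no inequality $\mathbb E^\tau_{Q_{F'}}\lvert g\rvert\lesssim\mathbb E^\tau_{Q_F}\lvert g\rvert$ for ancestors $F'\supsetneq F$. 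Second, even granting your bound, the Cauchy--Schwarz step produces
\[
\sum_{F\in\mathcal F}\bigl(\mathbb E^\tau_{Q_F}\lvert g\rvert\bigr)^2\sum_{K\in\mathcal W\!F}\mathsf T_\tau(Q_{I^0}\setminus Q_K)(x_{Q_K})^2 E(\sigma,K)^2\sigma(K),
\]
and neither the Energy Inequality~I nor the energy-stopping rule controls this: both give $\sum_{K\in\mathcal W\!F}\mathsf T_\tau(Q_F\setminus Q_K)(\cdot)^2 E(\sigma,K)^2\sigma(K)\lesssim\mathscr R^2\tau(Q_F)$ with the argument truncated to $Q_F$, not $Q_{I^0}$. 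Since the $F\in\mathcal F$ are nested rather than disjoint, you cannot apply the energy inequality with $I_0=I^0$ across all of $\mathcal F$ at once, and the $\tau$-Carleson property alone does not bridge the gap between $Q_F$ and $Q_{I^0}$.

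The paper's proof avoids both issues by keeping track of which ancestor $F'\supsetneq F$ contributes where: one writes $\bigl\lvert\sum_I\mathbb E^\tau_{Q_{IF}}\Delta^\tau_{Q_I}g\cdot(Q_{IF}\setminus Q_F)\bigr\rvert\lesssim\sum_{F'\supsetneq F}\mathbb E^\tau_{Q_{F'}}\lvert g\rvert\cdot\widehat Q_{F',F}$ with $\widehat Q_{F',F}=Q_{F'}\setminus Q_{F''}$ (here $F''$ is the $\mathcal F$-child of $F'$ containing $F$). After monotonicity this becomes a two-weight inequality for a Poisson-like operator against a new measure $\mu$ built from the energy data, and \emph{that} inequality is proved by duality and a genuine parallel-corona argument: one constructs separate stopping data $\mathcal G$ for the dual function and splits into two testing conditions, one handled by the energy inequality on each fixed $F'$, the other by the $A_2$ condition. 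This is substantively more than bookkeeping; the decomposition by ancestors is what keeps the argument of $\mathsf T_\tau$ local enough for the energy inequality to apply.
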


\begin{proof}
The first stage of the proof is to pass to a new two weight inequality 
from which the estimate above follows.  
The second stage is to prove a new two weight inequality, which itself requires a delicate analysis.

Note that, again by the construction of stopping data, that 
\begin{equation*}
\left\vert 
\sum_{I \::\: F\subsetneq I}  \mathbb E ^{\tau } _{Q_{IJ}} \Delta ^{\tau } _{Q_I} g \cdot (Q _{IF} - Q_F)
\right\vert \lesssim  \sum_{F' \in \mathcal F \;:\; F'\supsetneq F} \mathbb E ^{\tau } _{Q _{F'}} \lvert  g\rvert \cdot 
\widehat Q _{F',F},  
\end{equation*}
where by definition, $ \widehat Q _{F',F} = Q _{F'} \setminus Q _{F''}$, where 
$ F''$ is the $  \mathcal F$-child of $ F'$ that contains $ F$. 
From the monotonicity principle, \eqref{e:MONO}, we see that  for fixed $ F\in \mathcal F$, 
\begin{align} 
\Biggl\lvert 
\sum_{I \::\: F\subsetneq I} 
\sum_{\substack{J \::\:  \widetilde \pi _{\mathcal F} J=F}}  
\mathbb E ^{\tau } _{Q_{IJ}} & \Delta ^{\tau } _{Q_I}  g
\cdot   
\langle  {\mathsf R}^{\ast}_\tau (Q _{IF} - Q_F),    \Delta ^{\sigma } _{J} f \rangle _{\sigma} 
\Biggr\rvert 
\\& \lesssim   \label{e:3xx} 
\sum_{\substack{F'\in \mathcal F \;:\; F'\supsetneq F } } 
 \mathbb E ^{\tau } _{Q _{F'}} \lvert  g\rvert  \sum_{K\in \mathcal W\!F} 
\widehat{ \mathsf T}_{\tau } (\widehat Q _{F',F} )( x _{Q_K} ) 
\sum_{\substack{J \::\:  J\subset K \\\widetilde \pi _{\mathcal F} J = F }}   
\langle  t  ,  h ^{\sigma } _{J}\rangle _{\sigma } 
\lvert  \hat f_{\sigma} (J) \rvert  , 
 \\\label{e:Ttilde}
 \textup{where} &\quad 
 \widehat {\mathsf T }_{\tau } g (x_1, x_2)  \equiv \int _{\mathbb R ^2 _+}  \frac { g (y)  } { y^2_2 +{ x_2^2 } + \lvert y_1 - x_1\rvert ^2 } \; \tau (dy)   .  
\end{align}
The operator $ \widehat {\mathsf T }_{\tau } g$ is a Poisson average, but missing the appropriate 
scaling for an average, as compared to \eqref{e:Ttdef}.  

The fact to be proved is  
\begin{equation}   \label{e:extra}
\sum_{F\in \mathcal F} \eqref{e:3xx} \lesssim \mathscr R \lVert f\rVert _{\sigma } \lVert g\rVert_ \tau . 
\end{equation}
This will follow from a novel $ L ^2 $ estimate  below, in which we introduce a new 
measure $ \mu $ on $ \mathbb R ^2 _+$, derived from the stopping data.  
\begin{gather} \label{e:3aa}
\biggl\lVert 
\sum_{\substack{F, F'\in \mathcal F\\ F\subsetneqq F'} } 
 \mathbb E ^{\tau } _{Q _{F'}} \lvert  g\rvert  
 \sum_{K\in \mathcal W\!F}  \widehat   {\mathsf T }_{\tau } (\widehat Q _{F',F}) (x _{Q_K}) \cdot    W_K 
\biggr\rVert _{L ^2 (\mathbb R ^2 _+, \mu )} \lesssim   \mathscr R   \lVert g\rVert _{\tau }, 
\\
\label{e:mu}
\mu \equiv \sum_{F\in \mathcal F} \sum_{K\in \mathcal W\!F} 
\delta _{x _{Q_K}}  
{ \sum_{\substack{J \::\: J\subset K \\ \tilde \pi _{\mathcal F}   J = F}}
\langle  t, h ^{\sigma } _{J}\rangle _{\sigma } ^2 }  , 
 \end{gather} 
 and $ W _{K}  = K \times [ \lvert  K\rvert/2 , \lvert  K\rvert)$ is the top half of a Carleson box over $ K$. 
 The reduction to \eqref{e:3aa} is elementary, and presented here.  
 
 There is however one point about the definition of $\mu$ that  requires clarification.   
 If $K\Subset_{4r} F$, and $F' \in \mathcal{F}$ 
 strictly contains $F$, then, there is no interval $J\subset K$ with $\tilde{\pi}_ {\mathcal{F}} J=F'$.   
 For the purposes of the proof of \eqref{e:extra}, we can further restrict the collection $\mathcal{W}\!F$ to those $K$ for which there is 
 some $J\subset K$ with $\tilde{\pi}_ {\mathcal{F}}J=F$. 
 (If $K \in \mathcal{W}\!F$ does not meet this condition, it makes no contribution to $\mu$ in \eqref{e:mu}.) 
 We do so without changing the notation, and note that with this change, 
 for each $K$, there are at most $O(1)$ choices of $F$ so that $K\in \mathcal{W}\!F$. 
 We recall this point below.

\begin{proof}[Proof of \eqref{e:3aa} implies \eqref{e:extra}]   
This  definition is  associated with the inner most sum in \eqref{e:3xx}. 
\begin{align}
\phi ^2 _{K}  = 
\sum_{\substack{J \::\:  J\subset K \\\widetilde \pi _{\mathcal F} J = F }}   
\lvert  \hat f_{\sigma} (J) \rvert  ^2 ,  \qquad K\in \mathcal W\!F . 
\end{align}
By orthogonality of the adapted Haar basis,  we have 
$ \sum_{F\in \mathcal F}\sum_{K \in \mathcal W\!F} \phi ^2 _K  \leq \lVert f\rVert _{\sigma } ^2 $. 
Using the definition \eqref{e:mu}, we have 
\begin{equation*}
\mu (W_K) = 
\sum_{\substack{J \::\:  J\subset K \\\widetilde \pi _{\mathcal F} J = F }}   
\bigl\langle  t  ,  h ^{\sigma } _{J}\bigr\rangle _{\sigma } ^2  ,  \qquad K\in \mathcal W\!F .  
\end{equation*}
Thus, we have by Cauchy-Schwarz in the index $ K$, and $ F$, 
\begin{align*}
\eqref{e:extra} & \lesssim  
\sum_{F\in\mathcal F} 
\sum_{\substack{F'\in \mathcal F \;:\; F\subsetneqq F' } } 
 \mathbb E ^{\tau } _{Q _{F'}} \lvert  g\rvert  \sum_{K\in \mathcal W\!F} 
\widehat{ \mathsf T}_{\tau } (\widehat Q _{F',F} )( x _{Q_K} ) 
\mu (W_K) ^{1/2} \phi _{K} 
\\
&\leq 
\Gamma  \Biggl[ \sum_{F\in \mathcal F}  \sum_{K\in \mathcal W\!F} 
\phi _{K} ^2 \Biggr] ^{1/2} 
 \leq \Gamma   \lVert f\rVert _{\sigma } , 
\\ 
\noalign{\noindent where the term $ \Gamma $ on the right is } 
\Gamma  & = \Biggl[ 
\sum_{K }  
\Biggl[  \sum_{F\in \mathcal F \,:\, K\in \mathcal W\! F}
\sum_{\substack{F' \in \mathcal F \;:\; F\subsetneqq F' } }  
\mathbb E ^{\tau } _{Q _{F'}} \lvert  g\rvert \cdot  \widehat{ \mathsf T}_{\tau } (\widehat Q _{F',F} )( x _{Q_K} ) \Biggr] ^2  \mu (W_K)\Biggr] ^{1/2}  . 
\end{align*}
This is another way to write the  left half of \eqref{e:3aa}. 
Therefore, the inequality \eqref{e:3aa}  implies \eqref{e:extra}. 

\end{proof}

Now,  the inequality \eqref{e:3aa} is a two weight inequality for a Poisson-like operator, with a `hole in the argument.' 
On the one hand, there is no general theorem one can appeal to for such an inequality, and on the other, 
 a direct argument is not too hard, since stopping data for $ g$ has been used to construct the new measure $ \mu $.

 Experience dictates that \eqref{e:3aa} is best proved by duality. Thus, for
non-negative function $\gamma \in L^{2}(\mathbb{R}_{+}^{2},\tau )$, we
should show that 
\begin{align}
\sum_{F^{\prime }\in \mathcal{F}}& \sum_{\substack{ F^{\prime \prime }\in 
\mathcal{F}\;:\;F^{\prime \prime }\subsetneqq F^{\prime }}}\mathbb{E}%
_{Q_{F^{\prime }}}^{\tau }\lvert g\rvert \sum_{K^{\prime \prime }\in 
\mathcal{W}\!F^{\prime \prime }}\widehat{\mathsf{T}}_{\tau }(\widehat{Q}%
_{F^{\prime },F^{\prime \prime }})(x_{Q_{K^{\prime \prime
}}})\int_{W_{K^{\prime \prime }}}\gamma \;d\mu   \label{e:3bb} \\
& \approx \sum_{F^{\prime }\in \mathcal{F}}\sum_{\substack{ F\in \textup{Ch}%
_{\mathcal{F}}(F^{\prime })}}\mathbb{E}_{Q_{F^{\prime }}}^{\tau }\lvert
g\rvert \sum_{K\in \mathcal{W}\!F}\widehat{\mathsf{T}}_{\tau }(\widehat{Q}%
_{F^{\prime },F})(x_{Q_{K}})\int_{Q_{K}}\gamma \;d\mu \lesssim \mathcal{R}%
\lVert g\rVert _{\tau }\lVert \gamma \rVert _{\mu }.
\end{align}%
To see that the approximate equality holds above, it suffices to show that
for each fixed $F^{\prime }\in \mathcal{F}$,%
\begin{align}  
\sum_{\substack{ F^{\prime \prime }\in \mathcal{F}\;:\;F^{\prime \prime
}\subsetneqq F^{\prime }}}
&
\sum_{K^{\prime \prime }\in \mathcal{W}\!F^{\prime
\prime }}\widehat{\mathsf{T}}_{\tau }(\widehat{Q}_{F^{\prime },F^{\prime
\prime }})(x_{Q_{K^{\prime \prime }}})\int_{W_{K^{\prime \prime }}}\gamma
\;d\mu   \label{suff} \\
&\approx \sum_{\substack{ F\in \textup{Ch}_{\mathcal{F}}(F^{\prime })}}%
\sum_{K\in \mathcal{W}\!F}\widehat{\mathsf{T}}_{\tau }(\widehat{Q}%
_{F^{\prime },F})(x_{Q_{K}})\int_{Q_{K}}\gamma \;d\mu \ .  \notag
\end{align}%
To prove \eqref{suff},  start on the right hand side  with a fixed $\mathcal{F}$-child $F\in 
\textup{Ch}_{\mathcal{F}}(F^{\prime })$ and a fixed $K\in \mathcal{W}\!F$. 
Note the decomposition 
\begin{equation*}
\int_{Q_{K}}\gamma \;d\mu =\sum_{L\in \mathcal{D}:\ L\subset
K}\int_{W_{L}}\gamma \;d\mu =\sum_{F^{\prime \prime }\in \mathcal{F}:\
F^{\prime \prime }\subset F}\sum_{K^{\prime \prime }\in \mathcal{W}%
\!F^{\prime \prime }:K^{\prime \prime }\subset K\ }\int_{W_{K^{\prime \prime
}}}\gamma \;d\mu ,
\end{equation*}%
which holds since if $L\in \mathcal{D}\ $satisfies $L\subset K$, but $L$ is 
\textbf{not} equal to any $K^{\prime \prime }\in \mathcal{W}\!F^{\prime
\prime }\mathcal{\ }$for some $F^{\prime \prime }\in \mathcal{F}$ with $%
F^{\prime \prime }\subset F$, then $\int_{W_{L}}\gamma \;d\mu =0$ by the
definition of $\mu $ above, and so does not contribute to the sum. Next we
note that if $F^{\prime \prime }\subset F$ and $K^{\prime \prime }\in 
\mathcal{W}\!F^{\prime \prime }$, then%
\begin{equation*}
\widehat{\mathsf{T}}_{\tau }(\widehat{Q}_{F^{\prime },F^{\prime \prime
}})(x_{Q_{K^{\prime \prime }}})\approx \widehat{\mathsf{T}}_{\tau }(\widehat{%
Q}_{F^{\prime },F})(x_{Q_{K}}),
\end{equation*}%
because (1) $\widehat{Q}_{F^{\prime },F^{\prime \prime }}=\widehat{Q}%
_{F^{\prime },F}$ by definition and the assumption that $F\in \textup{Ch}_{%
\mathcal{F}}(F^{\prime })$, and (2) $\widehat{\mathsf{T}}_{\tau }(\widehat{Q}%
_{F^{\prime },F})(x_{Q_{K^{\prime \prime }}})\approx \widehat{\mathsf{T}}%
_{\tau }(\widehat{Q}_{F^{\prime },F})(x_{Q_{K}})$ because both $K^{\prime
\prime }$ and $K$ have their triples contained in $F$. Altogether then, we
have for a fixed $\mathcal{F}$-child $F\in \textup{Ch}_{\mathcal{F}%
}(F^{\prime })$ and a fixed $K\in \mathcal{W}\!F$,%
\begin{eqnarray*}
\widehat{\mathsf{T}}_{\tau }(\widehat{Q}_{F^{\prime
},F})(x_{Q_{K}})\int_{Q_{K}}\gamma \;d\mu  &=&\sum_{F^{\prime \prime }\in 
\mathcal{F}:\ F^{\prime \prime }\subset F}\sum_{K^{\prime \prime }\in 
\mathcal{W}\!F^{\prime \prime }:K^{\prime \prime }\subset K\ }\widehat{%
\mathsf{T}}_{\tau }(\widehat{Q}_{F^{\prime
},F})(x_{Q_{K}})\int_{W_{K^{\prime \prime }}}\gamma \;d\mu  \\
&\approx &\sum_{F^{\prime \prime }\in \mathcal{F}:\ F^{\prime \prime
}\subset F}\sum_{K^{\prime \prime }\in \mathcal{W}\!F^{\prime \prime
}:K^{\prime \prime }\subset K\ }\widehat{\mathsf{T}}_{\tau }(\widehat{Q}%
_{F^{\prime },F^{\prime \prime }})(x_{Q_{K}})\int_{W_{K^{\prime \prime
}}}\gamma \;d\mu .
\end{eqnarray*}%
Now sum over all $\mathcal{F}$-children $F\in \textup{Ch}_{\mathcal{F}%
}(F^{\prime })$ and $K\in \mathcal{W}\!F$ and use%
\begin{equation*}
\sum_{\substack{ F\in \textup{Ch}_{\mathcal{F}}(F^{\prime })}}\sum_{K\in 
\mathcal{W}\!F}\sum_{F^{\prime \prime }\in \mathcal{F}:\ F^{\prime \prime
}\subset F}\sum_{K^{\prime \prime }\in \mathcal{W}\!F^{\prime \prime
}:K^{\prime \prime }\subset K\ }=\sum_{\substack{ F^{\prime \prime }\in 
\mathcal{F}\;:\;F^{\prime \prime }\subsetneqq F^{\prime }}}\sum_{K^{\prime
\prime }\in \mathcal{W}\!F^{\prime \prime }}
\end{equation*}%
to obtain (\ref{suff}).
 
\bigskip
The inequality \eqref{e:3bb} can be reduced to two testing inequalities, following the method known as the \emph{parallel corona.} 
This method uses stopping data of the relevant functions to split the sum. 
The measure $ \mu $ is built from stopping data for $ g$, already.  
We need stopping data for the dual function $ \gamma $.  
Let $ \mathcal K = \bigcup_{F\in \mathcal F} \mathcal W\!F$.  
Let $ \mathcal G\subset \mathcal K$ be stopping data for $ \gamma $.  Namely, without loss of generality, we can assume that 
$ \gamma $ is supported on the union of disjoint cubes  $ Q _{G_j}$, for $ j\geq 1$, and $ G_j\in \mathcal K$. 
Add these intervals $ G_j$ to $ \mathcal G$, and in the inductive step, if $ G\in \mathcal G$ is minimal, add to $ \mathcal G$ the maximal dyadic children $ G' \in \mathcal K$ of $ G$, 
if they exist, such that $ \mathbb E ^{\mu } _{Q _{G'}} \gamma > 10  \mathbb E ^{\mu } _{Q _{G}} \gamma$.  

Using this notation, we write the sum in \eqref{e:3bb} as equal to the sum of these two terms. The first is 
a sum over $ F'\in \mathcal F$,  of the expression below. 
\begin{align}  \label{e:xx1st}
 \mathbb E ^{\tau } _{Q _{F'}} \lvert  g\rvert 
 \sum_{\substack{F\in \textup{Ch} _{\mathcal F} (F') } } 
\sum_{\substack{K\in \mathcal W\!F \\  \pi _{\mathcal G} K \subset F'}}  
\widehat {\mathsf T }_{\tau } (\widehat Q _{F',F}) (x _{Q_K})  \int _{Q_K}  \gamma  \; d \mu   . 
\end{align}
The key point is that $ \pi _{\mathcal G} K \subset F'$.  
And the second is the sum over $ G\in \mathcal G$ of the expression below.  
The key point is that $ F'\subsetneq G = \pi _{\mathcal G} K$.  
\begin{equation}\label{e:xx2nd}
 \sum_{\substack{F'\in \mathcal F\\   F' \subsetneq  G} } 
   \mathbb E ^{\tau } _{Q _{F'}} \lvert  g\rvert 
 \sum_{\substack{F\in \textup{Ch} _{\mathcal F} (F') } } 
 \sum_{\substack{K\in \mathcal W\!F \\   \pi _{\mathcal G} K=G}}  
\widehat {\mathsf T }_{\tau } ( \widehat Q _{F',F}) (x _{Q_K}) \int _{Q_K}  \gamma  \; d \mu    . 
\end{equation}

The first term \eqref{e:xx1st} obeys this inequality of testing type.  Uniformly in $ F'\in \mathcal F$, 
\begin{equation}\label{e:x1st}
\eqref{e:xx1st} \lesssim 
 \mathscr R   \mathbb E ^{\tau } _{Q _{F'}} \lvert  g\rvert   \tau (Q_{F'}) ^{1/2} 
\Biggl[
\sideset{}{^{F'}}\sum _{G\in \mathcal G} 
 ( \mathbb E ^{\mu } _{Q_G} \gamma ) ^2 \mu (Q_G)
\Biggr] ^{1/2} . 
\end{equation}
Above, the notation for the sum means that the sum is restricted to those $G\in \mathcal{G}$ so that $ \pi_{\mathcal{G}}K=G$ 
for some $K\in \mathcal{W}\!F$ and $F\in \textup{Ch} _{\mathcal F} (F')$.  
 Recall that we impose the additional condition that each  $K$ is in at most a bounded number of collections $\mathcal{W}\!F$. 
Observe that  Cauchy-Schwarz and the  quasi-orthogonality principle applies to bound the sum over $ F\in \mathcal F$ on the right in \eqref{e:x1st}.  The bound is 
 $ \lesssim  \mathscr R \lVert g\rVert _{\tau } \lVert \gamma \rVert _{\mu }$, as required.   
 
 The second term  \eqref{e:xx2nd}  satisfies the testing inequality below, uniformly in $ G\in \mathcal G$. 
\begin{equation}\label{e:x2nd}
\eqref{e:xx2nd} \lesssim 
\mathscr R  \cdot  \mathbb E ^{\mu } _{Q_G} \gamma  \cdot \mu (Q_G) ^{1/2} 
\Biggl[ 
\sum_{\substack{F'\in \mathcal F \\  \pi _{\mathcal G}F'=G }}  (\mathbb E ^{\tau } _{Q _{F'}} g) ^2 \tau (Q_{F'}) 
\Biggr] ^{1/2} . 
\end{equation}
And, quasi-orthogonality completes the bound in this case as well, completing the proof of \eqref{e:3aa}, subject to the two testing inequalities.  
The  two testing  inequalities \eqref{e:x1st} and \eqref{e:x2nd} are proved below.      
\end{proof}

\begin{proof}[Proof of \eqref{e:x1st}]  
The term $ \mathbb E ^{\tau } _{Q _{F'}} \lvert  g\rvert$ on the right plays no role in the analysis.  
Apply Cauchy-Schwarz in $ L ^2 (\mu )$ in \eqref{e:xx1st}.  On the one hand, 
we have an instance of the energy inequality.  Namely, 
\begin{align}
\label{e:xx1a}
\int _{Q_{F'}}  
\biggl[   \sum_{\substack{F\in \textup{Ch} _{\mathcal F} (F') } } &
\sum_{K\in \mathcal W\!F} \widehat {\mathsf T }_{\tau } (\widehat  Q_{F',F}) (x _{Q_K})  Q_K    \biggr] ^2 \; d \mu 
\\
&= 
 \sum_{\substack{F\in \textup{Ch} _{\mathcal F} (F') } } \sum_{K\in \mathcal W\!F} \widehat {\mathsf T }_{\tau } ({\widehat  Q_{F',F}} ) (x _{Q_K})  ^2 \mu (Q_K) 
\lesssim \mathscr R ^2 \tau (Q_{F'}). 
\end{align}
Indeed, the sets $ Q_K$ are pairwise disjoint as  $F'$ is fixed, and recalling that 
\begin{gather*}
  \mu (Q_K) = \sum_{J \;:\;  J\subset K, \tilde \pi J\subset F} \langle t, h ^{\sigma } _{J} \rangle_ \sigma  ^2 , \quad  K\in\mathcal{W}F
\end{gather*}
the energy inequality, \eqref{e:energyI} implies the bound above.  
This is half of the expression on the right in \eqref{e:xx1st}.  

On the other hand, we turn our attention to the function $\gamma$, and the application of Cauchy-Schwarz 
gives us the term 
\begin{align*}
\sum_{\substack{F\in \textup{Ch} _{\mathcal F} (F') } } 
\sum_{\substack{K\in \mathcal W\!F \\  \pi _{\mathcal G} K \subset F'}} 
\bigl( \mathbb E ^{\mu } _{Q_K} \gamma \bigr) ^2 \mu (Q_K)
\lesssim \sideset{}{^{F'}}\sum _{G\in \mathcal G} 
 ( \mathbb E ^{\mu } _{Q_G} \gamma ) ^2 \mu (Q_G).
\end{align*}
We have appealed to disjointness of the $ Q_K$ again. And we  appeal to the construction of the stopping data and the notation of \eqref{e:x1st}. This completes the proof.   
\end{proof}

\begin{proof}[Proof of \eqref{e:x2nd}] 
In this case, we will only need the $ \mathscr A_2$ condition, and  the operator $ \widehat {\mathsf T } $ is dualized.   The expression to bound is 
\begin{align*}
\eqref{e:xx2nd}= \mathbb E ^{\mu } _{Q_G} \gamma 
 \int _{Q_G}  
  \sum_{\substack{F'\in \mathcal F\\   F' \subsetneq  G} } 
   \mathbb E ^{\tau } _{Q _{F'}} \lvert  g\rvert 
 \sum_{\substack{F\in \textup{Ch} _{\mathcal F} (F') } } 
 \sum_{\substack{K\in \mathcal W\!F \\   \pi _{\mathcal G} K=G}}  
     {\widehat  Q_{F',F}}  
\widehat {\mathsf T } ^{\ast} _{\mu } ( Q_K) \; d \tau .
 \end{align*}
 Applying Cauchy-Schwarz in the index $ F'$, we will get on the one hand the term below, 
 \begin{equation*}
  \sum_{\substack{F'\in \mathcal F\\   F' \subsetneq  G} } 
\bigl(   \mathbb E ^{\tau } _{Q _{F'}} \lvert  g\rvert  \bigr) ^2 \tau (Q _{F'}), 
\end{equation*}
which forms half of the right side of \eqref{e:x2nd}.  
Therefore, we should establish the $ L ^2 $-estimate below
\begin{align}  \label{e:XXxx}
\int _{Q_G} 
\sum_{\substack{F '\in \mathcal F \\ \pi _{\mathcal G} F '= G }} 
\Biggl[  
 \sum_{\substack{F\in \textup{Ch} _{\mathcal F} (F') } } 
{\widehat  Q_{F',F}}  
 \sum_{K\in \mathcal W\!F }\widehat {\mathsf T } ^{\ast} _{\mu } (Q_K) 
\Biggr] ^2  \; d \tau 
\lesssim \mathscr R ^2 \mu (Q_G).  
\end{align}

This is a consequence of the $ \mathscr A_2$ condition, after several reductions.  
The latter require some additional summing indices; we will gain geometric decay in all of them. 

\begin{itemize}
\item  Above, $F'$ varies, so that the  Carleson cubes   $ Q_K$ overlap. We make this definition.  
For integers $ k \geq 0$, write $ W ^{k}_K = K \times [2 ^{-k-1} \lvert  K\rvert, 2 ^{-k} \lvert  K\rvert)  $. 
These subsets of $ \mathbb R ^2 _+$ are disjoint in $ k\geq 0$, and $ K$, and $ \mu (W ^{k}_K) \leq 2 ^{-2k} 
\lvert  K\rvert ^2 \sigma (K) $, as follows from the definition of $ \mu $ in \eqref{e:mu}. 

\item  Let $ K _{F, \ell }$, for $ \ell >r$,  be a choice  $ K\in \mathcal W\!F$ with  $ 2 ^{\ell } \lvert  K\rvert = \lvert  F\rvert  $. 
By the Whitney property of $ \mathcal W\!F$, there are $ O (1)$ possible choices of such an interval.  

\item For integers $ m  \geq \ell  $, let $\widehat   Q _{F', F} ^{\ell ,m} = Q _{F'} \cap  (Q _{ 2 ^{m+1 - \ell }F  } \setminus Q _{2 ^{m - \ell }F })$. 

\item   We employ a standard `separation of scales' trick.  
Let $ \mathcal D _{ m } $ be a subset of our dyadic grid $ \mathcal D$ so that   for all  $ F_1 \neq F_2 \in \mathcal D _{m }$,  
if $ \lvert  F_1\rvert = \lvert  F_2\rvert  $, then $ \textup{dist} (F_1, F_2 ) \geq 2 ^{ m  } \lvert  F_1\rvert $, 
and if $ \lvert  F_1\rvert < \lvert  F_2\rvert  $, then $2 ^{2m } \lvert  F_1\rvert \leq \lvert  F_2\rvert$.   
Then, set $  \textup{Ch} _{\mathcal F, m } (F') = \mathcal D _{m } \cap  \textup{Ch} _{\mathcal F } (F')$.  
Note that a dyadic grid is the union of $ Cm 2 ^{m}$ such grids with `separation of scales.' 
\end{itemize}
To prove \eqref{e:XXxx}, 
it suffices to show the estimate below.  It  has geometric decay in 
$ k$ and  $ m$, and hence $  \ell $.   Uniformly in $ F'\in \mathcal F$, $ k  \geq 0$, and $  m\geq \ell >r$, 
\begin{equation}\label{e:x2a} 
 \int _{Q_{F'} }  
\Biggl[  \sum_{\substack{F\in \textup{Ch} _{\mathcal F, m } (F') } } 
{\widehat  Q _{F', F} ^{\ell ,m}} 
 \widehat {\mathsf T }^{\ast} _{\mu } (W^k_{K_{F,\ell}})  \Biggr] ^2 \; d \tau 
\lesssim \mathscr R ^2      2 ^{-2k-m (2 - 4\epsilon )   } 
  \sum_{\substack{F\in \textup{Ch} _{\mathcal F , m} (F') } } 
 \mu (W_{K_{F,\ell}} ) .
\end{equation}
We still must sum over the  $ O (m 2 ^{m})$ choices of grids $ \mathcal D _m$, which we can do because of the $ 2 ^{-2m (1- 2 \epsilon )}$ above. 
This estimate implies \eqref{e:XXxx}, with the full sum over $ F'\subset G$.

\smallskip 
The square in \eqref{e:x2a} is expanded.  One term concerns 
the sum  over $F\in \textup{Ch} _{\mathcal F, m } (F')  $ of 
\begin{align}  \label{e:we}
\int _{\widehat  Q_{F', F} ^{\ell ,m} }   \widehat {\mathsf T }^{\ast} _{\mu }
(W^k_{K_{F,\ell}} )   ^2 \; d \tau 
 & 
\leq   \sup _{x \in \widehat   Q _{F',F}  ^{\ell ,m}} \widehat {\mathsf T }^{\ast} _{\mu }
(W^k_{K_{F,\ell}} )(x) 
\int _{\widehat  Q_{F', F} ^{\ell ,m} }   \widehat {\mathsf T }^{\ast} _{\mu }
(W^k_{K_{F,\ell}} )  \; d \tau 
\\
 & 
=   \sup _{x \in \widehat   Q _{F',F}  ^{\ell ,m}} \widehat {\mathsf T }^{\ast} _{\mu }
(W^k_{K_{F,\ell}} )(x) 
\int _{W^k_{K_{F,\ell}} }  \widehat {\mathsf T } _{\tau } (\widehat  Q_{F', F} ^{\ell ,m}) 
\; d \mu 
\\   \label{e:we1}
&\leq 
  \sup _{x \in \widehat   Q _{F',F}  ^{\ell ,m}} \widehat {\mathsf T }^{\ast} _{\mu }
(W^k_{K_{F,\ell}} )(x) 
\sup _{x'\in W^k_{K_{F,\ell}} } 
 \widehat {\mathsf T } _{\tau } (\widehat  Q_{F', F} ^{\ell ,m}) (x') \cdot \mu (W^k_{K_{F,\ell}} )
\end{align}
Above, we replaced one $ \widehat {\mathsf T} ^{\ast}  $ with a supremum, and dualized the second $ \widehat {\mathsf T} ^{\ast} $, to make the appeal to the $ \mathscr A_2$ condition easier. (This same argument appears a second time.)

\begin{figure}
\begin{tikzpicture}
\draw (-6,0) -- (6,0); 
\draw (-6,0) rectangle (6,5) ; 
\draw (-4,0) rectangle (4,3) ; 
\draw (0,3.5) node[above] {$\widehat  Q_{F', F} ^{\ell ,m} = Q _{F'} \cap  (Q _{ 2 ^{m+1 - \ell }F  } \setminus Q _{2 ^{m - \ell }F })  $}; 
\draw (0, 0.5) rectangle (1,1) node [right] {$ W^k_{K_{F,\ell}}$}; 
\draw [<->] (-4,0.75) -- (0,0.75) node[midway,above] {$ \lesssim 2 ^{m (1- \epsilon)} \lvert  K _{F, \ell }\rvert $}; 
\draw[<->] (0.5,1) --(0.5,3) node[midway,right] {$ \approx 2^ {m} \lvert K_{F, \ell} \rvert $}; 
\end{tikzpicture}

\caption{The sets used in the proof of \eqref{e:x2nd}.}
\label{f:QW}
\end{figure}
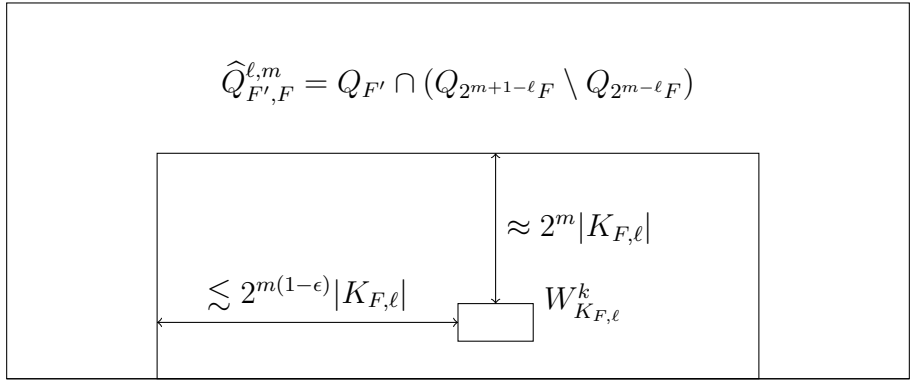

The two supremums in \eqref{e:we1} are estimated as follows.  In both, we use the definition of $ \widehat {\mathsf T }$ in \eqref{e:Ttilde}. 
Recall that $ W^k_{K_{F,\ell}} = K _{F, \ell } \times [2 ^{-k-1} \lvert  K_{F, \ell }\rvert, 2 ^{-k} \lvert  K_{F, \ell }\rvert)  $, 
where  $ K _{F , \ell }$ is a choice of $ K\in \mathcal W\!F$ with $ 2 ^{\ell } \lvert  K\rvert= \lvert  F\rvert  $, 
and $ \widehat   Q _{F', F} ^{\ell ,m} = Q _{F'} \cap  (Q _{ 2 ^{m+1 - \ell }F  } \setminus Q _{2 ^{m - \ell }F })$.  See Figure~\ref{f:QW}. 
Then, for the first supremum in \eqref{e:we1}
\begin{align}
  \sup _{x \in \widehat   Q _{F',F}  ^{\ell ,m}} \widehat {\mathsf T }^{\ast} _{\mu }
(W^k_{K_{F,\ell}} )(x )      
&=   \sup _{x \in \widehat   Q _{F',F}  ^{\ell ,m}} 
\int _{W^k_{K_{F,\ell}}}  \frac {  1 } {  y_2 ^2 + (y_1 -x _1) ^2 + x_2 ^2  } \; \mu  (dy)
\\
& \leq  \mu (W^k_{K_{F,\ell}})  
 \sup _{x \in \widehat   Q _{F',F}  ^{\ell ,m}}  \sup _{y \in W^k_{K_{F,\ell}}}  \frac {  1  } {  x_2^2 +  (y_1 -x _1) ^2    }
 \\  \label{e:we2} 
 & \lesssim  2 ^{ - 2 m (1- \epsilon ) } 
   \frac {\mu (W^k_{K_{F,\ell}} )} {  \lvert  {K _{F, \ell }}\rvert ^2    }  . 
\end{align}
Above,  the supremum  is small, at most $2^{-2m(1- \epsilon)} \lvert K_{F,\ell} \rvert ^{-2}$, as one sees from Figure \ref{f:QW}. 
For the second supremum in \eqref{e:we1}, compare to the Poisson average of $\tau$: 
\begin{align}
\sup _{x\in W^k_{K_{F,\ell}} } 
 \widehat {\mathsf T } _{\tau } (\widehat  Q_{F', F} ^{\ell ,m}) (x) 
 & = 
 \sup _{x\in W^k_{K_{F,\ell}} } \int _{\widehat  Q_{F', F} ^{\ell ,m} }   
  \frac { 1   } {  y_2 ^2 + (y_1 -x _1) ^2 + x_2 ^2  }  \; \tau  (dy) 
  \\ \label{e:we3} 
  & \lesssim  \lvert K_{F, \ell}\rvert ^{-1} \mathsf P _{\tau}( \widehat  Q_{F', F}, K_{F, \ell}). 
\end{align}
In using these two estimates, we use the inequality $ \mu (W  _{K} ^{k}) \lesssim  2 ^{-2k} \left\vert K\right\vert^2 \sigma (K)$,  
a direct consequence of the definition of $ \mu $ in \eqref{e:mu}.  
Combining \eqref{e:we2} and \eqref{e:we3}, we  have  
\begin{align*}
\eqref{e:we} & \lesssim  2 ^{ - 2 m (1- \epsilon ) } 
  \frac {\mu (W^k_{K_{F,\ell}})} { \lvert  {K _{F, \ell }}\rvert  ^3   } 
   \mathsf P _{\tau} (  \widehat  Q_{F', F}, K_{F, \ell})   \cdot \mu (W^k_{K_{F,\ell}} )  
   \\
   & \lesssim  2 ^{-2k- 2m (1- \epsilon) }  
    \frac { \sigma (K _{F, \ell } ) }  { \lvert  {K _{F, \ell }}\rvert    } 
 \mathsf P _{\tau} ( \widehat  Q_{F', F}, K_{F, \ell}) \cdot \mu (W^k_{K_{F,\ell}} )  
   \\
   & \lesssim   2 ^{-2k-2m (1- 2\epsilon )}   \mathscr A_2 \cdot \mu (W^k_{K_{F,\ell}} )  . 
\end{align*}
This completes the proof \eqref{e:x2a} in this case.  

\medskip 

The second term of the square concerns those $ F_1 \neq F_2 \in  \textup{Ch} _{\mathcal F, m } (F') $ 
such that $ \widehat  Q_{F', F_1} ^{\ell ,m} \cap \widehat  Q_{F', F_2} ^{\ell ,m}\neq \emptyset $.  
We can then assume that $ \lvert  F_1\rvert \geq 2 ^{2m} \lvert  F_2\rvert  $,  by our separation of scales argument. 
We will show that 
\begin{gather}\label{e:we5}
\int _{\widehat  Q_{F', F_1} ^{\ell ,m} }   \widehat {\mathsf T }^{\ast} _{\mu }
(W_ {K _{F_1, \ell }} ^k)  
 \sideset{}{^{\ell,m}}\sum _{F_2} 
{\widehat  Q_{F', F_2} ^{\ell ,m} }   \cdot  \widehat {\mathsf T }^{\ast} _{\mu }
(W_ {K _{F_2, \ell }} ^k )  \; d \tau 
\lesssim 
2 ^{-2k-  4m   } \mathscr A_2 \mu (W_ {K _{F_1, \ell }}), 
\\
\textup{where} \qquad 
 \sideset{}{^{\ell,m}}\sum _{F_2}  \cdots  := \sum_{ \substack{F_2 \in  \textup{Ch} _{\mathcal F, m } (F')  \\ \widehat  Q_{F', F_1} ^{\ell ,m} \cap \widehat  Q_{F', F_2} ^{\ell ,m} \neq \emptyset   }} \cdots . 
\end{gather}
This inequality completes the proof of  \eqref{e:x2a} upon summation of $ F_1$. 

    With $ F_1$ fixed, we can then estimate using the similar kind of reasoning as above. We in particular appeal to \eqref{e:we2} and \eqref{e:we3}.  We pull one power of $  \widehat {\mathsf T }_{\tau  } ^{\ast} $ outside the integral, and then dualize the one inside to obtain 
    
\begin{align*}
\textup{LHS of \eqref{e:we5}  } 
& \lesssim 
\sup _{x\in\widehat   Q _{F', F_1} ^{\ell ,m}}  
 \widehat {\mathsf T }^{\ast} _{\mu }
(W^k_ {K _{F_1, \ell }} )   
  \sideset{}{^{\ell,m}}\sum  _{F_2} 
\int _{ W^k_ {K _{F_2, \ell }}}    \widehat {\mathsf T }_{\tau  }  ( \widehat  Q_{F', F_2} ^{\ell ,m} )  \; d \mu  
\\ \noalign{\noindent followed by \eqref{e:we2}, and a trivial inequality, }
& \lesssim 2 ^{-2 m (1- \epsilon )}
 \frac {\mu (W^k _{K _{F_1, \ell }})} { \lvert  {K _{F_1, \ell }}\rvert ^2  }  
 \sideset{}{^{\ell,m}}\sum _{F_2}
\sup _{x\in W^k_ {K _{F_2, \ell }} } 
   \widehat {\mathsf T }_{\tau  }  ( \widehat  Q_{F', F_2} ^{\ell ,m} )( x  )  \cdot   \mu   (W^k_ {K _{F_2, \ell }}  ) 
\\
\noalign{\noindent  and inside the sum use  \eqref{e:we3}, and the bound $ \mu (W^k_K) \lesssim 2 ^{-2k}\lvert  K\rvert ^2 \sigma (K) $,} 
& \lesssim 
2 ^{-2k- 2 m (1- \epsilon )   }  
   \frac{\mu (W^k _{K _{F_1, \ell }})} { \lvert  {K _{F_1, \ell }}\rvert ^2  } 
 \sideset{}{^{\ell,m}}\sum _{F_2}
\mathsf P _{\tau} (  \widehat  Q_{F', F_2} ^{\ell ,m} , K _{F_2, \ell }) 
 \sigma   (W^k_ {K _{F_2, \ell }}  ) \lvert K _{F_2, \ell }\rvert 
\\ 
\noalign{\noindent  now appeal to the   $ \mathscr A_2$ condition with holes \eqref{e:A2}, } 
& \lesssim 
2 ^{-2k- 2 m (1- \epsilon )  } \mathscr A_2   
   \frac{\mu (W^k _{K _{F_1, \ell }})} { \lvert  {K _{F_1, \ell }}\rvert ^2  } 
 \sideset{}{^{\ell,m}}\sum _{F_2}
   \lvert  K _{F_2, \ell }\rvert ^2  
\\
\noalign{\noindent  but the last sum is easy to estimate by $  2^{-2m}\lvert  {K _{F_1, \ell }}\rvert ^2 $, so that  }
& \lesssim 
 2 ^{-2k-  4m(1-\epsilon) } \mathscr A_2 \cdot 
   {\mu (W^k _{K _{F_1, \ell }})} . 
\end{align*}
  This is a better estimate than in the first case, so  the proof of \eqref{e:x2a} is complete. 
\end{proof}
\section{Local Estimates} \label{s:local}

The bound for the form $ B ^{\textup{above}} (f,g)$, defined in \eqref{e:above}, 
is a sum over pairs of intervals $ (I,J)$ so that $ J\Subset _{4r} I$.  But, we have proved the  global to local estimate \eqref{e:Cglobal}. In it, we have restricted the sum to pairs of intervals $ (I,J)$ so that  
$ \tilde \pi _{\mathcal F} J \subsetneq I$, where 
$ \tilde \pi _{\mathcal F} J$ is the minimal stopping interval $ F \in \mathcal F$ so that $ J\Subset _{4r} F$.  
This condition is equivalent to $ \tilde \pi _{\mathcal F} J \subset I_J$.  
Therefore, it suffices to bound the complementary sum,  in which $ I_J \subsetneqq \tilde \pi _{\mathcal F} J$, 
which is equivalent to $ I\subset \tilde \pi _{\mathcal F} J$.   And, since $ J\Subset  _{4r}I$, we then see that $ \pi _{\mathcal F} I= \tilde \pi _{\mathcal F} J$.  
The sum is explicitly given by
\begin{align}
\sum_{I} \sum_{\substack{J \::\:   I \subset \tilde \pi _{\mathcal F} J  \\ J\Subset _{4r} I}}
&  \mathbb E ^{\tau } _{Q_{IJ}} \Delta ^{\tau } _{Q_I} g \cdot 
\left\langle  {\mathsf R}^{\ast}_\tau Q _{IJ},    \Delta ^{\sigma } _{J} f \right\rangle _{\sigma}  
\\ 
& = 
\sum _{F\in \mathcal F}   \; \sum_{I \;:\; \pi _{\mathcal F} I=F} 
\; 
\sum_{\substack{J \::\:  \tilde \pi _{\mathcal F} J  = F \\ J\Subset _{4r} I}} \mathbb E ^{\tau } _{Q_{IJ}} \Delta ^{\tau } _{Q_I} g \cdot 
\left\langle  {\mathsf R}^{\ast}_\tau Q _{IJ},    \Delta ^{\sigma } _{J} f \right\rangle _{\sigma}.  
\end{align}
 Note that with the stopping interval $ F$ fixed,  we  have $ F =  \tilde \pi _{\mathcal F} J = \pi _{\mathcal F} I$, 
since $ J\Subset _{4r}I$.   

We want to make the sum a bit more restrictive.  Specializing the sum to the case of $ I_J\in \mathcal F$ above, we have 

\begin{proposition}\label{p:rr} There holds 
\begin{equation}\label{e:44}
\Bigl\lvert 
\sum _{F\in \mathcal F}  \; \sum_{I \;:\; \pi _{\mathcal F} I=F} 
\;  
\sum_{\substack{J \::\:  \tilde \pi _{\mathcal F} J  = F \\ I_J \in \mathcal F  ,\ J\Subset _{4r} I}} \mathbb E ^{\tau } _{Q_{IJ}} \Delta ^{\tau } _{Q_I} g \cdot 
\left\langle  {\mathsf R}^{\ast}_\tau Q _{IJ},    \Delta ^{\sigma } _{J} f \right\rangle _{\sigma} 
\Bigr\rvert 
\lesssim \mathscr T \lVert g\rVert _{\tau } \lVert f\rVert _{\sigma } 
\end{equation}
\end{proposition}

\begin{proof}
Only using the testing condition,  and assuming that $ \pi ^{2} _{\mathcal F} F'=F$, 
\begin{equation*}
\Bigl\lvert 
\mathbb E ^{\tau } _{Q _{F'}} g 
\sum _{\substack{ J \;:\; \Subset _{4r-1} F' } }  
\left\langle  {\mathsf R}^{\ast}_\tau Q _{F'},    \Delta ^{\sigma } _{J} f \right\rangle _{\sigma}
\Bigr\rvert
\lesssim \mathscr T 
\mathbb E ^{\tau } _{Q _{F'}} \lvert  g\rvert \cdot  \tau (Q _{F'}) ^{1/2} 
\Bigl\lVert  \sum _{\substack{ J \;:\; J \Subset _{4r-1} F' } }  \Delta ^{\sigma } _{J} f \Bigr\rVert _{\sigma }.  
\end{equation*}
This can be summed over $ F\in \mathcal F$, and $ F' $ an $ \mathcal F$-child of $ F$, using quasi-orthogonality.  
\end{proof}

The proposition above leaves us with the task of proving a bound with the additional constraint that $ I_J\not\in \mathcal F$.

\begin{lemma}\label{l:local} For each $ F\in \mathcal F$, we have 
\begin{equation}\label{e:local}
\Bigl\lvert 
 \sum_{I \;:\; \pi _{\mathcal F} I=F} 
\; 
\sum_{\substack{J \::\:  \tilde \pi _{\mathcal F} J  = F \\ I_J \not\in \mathcal F,  J\Subset _{4r} I}} \mathbb E ^{\tau } _{Q_{IJ}} \Delta ^{\tau } _{Q_I} g \cdot 
\left\langle  {\mathsf R}^{\ast}_\tau Q _{IJ},    \Delta ^{\sigma } _{J} f \right\rangle _{\sigma}  
\Bigr\rvert
\lesssim \mathscr R  
\{\mathbb E^{\tau}_{Q_F}\lvert g\rvert \cdot\tau (Q_F) ^{1/2} + \lVert H^{\tau } _{F} g\rVert_{\tau } \}
\lVert \tilde H^{\sigma } _{F} f\rVert_{\sigma } . 
 \end{equation}
\end{lemma}

This case is much more complicated.  
The following step is a simple appeal to the Carleson cube testing hypothesis.  
For each $ J$ with $\tilde \pi _{\mathcal F} J=F$, define $ \varepsilon _J$  by the formula 
\begin{equation*}
\varepsilon _J \mathbb E^{\tau}_{Q_F}\lvert g\rvert \equiv \sum_{\substack{I \::\: \pi _{\mathcal F} I =F\\ I_J \not\in \mathcal F, \  J\Subset _{4r} I }} 
\mathbb E ^{\tau } _{Q _{IJ}} \Delta ^{\tau } _{Q_I} g . 
\end{equation*}
It follows from the construction of the stopping tree $ \mathcal F$, that $ \lvert  \varepsilon _J \rvert \lesssim 1 $. 
We perform part of the exchange argument. 
In the bilinear form, the argument of the Riesz transform is $ Q _{IJ}= Q_F - (Q _{F} - Q _{IJ})$. 
With just $ Q_F$, we have 
\begin{align*}
\sum_{I \::\: \pi _{\mathcal F}I=F} 
\sum_{\substack{J \::\: \tilde \pi _{\mathcal F} J =F\\ I_J \not\in \mathcal F, \  J\Subset_{4r} I }}  
\mathbb E ^{\tau } _{Q _{IJ}} \Delta ^{\tau } _{Q_I} g 
\Bigl\langle  {\mathsf R}_{\tau }^{\ast }  Q_F ,  \Delta ^{\sigma } _{J} f\Bigr\rangle _{\sigma } 
& =  \mathbb E^{\tau}_{Q_F}\lvert g\rvert\Bigl\langle  {\mathsf R}_{\tau }  ^{\ast}Q_F , \sum_{J \::\: \tilde \pi _{\mathcal F} J=F}  \varepsilon_J  \Delta^\sigma_J f\Bigr\rangle_{\sigma } 
\\
& \le \mathbb E^{\tau}_{Q_F}\lvert g\rvert 
\left\Vert  Q_F{\mathsf R}_{\tau } ^{\ast}  Q_F \right\Vert_{\sigma } 
\left\Vert  
\sum_{J \::\: \tilde \pi _{\mathcal F} J=F} \varepsilon_J  \Delta^\sigma_J f
\right\Vert_{\sigma } 
\\
& \le \mathscr T \mathbb E^{\tau}_{Q_F}\lvert g\rvert \cdot\tau (Q_F) ^{1/2} \Vert \tilde H^{\sigma } _{F} f\Vert_{\sigma} . 
\end{align*}
This uses the testing inequality and orthogonality of the martingale differences.  
Quasi-orthogonality is used to sum this last estimate over all $ F\in \mathcal F$.  

\smallskip
It remains to bound the term where the argument of the Riesz transform is $ (Q _{F} - Q _{IJ})$, that is the \emph{stopping form}
\begin{equation}\label{e:stopI}
B ^{\textup{stop}} _F (f,   g)
\equiv 
\sum_{I \::\: \pi _{\mathcal F}I=F} 
\mathbb E ^{\tau } _{Q _{IJ}} \Delta ^{\tau } _{Q_I} g \cdot 
\sum_{\substack{J \::\: \tilde \pi _{\mathcal F} J =F\\ I_J \not\in \mathcal F, \  J\Subset_{4r} I }}  
\left\langle {\mathsf R} ^{\ast}   _{\tau } (Q _{F} - Q _{IJ}),  \Delta^\sigma_J f\right\rangle _{\sigma } . 
\end{equation}
No naive application of the remaining part of the exchange argument will be successful.  
Instead, we will use a sophisticated recursion to prove the following lemma.

\begin{lemma}\label{l:stopI} The following estimate holds uniformly over $ F\in \mathcal F$: 
\begin{equation}\label{e:stopI<}
\left\vert  B ^{\textup{stop}} _F (f,   g)\right\vert 
\lesssim \mathscr R \lVert g\rVert_{\tau } \lVert f\rVert_{ \sigma } . 
\end{equation}
\end{lemma}
One then can invoke quasi-orthogonality to sum the estimate above since it is applied to the corresponding Haar projections of $f$ and $g$, namely, to $\tilde H^{\sigma } _{F} f$ and $H^{\tau} _{F} g$ respectively.

\subsection{The Initial Definitions and the Size Lemma}
We regard the interval $ F\in \mathcal F$ as fixed. 
We need an elaborate decomposition of the bilinear form $ B ^{\textup{stop}} _F (f, g)$,  by dividing up the  intervals $ I, J$ over which the sum is formed. 

Let $ \mathcal P$ be a finite  collection of pairs  $ (I,J) \in \mathcal D _g  \times \mathcal D_f $. 
Denote a generic element by $  (P_1, P_2)\in \mathcal P$.  Set $ \mathcal P _{j} \equiv \{ P_j \::\: (P_1,P_2) \in \mathcal P\}$ to be the projection onto the respective coordinate.  Also, let $ \tilde {\mathcal P}_1 \equiv \{ (P_1) _{P_2} \::\: (P_1,P_2) \in \mathcal P \}$.  

We say that $ \mathcal P$ is \emph{admissible} if 
\begin{enumerate}
\item For $ (P_1, P_2) \in \mathcal P$, we have $ P_2 \Subset _{4r} P_1 \subsetneq F$. 

\item For $ (P_1, P_2) \in \mathcal P$, there holds $ P_2 \Subset _{4r} P_1$,  
and  both  $ P_1$ and $ P_2$ are good. 

\item  Let $ F'$ be an energy stopping interval of $ F$,  as defined at the beginning of \S\ref{s:stopData}.  
No interval in $   \tilde {\mathcal P}_1 $ is    contained in  $ F'$. 

 \item  For each fixed $Q$, 
 the collection $ \{P_1\::\: (P_1, Q)\in \mathcal P\}$ is \emph{convex}.  Namely, if 
 $ P_1 \subset P_2 \subset P_3 $ and $P_1, P_3 \in \mathcal P_1$, and $ P_2\in \mathcal D _{g} $ is good, then $ P_2\in \mathcal P_1$.
 
\end{enumerate}

A remark that we will use repeatedly is that a convex subcollection  $ \mathcal P'$ of an admissible $ \mathcal P$ is again admissible. 
 Set 
\begin{equation*}
B _{\mathcal P} (f,g) \equiv \sum_{(P_1, P_2) \in \mathcal P}
 \mathbb E ^{\tau } _{\tilde P_1} \Delta ^{\tau } _{P_1} g \cdot 
\left\langle {\mathsf R}  _{\tau }^{\ast} (Q _{F} - Q _{ \tilde P_1}),  \Delta ^{\sigma } _{P_2} f\right\rangle _{\sigma } ,
\end{equation*}
It suffices to consider admissible $ \mathcal P$.

\begin{proposition}\label{p:stopAdmiss}  We have for an admissible collection $ \mathcal P$, 
\begin{equation*}
B ^{\textup{stop}} _F (f,   g) = B  _{\mathcal P} (f,   g)
\end{equation*}
\end{proposition}

\begin{proof}
From the definition of the stopping form in \eqref{e:XS}, we take 
\begin{equation*}
\mathcal P = \{ ( P_1, P_2) \;:\;  \textup{$ P_1, P_2$ good},\  \pi _{\mathcal F} P_2 = \pi _{\mathcal F} P_1 = F,\  P_2\Subset _{4r} P_1\}
\end{equation*}
This collection is admissible.   The first condition is immediate.  The second condition follows from $ \pi _{\mathcal F} P_2=F$ and $ P_2 \Subset _{4r} P_1$. And the third condition follows from the definition of $ \mathcal F$-parent. 
\end{proof}

Let $ \mathscr N _{\mathcal P}$ be the norm of the bilinear form $B _{\mathcal P}$, that  is, $ \mathscr N _{\mathcal P}$ is  the best constant in the inequality 
\begin{equation*}
\lvert  B _{\mathcal P} (f,g) \rvert \le \mathscr N _{\mathcal P} \lVert g\rVert_{\tau } \lVert f\rVert_{\sigma } .  
\end{equation*}
Thus, it suffices to show that $ \mathscr N _{\mathcal P} \lesssim \mathscr R$ for all admissible $ \mathcal P$. 

With an abuse of language, we say that admissible collections $ \mathcal P ^{t}$, $ t\ge 0$, are \emph{orthogonal} if  
for any $ s\neq t$, then $ \mathcal P _2^{s} \cap \mathcal P _2^{t} =\emptyset $ and $ \tilde {\mathcal P} ^{s}_1 \cap \tilde {\mathcal P} ^{t}_1 = \emptyset $.  
A simple lemma is then an estimate of the norm of a form which is the union of orthogonal collections.  

\begin{lemma}\label{l:Sortho} Given orthogonal admissible collections $ \mathcal P ^{t}$, $ t\ge 0$,
\begin{equation*}
\mathscr N _{\bigcup _{t} \mathcal P ^{t}}  \le \sqrt 2 \sup _{t} \mathscr N _{\mathcal P ^{t}} . 
\end{equation*}
\end{lemma}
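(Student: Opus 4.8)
The plan is to split $B_{\bigcup_t\mathcal P^t}=\sum_t B_{\mathcal P^t}$, bound each summand by its own norm $\mathscr N_{\mathcal P^t}$ after replacing the test functions by suitable Haar projections, and then finish with Cauchy--Schwarz together with two Bessel-type almost-orthogonality estimates: an exact one on the $\sigma$-side and one losing a factor of $2$ on the $\tau$-side. That single loss of $2$ is precisely what produces the constant $\sqrt 2$.

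First I would record the localization step. Reading off the defining formula of $B_{\mathcal P^t}$, this form depends on $f$ only through $P^{t}_\sigma f\equiv\sum_{P_2\in\mathcal P^{t}_2}\Delta^{\sigma}_{P_2}f$ and on $g$ only through $P^{t}_\tau g\equiv\sum_{P_1\in\mathcal P^{t}_1}\Delta^{\tau}_{P_1}g$, since in each summand the selected child $\widetilde P_1=(P_1)_{P_2}$ is determined by the pair $(P_1,P_2)$ and $g$ enters only via $\mathbb E^{\tau}_{\widetilde P_1}\Delta^{\tau}_{P_1}g$. By orthogonality of martingale differences, $\Delta^{\sigma}_{P_2}(P^{t}_\sigma f)=\Delta^{\sigma}_{P_2}f$ for $P_2\in\mathcal P^{t}_2$ and $\Delta^{\tau}_{P_1}(P^{t}_\tau g)=\Delta^{\tau}_{P_1}g$ for $P_1\in\mathcal P^{t}_1$, so $B_{\mathcal P^t}(f,g)=B_{\mathcal P^t}(P^{t}_\sigma f,P^{t}_\tau g)$, whence
\begin{equation*}
\bigl\lvert B_{\mathcal P^t}(f,g)\bigr\rvert\le\mathscr N_{\mathcal P^t}\,\lVert P^{t}_\sigma f\rVert_{\sigma}\,\lVert P^{t}_\tau g\rVert_{\tau}.
\end{equation*}
Summing in $t$ and applying Cauchy--Schwarz gives
\begin{equation*}
\bigl\lvert B_{\bigcup_t\mathcal P^t}(f,g)\bigr\rvert\le\Bigl(\sup_t\mathscr N_{\mathcal P^t}\Bigr)\Bigl(\sum_t\lVert P^{t}_\sigma f\rVert_{\sigma}^2\Bigr)^{1/2}\Bigl(\sum_t\lVert P^{t}_\tau g\rVert_{\tau}^2\Bigr)^{1/2}.
\end{equation*}

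On the $\sigma$-side the $\mathcal P^{t}_2$ are pairwise disjoint by hypothesis, so the $P^{t}_\sigma$ are pairwise orthogonal Haar projections and $\sum_t\lVert P^{t}_\sigma f\rVert_{\sigma}^2\le\lVert f\rVert_{\sigma}^2$ by Bessel's inequality. On the $\tau$-side the $\mathcal P^{t}_1$ need not be disjoint, but I claim each interval $P_1$ lies in at most two of them. Indeed, the child $\widetilde P_1$ paired with $P_1$ is always a \emph{bottom} child of the Carleson cube $Q_{P_1}$: since $Q_{P_2}$ meets the real axis and $\lvert P_2\rvert\le 2^{-4r}\lvert P_1\rvert$, we have $Q_{P_2}\subset\widetilde P_1\times[0,\lvert P_1\rvert/2)$. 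There are exactly two such bottom children, and the hypothesis $\widetilde{\mathcal P}^{s}_1\cap\widetilde{\mathcal P}^{t}_1=\emptyset$ forbids two distinct collections from using the same one, so at most two values of $t$ can have $P_1\in\mathcal P^{t}_1$. Using orthogonality of $\{\Delta^{\tau}_{P_1}g\}$ over distinct $P_1$,
\begin{equation*}
\sum_t\lVert P^{t}_\tau g\rVert_{\tau}^2=\sum_{P_1}\#\{t:P_1\in\mathcal P^{t}_1\}\,\lVert\Delta^{\tau}_{P_1}g\rVert_{\tau}^2\le 2\sum_{P_1}\lVert\Delta^{\tau}_{P_1}g\rVert_{\tau}^2\le 2\lVert g\rVert_{\tau}^2.
\end{equation*}
Combining the last three displays yields $\lvert B_{\bigcup_t\mathcal P^t}(f,g)\rvert\le\sqrt2\,(\sup_t\mathscr N_{\mathcal P^t})\lVert f\rVert_{\sigma}\lVert g\rVert_{\tau}$, i.e.\ $\mathscr N_{\bigcup_t\mathcal P^t}\le\sqrt2\,\sup_t\mathscr N_{\mathcal P^t}$.

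The one genuinely non-formal point — and the step I expect to be the main obstacle — is the multiplicity bound: one must observe that the child of $Q_{P_1}$ selected by the bilinear form is forced to be a \emph{bottom} child, of which there are only two, so that the orthogonality hypothesis on $\widetilde{\mathcal P}_1$ caps the multiplicity at $2$ rather than $4$; this is exactly what gives the constant $\sqrt2$ rather than $2$. Everything else is a routine splicing together of Bessel's inequality and the definition of the form norm $\mathscr N_{\mathcal P}$.
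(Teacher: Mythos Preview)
Your proof is correct and follows essentially the same route as the paper: localize each $B_{\mathcal P^t}$ to the Haar projections onto $\mathcal P^t_2$ and $\mathcal P^t_1$, apply Cauchy--Schwarz, use disjointness of the $\mathcal P^t_2$ for exact orthogonality on the $\sigma$-side, and the multiplicity-$2$ bound on the $\tau$-side coming from disjointness of the $\widetilde{\mathcal P}^t_1$. The paper phrases the multiplicity bound slightly more directly---an interval $P_1$ has two children, so at most two distinct $\widetilde{\mathcal P}^t_1$ can claim it---whereas you route through ``bottom children of $Q_{P_1}$''; since $Q_{(P_1)_{P_2}}$ is exactly the bottom child of $Q_{P_1}$ containing $Q_{P_2}$, these are the same observation.
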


\begin{proof}
Let $ \Pi _{t}  ^{\sigma }$ be the Haar projection in $ L ^2 (\mathbb R ; \sigma ) $ onto the span of the functions $\{ h ^{\sigma } _{J} \::\: J\in \mathcal P ^{t}_2\}$. 
And, let $ \Pi _t ^{\tau }$ 
be the Haar projection in $ L ^2 ( \mathbb R ^2 _+; \tau  ) $ onto the span of the functions $\{ h ^{\tau  } _{Q_I} \::\: I\in \mathcal P ^{t}_1\}$.  

The projections $ \Pi _{t}  ^{\sigma }$ are indeed orthogonal in $ L ^2 (\mathbb R ;\sigma )$. 
Concerning the projections $ \Pi _t ^{\tau }$, note that an interval $ I$ has two children. The children can be in a  collection    $ \tilde {\mathcal P} ^{t}_1$ for  two distinct choices of $ t$.
Therefore, we have 
\begin{equation*}
\sum_{t} \lVert \Pi ^{\tau  }_t g\rVert_{\tau  } ^2 \le 2 \lVert g\rVert_{\sigma } ^2 \,. 
\end{equation*}
The Lemma is completed by estimating 
\begin{align*}
\lvert B _{\bigcup _{t} \mathcal P ^{t}} (f,g)\rvert &=  
\Bigl\lvert  \sum_{t} B _{\mathcal P ^{t}} (f,g) \Bigr\rvert_{} 
\\
&\le 
  \sum_{t} \lvert B _{\mathcal P ^{t}} ( \Pi ^{\sigma  } _{t} f,  \Pi ^{\tau  }_t g) \rvert
  \\
 & \le 
   \sum_{t} \mathscr N _{\mathcal P ^{t}} \left\lVert  \Pi ^{\tau } _{t} g\right\rVert_{\tau }\left \lVert \Pi ^{\sigma }_t f\right\rVert_{ \sigma } 
   \\
   &\le \sup _{t} 
   \mathscr N _{\mathcal P ^{t}}  \times 
   \Bigl[ \sum_{t} \left\lVert  \Pi ^{\tau } _{t} g\right\rVert_{\tau } ^2 
   \times \sum_{t}\left\lVert \Pi ^{\sigma }_t f\right\rVert_{\sigma } ^2    \Bigr] ^{1/2} 
   \\
   &\le \sqrt 2 \sup _{t} 
   \mathscr N _{\mathcal P ^{t}}  \cdot   \lVert g\rVert_{\tau } \lVert f\rVert_{\sigma } . 
\end{align*}
\end{proof}

The critical notion of \emph{size} serves as a crude approximation to the
norm of the bilinear form $B_{\mathcal{P}}$, and it has to be defined with
some care. Set 
\begin{gather}
\lambda =\lambda _{\mathcal{P}}\equiv \sum_{\substack{ P_{2}\in \mathcal{P}%
_{2}}}\left\langle t,h_{P_{2}}^{\sigma }\right\rangle _{\sigma }^{2}\delta
_{x_{Q_{P_{2}}}},  \label{e:size} \\
\textup{size}(\mathcal{P})^{2}\equiv \sup_{I\in \tilde{\mathcal{P}}%
_{1}:\tau (Q_{I})>0}\tau (Q_{I})^{-1}\sum_{K\in \mathcal{W}\!{I}}\mathsf{T}%
_{\tau }(Q_{F}\setminus Q_{K})(x_{Q_{K}})^{2}\frac{\lambda (\textup{Saw}_{%
\mathcal{P}}K)}{\lvert K\rvert ^{2}}\,, \\
\textup{Saw}_{\mathcal{P}}I\equiv \bigcup \{x_{Q_{P_{2}}}:P_{2}\in 
\mathcal{P}_{2}:P_{2}\Subset _{r}I\}.
\end{gather}

The measure $\lambda $ is derived from the energy terms, but we do not have
scale modifications above. The latter reappear in the definition of size,
which also uses a (not quite standard) sawtooth-type definition associated
to Carleson measure estimates. Besides the sawtooth region being a discrete
set, we caution the reader that (a) we do not have Carleson measures in this
argument, and (b) the distinction between this definition and the standard
definition is important, as it stems from the formulation of the energy
stopping condition. The next proposition is the base step in our recursion. 
\begin{proposition}\label{p:size}  There holds for admissible $ \mathcal P$, 
$ \textup{size} (\mathcal P) \lesssim \mathscr R$.  
\end{proposition}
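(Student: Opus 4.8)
The plan is to unwind the definition of $\textup{size}(\mathcal P)$ and recognize the supremum as a quantity already controlled by Energy Inequality~I (Lemma~\ref{l:energyI}). Fix an interval $I\in\mathcal T_{\mathcal P}$ with $\tau(Q_I)>0$. By definition of $\mathcal T_{\mathcal P}$, there is some $I_1\in\tilde{\mathcal P}_1$ with $I\in\mathcal K_{I_1}$, so in particular $10\cdot I\subset I_1\subset F$. The term to estimate is
\begin{equation*}
\mathsf T_{\tau}(Q_F\setminus Q_I)(x_{Q_I})^2\,\frac{\lambda(\textup{Saw}_{\mathcal P}I)}{\tau(Q_I)\lvert I\rvert^2}.
\end{equation*}
First I would compare $\lambda(\textup{Saw}_{\mathcal P}I)$ with an energy sum: by \eqref{e:lam}, $\lambda(\textup{Saw}_{\mathcal P}I)=\sum\{\langle t,h^\sigma_{P_2}\rangle_\sigma^2 : P_2\in\mathcal P_2,\ P_2\Subset_r I\}$, and since every such $P_2$ is good, Proposition~\ref{p:KI} (or directly Proposition~\ref{p:overlap}) groups these $P_2$ into the Whitney intervals $K\in\mathcal W\!I$. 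Replacing $t$ by $t-t_I$ (legitimate since Haar functions are $\sigma$-mean-zero) and using $\lvert I\rvert^{-2}\langle t-t_I,h^\sigma_{P_2}\rangle_\sigma^2 \le \langle\tfrac{t}{\lvert K\rvert},h^\sigma_{P_2}\rangle_\sigma^2$ for $P_2\subset K$, we get
\begin{equation*}
\frac{\lambda(\textup{Saw}_{\mathcal P}I)}{\lvert I\rvert^2}\lesssim \sum_{K\in\mathcal W\!I}\ \sum_{\substack{P_2\subset K}}\Big\langle\frac{t}{\lvert K\rvert},h^\sigma_{P_2}\Big\rangle_\sigma^2 \lesssim \sum_{K\in\mathcal W\!I} E(\sigma,K)^2\sigma(K),
\end{equation*}
where the last step uses $\lvert K\rvert\le 2^{-r}\lvert I\rvert$ together with \eqref{e:eng-sigma}, after again shifting $t\mapsto t-t_K$ inside each $K$.

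Next I would insert the Poisson–Bergman factor. For each $K\in\mathcal W\!I$, since $x_{Q_I}\in Q_I$ and $K$ is a Whitney interval well inside $I$, the kernel comparison underlying \eqref{e:nabla} gives $\mathsf T_{\tau}(Q_F\setminus Q_I)(x_{Q_I})\lesssim \mathsf T_{\tau}(Q_F\setminus Q_K)(x_{Q_K})$; the enlargement of the hole from $Q_I$ to $Q_K$ is free because the missing mass $\mathsf T_\tau(Q_I\setminus Q_K)(x_{Q_K})\cdot\frac{\sigma(K)}{\lvert K\rvert}$ is bounded by $\mathscr A_2$ (this is exactly the manoeuvre used in Lemma~\ref{l:hI}). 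Combining the two displays,
\begin{equation*}
\mathsf T_{\tau}(Q_F\setminus Q_I)(x_{Q_I})^2\,\frac{\lambda(\textup{Saw}_{\mathcal P}I)}{\lvert I\rvert^2}\lesssim \sum_{K\in\mathcal W\!I}\mathsf T_{\tau}(Q_F\setminus Q_K)(x_{Q_K})^2 E(\sigma,K)^2\sigma(K) + \mathscr A_2\,\tau(Q_I).
\end{equation*}
Dividing by $\tau(Q_I)$, the second term contributes $\mathscr A_2$. For the first term, apply Energy Inequality~I, Lemma~\ref{l:energyI}, with $I_0=F$ and the partition $\mathcal I$ taken to be $\mathcal K_{I}$ refined to a partition of $I_0$ — or more directly, observe that $\{K\in\mathcal W\!I\}$ is a single ``level'' of the energy sum and Lemma~\ref{l:energyI} bounds $\sum_{K\in\mathcal W\!I}\mathsf T_{\tau}(Q_F\setminus Q_K)(x_{Q_K})^2 E(\sigma,K)^2\sigma(K)\lesssim\mathscr R^2\tau(Q_I)$ once we note that $Q_I\subset Q_F$ and the same argument that proves \eqref{e:energyI} localizes to the subtree below $I$. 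Dividing by $\tau(Q_I)$ yields $\lesssim\mathscr R^2$, and taking the supremum over admissible $I$ finishes the proof.

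The main obstacle I anticipate is the bookkeeping in the second step: Energy Inequality~I is stated for a partition $\mathcal I$ of $I_0$, whereas here we need it applied ``from below'' a fixed $I\in\mathcal T_{\mathcal P}$, i.e.\ localized to the Carleson cube $Q_I$ with the hole $Q_F\setminus Q_K$ rather than $Q_I\setminus Q_K$. One must check that replacing the ambient cube $Q_F$ by $Q_I$ in the hole only costs an $\mathscr A_2$ term (again the enlargement-of-holes trick), and that the admissibility condition (2) on $\mathcal P$ — no interval of $\tilde{\mathcal P}_1$ sits inside an energy stopping interval $F'$, and no $P_2$ has $P_2\Subset_r F'$ — is precisely what guarantees the energy stopping bound was \emph{not} triggered along the relevant intervals, so that the constant stays $\lesssim\mathscr R^2$ rather than blowing up. That is the conceptual heart: size is finite exactly because admissibility excludes the intervals where the energy stopping condition fails.
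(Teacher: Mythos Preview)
Your outline has the right shape but there are two genuine gaps.

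\textbf{The kernel comparison is false as stated.} You claim $\mathsf T_{\tau}(Q_F\setminus Q_I)(x_{Q_I})\lesssim \mathsf T_{\tau}(Q_F\setminus Q_K)(x_{Q_K})$ for $K\in\mathcal W\!I$. But the kernel of $\mathsf T_\tau$ in \eqref{e:Tdef} carries $x_2$ in its numerator, so for $\tau$-mass at distance $D\gg|I|$ the left side is $\simeq|I|/D^2$ while the right is $\simeq|K|/D^2$, and these go the wrong way when $|K|\ll|I|$. What is actually true is
\[
\frac{|K|}{|I|}\,\mathsf T_{\tau}(Q_F\setminus Q_I)(x_{Q_I})\lesssim \mathsf T_{\tau}(Q_F\setminus Q_I)(x_{Q_K})\le \mathsf T_{\tau}(Q_F\setminus Q_K)(x_{Q_K}),
\]
which follows from $|K|^2+\textup{dist}(y,Q_K)^2\lesssim |I|^2+\textup{dist}(y,Q_I)^2$ for $y\notin Q_I$. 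The factor $|K|^2/|I|^2$ you need here is precisely the one you discarded in your energy step when you passed from $\langle t,h^\sigma_{P_2}\rangle^2/|I|^2$ to $\langle t/|K|,h^\sigma_{P_2}\rangle^2$. Keep it: write $\lambda(\textup{Saw}_{\mathcal P}I)\le\sum_{K\in\mathcal W\!I}|K|^2E(\sigma,K)^2\sigma(K)$ and let the $|K|^2/|I|^2$ be absorbed by the correct kernel inequality above.

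\textbf{Energy Inequality~I does not give the localized bound.} Lemma~\ref{l:energyI} with $I_0=F$ bounds the Whitney sum by $\mathscr R^2\tau(Q_F)$, not $\mathscr R^2\tau(Q_I)$; with $I_0=I$ it produces $\mathsf T_\tau(Q_I\setminus Q_K)$ rather than $\mathsf T_\tau(Q_F\setminus Q_K)$, and the residual $\mathsf T_\tau(Q_F\setminus Q_I)(x_{Q_K})$ is not an $\mathscr A_2$-controllable hole correction. The right argument is the dichotomy you allude to only at the end: either $\pi_{\mathcal F}I\neq F$, in which case admissibility forces every $P_2\Subset_r I$ to satisfy $P_2\Subset_r F'$ for the intermediate stopping interval $F'$, so $\lambda(\textup{Saw}_{\mathcal P}I)=0$; or $\pi_{\mathcal F}I=F$, in which case $I\in\mathcal D_g^r$ is \emph{not} a stopping interval and therefore fails the Energy Stopping criterion of \S\ref{s:stopData}, giving directly
\[
\sum_{K\in\mathcal W\!I}\mathsf T_\tau(Q_F\setminus Q_K)(x_{Q_K})^2E(\sigma,K)^2\sigma(K)<C_0\mathscr R^2\tau(Q_I).
\]
This is the whole purpose of building the energy stopping into $\mathcal F$; Lemma~\ref{l:energyI} is used upstream to guarantee the stopping tree is $\tau$-Carleson, not here.
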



\begin{proof}
We argue by contradiction. In the definition of size, namely \eqref{e:size},
we have 
\begin{align*}
\mathsf{T}_{\tau } (Q_F \setminus Q_K)(x_{Q_K} ) ^2 \frac{ \lambda ( 
\textup{Saw}_ {\mathcal{P}}K)}{ \lvert K\rvert ^2 } \leq \mathsf{T}_{\tau }
(Q_F \setminus Q_K)(x_{Q_K} )^2E (\sigma , K) ^2 \sigma (K).
\end{align*}
Above, we are using the notion of energy defined in \eqref{e:eng-sigma}.
Thus, if the conclusion does not hold, we have for some $I\in \tilde {%
\mathcal{P}}_1$, we have 
\begin{equation*}
C \mathscr R ^2 \tau (Q _{I}) \leq \sum_{K\in \mathcal{W}\!{I}} \mathsf{T}%
_{\tau } (Q_F \setminus Q_K)(x_{Q_K} ) ^2 E (\sigma , K) ^2 \sigma (K).
\end{equation*}
Above, we can take the constant $C$ as large as we wish. In view of the
selection of stopping data in \S \ref{s:stopData}, we see that $I$ must be
in $\mathcal{F}$, which is a contradiction. 
\end{proof}


Our proof will show that the norm of the stopping form associated to $%
\mathcal{P}$ satisfies $\mathscr N _{\mathcal{P}} \lesssim \textup{size} (%
\mathcal{P})$. The main lemma in the proof of this fact is this.

\begin{lemma}[Size Lemma]
\label{l:size}
Any admissible collection $ \mathcal P$ admits a decomposition into collections $ \mathcal P ^{\textup{big}} \cup \mathcal P ^{\textup{small}}$, so that on the one hand, 
$
\mathscr N _{ \mathcal P ^{\textup{big}} } \lesssim \textup{size} (\mathcal P) 
$, 
and on the other, $ \mathcal P ^{\textup{small}} $  is the union of  admissible 
orthogonal collections $ \bigcup _{t\ge 0}\mathcal P _{ t} ^{\textup{small}}$,    with 
\begin{equation}\label{e:small}
\sup _{t\ge 0} \textup{size} (\mathcal P ^{\textup{small}} _{t}) \le \tfrac 14 \textup{size} (\mathcal P).  
\end{equation}
\end{lemma}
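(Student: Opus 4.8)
The plan is to run the \emph{size recursion} familiar from the two-weight Hilbert transform: decompose $\mathcal P$ into a ``big'' piece whose bilinear form is estimated \emph{directly} by $\textup{size}(\mathcal P)$, plus orthogonal ``small'' pieces each of size at most $\tfrac14\textup{size}(\mathcal P)$. Given this lemma, one iterates it and sums the resulting geometric series through Lemma~\ref{l:Sortho}, obtaining $\mathscr N_{\mathcal P}\lesssim\textup{size}(\mathcal P)\lesssim\mathscr R$ by Proposition~\ref{p:size}, which is \eqref{e:stopI<}. To build the decomposition, write $S=\textup{size}(\mathcal P)$, fix a small absolute constant $c_0$, and inside the tree $\mathcal T_{\mathcal P}$ of \eqref{e:size} construct a stopping collection $\mathcal L$: its maximal elements are the maximal intervals of $\mathcal T_{\mathcal P}$, and recursively, to a minimal $L\in\mathcal L$ we adjoin the maximal $L'\in\mathcal T_{\mathcal P}$ with $L'\subsetneq L$ such that
\[
\mathsf T_{\tau }(Q_{L}\setminus Q_{L'})(x_{Q_{L'}})^2\,\frac{\lambda(\textup{Saw}_{\mathcal P}L')}{\tau(Q_{L'})\lvert L'\rvert^2}\ \ge\ c_0\,S^2 .
\]
Thus between consecutive stopping intervals the ``truncated size'' stays below $c_0S^2$. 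To each pair $(P_1,P_2)\in\mathcal P$ attach the cube $K=K(P_1,P_2)\in\mathcal K_{\tilde P_1}$ of Proposition~\ref{p:KI} (the unique one with $P_2\Subset_r K$), and set $L(P_1,P_2)=\pi_{\mathcal L}K$, the smallest stopping interval containing it.

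The split is then as follows. Put $(P_1,P_2)$ in $\mathcal P^{\textup{big}}$ when $\tilde P_1\supseteq L(P_1,P_2)$ --- equivalently, the martingale hole $Q_F\setminus Q_{\tilde P_1}$ already reaches at or above the first stopping interval over $K$ --- and in $\mathcal P^{\textup{small}}$ otherwise, so that $K\subsetneq\tilde P_1\subsetneq L(P_1,P_2)$. For $t\ge0$ let $\mathcal P^{\textup{small}}_t$ gather the small pairs whose $L(P_1,P_2)$ has depth $t$ in $\mathcal L$. Since two stopping intervals of the same depth are disjoint, and since for a small pair both $P_2$ and $\tilde P_1$ lie inside $L(P_1,P_2)$, the $\mathcal P^{\textup{small}}_t$ are orthogonal in the sense required by Lemma~\ref{l:Sortho}, up to the usual factor of two coming from the two children of a cube.

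For the big estimate, apply the monotonicity equivalence \eqref{e:monoI2} to every inner product $\langle\mathsf R^{\ast}_\tau(Q_F-Q_{\tilde P_1}),\Delta^{\sigma}_{P_2}f\rangle_\sigma$ in $B_{\mathcal P^{\textup{big}}}$; this trades the singular integral for $\mathsf T_{\tau }(Q_F\setminus Q_{\tilde P_1})(x_{Q_K})$ times the \emph{positive} Haar data $\langle t/\lvert K\rvert,h^{\sigma}_{P_2}\rangle_\sigma\,\widehat{f}_{\sigma}(P_2)$, weighted by $\lvert\mathbb E^{\tau}_{\tilde P_1}\Delta^{\tau}_{P_1}g\rvert$. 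Group the cubes $K$ by their stopping interval $L$; on $\mathcal P^{\textup{big}}$ the hole contains $Q_F\setminus Q_{L}$, and the definition \eqref{e:size} of size --- into which the energy inequality \eqref{e:energyI} has already been absorbed --- gives $\mathsf T_{\tau }(Q_F\setminus Q_{\tilde P_1})(x_{Q_K})^2\frac{\lambda(\textup{Saw}_{\mathcal P}K)}{\tau(Q_K)\lvert K\rvert^2}\lesssim S^2$. Cauchy--Schwarz in the pairs, followed by the quasi-orthogonality bound \eqref{e:quasi} for the $\mathcal F$-adapted Haar projections of $f$ and $g$, then yields $\mathscr N_{\mathcal P^{\textup{big}}}\lesssim S$.

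Finally, for the size drop, fix $t$ and a depth-$t$ stopping interval $L$ and let $\mathcal Q$ be the part of $\mathcal P^{\textup{small}}_t$ attached to $L$. Every $I\in\mathcal T_{\mathcal Q}$ that carries $\lambda_{\mathcal Q}$-mass is a strict descendant of $L$ lying above the next stopping generation --- this is exactly the effect of having removed the big pairs --- so by construction $\mathsf T_{\tau }(Q_L\setminus Q_I)(x_{Q_I})^2\frac{\lambda_{\mathcal Q}(\textup{Saw}_{\mathcal Q}I)}{\tau(Q_I)\lvert I\rvert^2}<c_0S^2$; writing $Q_F\setminus Q_I=(Q_F\setminus Q_L)\cup(Q_L\setminus Q_I)$ and estimating the far part through the kernel-regularity reduction $\mathsf T_{\tau }(Q_F\setminus Q_L)(x_{Q_I})\lesssim\frac{\lvert I\rvert}{\lvert L\rvert}\mathsf T_{\tau }(Q_F\setminus Q_L)(x_{Q_L})$ together with the $A_2$ condition \eqref{e:A2}, one is led to bound its contribution by a further small multiple of $S^2$, whence $\textup{size}(\mathcal Q)^2\le\tfrac1{16}S^2$ and, after collecting the disjoint $\mathcal Q$'s of depth $t$, $\textup{size}(\mathcal P^{\textup{small}}_t)\le\tfrac14S$, which is \eqref{e:small}. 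The hard part is precisely this last step. The split in the second paragraph must be tuned so that deleting $\mathcal P^{\textup{big}}$ genuinely confines every $\mathcal T_{\mathcal Q}$ strictly inside the stopping intervals \emph{and} localizes both the $P_2$- and the $\tilde P_1$-supports (needed for Lemma~\ref{l:Sortho}); and the far-hole estimate has to be carried out against the \emph{non-standard} sawtooth $\textup{Saw}_{\mathcal P}$ --- a discrete set with the energy-adapted, non-scale-invariant normalization forced by the Energy Stopping rule of \S\ref{s:stopData} --- so that the $\tau$-mass ratios produced by enlarging the hole are reabsorbed into $\mathscr A_2$ rather than blowing up. Reconciling the three thresholds ($c_0$, the implied constant in the big estimate, and the $\tfrac14$) so that the recursion closes is where the genuine bookkeeping lies; everything else is Cauchy--Schwarz and the quasi-orthogonality already in hand.
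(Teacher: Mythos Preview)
Your decomposition has the right shape, but the crucial size-drop step contains a genuine gap. The claimed ``kernel-regularity reduction''
\[
\mathsf T_{\tau}(Q_F\setminus Q_L)(x_{Q_I})\ \lesssim\ \frac{|I|}{|L|}\,\mathsf T_{\tau}(Q_F\setminus Q_L)(x_{Q_L})
\]
is simply false in general. If $I\subset L$ sits near $\partial L$ and $\tau$ places mass just outside $L$ at height $\lesssim |I|$ (which is allowed, since $Q_F\setminus Q_L$ contains $\{y: y_1\notin L,\ 0\le y_2<|L|\}$), then for such $y$ the integrand at $x_{Q_I}$ is $\sim |I|/|I|^2=1/|I|$ while at $x_{Q_L}$ it is $\sim |L|/|L|^2=1/|L|$; the ratio goes the wrong way by a factor $|L|/|I|$. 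The $A_2$ condition does not repair this: it controls products of Poisson-type averages of $\sigma$ and $\tau$, not the ratio $\tau(Q_L)/\tau(Q_I)$ that one would need to pass from the scale-$L$ size bound to a scale-$I$ statement. Consequently your argument for \eqref{e:small} does not close.

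The paper avoids this obstacle by a different stopping mechanism. Only the bottom layer $\mathcal L_0$ is selected by a size-type inequality \eqref{e:0}; for $t>0$ the layers $\mathcal L_t$ are chosen by the \emph{$\lambda$-mass growth} condition \eqref{e:tent}, which involves no $\mathsf T_{\tau}$ at all. The small collections are then those pairs with $\ddot\pi_{\mathcal L}P_2=\pi_{\mathcal L}\tilde P_1=L$ and $\tilde P_1\subsetneq L$, and the size-drop comes from the inequality $\lambda(\textup{Saw}_{\mathcal P_{\textup{small},L}}I)\le c^2\lambda(\textup{Saw}_{\mathcal P}I)$ (failure of \eqref{e:tent} below $L$), keeping the full argument $Q_F\setminus Q_I$ in $\mathsf T_{\tau}$ untouched. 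On the big side, one does not get a single direct estimate: $\mathcal P^{\textup{big}}$ is split into layers $\mathcal P^t_L$ according to the $\mathcal L$-distance between $\ddot\pi_{\mathcal L}P_2$ and $\pi_{\mathcal L}\tilde P_1$, and Lemma~\ref{l:norm-t} shows $\mathscr N_{\mathcal P^t_L}\lesssim(1+c^2)^{-t/2}\mathbf S$ via the $\lambda$-decay \eqref{e:tt} together with a \emph{second} stopping construction $\mathcal G$ for $g$ (needed to telescope the martingale differences in $P_1$). Your sketch of the big estimate, which invokes only \eqref{e:quasi} at the level of a single $F$, omits this secondary structure and would not by itself yield $\mathscr N_{\mathcal P^{\textup{big}}}\lesssim\mathbf S$.
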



\begin{proof}[Proof of Lemma \protect\ref{l:stopI}]
The form $B ^{\textup{stop}} _F (f, g) = B _{\mathcal{P}_0} (f,g) $, for an
admissible collection $\mathcal{P}_0$ which then satisfies $\textup{size} (%
\mathcal{P}_0) \lesssim \mathscr R$. By recursive application of the Size
Lemma, we have that $\mathcal{P }_{0} $ is the union of collections $%
\mathcal{P }_{n, t}$, where (a) $n, t \ge 1$, (b) for fixed $n\ge 1 $ the
collections $\{\mathcal{P }_{n,t} \::\: t\ge 1\}$ are admissible and
orthogonal, and (c) we have the inequality 
\begin{equation*}
\mathscr N _{ \mathcal{P }_{n,t} } \lesssim 2 ^{-2n} \mathscr R , \qquad
n,t\ge 0.
\end{equation*}
In view of Lemma~\ref{l:Sortho}, this gives us a proof of $\mathscr N _{%
\mathcal{P}_0} \lesssim \mathscr R$, completing the proof.
\end{proof}



\subsection{Main Construction}

We only know how to directly estimate $\mathcal{N }_{\mathcal{P}}$ if there
is some `decoupling' between the Haar coefficient $\langle f, h ^{\sigma }
_{P_2} \rangle_{\sigma}$, and the argument of the Riesz transform $Q _{F}
\setminus Q _{\tilde P_1}$.

The tool to achieve the decoupling, and so the decomposition of the Size
Lemma, is the collection $\mathcal{L}=\bigcup_{t=0}^{\infty }\mathcal{L}_{t}$%
, the latter defined recursively. Set $\mathbf{S}\equiv \textup{size}%
(\mathcal{P})$, and take $\mathcal{L}_{0}$ to be the minimal intervals $L\in \tilde{%
\mathcal{P}}_{1}$ such that 
\begin{equation}
\sum_{J\in \mathcal{W}\!L}\mathsf{T}_{\tau }(Q_{F}\setminus
Q_{J})(x_{Q_{J}})^{2}\frac{\lambda (\textup{Saw}_{\mathcal{P}}J)}{\lvert
J\rvert ^{2}}\geq c\mathbf{S}^{2}\tau (Q_{L}).  \label{e:0}
\end{equation}%
The constant $0<c<1$ will be sufficiently small, but absolute. There must be
such intervals, by the definition of size in \eqref{e:size}, and the minimal
intervals exist since $\tilde{\mathcal{P}}_{1}$ is a finite collection.
Then, for $t>0$, inductively define $\mathcal{L}_{t}$ to be the minimal
intervals $L\in \tilde{\mathcal{P}}_{1}$ such that 
\begin{equation}
\lambda (\textup{Saw}_{\mathcal{P}}L)\geq (1+c)\sum_{\substack{ L^{\prime
}\in \mathcal{L}_{t-1} \\ L^{\prime }\subsetneq L}}\lambda (\textup{Saw}_{%
\mathcal{P}}{L^{\prime }}).  \label{e:tent}
\end{equation}%
See Figure~\ref{f:tents}.

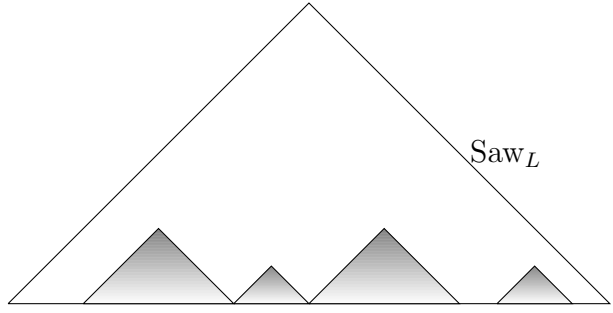
\begin{figure}[tbp]
\begin{tikzpicture}
 \draw (0,0) -- (4,4) -- (8,0)  node[right,midway] () {$ \textup{Saw}_L$} -- (0,0); 
 \shadedraw (1,0) -- (2,1) -- (3,0) -- (1,0); 
  \shadedraw (3,0) -- (3.5,.5) -- (4,0) -- (3,0); 
   \shadedraw (4,0) -- (5,1) -- (6,0) -- (4,0);  
   \shadedraw (6.5,0) -- (7,.5) -- (7.5,0) -- (6.5,0);  
 \end{tikzpicture}
\caption{The shaded smaller tents have been selected, and $\textup{Saw}_L$
is the minimal sawtooth region with $\protect\lambda (\textup{Saw}_L)$
larger than $1+ c $ times the $\protect\lambda $-measure of the shaded tents.
}
\label{f:tents}
\end{figure}

\smallskip

The collection $\mathcal{P }_{\textup{small}}$ is the union of the
following collections. First, set 
\begin{equation*}
\mathcal{P }_{\textup{small}} ^{0} \equiv \{ (P_1, P_2 ) \in \mathcal{P }%
\::\: \textup{$\tilde P_1$ does not have a parent in $\mathcal{L}$}\}.
\end{equation*}
And, for $L\in \mathcal{L}$, set 
\begin{gather}  \label{e:ddot}
\mathcal{P }_{\textup{small}, L} \equiv \{ (P_1, P_2) \::\: \ddot\pi _{%
\mathcal{L}}P_2 = \pi _{\mathcal{L}} \tilde P_1=L\,, \tilde P_1 \subsetneq
L\}, \\
\ddot\pi _{\mathcal{L}}P_2 \quad \textup{is the minimal element of $L\in 
\mathcal{L}$ with $P_2 \Subset_r L$.}
\end{gather}

Note that in the `small' collections, there is no `decoupling' between $P_2$
and $Q_F \setminus Q _{\tilde P_1}$. We check that the `small' collection
meets the required conditions.


\begin{proof}[Proof of \eqref{e:small}]
The collections $\mathcal{P }_{\textup{small}} ^{0} $ and $\{ \mathcal{P }_{%
\textup{small}, L} \,:\, L \in \mathcal{L}\}$ are convex subsets of an
admissible collection. Hence they are admissible. The collections are
orthogonal, since a given interval contained in some element of $\mathcal{L}$%
, is contained in a minimal element of $\mathcal{L}$.

We verify that the collections above have small size. For $\mathcal{P }_{%
\textup{small}} ^{0}$, since each interval $\tilde P_1$ must fail %
\eqref{e:0}, the size of this collection is smaller by the factor $c$. We
turn to $\mathcal{P }_{\textup{small}, L} $. In the case that $L\in 
\mathcal{L}_0$, this follows immediately from the definition of $\mathcal{L}%
_0$ in \eqref{e:0}. In the case that $L\in \mathcal{L}$ is not minimal, then 
$L$  has $\mathcal{L}$-children, which will appear below. We have for $I\in (%
\tilde {\mathcal{P}}_{\textup{small}, L})_1 $, and $\mathcal{L}_I \equiv \{
L^{\prime }\in \mathcal{L }\::\: \pi _{\mathcal{L}} ^{1}L^{\prime }=L,\
L^{\prime }\subset I\}$ that 
\begin{align*}
{\lambda ( \textup{Saw}_ {\mathcal{P }_{\textup{small}, L} }I)} &\le {%
\lambda \bigl( \textup{Saw}_ {\mathcal{P}} I \setminus \bigcup_{L^{\prime
}\in \mathcal{L}_I}\textup{Saw}_ {\mathcal{P}}{L^{\prime }}\bigr)} \\
& \le c {\ \sum_{L^{\prime }\in \mathcal{L}_I }\lambda ( \textup{Saw}_ {%
\mathcal{P}} {L^{\prime }})} \le c {\ \lambda ( \textup{Saw}_ {\mathcal{P}}
I)} .
\end{align*}
This uses \eqref{e:tent}, the selection rule for $\mathcal{L}$, and the fact
that in \eqref{e:ddot}, we exclude the case that $\tilde P_1 = L$. But
then, it follows directly from the definition of size in \eqref{e:size} that 
$\textup{size} (\mathcal{P }_{\textup{small}, L}) < \frac 14 \mathbf{S}$,
provided $0< c < 1/4$.
\end{proof}



\subsection{The Big Collections}

Note that if $(P_1, P_2) \in \mathcal{P}$ is such that $\tilde P_1$ has
parent $L\in \mathcal{L}$, then we must have $P_2 \Subset _{4r} \pi L$.
Thus, $\ddot \pi _{\mathcal{L}} P_2$ is well-defined, see \eqref{e:ddot}.
And for some integer $t\ge 1$, necessarily $\ddot\pi ^{t+1} _{\mathcal{L}}
P_2= \pi _{\mathcal{L}} \tilde P_1=L $. The last notation is defined here: $%
\ddot \pi _{\mathcal{L}} P_2$ is defined in \eqref{e:ddot}, $\ddot\pi ^{1} _{%
\mathcal{L}} P_2 \equiv \ddot\pi _{\mathcal{L}} P_2 $ and inductively define 
$\ddot\pi ^{t+1} _{\mathcal{L}} P_2$ to be the minimal member of $\mathcal{L}
$ that \emph{strictly} contains $\ddot\pi ^{t} _{\mathcal{L}} P_2$.

In order that $(P_1, P_2) \in \mathcal{P }^{\textup{big}} = \mathcal{P }%
\setminus \mathcal{P }^{\textup{small}}$ we must have that $\tilde P_1 $
has a parent in $\mathcal{L}$, and either $\tilde P_1 \in \mathcal{L}$ or $%
\ddot\pi _{\mathcal{L}} P_2 \subsetneq \tilde P_1$. Thus, $\mathcal{P }%
\setminus \mathcal{P }^{\textup{small}}$ is the union over $L\in \mathcal{L}
$, and $t\geq 1$ of the collections 
\begin{align}  \label{e:zptl}
\mathcal{P }^{1} _{L} &\equiv \{ (P_1, P_2) \in \mathcal{P }_{L} \::\:
\ddot\pi ^{1} _{\mathcal{L}} P_2=\tilde P_1=L\}, \\
\mathcal{P}^t_L & \equiv \{ (P_1, P_2) \in \mathcal{P }\::\: \ddot\pi _{%
\mathcal{L}} ^{t} P_2 = \pi _{\mathcal{L}}\tilde P_1 =L\}, \qquad t > 1.
\end{align}
For fixed $t$, these collections are mutually orthogonal. The norms of these
collections satisfy the estimate below.

\begin{lemma}\label{l:norm-t} For $ L\in \mathcal L$ and $ t \ge 1$, there holds 
$
\mathscr N_{\mathcal P^t_L}  \lesssim (1+c) ^{-t/2} \mathbf S 
$.
\end{lemma}

It follows immediately that 
\begin{equation*}
\sum_{t=1} ^{\infty } \mathscr N _{\bigcup _{L\in \mathcal{L}}\mathcal{P}%
^t_L } \lesssim \mathbf{S }\sum_{t=1} ^{\infty } (1+c ) ^{-t/2} \lesssim 
\mathbf{S }.
\end{equation*}
The proof of the `Size Lemma' \ref{l:size} will be complete after proving
Lemma~\ref{l:norm-t}.


\begin{proof}[Proof of Lemma~\protect\ref{l:norm-t}, $t=1$]
There is a single choice of $\tilde{P}_{1}$, namely $L$. For each $K\in 
\mathcal{W}\!L$, we can estimate by the definition of $\mathcal{W}\!L$, and
monotonicity \eqref{e:MONO}, that 
\begin{align*}
\Biggl\lvert\sum_{\substack{ (P_{1},P_{2})\in \mathcal{P}_{L}^{1} \\ %
P_{2}\subset K}}& \mathbb{E}_{Q_{L}}^{\tau }{\Delta _{P_{1}}^{\tau }g}\cdot
\left\langle \mathsf{R}_{\tau }^{\ast }(Q_{F}-Q_{L}),\Delta _{P_{2}}^{\sigma
}f\right\rangle _{\tau }\Biggr\rvert \\
& \leq \sum_{\substack{ P_{2}\in (\mathcal{P}_{L}^{1})_{2} \\ P_{2}\subset K
}}\Biggl\lvert\sum_{P_{1}\in (\mathcal{P}_{L}^{1})_{1}}\mathbb{E}%
_{Q_{L}}^{\tau }{\Delta _{P_{1}}^{\tau }}g\cdot \left\langle \mathsf{R}%
_{\tau }^{\ast }(Q_{F}-Q_{L}),\Delta _{P_{2}}^{\sigma }f\right\rangle _{\tau
}\Biggr\rvert \\
& \leq \bigl\lvert\mathbb{E}_{Q_{L}}^{\tau }{\Delta _{P_{1}}^{\tau }g}%
\bigr\rvert\mathsf{T}_{\tau }(Q_{F}\setminus
Q_{K})(x_{Q_{K}})\sum_{P_{2}:P_{2}\subset K}\left\langle \frac{t}{\lvert
K\rvert },h_{P_{2}}^{\sigma }\right\rangle _{\sigma }\lvert \hat{f}_{\sigma
}(P_{2})\rvert  \\
& \leq \mathbf{S}\bigl\lvert\mathbb{E}_{Q_{L}}^{\tau }{\Delta
_{P_{1}}^{\tau }g}\bigr\rvert\tau (Q_{K})^{1/2}\left[ \sum_{P_{2}:P_{2}%
\subset K}\hat{f}_{\sigma }(P_{2})^{2}\right] ^{1/2}.
\end{align*}%
Above, we use Cauchy--Schwarz in $P_{2}$, and importantly, the definition of
size to gain the factor $\mathbf{S}\tau (Q_{K})^{1/2}$ above. It is clear
that quasi-orthogonality permits the bound 
\begin{align*}
\mathbf{S}\bigl\lvert\mathbb{E}_{Q_{L}}^{\tau }{\Delta _{P_{1}}^{\tau }g}%
\bigr\rvert& \sum_{K\in \mathcal{W}\!L}\tau (Q_{K})^{1/2}\left[
\sum_{P_{2}:P_{2}\subset K}\hat{f}_{\sigma }(P_{2})^{2}\right] ^{1/2} \\
& \lesssim \mathbf{S}\bigl\lvert\mathbb{E}_{Q_{L}}^{\tau }{\Delta
_{P_{1}}^{\tau }g}\bigr\rvert\cdot \tau (Q_{L})^{1/2}\lVert f\rVert _{\sigma
}\lesssim \mathbf{S}\lVert g\rVert _{\tau }\lVert f\rVert _{\sigma }.
\end{align*}%
The proof is complete.
\end{proof}



\begin{proof}[Proof of Lemma~\protect\ref{l:norm-t}, $t> 1$]
There are two stages of the proof, first to show that the size of the
collections $\mathcal{P}^t_L $ decay exponentially in $t$, and second that
the size dominates the norm of the bilinear form.

Firstly, we verify that $\textup{size} (\mathcal{P}^t_L )$ has exponential
decay in $t$, with the key inequality being \eqref{e:tent}, in the
construction of $\mathcal{L}$. For $I \in (\tilde {\mathcal{P}^t_L})_1$, we
have by backwards induction, 
\begin{align*}
\lambda (\textup{Saw} _{{\mathcal{P}}^{t} _{L}} I) &\le (1+ c ) ^{-1} \sum
_{\substack{ L^{\prime }\in \mathcal{L }\::\: L^{\prime }\subset I  \\ \pi
^{t-1} _{\mathcal{L}}L^{\prime }=L}}\lambda (\textup{Saw}_{\mathcal{P}}{%
L^{\prime }}) \\
&\;\;\vdots \\
& \le (1+ c ) ^{-t} \sum_{\substack{ L^{\prime }\in \mathcal{L }\::\:
L^{\prime }\subset I  \\ \pi ^{1} _{\mathcal{L}}L^{\prime }=L}} \lambda (%
\textup{Saw}_{\mathcal{P}}{L^{\prime }}) \le (1+ c ) ^{-t} \lambda (%
\textup{Saw}_ {\mathcal{P}} I).
\end{align*}
But, if the sawtooth regions have small size, we then have immediately from
the definition of size in \eqref{e:size} that 
\begin{equation}  \label{e:st}
\textup{size} (\mathcal{P}^t_L ) \lesssim (1+ c ) ^{-t/2}\, \mathbf{S}.
\end{equation}
This is the required exponential decay. \bigskip

Secondly, turn to the estimation of the norm $\mathscr N_{\mathcal{Q}} $,
where $\mathcal{Q}= \mathcal{P}^t_L$. Namely that $\mathscr N_{\mathcal{Q}}
\lesssim (1+ c ) ^{-t/2} \mathbf{S}$. We are free to assume that the Haar
support of $g$ is contained in $\mathcal{Q}_1$. Construct stopping data $%
\mathcal{G}$ for $g$ in an ordinary way. Namely, take the maximal elements
of $\mathcal{G}$ to be the maximal elements in $\tilde {\mathcal{Q}}_1$. In
the recursive step, if $I\in \mathcal{G}$ is minimal, add to $\mathcal{G}$
the maximal children $I^{\prime }\subsetneq I$ such that $I^{\prime }\in 
\tilde {\mathcal{Q}}_1$ and $\mathbb{E }^{\tau } _{Q_{I^{\prime }}} \lvert
g\rvert \ge 10 \mathbb{E }^{\tau } _{Q_I} \lvert g\rvert $.

Let $\mathcal{S}$ be the $\mathcal{L}$-children of $L$. Define $\ddot{\pi}_{%
\mathcal{G}}P_{2}$ to be the minimal element $G\in \mathcal{G}$ with $%
P_{2}\Subset _{r}G$. Hold an interval $G\in \mathcal{G}$ fixed. Then, define 
\begin{align}
\Xi (G)& \equiv \Biggl\lvert\sum_{\substack{ (P_{1},P_{2})\in \mathcal{Q} \\ 
\ddot{\pi}_{\mathcal{G}}P_{2}=G}}\mathbb{E}_{Q_{\tilde{P}_{1}}}^{\sigma }{%
\Delta _{Q_{P_{1}}}^{\tau }}g\cdot \left\langle \mathsf{R}_{\tau }^{\ast
}(Q_{F}-Q_{\tilde{P}_{1}}),\Delta _{P_{2}}^{\sigma }f\right\rangle _{\tau }%
\Biggr\rvert \\
\noalign{\noindent but, as $ t >1$, each $ P_2$ satisfies $ P_2 \Subset_r S$
for some $ S\in \mathcal S$, so that we can sum over $S\in \mathcal S$, }& =%
\Biggl\lvert\sum_{S\in \mathcal{S}}\sum_{\substack{ (P_{1},P_{2})\in 
\mathcal{Q} \\ \ddot{\pi}_{\mathcal{G}}P_{2}=G,\ P_{2}\Subset _{r}S}}\mathbb{%
E}_{Q_{\tilde{P}_{1}}}^{\sigma }{\Delta _{Q_{P_{1}}}^{\tau }}g\cdot
\left\langle \mathsf{R}_{\tau }^{\ast }(Q_{F}-Q_{\tilde{P}_{1}}),\Delta
_{P_{2}}^{\sigma }f\right\rangle _{\tau }\Biggr\rvert.
\end{align}%
By convexity in $P_{1}$ and the definition of the stopping intervals $G$,
observe that for each $P_{2}$, the argument of the Riesz transform above is 
\begin{equation}
\Biggl\lvert\sum_{\substack{ P_{1}\,:\,(P_{1},P_{2})\in \mathcal{Q} \\ \ddot{%
\pi}_{\mathcal{G}}P_{2}=G,\ P_{2}\Subset _{r}S}}\mathbb{E}_{Q_{\tilde{P}%
_{1}}}^{\sigma }{\Delta _{Q_{P_{1}}}^{\tau }}g\cdot (Q_{F}-Q_{\tilde{P}_{1}})%
\Biggr\rvert\lesssim \mathbb{E}_{G}^{\tau }\lvert g\rvert \cdot
(Q_{F}\setminus Q_{S}).
\end{equation}%
The monotonicity principle \eqref{e:MONO} applies, yielding the inequality
below. This is the decoupling step. 

We continue with the estimate of $ \Xi (G)$ as follows. 
\begin{align}
\Xi (G)& \lesssim \mathbb{E}_{G}^{\tau }\lvert g\rvert \cdot \sum_{S\in 
\mathcal{S}}\mathsf{T}_{\tau }(Q_{F}\setminus Q_{S})(x_{Q_{S}})\sum
_{\substack{ P_{2}:\ddot{\pi}_{\mathcal{G}}P_{2}=G \\ P_{2}\Subset S}}%
\Bigl\langle\frac{t}{\lvert S\rvert },h_{P_{2}}^{\sigma }\Bigr\rangle%
_{\sigma }\lvert \hat{f}_{\sigma }(P_{2})\rvert , \\
& =\mathbb{E}_{G}^{\tau }\lvert g\rvert \cdot \sum_{S\in \mathcal{S}}\mathsf{%
T}_{\tau }(Q_{F}\setminus Q_{S})(x_{Q_{S}})\sum_{K\in \mathcal{W}\!S}\sum
_{\substack{ P_{2}:\ddot{\pi}_{\mathcal{G}}P_{2}=G \\ P_{2}\Subset K}}%
\Bigl\langle\frac{t}{\lvert S\rvert },h_{P_{2}}^{\sigma }\Bigr\rangle%
_{\sigma }\lvert \hat{f}_{\sigma }(P_{2})\rvert , \\
& \lesssim \mathbb{E}_{G}^{\tau }\lvert g\rvert \cdot \sum_{S\in \mathcal{S}}%
\mathsf{T}_{\tau }(Q_{F}\setminus Q_{S})(x_{Q_{S}})
\Biggr[ 
\sum_{K\in \mathcal{W}\!S}\sum_{\substack{ P_{2}:\ddot{\pi}_{\mathcal{G}}P_{2}=G \\ %
P_{2}\Subset K}}
\Bigl\langle\frac{t}{\lvert S\rvert },h_{P_{2}}^{\sigma }\Bigr\rangle ^{2} 
\times  
\sum_{K\in \mathcal{W}\!S}\sum_{\substack{ P_{2}:\ddot{\pi}_{%
\mathcal{G}}P_{2}=G \\ P_{2}\Subset K}}\lvert \hat{f}_{\sigma }(P_{2})\rvert
^{2}\Biggr] ^{\frac{1}{2}}
\end{align}%
and then using $\mathsf{T}_{\tau }(Q_{F}\setminus Q_{S})(x_{Q_{S}})\lesssim 
\mathsf{T}_{\tau }(Q_{F}\setminus Q_{S})(x_{Q_{K}})$,\ we continue with%
\begin{align}
& \lesssim \mathbb{E}_{G}^{\tau }\lvert g\rvert \cdot \sum_{S\in \mathcal{S}%
} \Biggl[ 
\sum_{K\in \mathcal{W}\!S}\mathsf{T}_{\tau }(Q_{F}\setminus
Q_{S})(x_{Q_{K}})^{2}\sum_{\substack{ P_{2}:\ddot{\pi}_{\mathcal{G}}P_{2}=G
\\ P_{2}\Subset K}}
\Bigl\langle\frac{t}{\lvert S\rvert },h_{P_{2}}^{\sigma }\Bigr\rangle^{2}
\times 
\sum_{K\in \mathcal{W}\!S}\sum_{\substack{ P_{2}:\ddot{\pi}_{%
\mathcal{G}}P_{2}=G \\ P_{2}\Subset K}}\lvert \hat{f}_{\sigma }(P_{2})\rvert
^{2}
\Biggr] ^{\frac{1}{2}} 
\\
& \lesssim \textup{size} (\mathcal{P}^t_L ) \cdot 
\mathbb{E}_{G}^{\tau }\lvert g\rvert \cdot \sum_{S\in \mathcal{S}%
} \tau \left( S\right)  ^{\frac{1}{2}}
\Biggl[
\sum_{K\in \mathcal{W}\!S}\sum_{\substack{ P_{2}:\ddot{\pi}_{\mathcal{G}%
}P_{2}=G \\ P_{2}\Subset K}}\lvert \hat{f}_{\sigma }(P_{2})\rvert
^{2}
\Biggr] ^{\frac{1}{2}} \\
& \leq   \textup{size} (\mathcal{P}^t_L )  \mathbb{E}_{G}^{\tau }\lvert g\rvert \cdot 
\Biggl[
\sum_{S\in \mathcal{S}}\tau \left( S\right) 
\times 
\sum_{S\in \mathcal{S}}\sum_{K\in \mathcal{W}\!S}\sum_{\substack{ P_{2}:%
\ddot{\pi}_{\mathcal{G}}P_{2}=G \\ P_{2}\Subset K}}\lvert \hat{f}_{\sigma
}(P_{2})\rvert ^{2}
\Biggr] ^{\frac{1}{2}}
\\
& \lesssim  \textup{size} (\mathcal{P}^t_L ) 
\cdot \mathbb{E}_{G}^{\tau }\lvert g\rvert \cdot \tau (Q_{G})^{1/2}\lVert
\Pi _{G}^{\sigma }f\rVert _{\sigma }.
\end{align}%
Above, we have appealed to the definition of size in \eqref{e:size}. 
Then, Cauchy-Schwartz in $ S\in \mathcal S$, which are pairwise disjoint intervals contained inside of $ G$.  The last line has the notation 
$\Pi _{G}^{\sigma }\equiv \sum_{P_{2}:\ddot{\pi}_{\mathcal{G}%
}P_{2}=G}\Delta _{P_{2}}^{\sigma }$. Finally, use the quasi-orthogonality
bound \eqref{e:quasi} and the geometric decay size bound \eqref{e:st} to see that 
\begin{equation}
\sum_{G\in \mathcal{G}}\Xi (G)\lesssim (1+c)^{-t/2}\mathbf{S}\,\lVert
g\rVert _{\tau }\lVert f\rVert _{\sigma }.  \label{e:Xi}
\end{equation}%
This completes the proof that $\mathscr N_{\mathcal{Q}}\lesssim
(1+c^{2})^{-t/2}\mathbf{S}$. The Lemma is complete.
\end{proof}

\section{The Form \texorpdfstring{$B ^{\textup{below}}$}{B below}} \label{s:Bbelow}

The analysis of the form $ B ^{\textup{below}} (f,g)$  defined in \eqref{e:below} has an analysis that is similar to that of the 
form $ B ^{\textup{above}}$, due to the close similarity in the two energy inequalities \eqref{e:energyI} and \eqref{e:energyII}. 
We will state  the highlights of the analysis.   

The first step is the construction of stopping intervals $ \mathcal F$.
Recall the formula \eqref{e:HS}, concerning the Haar support of $ f$, and the definition of $ \mathcal D_f^r$. 
The Haar support of $ f$ are intervals $ \pi I$, $ I\in \mathcal D_f^r$ such that $ \pi I$ is good.  

In the initial stage, we take the maximal elements of $ \mathcal F$ to be the children of the maximal intervals in the Haar support of $ f$. 
In the inductive stage, if $ F\in \mathcal F$ is minimal, we add to $ \mathcal F$ the maximal sub-children $ F'\subsetneq F$,  
with $ F'\in \mathcal D_f^r$,  such that either 
\begin{enumerate}
\item (A large average) $ \mathbb E ^{\sigma } _{F'} \lvert  f\rvert \ge 10 \mathbb E ^{\sigma } _{F} \lvert  f\rvert  $,  
\item (Energy Stopping)
$\sum_{K\in \mathcal W\!F'}\mathsf P (\sigma \cdot (F \setminus K), K) ^2 E (\tau ,Q_{K}) ^2 \tau (Q_K) \ge C_0 \mathscr R ^2 \sigma (F')$.    
\\ Recall \eqref{e:Etau}, and Lemma~\ref{l:energyII} and that $ C_0$ is sufficiently large constant.
\end{enumerate}

Define  Haar projections by 
\begin{align} \label{e:HH}
\begin{split}
H _{F} ^{\sigma } f &\equiv \Delta ^{\sigma } _{\pi F} f + \sum_{J \::\: \pi _{\mathcal F}J=F} \Delta ^{\sigma } _{J} f ,
\\
\tilde H _{F} ^{\tau} g &\equiv   \sum_{\substack{I \::\: \pi _{\mathcal F}I=F\\ I \subset F }}  \Delta  ^{\tau} _{Q_I}  g.
\end{split}
\end{align}
We have the quasi-orthogonality inequality, compare to \eqref{e:quasi},   
\begin{equation} \label{e:QQ}
\sum_{F\in \mathcal F} 
\left\{ \mathbb E ^{\sigma } _{F} \lvert  f\rvert \cdot \sigma (F) ^{1/2} + \lVert H ^{\sigma } _{F} f\rVert_{\sigma }\right\} \lVert \tilde H ^{\tau } _{F} g\rVert_{\tau } 
\lesssim \lVert f\rVert_{\sigma } \lVert g\rVert_{ \tau } \,. 
\end{equation}
As before, we will use this inequality as written, and with different choices of the orthogonal projections $ \tilde H ^{\tau } _{F}$.  

This is the global-to-local reduction for $ B ^{\textup{below}} (f,g)$. The sum is over intervals that are `separated' by $ \mathcal F$.  

\begin{lemma}{\textnormal{[Global to Local Reduction, II]}}
\label{l:global2localBelow} The following estimate holds: 
\begin{equation}\label{e:g2lB}
\left\vert 
\sum_{ J} \sum_{\substack{I \::\: I\Subset _{4r} J \\  I\subset \pi _{\mathcal F} I\subsetneq J}} 
\mathbb E ^{\sigma } _{J_I} \Delta ^{\sigma } _{J} f \cdot \left\langle  {\mathsf  R} _{\sigma }  J_I,  \Delta ^{\tau } _{Q_I} g\right\rangle _{\tau }
\right\vert
\lesssim \mathscr R \lVert f\rVert_{\sigma } \lVert g\rVert_{ \tau } . 
\end{equation}
\end{lemma}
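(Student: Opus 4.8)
The plan is to mirror the structure of the ``above'' case (Lemma~\ref{l:global2localAbove} and~\ref{l:xx}), but now exploiting that the singular integral is $\mathsf R_\sigma$ acting on a child $J_I$ of $I$, so the roles of the weights $\sigma$ and $\tau$ are swapped. First I would group the sum on the left of \eqref{e:g2lB} by the stopping interval $F = \pi_{\mathcal F} I$ of the inner interval $I$: the sum becomes $\sum_{F\in\mathcal F}\sum_{J \::\: J\supsetneq F}\sum_{I \::\: \pi_{\mathcal F}I=F}\mathbb E^\sigma_{J_I}\Delta^\sigma_J f\cdot\langle \mathsf R_\sigma J_I, \Delta^\tau_{Q_I}g\rangle_\tau$. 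Because $I\subset F\subsetneq J$, the child $J_I$ of $J$ containing $I$ contains $F$, so I write $J_I = F + (J_I\setminus F)$ and split the inner product accordingly, exactly as in the exchange argument described at the start of the proof of Lemma~\ref{l:global2localAbove}.

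For the term with argument $F$: by construction of the stopping tree $\mathcal F$ (the ``large average'' stopping rule, cf.\ \eqref{e:avg<} applied in the $\sigma$-variable), the telescoping sum $\sum_{J \::\: J\supsetneq F}\mathbb E^\sigma_{J_I}\Delta^\sigma_J f$ is controlled by $\mathbb E^\sigma_F\lvert f\rvert$; then I pull out this scalar and are left with $\mathbb E^\sigma_F\lvert f\rvert\cdot\langle \mathsf R_\sigma F, \sum_{I\::\:\pi_{\mathcal F}I=F}\Delta^\tau_{Q_I}g\rangle_\tau$, which by Cauchy--Schwarz, the interval testing inequality \eqref{e:T1} for $\mathsf R_\sigma$ on $F$ (giving $\lVert\mathsf R_\sigma F\rVert_{L^2(Q_F;\tau)}\lesssim\mathscr T\sigma(F)^{1/2}$), and orthogonality of the martingale differences $\Delta^\tau_{Q_I}g$ (giving $\lVert\tilde H^\tau_F g\rVert_\tau$), is bounded by $\mathscr T\,\mathbb E^\sigma_F\lvert f\rvert\,\sigma(F)^{1/2}\lVert\tilde H^\tau_F g\rVert_\tau$. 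Summing over $F\in\mathcal F$ via the quasi-orthogonality bound \eqref{e:QQ} closes this piece.

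For the term with argument $J_I\setminus F$, which is the genuinely off-diagonal part, I would again invoke construction of stopping data to bound $\lvert\sum_{J\::\:\pi I=J}\mathbb E^\sigma_{J_I}\Delta^\sigma_J f\cdot(J_I\setminus F)\rvert$ by a sum over $\mathcal F$-ancestors $F'\supsetneq F$ of $\mathbb E^\sigma_{F'}\lvert f\rvert$ times an indicator $\widehat J_{F',F}$ of the appropriate ``annular'' piece $F'\setminus F''$; then apply the monotonicity principle \eqref{e:monoII<} to replace the inner products $\langle \mathsf R_\sigma(J_I\setminus F),\Delta^\tau_{Q_I}g\rangle_\tau$ by the positive Poisson average $\mathsf P_\sigma(\widehat J_{F',F},K)$ against the $L^2_0(Q_K;\tau)$ energy terms, $K\in\mathcal W\!F$. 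This reduces \eqref{e:g2lB} to a two-weight inequality for a Poisson-type operator ``with a hole,'' in exact analogy with \eqref{e:3aa}; I would then dualize and run the parallel corona / double-stopping argument (stopping data for $g$ built into the new measure, stopping data for the dual function), reducing to two testing inequalities of the form \eqref{e:x1st}--\eqref{e:x2nd}. The first testing inequality is an instance of Energy Inequality~II (Lemma~\ref{l:energyII}) --- this replaces the use of Energy Inequality~I in the ``above'' case --- and the second follows from the $\mathscr A_2$ condition after the same separation-of-scales trick used in the proof of \eqref{e:x2nd}.

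The main obstacle I expect is making the monotonicity step and the subsequent energy estimate match up: Energy Inequality~II (\ref{e:energyII}) is stated with the Whitney collection $\mathcal W\!I$ and Poisson averages $\mathsf P_\sigma(I_0\setminus K,K)$, whereas the natural quantities arising from the telescoping on $f$ and the monotonicity bound \eqref{e:monoII<} involve the annular holes $\widehat J_{F',F}$ and the energy $E(\tau,K)$ defined via the $L^2_0$ norm; one must verify, using the $\mathscr A_2$ condition to enlarge holes (as in the first display of the proof of Lemma~\ref{l:energyII}), that the hole $\widehat J_{F',F}$ can be replaced by $I_0\setminus K$ up to acceptable error, and that the bounded-overlap property \eqref{e:whitney} of $\mathcal W\!F$ together with the $\tau$-Carleson property of $\mathcal F$ (the analogue of \eqref{e:tauCarleson} for energy stopping in this direction) delivers the corona summation. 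Everything else is a routine transcription of \S\ref{s:global} with $\sigma\leftrightarrow\tau$ and the Poisson operator in place of $\mathsf T_\tau$; in the interest of brevity I would present these highlights and leave the verbatim repetitions to the reader, as the paper does elsewhere.
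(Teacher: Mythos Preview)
Your proposal is correct and matches the paper's intended approach. The paper does not give an explicit proof of this lemma; it simply states that the analysis of $B^{\textup{below}}$ is similar to that of $B^{\textup{above}}$ (and to the Hilbert transform papers \cites{MR3285857,MR3285858}) and presents only the highlights, namely the stopping construction with energy stopping based on Lemma~\ref{l:energyII}, the projections \eqref{e:HH}, and the quasi-orthogonality bound \eqref{e:QQ}. Your outline---grouping by $F=\pi_{\mathcal F}I$, the exchange $J_I=F+(J_I\setminus F)$, testing on $F$ plus quasi-orthogonality for the first piece, and monotonicity \eqref{e:monoII<} followed by the parallel corona reduction to Energy Inequality~II and the $\mathscr A_2$ bound for the second---is exactly this transcription, with the correct substitutions of $\mathsf P_\sigma$ for $\mathsf T_\tau$ and Lemma~\ref{l:energyII} for Lemma~\ref{l:energyI}.
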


\subsection{The Local Estimate}

The last step is to control the local form, namely, to prove the following.

\begin{lemma}\label{l:local2} For each $ F\in \mathcal F$, we have 
\begin{equation}\label{e:local2}
\left\vert  B ^{\textup{below}} ( H^{\sigma } _{F} f,  \tilde H^{ \tau} _{F} g)\right\vert 
\lesssim \mathscr R 
\left\{  \mathbb E ^{\sigma } _{F} \lvert  f\rvert \cdot  \sigma (F)^{1/2} + \lVert H^{\sigma  } _{F} f\rVert_{\sigma  }\right\} \lVert \tilde H^{\tau  } _{F} g\rVert_{\tau  } . 
 \end{equation}
\end{lemma}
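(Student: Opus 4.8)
The plan is to mirror the analysis of $ B ^{\textup{above}}$ carried out in \S\ref{s:local}, interchanging the roles of the weights $ \sigma $ and $ \tau $ and replacing the monotonicity and energy principles of Part~I by those of Part~II.

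\smallskip
\emph{Exchange argument and reduction to a stopping form.} Fix $ F\in \mathcal F$.  In $ B ^{\textup{below}} (H^{\sigma } _{F} f, \tilde H^{\tau } _{F} g)$ the argument of the Riesz transform is the child $ J_I$ of $ J$ that contains $ I$; write $ J_I = F - (F \setminus J_I)$.  The contribution of the term $ F$ is handled exactly as in the first step of the proof of Lemma~\ref{l:local}: by construction of the stopping tree $ \mathcal F$ for $ f$ in \S\ref{s:Bbelow}, the martingale-difference sums of $ f$ are controlled by the stopping average $ \mathbb E ^{\sigma } _{F} \lvert  f\rvert  $, so the interval testing inequality \eqref{e:T1} gives a bound by
\begin{equation*}
\mathbb E ^{\sigma } _{F} \lvert  f\rvert \cdot \lVert \mathbf 1_{Q_F} \mathsf R _{\sigma } \mathbf 1_F \rVert_{\tau } \cdot \lVert \tilde H^{\tau } _{F} g\rVert_{\tau } \lesssim \mathscr T\, \mathbb E ^{\sigma } _{F} \lvert  f\rvert \cdot \sigma (F) ^{1/2} \lVert \tilde H^{\tau } _{F} g\rVert_{\tau },
\end{equation*}
and summation over $ F\in \mathcal F$ is completed by the quasi-orthogonality bound \eqref{e:QQ}.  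It then remains to treat the stopping form, in which the argument of $ \mathsf R _{\sigma }$ is a hole, in analogy with \eqref{e:stopI}; note that for good $ I\Subset _{4r} J$ the interval $ I$ is disjoint from the hole $ F\setminus J_I$, which is what makes the Part~II monotonicity available.

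\smallskip
\emph{The stopping form.} I introduce admissible collections $ \mathcal P$ of pairs, with the convexity condition and compatibility with the Part~II energy stopping rule of \S\ref{s:Bbelow}, exactly as in \S\ref{s:local}.  The monotonicity principle \eqref{e:monoII<}--\eqref{e:monoII>} replaces \eqref{e:MONO}: after introducing the hole it dominates the relevant inner products in terms of a Poisson average $ \mathsf P _{\sigma } (\,\cdot\,, K)$ times $ \lVert x/\lvert  K\rvert \rVert_{L ^2 _0 (Q_K; \tau )}$, where $ K$ is the Whitney-type interval of Proposition~\ref{p:KI}.  Accordingly the size functional becomes
\begin{equation*}
\textup{size} (\mathcal P) ^2 \equiv \sup _{K \in \mathcal T _{\mathcal P} \::\: \sigma (K) >0 } \mathsf P _{\sigma } (F \setminus K, K) ^2 \, \frac{\mu (\textup{Saw}_{\mathcal P} K)}{\sigma (K) \lvert  K\rvert ^2 },
\end{equation*}
where $ \mu $ is the discrete measure built from the Part~II energy terms $ \lVert x\rVert_{L ^2 _0 (Q_K; \tau )} ^2 = \lvert  K\rvert ^2 E (\tau ,K) ^2 \tau (Q_K)$, the analogue of \eqref{e:mu}; with this choice the base estimate $ \textup{size} (\mathcal P) \lesssim \mathscr R$ is precisely the energy inequality \eqref{e:energyII}.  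Then I run the Size Lemma recursion of \S\ref{s:local} verbatim: construct $ \mathcal L = \bigcup _{t} \mathcal L _t$ via the analogues of \eqref{e:0} and \eqref{e:tent}, split $ \mathcal P = \mathcal P ^{\textup{big}} \cup \mathcal P ^{\textup{small}}$ with $ \mathscr N _{\mathcal P ^{\textup{big}}} \lesssim \textup{size} (\mathcal P)$ and $ \sup _{t} \textup{size} (\mathcal P ^{\textup{small}} _t) \le \tfrac14 \textup{size} (\mathcal P)$, and iterate; the norm estimates for the big collections $ \mathcal P ^{t}_L$ follow the Case~A, B1, B2 analysis of Lemma~\ref{l:norm-t}, with \eqref{e:QQ} supplying quasi-orthogonality and Proposition~\ref{p:close} the separation of scales in the $ \mathcal L$-tree.

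\smallskip
\emph{Main obstacle.} The principal difficulty, as already emphasized in \S\ref{s:global}, is the asymmetry built into Part~II: the monotonicity lower bound \eqref{e:monoII>} isolates only the first coordinate $ x_1$ on its left side while the full vector $ \mathsf R _{\sigma }$ appears on the right, and the energy inequality \eqref{e:energyII} controls the $ x_1$-contribution through interval testing but the $ x_2$-contribution only through the Poisson (second-coordinate) testing.  Consequently, in the size recursion and especially in the Case~B2 estimate one must carefully track which coordinate of $ \mathsf R _{\sigma }$ supplies the needed lower bound, invoking \eqref{e:monoII>} with the admissible choice of $ j\in \{1,2\}$ dictated by the relative position of the two points in $ Q_K$; this is the ``more careful analysis'' flagged for the Local Estimate in the introduction.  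The remaining ingredients---Whitney overlap \eqref{e:whitney}, the Carleson property of $ \mathcal F$, and the geometric decay of the recursion---transfer from \S\ref{s:local} with only cosmetic changes.
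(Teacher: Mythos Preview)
Your proposal is correct and follows exactly the approach the paper indicates: the paper's entire treatment of Lemma~\ref{l:local2} is the single sentence ``After a second application of the exchange argument, it remains to consider the stopping form,'' followed by Lemma~\ref{l:stopII} with the remark ``This argument is a variant of the proof of Lemma~\ref{l:stopI}.''  Your expansion of that sketch---exchange $J_I = F - (F\setminus J_I)$, handle the $F$ term by interval testing and quasi-orthogonality \eqref{e:QQ}, then run the admissible-collection\slash size\slash Size-Lemma recursion with Part~II ingredients---is the intended route.

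One remark on emphasis.  Your ``main obstacle'' paragraph somewhat overstates the difficulty here: the paper explicitly regards the $B^{\textup{below}}$ case as the \emph{easier} of the two, ``even more similar'' to the Hilbert-transform case, and it is the $B^{\textup{above}}$ case (Part~I) that is singled out as ``more novel'' and treated in full.  The coordinate asymmetry in \eqref{e:monoII>} is used in the \emph{proof} of the energy inequality \eqref{e:energyII}, where the $x_1$ and $x_2$ contributions are bounded by different mechanisms; once that inequality is in hand, the size functional is defined with the full energy $E(\tau,K)$ and the stopping-form recursion uses only the upper-bound monotonicity \eqref{e:monoII<}, which carries the full vector $x$.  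So in Cases~A, B1, B2 of the analogue of Lemma~\ref{l:norm-t} you do not need to track coordinates---the subtlety has already been absorbed into Lemma~\ref{l:energyII} and hence into the base estimate $\textup{size}(\mathcal P)\lesssim \mathscr R$.
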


After a second application of the exchange argument, it remains to consider the \emph{stopping form}: 
\begin{equation} \label{e:XS}
B ^{\textup{stop}} _F (f,   g)
\equiv \sum_{I \::\: \pi _{\mathcal F} I= F}  \mathbb E ^{\sigma } _{I_J} \Delta ^{\sigma } _{I} f \cdot 
 \sum_{\substack{J \::\: \pi _{\mathcal F}J=F\\ J\Subset _{4r} I }}  \left\langle {\mathsf R}_{\sigma } (F-I_J),  \Delta^\tau _J g \right\rangle _{\sigma } . 
\end{equation}

\begin{lemma}\label{l:stopII} The following estimate is true: 
\begin{equation}\label{e:stopII<}
\left\vert  B ^{\textup{stop}} _F (f,   g)\right\vert 
\lesssim \mathscr R \lVert f\rVert_{ \sigma }  \lVert g\rVert_{\tau } . 
\end{equation}
\end{lemma}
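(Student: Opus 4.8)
The plan is to mirror the recursion developed for the `above' stopping form in \S\ref{s:local}, now in the dual direction where $\sigma$ is on $\mathbb{R}$ and $\tau$ on $\mathbb{R}^2_+$, using the second monotonicity and energy principles (Lemma~\ref{l:monoIIx} and Lemma~\ref{l:energyII}) in place of the first ones. First I would fix $F\in\mathcal{F}$ and set up an admissible collection $\mathcal{P}$ of pairs $(P_1,P_2)\in\mathcal{D}_f^{4r}\times\mathcal{D}_g^{4r}$ with $P_2\Subset_{4r}P_1$, $P_1,P_2$ good, satisfying the analogue of the admissibility conditions: the convexity is now imposed on $\mathcal{P}_2$ (the $\tau$-side cubes) holding $P_1$ fixed, and no interval in $\widetilde{\mathcal{P}}_1$ sits inside an energy stopping interval $F'$ of $F$, with $F'$ defined by the \emph{dual} energy stopping rule from \S\ref{s:Bbelow}. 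One defines $B_{\mathcal{P}}(f,g)\equiv\sum_{(P_1,P_2)\in\mathcal{P}}\mathbb{E}^{\sigma}_{\tilde P_1}\Delta^{\sigma}_{P_1}f\cdot\langle\mathsf{R}_\sigma(F\setminus \tilde P_1),\Delta^{\tau}_{Q_{P_2}}g\rangle_\tau$ and its norm $\mathscr{N}_{\mathcal{P}}$, and the goal becomes $\mathscr{N}_{\mathcal{P}}\lesssim\mathscr{R}$ for all admissible $\mathcal{P}$.

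The key ingredients in order: (1) an orthogonality lemma exactly as Lemma~\ref{l:Sortho}, with the roles of $f$ and $g$ (and hence the factor-of-$2$ overlap) swapped to the $\tau$-side martingale differences; (2) the correct notion of \emph{size}, which here should read
\[
\textup{size}(\mathcal{P})^2\equiv\sup_{I\in\mathcal{T}_{\mathcal{P}}\,:\,\sigma(I)>0}\mathsf{P}_\sigma(F\setminus I,I)^2\,\frac{\lambda(\textup{Saw}_{\mathcal{P}}I)}{\sigma(I)\,\lvert I\rvert^2},
\]
where now $\lambda=\sum_{P_2\in\mathcal{P}_2}\lVert x/\lvert P_2\rvert\rVert_{L^2_0(Q_{P_2};\tau)}^2\,\delta_{x_{Q_{P_2}}}$ encodes the $E(\tau,\cdot)$ energies, $\textup{Saw}_{\mathcal{P}}I=\bigcup\{x_{Q_{P_2}}:P_2\in\mathcal{P}_2,\,P_2\Subset_r I\}$, and $\mathcal{T}_{\mathcal{P}}=\bigcup\{\mathcal{K}_I:I\in\widetilde{\mathcal{P}}_1\}$; (3) the base estimate $\textup{size}(\mathcal{P})\lesssim\mathscr{R}$, which follows from the dual Energy Inequality~\eqref{e:energyII} exactly as Proposition~\ref{p:size} follows from \eqref{e:energyI} — for $I\in\mathcal{T}_{\mathcal{P}}$ with $\pi_{\mathcal{F}}I=F$ failing the dual Energy Stopping rule, one bounds $\mathsf{P}_\sigma(F\setminus I,I)^2\lambda(\textup{Saw}_{\mathcal{P}}I)/\lvert I\rvert^2\lesssim\sum_{K\in\mathcal{W}I}\mathsf{P}_\sigma(F\setminus K,K)^2 E(\tau,K)^2\tau(Q_K)\lesssim\mathscr{R}^2\sigma(I)$; and (4) the Size Lemma, decomposing $\mathcal{P}=\mathcal{P}^{\textup{big}}\cup\mathcal{P}^{\textup{small}}$ with $\mathscr{N}_{\mathcal{P}^{\textup{big}}}\lesssim\textup{size}(\mathcal{P})$ and $\mathcal{P}^{\textup{small}}$ a union of orthogonal collections each of size $\le\tfrac14\textup{size}(\mathcal{P})$.

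The construction of the decoupling collection $\mathcal{L}=\bigcup_t\mathcal{L}_t$ proceeds verbatim as in \S\ref{s:local}: $\mathcal{L}_0$ are the minimal $L\in\widetilde{\mathcal{P}}_1$ with $\mathsf{P}_\sigma(F\setminus L,L)^2\lambda(\textup{Saw}_{\mathcal{P}}L)/\lvert L\rvert^2\ge c^2\mathbf{S}^2\sigma(L)$, and $\mathcal{L}_t$ the minimal dyadic $L$ with $\lambda(\textup{Saw}_{\mathcal{P}}L)\ge(1+c^2)\sum_{L'\in\mathcal{L}_{t-1},L'\subsetneq L}\lambda(\textup{Saw}_{\mathcal{P}}L')$. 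One then splits $\mathcal{P}^{\textup{big}}$ into $\mathcal{P}^t_L$, and for the $t=1$ base case uses monotonicity \eqref{e:monoII>} together with Carleson cube testing for $\mathsf{R}_\sigma$ and quasi-orthogonality \eqref{e:QQ}; for $t>1$ one proves exponential decay $\textup{size}(\mathcal{P}^t_L)\lesssim(1+c^2)^{-t/2}\mathbf{S}$ via a proximity lemma (the analogue of Proposition~\ref{p:close}, giving $L\subset\pi^3_{\mathcal{L}}K$ for $K\in\mathcal{T}_{\mathcal{P}^t_L}$) and the selection rule \eqref{e:tent}, and then Cases A, B1, B2 as before using \eqref{e:monoII>}.

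The main obstacle I anticipate is the monotonicity step: in this direction the lower bound \eqref{e:monoII>} only controls the $x_1$-component of the $L^2_0(Q_J;\tau)$-norm by $\mathsf{P}_\sigma(f,J)$ times the full Riesz transform, so one must carry \emph{both} coordinates of $\mathsf{R}_\sigma$ through the exchange argument and verify that the $x_2$-contribution to the energy $\lambda$ is still dominated — this is exactly the asymmetry flagged in the paper, and the decomposition $\lambda$ into an $x_1$-part controlled by Riesz testing and an $x_2$-part controlled by Poisson (i.e.\ $\mathsf{R}^2_\sigma$) testing, following the two-part argument in the proof of Lemma~\ref{l:energyII}, is the delicate point. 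Everything else is a faithful, if lengthy, transcription of \S\ref{s:local} with the dual principles substituted in.
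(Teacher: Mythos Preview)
Your approach is exactly the one the paper intends: it writes only ``This argument is a variant of the proof of Lemma~\ref{l:stopI},'' and your transcription with $\mathsf P_\sigma$, $E(\tau,\cdot)$, $\sigma(I)$, and Lemma~\ref{l:energyII} in place of their Part~I analogues is the right dualization.

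Two small corrections. First, convexity must remain on the \emph{large} side: with your convention $(P_1,P_2)\in\mathcal D_f^{4r}\times\mathcal D_g^{4r}$ and $P_2\Subset_{4r}P_1$, it is $\{P_1:(P_1,P_2)\in\mathcal P\}$ that must be convex for fixed $P_2$, not the other way around---this is what makes the analogue of \eqref{e:Xmono} telescope to a stopping average of $f$. Second, in the $t=1$ case and in Cases~A, B1, B2 you want the \emph{upper} monotonicity bound \eqref{e:monoII<}, not \eqref{e:monoII>}; the latter is only used once, inside the proof of the energy inequality~\eqref{e:energyII}, to pass from $x_1$-energy to the Riesz testing constant. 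Once $\textup{size}(\mathcal P)\lesssim\mathscr R$ is established via \eqref{e:energyII} (which already handles both coordinates, the $x_2$-part by Poisson testing), the asymmetry you flag in your last paragraph does not reappear: every subsequent step uses only the upper bound \eqref{e:monoII<}, which carries the full $\lVert x/\lvert J\rvert\rVert_{L^2_0(Q_J;\tau)}$, so no further $x_1/x_2$ splitting is needed in the recursion itself.
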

This argument is a variant of the proof of Lemma~\ref{l:stopI}.

\section{Elementary Estimates} \label{s:elementary}

This section collects some considerations which, while not completely elementary, rely solely upon the 
$ A_2$ condition.  Then, reductions are proved, namely 
the Carleson cube projection in Proposition~\ref{p:carlesonCubes}, and the reduction to the `above' and `below' 
projections in Proposition~\ref{p:triangles}.  

\subsection{Weak-Boundedness}

By the weak-boundedness condition, we mean this estimate.  

\begin{proposition}\label{p:weak} 
Let $ \sigma $ and $ \tau $ satisfy the $  A_2$ condition \eqref{e:A2}.  Then, for any two 
intervals $ I, J$, intersecting only at their boundaries, we have 
\begin{equation}\label{e:weak}
\left\vert  \left\langle {\mathsf  R}  _{\sigma } f \cdot I,   g \cdot Q_J \right\rangle _{\tau }\right\vert  \lesssim \mathscr A_2 ^{1/2} 
\lVert f\rVert_{\sigma } \lVert g\rVert_{\tau}. 
\end{equation}
\end{proposition}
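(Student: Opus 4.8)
The plan is to prove the weak-boundedness estimate \eqref{e:weak} directly from the $ A_2 $ condition, by splitting the integration over $ Q_J $ into a ``far'' piece and a ``near'' piece relative to $ I $, and exploiting the fact that $ I $ and $ J $ meet only at a boundary point. Without loss of generality assume $ \lvert J\rvert \le \lvert I\rvert $, and let $ x_0 $ be the common boundary point of $ I $ and $ J $. First I would observe that $ Q_J \subset \{ x \in \mathbb R ^2 _+ \::\: \lvert x_1 - t\rvert \ge - c\lvert J\rvert \textup{ for } t\in I \} $ in the sense that any $ x = (x_1,x_2) \in Q_J $ has $ x_2 < \lvert J\rvert $ and $ x_1 $ on the far side of $ x_0 $ from $ I $. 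Decompose $ Q_J = \bigcup_{k\ge 0} S_k $ into dyadic annuli $ S_k \equiv \{ x\in Q_J \::\: 2 ^{-k-1}\lvert J\rvert \le \textup{dist}(x,x_0) < 2 ^{-k}\lvert J\rvert \} $ (with the innermost one collecting $ \textup{dist}(x,x_0) $ small). On each $ S_k $, for $ t\in I $ we have $ \lvert x - t\rvert \gtrsim \textup{dist}(x, I) + x_2 \gtrsim 2 ^{-k}\lvert J\rvert $ because $ t $ lies on the opposite side of $ x_0 $.

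Next I would estimate the bilinear form on each annulus. By Cauchy--Schwarz in the $ \tau $-integral,
\begin{equation*}
\left\lvert \left\langle \mathsf R _\sigma (f\cdot I), g\cdot S_k\right\rangle _\tau\right\rvert
\le \lVert g\rVert _{L ^2 (S_k;\tau)} \left( \int _{S_k} \lvert \mathsf R _\sigma (f\cdot I)(x)\rvert ^2 \; \tau (dx)\right) ^{1/2},
\end{equation*}
and on $ S_k $ the kernel bound \eqref{e:nabla} gives $ \lvert \mathsf R _\sigma (f\cdot I)(x)\rvert \lesssim (2 ^{-k}\lvert J\rvert) ^{-1} \int _I \lvert f\rvert \; \sigma \le (2 ^{-k}\lvert J\rvert) ^{-1} \sigma (I) ^{1/2}\lVert f\rVert _\sigma $. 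Hence the square-integral is $ \lesssim (2 ^{-k}\lvert J\rvert) ^{-2}\sigma (I)\lVert f\rVert _\sigma ^2 \, \tau (S_k) $. Now $ S_k \subset B_k \equiv $ a ball of radius $ \sim 2 ^{-k}\lvert J\rvert $ centered at $ x_0 $, and since $ x_0 \in \partial I $ one has, from the second half of the $ A_2 $ condition applied to a dyadic-type interval of length $ \sim 2 ^{-k}\lvert J\rvert $ adjacent to $ x_0 $ inside $ I $ (this is where the hypothesis that $ I,J $ meet only at their boundaries is used), the bound
\begin{equation*}
\frac{\tau (S_k)}{2 ^{-k}\lvert J\rvert}\cdot\frac{\sigma (I)}{\lvert I\rvert}\lesssim\frac{\tau (S_k)}{2 ^{-k}\lvert J\rvert}\cdot\frac{\sigma (I)}{2 ^{-k}\lvert J\rvert}\lesssim\mathscr A_2,
\end{equation*}
so that $ (2 ^{-k}\lvert J\rvert) ^{-2}\sigma (I)\,\tau (S_k) \lesssim \mathscr A_2 $ (using $ \lvert I\rvert \ge \lvert J\rvert \ge 2 ^{-k}\lvert J\rvert $). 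Combining, $ \left\lvert \left\langle \mathsf R _\sigma (f\cdot I), g\cdot S_k\right\rangle _\tau\right\rvert \lesssim \mathscr A_2 ^{1/2}\lVert f\rVert _\sigma \lVert g\rVert _{L ^2 (S_k;\tau)} $. Summing in $ k $ via Cauchy--Schwarz (the $ S_k $ are disjoint, so $ \sum_k \lVert g\rVert _{L ^2 (S_k;\tau)} \le (\sum_k 1)... $ — here I would instead insert a small geometric gain: redo the kernel bound keeping a factor $ 2 ^{-k\delta} $ from the extra room $ \textup{dist}(x,I)\gg x_2 $ near $ x_0 $, or simply note the innermost annuli carry geometrically decreasing $ \tau $-mass relative to the worst case, giving $ \sum_k 2 ^{-k\delta}\lVert g\rVert _{L ^2 (S_k;\tau)} \lesssim \lVert g\rVert _\tau $) yields \eqref{e:weak}.

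The main obstacle I anticipate is the absence of an automatic geometric gain in $ k $: the crude per-annulus bound $ \mathscr A_2 ^{1/2}\lVert f\rVert _\sigma\lVert g\rVert _{L ^2(S_k;\tau)} $ summed over all $ k\ge 0 $ only gives a logarithmically divergent series under Cauchy--Schwarz unless one extracts decay. The fix is to exploit that on $ S_k $, with $ x $ at distance $ \sim 2 ^{-k}\lvert J\rvert $ from $ x_0 $ and $ I $ extending a full length $ \lvert I\rvert \gg 2 ^{-k}\lvert J\rvert $ to one side, the bulk of the mass of $ f\cdot I $ is actually at distance $ \gg 2 ^{-k}\lvert J\rvert $ from $ x $, so one gains by comparing $ \mathsf R_\sigma(f\cdot I)(x) $ against its value at a fixed reference point and using \eqref{e:nabla} with $ j=1 $; this replaces $ (2 ^{-k}\lvert J\rvert) ^{-1}\sigma(I) ^{1/2} $ by something carrying a power of $ 2 ^{-k}\lvert J\rvert / \lvert I\rvert $, restoring summability. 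Alternatively, and more cleanly, one tiles $ I $ dyadically away from $ x_0 $ into intervals $ I_j $ with $ \lvert I_j\rvert \sim \textup{dist}(I_j, x_0) $ and pairs the estimate on $ S_k $ against those $ I_j $ with $ \lvert I_j\rvert \gtrsim 2 ^{-k}\lvert J\rvert $, applying the $ A_2 $ condition to each pair $ (I_j, B_k) $; the double sum then converges geometrically off the diagonal $ j\sim k $. I would present this tiling version, as it makes the role of the $ A_2 $ condition most transparent and avoids any delicate cancellation argument.
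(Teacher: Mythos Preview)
Your decomposition strategy is reasonable, and you correctly identify that the crude single-annulus estimate lacks summability in $k$. However, the proposed fix has a genuine gap: the claimed ``geometric decay off the diagonal $j\sim k$'' does not follow from the $A_2$ condition. If you Whitney-decompose $I=\bigcup_j I_j$ with $\lvert I_j\rvert\sim\textup{dist}(I_j,x_0)\sim 2^{m_j}$ and $Q_J=\bigcup_k S_k$ with $S_k$ at distance $\sim 2^{n_k}$ from $x_0$, then for $x\in S_k$, $t\in I_j$ one has $\lvert x-t\rvert\sim 2^{\max(m_j,n_k)}$, and the pairwise bound becomes
\[
\bigl\lvert\langle\mathsf R_\sigma(f\cdot I_j),\,g\cdot S_k\rangle_\tau\bigr\rvert
\lesssim \frac{\sigma(I_j)^{1/2}\tau(S_k)^{1/2}}{2^{\max(m_j,n_k)}}\,\lVert f\rVert_{L^2(I_j;\sigma)}\lVert g\rVert_{L^2(S_k;\tau)}.
\]
Applying either half of \eqref{e:A2} at the scale $2^{\min(m_j,n_k)}$ yields only $\sigma(I_j)\tau(S_k)\lesssim\mathscr A_2\,2^{2\max(m_j,n_k)}$, i.e.\ the coefficient is $\lesssim\mathscr A_2^{1/2}$ \emph{with no residual decay in $\lvert m_j-n_k\rvert$}. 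A Schur-type summation therefore fails, and your first alternative (subtracting a reference value and using the gradient bound) cannot help either, since $f\cdot I$ has no cancellation.

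What actually closes the argument is Muckenhoupt's two-weight Hardy inequality, and this is precisely the route the paper takes. The paper translates so that the common endpoint is the origin, reflects $\sigma$ to $[0,\infty)$, pushes $\tau$ forward to a one-dimensional measure $\tilde\tau$ via $x\mapsto\lvert x\rvert$, and dominates the bilinear form by
\[
\int_0^\infty\!\!\int_0^\infty \frac{\tilde f(t)\,\tilde g(s)}{s+t}\,\tilde\tau(ds)\,\tilde\sigma(dt).
\]
Splitting into the two triangles $\{t<s\}$ and $\{s\le t\}$ gives two Hardy operators, and for each one checks that the Muckenhoupt constant $\mathscr B^2=\sup_{r>0}\int_r^\infty\hat w\cdot\int_0^r\hat\sigma$ is controlled by $\mathscr A_2$. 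Your double-tiling, if pursued carefully (summing over $k$ first for fixed $j$, then applying Cauchy--Schwarz in $\sigma$), reduces to exactly this Hardy inequality; the point is that the Hardy bound is a genuine theorem and not a consequence of a pointwise Schur estimate.
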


For the proof, we will need the classical result of Muckenhoupt \cite{MR0311856}, characterizing the two weight Hardy inequality on the line.

\begin{priorResults}\label{p:hardy} 
 For weights $ \hat{w }$ and $\hat\sigma $  supported on $ \mathbb R _+$,  
\begin{gather}
\left\Vert  \int_{(0,x)}f \;\hat\sigma(dy)  \right\Vert_{\hat  w}
\leq \mathscr B \lVert f \rVert_{\hat\sigma } \label{e:Muck}\,, 
\\ 
\textup{where} \quad 
\mathscr B^{2}\simeq \sup_{0<r }  \int_{(r, \infty )}\hat{w } (dx) \times 
\int_{(0,r)}\hat\sigma(dy)   \,. \label{e:Bis}
\end{gather}
\end{priorResults}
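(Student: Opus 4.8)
The statement is Muckenhoupt's classical two-weight Hardy inequality \cite{MR0311856}, so in the paper it is only quoted; for completeness I sketch how one proves it and why the constant in \eqref{e:Bis} is the right one. The plan is to split into necessity and sufficiency. Necessity is immediate: test \eqref{e:Muck} against $f=\mathbf 1_{(0,r)}$. Then $\int_{(0,x)}f\,\hat\sigma(dy)=\hat\sigma((0,r))$ for every $x>r$, so the left side of \eqref{e:Muck} is at least $\hat\sigma((0,r))\,\hat w((r,\infty))^{1/2}$, while $\lVert f\rVert_{\hat\sigma}=\hat\sigma((0,r))^{1/2}$; rearranging and taking the supremum over $r>0$ gives $\mathscr B^2\gtrsim\sup_r\hat w((r,\infty))\,\hat\sigma((0,r))$.

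For sufficiency I would use the standard ``Muckenhoupt trick.'' Introduce an auxiliary positive weight $h$ on $\mathbb R_+$ and apply Cauchy--Schwarz in the inner integral,
\[
\Bigl|\int_{(0,x)}f\,\hat\sigma(dy)\Bigr|^2\le\Bigl(\int_{(0,x)}|f|^2h^{-1}\,\hat\sigma(dy)\Bigr)\Bigl(\int_{(0,x)}h\,\hat\sigma(dy)\Bigr).
\]
Integrating $\hat w(dx)$ and applying Fubini to the first factor reduces the entire estimate to the pointwise bound
\[
h(y)^{-1}\int_{(y,\infty)}\Bigl(\int_{(0,x)}h\,\hat\sigma\Bigr)\,\hat w(dx)\lesssim\mathscr B^2,\qquad\hat\sigma\text{-a.e. }y.
\]
The choice that works is $h(y)=\hat\sigma((0,y])^{-1/2}$: then $\int_{(0,x)}h\,\hat\sigma\simeq\hat\sigma((0,x))^{1/2}$ by integrating the power $t^{-1/2}$, and a layer-cake decomposition of $\hat\sigma((0,x))^{1/2}$ combined with the defining bound $\hat w((r,\infty))\le\mathscr B^2/\hat\sigma((0,r))$ yields $\int_{(y,\infty)}\hat\sigma((0,x))^{1/2}\,\hat w(dx)\lesssim\mathscr B^2\,\hat\sigma((0,y))^{-1/2}$, which is exactly what is needed after multiplying by $h(y)^{-1}=\hat\sigma((0,y))^{1/2}$.

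The only technical nuisance --- not a genuine obstacle --- is the presence of atoms of $\hat\sigma$ and the behaviour of $\hat\sigma((0,y])$ as $y\downarrow 0$ or on sets where it vanishes; one handles this by phrasing everything through the distribution function of $\hat\sigma$, replacing $\hat\sigma((0,y])^{-1/2}$ by a truncated or regularized version, and checking that the Fubini interchanges are legitimate since all integrands are nonnegative. Since in this paper Theorem~A is invoked only as a black box inside the proof of Proposition~\ref{p:weak}, it in fact suffices to cite \cite{MR0311856}; the sketch above is recorded merely to make transparent that \eqref{e:Bis} is sharp up to absolute constants.
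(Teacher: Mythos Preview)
Your proposal is correct: the paper does not prove Theorem~A at all, it merely cites Muckenhoupt \cite{MR0311856} and uses the result as a black box inside the proof of Proposition~\ref{p:weak}, exactly as you say. The sketch you supply for completeness is the standard Muckenhoupt argument (test on $\mathbf 1_{(0,r)}$ for necessity; Cauchy--Schwarz with the auxiliary weight $h(y)=\hat\sigma((0,y])^{-1/2}$ for sufficiency), and your caveats about atoms and vanishing $\hat\sigma((0,y])$ are the right ones to flag.
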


\begin{proof}[Proof of Proposition \ref{p:weak}]
We abandon the possibility of cancellation, taking the absolute value of the kernel of the Riesz transform. 
It is clear that we can assume that $ I $ and $ J$ share a common endpoint, 
which after a translation, can be taken to be $ 0$.  
Assume that $ I$ lies to the left of $ J$.  

The point of the next steps is to pass to dual formulations of a Hardy inequality with respect to changed measures.  
Write $ \tilde \sigma (dx) \equiv \sigma (-dx) \cdot \mathbf 1_{[0, \infty )}$, and also derive from $ \tau $ a one dimensional weight 
by setting 
\begin{equation*}
\tilde \tau (s,t) \equiv \int _{ s< \lvert  x \rvert  < t } \; d \tau \,, \qquad 0< s < t  .
\end{equation*}
Likewise for $ g \in L ^2 (Q_J;\tau)$, set $ \tilde g$ to be the non-negative function with 
\begin{equation*}
\int _{s} ^{t} \tilde g (u) \, d \tilde \tau  = \int 
 _{ s< \lvert  x \rvert  < t }  \lvert  g (x)\rvert \; d \tau \,, \qquad 0< s < t  . 
\end{equation*}
Then, by a conditional expectation argument, $ \lVert \tilde g\rVert_{\tilde \tau } \le \lVert g\rVert_{\tau }$.  
Finally, turning to the Riesz transform, we have 
\begin{align*}
\left\vert   \left\langle  {\mathsf  R}  _{\sigma }  f, g \right\rangle _{\tau }\right\vert & = \left\vert 
\int _{I} \int _{Q_J}  \frac {x-t} {\lvert  x-t\rvert ^2  }  f (t) g (x) \, \tau (dx)\, \sigma (dt) 
\right\vert 
\\
& =  \left\vert 
\int _{0} ^{\infty } \int _{Q_J}  \frac {x+t} {\lvert  x+t\rvert ^2  }  f (-t) g (x) \, \tau (dx)\, \tilde \sigma (dt) 
\right\vert 
\\
& \lesssim 
\int _{0} ^{\infty } \int _{0} ^{\infty } \frac {1 } {s+t}  \tilde f (t) g (s) \, \tilde \tau (ds) \, \tilde \sigma (dt).  
\end{align*}
Here, $ \tilde f (t) = f (-t)$.   This last form is divided into dual forms.  

The first is integration over the region $ \left\{ (s,t) \::\:  0< t < s\right\}$, on which $\frac {1 } {s+t} \le \frac 1 t$, and so it suffices to bound the form 
\begin{align*}
\int_{(0,\infty )}  \frac {\tilde f (t)}t \int_{(0,t )}   g (s) \, \tilde \tau (ds) \, \tilde \sigma (dt)
&\le 
\lVert \tilde f\rVert_{\tilde \sigma } \left[
\int_{(0,\infty )}  \left[ \int_{(0,t )}   \tilde g (s) \, \tilde \tau (ds)  \right] ^2 \frac {\tilde \sigma (dt)} { t ^2 }
\right] ^{1/2} .
\end{align*}
The last expression is the Hardy inequality with the weight $ \frac {\sigma (dt)} { t ^2 }$.  
Whence it suffices to bound the  constant $ \mathscr B ^2 $, given the expression in \eqref{e:Bis}.  It is a supremum over $ r >0$ of 
\begin{align*}
 \int_{(r, \infty )}  \frac {\tilde \sigma (dt)} { t ^2 } \times  \int_{(0,r)}  \tilde \tau (ds)
 & \le \int_{(-r,\infty )}  \frac {r} {t ^2 } \, \sigma (dt)  \times \frac 1r \int _{ Q _{[0,r]}}  \, \tau (dx)   , 
\end{align*}
as follows by inspection.  
But, the last expression is clearly dominated by our $ A_2$ assumption.  

The second expression is 
 integration over the region $ \{ (s,t) \::\:  0< s \le t\}$, on which $\frac {1 } {s+t} \le \frac 1 s$, and so it suffices to bound the form 
 \begin{align*}
\int_{(0,\infty )}  \frac {\tilde g (s)} s \int_{(0,s)}   \tilde f (t) \,  \tilde \sigma (dt)\, \tilde \tau (ds) \,
&\le 
\lVert \tilde g\rVert_{\tilde \tau  } \left[
\int_{(0,\infty )}  \left[ \int_{(0,s)}   \tilde f (t) \,  \tilde \sigma (dt)  \right] ^2 \frac {\tilde \tau  (ds)} { s ^2 }
\right] ^{1/2}. 
\end{align*}
In this case, the expression in \eqref{e:Bis} is the supremum over $ r>0$ of 
\begin{equation*}
 \int_{(r, \infty )}  \frac {\tilde \tau  (ds)} { s ^2 } \times  \int_{(0,r)}  \tilde \sigma  (dt) 
\leq 
\int _{ \lvert  x\rvert \ge r } \frac r { s ^2 } \tilde \tau  (ds) 
\times \frac 1r\int_{(-r,0)}  \sigma (dt) \lesssim \mathscr A_2. 
\end{equation*}
 The proof is complete. 
\end{proof}

\subsection{Proof of Proposition~\ref{p:carlesonCubes}}
We show that the we only need consider the martingale projection of $g\in L^2(\tau)$ formed over Carleson cubes.  
The basic facts are that  (a) the relevant martingale differences in $ L ^2 (\mathbb{R}^2_+;\tau )$ 
are associated to cubes $ Q$ whose distance to the boundary is at least the side length of $Q$, and (b)   
the $ L ^{2}$-boundedness of the Poisson operator  is a consequence of the assumptions.  

Indeed, we can assume that $ g\in L ^2 (\mathbb{R}^2_+;\tau )$ is such that $ P ^{\tau } _{\textup{Car}}g=0$. 
The upper half-plane is partitioned by cubes  of the form  $ \tilde Q_I = I \times [ \lvert  I\rvert, 2 \lvert  I\rvert)  $, 
for dyadic $ I$, and $ g = \sum_{I\in \mathcal D} \Pi _{I} ^{\tau }g$, where  
\begin{equation*}
 \Pi _{I} ^{\tau }g := \sum_{P \;:\; P\subset \tilde Q_I} \Delta ^{\tau }_P g. 
\end{equation*}
These projections are disjointly supported in $ I$.  
 
Let $ \chi _I$ be a smooth function on $ I$ with $ \mathbf 1_{I} \le \chi _I  \leq \mathbf 1_{2I}$, 
and for the derivative, $ \lvert  \chi '_I \rvert \leq 2\left\vert I\right\vert^{-1} \mathbf 1_{2 I}$.  Then,  using the separation between 
$ f$ and $ g$, it is easy to see that 
\begin{equation*}
\lvert  \langle    \mathsf R ^{\ast} _{\tau } \Pi _I ^{\tau } g , \chi _{I} f \rangle _{\sigma } \rvert 
\lesssim  \langle \lvert   \Pi _{I} ^{\tau }g\rvert,  \mathsf P _{\sigma } \lvert  f\rvert   \rangle _{\tau }. 
\end{equation*}
Summing over $ I$, we have 
\begin{equation*}
\langle \lvert  g\rvert ,   \mathsf P _{\sigma } \lvert  f\rvert   \rangle _{\tau } 
\lesssim \mathscr T  \lVert g\rVert _{L ^2  (\mathbb{R}^2_+;\tau )} \lVert f\rVert _{L ^2 (\mathbb R , \sigma )} . 
\end{equation*}
Above, we are using a two weight inequality for the Poisson operator. The latter, by Sawyer's Theorem \cite{MR930072}, is equivalent to the two-weight testing inequalities. 
But, the Poisson operator is one coordinate of the Cauchy transform, so that the testing inequalities for the Cauchy transform imply the two weight inequality for the Poisson operator.

Using the mean zero property of $ \Pi _{I} ^{\tau }g$, we can also verify that 
\begin{equation*}
\lvert  \langle    \mathsf R ^{\ast} _{\tau } \Pi _I ^{\tau } g , (1- \chi _{I}) { f} \rangle _{\sigma } \rvert 
\lesssim  \langle \lvert   \Pi ^{\tau }_{I} g\rvert,  \mathsf P ^{\sigma } \lvert  f\rvert   \rangle _{\tau }. 
\end{equation*}
And, then the sum over $ I$ is controlled just as before.

\subsection{Proof of Proposition~\ref{p:triangles}}

In the proof of Proposition~\ref{p:triangles}, it suffices to consider $ g$ in the span of (good) martingale 
differences $ \Delta ^{\tau } _{Q_J}$, namely associated with Carleson cubes.   There are quite a few 
subcases of the proof, all controlled by goodness, and the $ A_2$ condition.

Define a sub-bilinear form, and several collections of pairs of intervals as follows: 
\begin{align*}
B ^{\mathcal P} (f,g) & \equiv \sum_{ (I, J) \in \mathcal P} 
\left\vert \left\langle  {\mathsf  R} ^{\ast } _{\tau } \Delta _{Q_J} ^{\tau } g , \Delta ^{\sigma } _{I} f \right\rangle _{\sigma } \right\vert, 
\\
\mathcal P _{\textup{diagonal}} & \equiv \left\{ (I,J) \::\: 3I \cap 3J \neq \emptyset,   2 ^{-4r} \lvert  I\rvert < \lvert  J\rvert \le   \lvert  I\rvert    \right\}, 
\\
\mathcal P _{\textup{far}} & \equiv \left\{ (I,J) \::\: 3I \cap 3J= \emptyset  \right\}, 
\\
\mathcal P _{\textup{near}} & \equiv \left\{ (I,J) \::\:  J\subset 3I \setminus I\, \textup{ or } I\subset 3J \setminus J \right \}.  
\end{align*}

\begin{lemma}\label{l:star} For $ \star \in \{\textup{diagonal}, \textup{far}, \textup{near}\}$, the following holds 
\begin{equation*}
B ^{\mathcal P _{\star}} (f,g) \lesssim \mathscr R \lVert f\rVert_{\sigma } \lVert g\rVert_{\tau }.  
\end{equation*}
\end{lemma}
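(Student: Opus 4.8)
The plan is to treat the three collections $\mathcal P_{\textup{diagonal}}$, $\mathcal P_{\textup{far}}$, $\mathcal P_{\textup{near}}$ separately, since each is handled by a familiar argument from the Nazarov--Treil--Volberg machinery, adapted to the asymmetric geometry here. In each case the key is that the indicator interval $I$ (for $\sigma$) and the Carleson cube $Q_J$ (for $\tau$) are comparable in size or position, so the singular integral behaves like a bounded operator and the $A_2$ condition plus testing/weak-boundedness suffices. I would first record the elementary kernel estimate: for $t\in I$, $x\in Q_J$ with $\operatorname{dist}(I,J)\gtrsim\max(|I|,|J|)$ one has $\bigl|\tfrac{x-t}{|x-t|^2}\bigr|\lesssim \operatorname{dist}(I,J)^{-1}$, and the corresponding gradient bound from \eqref{e:nabla}; these drive the off-diagonal decay.

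For $\mathcal P_{\textup{far}}$, the plan is the standard long-range estimate. Since $3I\cap 3J=\emptyset$, write $\ell=\log_2(|I|/|J|)$ and split the sum over pairs according to the ratio of sidelengths and the distance between the cubes, indexing by dyadic annuli. Using the mean-zero property of $\Delta^\sigma_I f$ and $\Delta^\tau_{Q_J}g$ one gains a factor $\min(|I|,|J|)/\operatorname{dist}(I,J)$ (one power of cancellation from whichever interval is smaller), and the resulting bilinear form is controlled by the Poisson $A_2$ condition \eqref{e:A2} after a Schur-test / Cauchy--Schwarz argument; this is exactly the classical argument and I would cite the analogous lemma in \cites{MR3285857,MR3285858,10031596}. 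For $\mathcal P_{\textup{near}}$, where $J\subset 3I\setminus I$ (or symmetrically), the intervals are nested-adjacent but $J$ avoids $I$ itself; here one again uses one order of cancellation and the $A_2$ condition, summing a geometric series in the eccentricity, since the relevant interval sits near the boundary $\partial I$ and the kernel is controlled there.

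For $\mathcal P_{\textup{diagonal}}$, where $3I\cap 3J\neq\emptyset$ and $2^{-4r}|I|\le|J|\le 2^{4r}|I|$, the number of $J$ associated to each $I$ is $O_r(1)$, so it suffices to bound a single inner product $|\langle \mathsf R^{\ast}_\tau \Delta^\tau_{Q_J}g,\Delta^\sigma_I f\rangle_\sigma|$ by $\mathscr A_2^{1/2}\|\Delta^\sigma_I f\|_\sigma\|\Delta^\tau_{Q_J}g\|_\tau$ and sum by orthogonality. That single-scale bound is precisely the weak-boundedness principle, Proposition~\ref{p:weak}: after noting $I$ and $J$ intersect only near their boundaries once one removes the common core (the truly overlapping part $I\cap 3J$ is itself handled by splitting $I$ into $O(1)$ dyadic pieces intersecting $J$ only at boundaries), one applies \eqref{e:weak}. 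The main obstacle I anticipate is bookkeeping in the diagonal case, namely reducing the genuinely overlapping configuration to the boundary-touching hypothesis of Proposition~\ref{p:weak}: one must subdivide $I$ (resp.\ $J$) into boundedly many dyadic subintervals, each meeting the other cube only along its boundary, and control the finitely many cross terms using goodness to ensure no pathological alignment. Once that reduction is in place, the rest is a routine geometric-series summation driven by the $A_2$ condition, and I would present the far and near cases in parallel, leaving the easy verifications to the reader in the spirit of \S\ref{s:elementary}.
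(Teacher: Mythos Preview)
Your outline for $\mathcal P_{\textup{far}}$ and $\mathcal P_{\textup{near}}$ is the standard one and matches what the paper has in mind (it simply defers to \cites{10031596,10014043,primer}). The only point worth adding is that in the near case the role of goodness is essential and should be made explicit: when $J\subset 3I\setminus I$ with $\lvert J\rvert\le 2^{-r}\lvert I\rvert$, goodness of $J$ forces $\textup{dist}(J,\partial I)\ge \lvert J\rvert^{\epsilon}\lvert I\rvert^{1-\epsilon}$, and it is precisely this separation that makes the geometric series in the eccentricity converge after applying the gradient bound \eqref{e:nabla} and the $A_2$ condition.

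There is, however, a genuine gap in your treatment of $\mathcal P_{\textup{diagonal}}$. You assert that the single inner product is bounded by $\mathscr A_2^{1/2}\lVert\Delta^\sigma_I f\rVert_\sigma\lVert\Delta^\tau_{Q_J}g\rVert_\tau$ via Proposition~\ref{p:weak} alone, after splitting into boundary-touching pieces. This cannot work when, say, $I=J$: expanding the martingale differences on children, one encounters terms like $\langle \mathsf R^{\ast}_\tau Q_{I'},\ I'\rangle_\sigma$ with $I'$ a child of $I$, and there is no way to subdivide $I'$ into dyadic pieces meeting the base of $Q_{I'}$ only at endpoints, since $I'$ \emph{is} that base. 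For such overlapping terms you must invoke the testing hypotheses \eqref{e:T1}--\eqref{e:T*1}: for instance
\[
\bigl\lvert\langle \mathsf R^{\ast}_\tau Q_{I'},\ I'\rangle_\sigma\bigr\rvert
=\bigl\lvert\langle Q_{I'},\ \mathsf R_\sigma I'\rangle_\tau\bigr\rvert
\le \tau(Q_{I'})^{1/2}\lVert Q_{I'}\mathsf R_\sigma I'\rVert_\tau
\le \mathscr T\,\tau(Q_{I'})^{1/2}\sigma(I')^{1/2}.
\]
The adjacent pieces (children $I'$, $J'$ sharing only an endpoint) are indeed handled by Proposition~\ref{p:weak}, and the ``top'' children of $Q_J$ (those not touching $\mathbb R$) by a direct kernel size estimate together with the $A_2$ box condition. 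Thus the diagonal bound is by $\mathscr R$, not by $\mathscr A_2^{1/2}$ alone; once you insert the testing step for the coincident pieces, the rest of your plan goes through.
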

The details of these cases are straightforward, and have appeared in on these cases have appeared in \cites{10031596,10014043,primer}.
Briefly, the `diagonal' case depends upon the  weak boundedness principle, Proposition \ref{p:weak}.  
The `far' and `near' cases 
appeal to cancellation from the `small' martingale difference, 
and the $ A_2$ condition. 
An important element of these arguments is that goodness of the small interval $ I$ implies that 
it is necessarily relatively far from the  boundary of interval $ J$.  In this case, note that $ I$ will also be relatively far   
from   boundary of    $ Q_J$, and vice versa.  
These properties follow from 
 the randomization of the dyadic grids \emph{in only the  horizontal direction}; note that the randomization in the vertical direction is not needed because of the positivity we have in the second Riesz transform.

The next two cases concern the reduction of the `large' martingale difference to the child that contains the `small' martingale difference. 
There are two different estimates which are as follows.  

\begin{lemma}\label{l:restrict}  Both of these bilinear forms are bounded by $ \mathscr R \lVert f\rVert_{\sigma } \lVert g\rVert_{\tau }$, 
\begin{gather*}
 \sum_{ (I,J) \::\: I\Subset J} 
\left\vert \left\langle  {\mathsf  R} ^{\ast } _{\tau } ( \Delta _{Q_{J} }^{\tau } g \cdot  \mathbf 1_{Q_J \setminus Q _{JI}}) , \Delta ^{\sigma } _{I} f \right\rangle _{\sigma } \right\vert, 
\\
 \sum_{ (I,J) \::\: J\Subset I} 
\left\vert \left\langle  {\mathsf  R}  _{\sigma} (\Delta ^{\sigma } _{I} f  \cdot \mathbf 1_{I \setminus I_J}) , \Delta _{Q_J} ^{\tau } g  \right\rangle _{\tau  } \right\vert.  
\end{gather*}  
Recall that $ I_J$ is the child of $ I$ that contains $ J$, and $ Q _{JI} $ is the dyadic cube, child of $ Q_J$ that contains $ Q_I$.  
{In both estimates, we have the full martingale difference on the small interval.} 
\end{lemma}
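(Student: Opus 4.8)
\textbf{Proof plan for Lemma~\ref{l:restrict}.}

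The plan is to treat the two displayed forms by the same scheme, so I will describe the first one; the second is its mirror image with the roles of $\sigma$ and $\tau$ (and of $\R$ and $\R^2_+$) interchanged, and nothing new arises. Fix a good interval $I$ with $I\Subset J$, write $J_1\equiv Q_{JI}$ for the child of $Q_J$ containing $Q_I$, and observe that on $Q_J\setminus J_1$ the function $\Delta^\tau_{Q_J}g$ is a constant on each of the (at most three) remaining children, so $\Delta^\tau_{Q_J}g\cdot\mathbf 1_{Q_J\setminus J_1}=\sum_{J'} c_{J'}\mathbf 1_{J'}$ with $\sum_{J'}c_{J'}^2\tau(J')\le\lVert\Delta^\tau_{Q_J}g\rVert_\tau^2$. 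The key geometric input is goodness: since $I\Subset_r J$ is good, $\textup{dist}(I,\partial J)\gtrsim |I|^\epsilon|J|^{1-\epsilon}$, and because the grids are randomized only in the horizontal direction, the same separation holds between $Q_I$ and $\partial Q_J$ and hence between $Q_I$ and each of the complementary children $J'$. Therefore, for $t\in I$ and $x\in J'$, the kernel $\frac{x-t}{|x-t|^2}$ has size $\lesssim |x-t|^{-1}$ and, after subtracting its value at the center $t_I$ (using $\int\Delta^\sigma_I f\,\sigma=0$), we gain a factor $|I|/|x-t|$, exactly as in the monotonicity estimates \eqref{e:monoI1} and \eqref{e:MONO}. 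This reduces each inner product to a Poisson-type quantity.

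Concretely, I would bound
\[
\bigl|\langle \mathsf R^{\ast}_\tau(\Delta^\tau_{Q_J}g\cdot\mathbf 1_{Q_J\setminus J_1}),\Delta^\sigma_I f\rangle_\sigma\bigr|
\lesssim \Bigl(\int_I|t-t_I|\,|\Delta^\sigma_I f|\,\sigma(dt)\Bigr)\cdot \sum_{J'}\int_{J'}\frac{1}{|I|^2+\textup{dist}(x,I)^2}\,|\Delta^\tau_{Q_J}g(x)|\,\tau(dx),
\]
then apply Cauchy--Schwarz in $t$ to replace the first factor by $|I|\cdot\lVert\Delta^\sigma_I f\rVert_\sigma\cdot\sigma(I)^{1/2}/|I|$, and Cauchy--Schwarz in $x$ (over the disjoint $J'$) to replace the second factor by a weighted $\ell^2$ sum of $\lVert\Delta^\tau_{Q_J}g\rVert_\tau$ times the relevant $A_2$-type Poisson integrals. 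After summing over $J$ (for fixed $I$) and over $I$, the resulting double sum is estimated by a Schur/almost-orthogonality argument organized by the relative scales $2^{-k}=|I|/|J|$: for each $k\ge r$ the contribution is controlled by $\mathscr A_2$ with a geometric factor $2^{-\delta k}$ coming from the gained decay $|I|/\textup{dist}(x,I)$ together with goodness, and then $\sum_{I,J}$ collapses to $\lVert f\rVert_\sigma\lVert g\rVert_\tau$ by orthogonality of the martingale differences. This is precisely the mechanism already used for the $\mathcal P_{\textup{far}}$ and $\mathcal P_{\textup{near}}$ cases in Lemma~\ref{l:star}, and the book-keeping is identical to \cites{10031596,10014043,primer}.

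The main obstacle, and the only place where the present setting differs from the Hilbert transform, is bookkeeping the two-dimensional geometry of $Q_J\setminus J_1$: one must check that the complementary children $J'$ are genuinely separated from $Q_I$ by the good gap. The point is that the bad child $J_1=Q_{JI}$ is the \emph{only} child of $Q_J$ that can be close to $Q_I$, since horizontal-only randomization forces $Q_I$ to sit well inside $J_1$ and away from the shared faces; the vertical structure is deterministic and does not create new near-boundary configurations. Once this separation is in hand, every estimate is a routine size-plus-gradient estimate on $\frac{x-t}{|x-t|^2}$ combined with the $A_2$ condition \eqref{e:A2} and orthogonality, and no testing hypothesis beyond what has already been invoked is needed. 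I would therefore present the argument briefly, emphasizing the goodness/separation step and citing \cites{10031596,10014043,primer} for the remaining standard manipulations.
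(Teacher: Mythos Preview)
Your proposal is correct and follows essentially the same approach as the paper. The paper does not give a proof of this lemma at all---it simply states ``The proof of this estimate is very similar to that of $\mathcal P_{\textup{near}}$, and we again omit the details,'' and for $\mathcal P_{\textup{near}}$ itself it only points to cancellation of the small martingale difference, the $A_2$ condition, goodness-induced separation (including the observation that horizontal-only randomization forces $Q_I$ away from $\partial Q_J$), and the references \cites{10031596,10014043,primer}; your plan fleshes out exactly these ingredients with the expected Schur-type summation in the scale ratio.
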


The proof of this estimate is very similar to that of $ \mathcal P _{\textup{near}}$, and we again omit the details.


\begin{bibdiv}
\begin{biblist}

\bib{MR1464420}{article}{
  author={Aleksandrov, A. B.},
  title={Isometric embeddings of co-invariant subspaces of the shift operator},
  language={Russian, with English and Russian summaries},
  journal={Zap. Nauchn. Sem. S.-Peterburg. Otdel. Mat. Inst. Steklov. (POMI)},
  volume={232},
  date={1996},
  number={Issled. po Linein. Oper. i Teor. Funktsii. 24},
  pages={5--15, 213},
  issn={0373-2703},
  translation={ journal={J. Math. Sci. (New York)}, volume={92}, date={1998}, number={1}, pages={3543--3549}, issn={1072-3374}, },
}

\bib{MR1327512}{article}{
  author={Aleksandrov, A. B.},
  title={A simple proof of the Vol\cprime berg-Treil\cprime \ theorem on the embedding of covariant subspaces of the shift operator},
  language={Russian, with English and Russian summaries},
  journal={Zap. Nauchn. Sem. S.-Peterburg. Otdel. Mat. Inst. Steklov. (POMI)},
  volume={217},
  date={1994},
  number={Issled. po Linein. Oper. i Teor. Funktsii. 22},
  pages={26--35, 218},
  translation={ journal={J. Math. Sci. (New York)}, volume={85}, date={1997}, number={2}, pages={1773--1778} },
}

\bib{MR705235}{article}{
  author={Cohn, William S.},
  title={Carleson measures for functions orthogonal to invariant subspaces},
  journal={Pacific J. Math.},
  volume={103},
  date={1982},
  number={2},
  pages={347--364},
}

\bib{MR1397026}{book}{
  author={Cowen, Carl C.},
  author={MacCluer, Barbara D.},
  title={Composition operators on spaces of analytic functions},
  series={Studies in Advanced Mathematics},
  publisher={CRC Press},
  place={Boca Raton, FL},
  date={1995},
  pages={xii+388},
}

\bib{13120843}{article}{
  author={Hyt{\"o}nen, Tuomas P.},
  title={The two weight inequality for the Hilbert transform with general measures},
  date={2013},
  eprint={http://www.arxiv.org/abs/1312.0843 },
}

\bib{MR2836660}{article}{
  author={Lef{\`e}vre, Pascal},
  author={Li, Daniel},
  author={Queff{\'e}lec, Herv{\'e}},
  author={Rodr{\'{\i }}guez-Piazza, Luis},
  title={Nevanlinna counting function and Carleson function of analytic maps},
  journal={Math. Ann.},
  volume={351},
  date={2011},
  number={2},
  pages={305--326},
}

\bib{MR3285858}{article}{
  author={Lacey, Michael T.},
  title={Two-weight inequality for the Hilbert transform: A real variable characterization, II},
  journal={Duke Math. J.},
  volume={163},
  date={2014},
  number={15},
  pages={2821--2840},
}

\bib{primer}{article}{
  author={Lacey, Michael T.},
  title={The Two Weight Inequality for the Hilbert Transform: A Primer},
  journal={Submitted},
  eprint={http://www.arxiv.org/abs/1304.5004},
  date={2013},
}

\bib{10014043}{article}{
  author={Lacey, Michael T.},
  author={Sawyer, Eric T.},
  author={Uriarte-Tuero, Ignacio},
  title={A Two Weight Inequality for the Hilbert transform Assuming an Energy Hypothesis},
  journal={J. Funct. Anal.},
  volume={263},
  pages={305--363},
  date={2012},
}

\bib{MR3285857}{article}{
  author={Lacey, Michael T.},
  author={Sawyer, Eric T.},
  author={Shen, Chun-Yen},
  author={Uriarte-Tuero, Ignacio},
  title={Two-weight inequality for the Hilbert transform: A real variable characterization, I},
  journal={Duke Math. J.},
  volume={163},
  date={2014},
  number={15},
  pages={2795--2820},
}

\bib{12055172}{article}{
   author={Lyubarskii, Yurii I.},
   author={Malinnikova, Eugenia},
   title={Composition operators on model spaces},
   conference={
      title={Recent trends in analysis},
   },
   book={
      series={Theta Ser. Adv. Math.},
      publisher={Theta, Bucharest},
   },
   date={2013},
   pages={149--157},
   review={\MR{3411049}},
}

\bib{MR783578}{article}{
  author={MacCluer, Barbara D.},
  title={Compact composition operators on $H^p(B_N)$},
  journal={Michigan Math. J.},
  volume={32},
  date={1985},
  number={2},
  pages={237--248},
}

\bib{MR0311856}{article}{
  author={Muckenhoupt, Benjamin},
  title={Hardy's inequality with weights},
  note={Collection of articles honoring the completion by Antoni Zygmund of 50 years of scientific activity, I},
  journal={Studia Math.},
  volume={44},
  date={1972},
  pages={31--38},
}

\bib{N1}{article}{
  author={Nazarov, F.},
  title={A counterexample to Sarason's conjecture},
  date={1997},
  journal={Preprint, MSU},
  eprint={http://www.math.msu.edu/~fedja/prepr.html},
}

\bib{10031596}{article}{
  author={Nazarov, F.},
  author={Treil, S.},
  author={Volberg, A.},
  title={Two weight estimate for the Hilbert transform and Corona decomposition for non-doubling measures},
  date={2004},
  eprint={http://arxiv.org/abs/1003.1596},
}

\bib{NV}{article}{
  author={Nazarov, F.},
  author={Volberg, A.},
  title={The Bellman function, the two weight Hilbert transform, and embeddings of the model spaces $K_\theta $},
  note={Dedicated to the memory of Thomas H.\ Wolff},
  journal={J. Anal. Math.},
  volume={87},
  date={2002},
  pages={385--414},
}

\bib{MR827223}{book}{
  author={Nikol{\cprime }ski{\u \i }, N. K.},
  title={Treatise on the shift operator},
  series={Grundlehren der Mathematischen Wissenschaften [Fundamental Principles of Mathematical Sciences]},
  volume={273},
  note={Spectral function theory; With an appendix by S. V. Hru\v s\v cev [S. V. Khrushch\"ev] and V. V. Peller; Translated from the Russian by Jaak Peetre},
  publisher={Springer-Verlag},
  place={Berlin},
  date={1986},
  pages={xii+491},
}

\bib{MR2198367}{article}{
  author={Poltoratski, Alexei},
  author={Sarason, Donald},
  title={Aleksandrov-Clark measures},
  conference={ title={Recent advances in operator-related function theory}, },
  book={ series={Contemp. Math.}, volume={393}, publisher={Amer. Math. Soc.}, place={Providence, RI}, },
  date={2006},
  pages={1--14},
}

\bib{MR930072}{article}{
  author={Sawyer, Eric T.},
  title={A characterization of two weight norm inequalities for fractional and Poisson integrals},
  journal={Trans. Amer. Math. Soc.},
  volume={308},
  date={1988},
  number={2},
  pages={533--545},
}

\bib{SSUT}{article}{
  author={Sawyer, Eric T.},
  author={Shen, C.-Y.},
  author={Uriarte-Tuero, I.},
  title={A Geometric Condition, Necessity of Energy, and Two Weight Boundedness of Fractional Riesz Transforms},
  eprint={http://arxiv.org/abs/1310.4484},
}

\bib{MR881273}{article}{
  author={Shapiro, Joel H.},
  title={The essential norm of a composition operator},
  journal={Ann. of Math. (2)},
  volume={125},
  date={1987},
  number={2},
  pages={375--404},
}

\bib{MR849293}{article}{
  author={Treil{\cprime }, S. R.},
  author={Vol{\cprime }berg, A. L.},
  title={Embedding theorems for invariant subspaces of the inverse shift operator},
  language={Russian, with English summary},
  journal={Zap. Nauchn. Sem. Leningrad. Otdel. Mat. Inst. Steklov. (LOMI)},
  volume={149},
  date={1986},
  number={Issled. Linein. Teor. Funktsii. XV},
  pages={38--51, 186--187},
  translation={ journal={J. Soviet Math.}, volume={42}, date={1988}, number={2}, pages={1562--1572}, },
}

\bib{V}{book}{
  author={Volberg, A.},
  title={Calder\'on-Zygmund capacities and operators on nonhomogeneous spaces},
  series={CBMS Regional Conference Series in Mathematics},
  volume={100},
  publisher={Published for the Conference Board of the Mathematical Sciences, Washington, DC},
  date={2003},
  pages={iv+167},
}

\end{biblist}
\end{bibdiv}


\end{document}